\newcommand{\Title}{VogrincKendall1}
\newcommand{\Subject}{\Title}
\newcommand{\Date}{\today}
\definecolor{linkcolor}{named}{Maroon}
\definecolor{citecolor}{named}{PineGreen}
\definecolor{urlcolor}{named}{RoyalPurple}
\definecolor{okcolor}{named}{OliveGreen}
\definecolor{alertcolor}{named}{BrickRed}
\newtheorem{appendixlem}{Lemma}[section]
\newcommand{\LMHR}{\rho}
\newcommand{\fBM}{B^{(H)}}
\newcommand{\qu}{\varphi_c}
\newcommand{\V}[1]{V(#1\,|\fBM;c)}
\newcommand{\Vd}[1]{\dot{V}(#1\,|\fBM;c)}
\newcommand{\Vdd}[1]{\ddot{V}(#1\,|\fBM;c)}
\newcommand{\auxset}{\mathcal{S}}
\newcommand{\SigmaField}{\mathcal{F}}
\newcommand{\X}{\mathcal{X}}
\newcommand{\upi}{\xi}
\newcommand{\Region}{\mathcal{R}}
\newcommand{\Expectp}[2]{\operatorname{\mathbb{E}}_{#1}\left[#2\right]}
\newcommand{\sign}{\operatorname{sign}}
\newcommand{\ESJD}{\operatorname{\text{ESJD}}}
\theoremstyle{plain}
 \newtheorem{AssumptionFramework}[thm]{Anomalous Scaling Framework}
\begin{document}
 \TrackingPreamble

%%%%%%%%%%%%%%%%%%%%%%%%%%%%%%%%
%% Title page
%  \title{Anomalous scaling of Metropolis-Hastings chains for insufficiently smooth target densities}
%  \title{On anomalous scaling of Metropolis-Hastings chains \\
%  for suitably rough target densities}
 \title{Counterexamples for optimal scaling of\\
 Metropolis-Hastings chains  with rough target densities}
 \author{%
Jure Vogrinc and Wilfrid S. Kendall
\\
                 \href{mailto:jure.vogrinc@warwick.ac.uk}%
                 {\scriptsize jure.vogrinc@warwick.ac.uk}, %
                 \href{mailto:w.s.kendall@warwick.ac.uk}%
                 {\scriptsize w.s.kendall@warwick.ac.uk}
}
 \date{\Date}
 \maketitle

% =========================================   
\begin{abstract}\noindent

For sufficiently smooth targets of product form it is known that the variance of a single coordinate of the proposal in RWM (Random walk Metropolis) and MALA (Metropolis adjusted Langevin algorithm) should optimally scale as $n^{-1}$ and as $n^{-\frac{1}{3}}$ with dimension \(n\),
and that the acceptance rates should be tuned to $0.234$ and $0.574$. We establish counterexamples
to demonstrate that smoothness assumptions of the order of $\mathcal{C}^1(\Reals)$ for RWM and $\mathcal{C}^3(\Reals)$ for MALA are indeed required if these scaling rates are to hold. 
The counterexamples 
identify classes of marginal targets for which these guidelines
are violated, obtained by perturbing a standard Normal density (at the level of the potential for RWM and  the second derivative of the potential for MALA) using roughness generated by a path of fractional Brownian motion with Hurst exponent $H$. 
For such targets there is strong evidence that RWM and MALA proposal variances should optimally be scaled as $n^{-\frac{1}{H}}$ and as $n^{-\frac{1}{2+H}}$ and will then
obey anomalous acceptance rate guidelines. 
Useful heuristics resulting from this theory are discussed.
The paper develops a framework capable of tackling optimal scaling results for quite general Metropolis-Hastings algorithms (possibly depending on a random environment).
\end{abstract}

\bigskip
\noindent 
Keywords and phrases:
\\
\textsc{ 
anomalous optimal scaling;
Expected Square Jump Distance (ESJD);
fractional Brownian motion (fBM);
Markov chain Monte Carlo;
Me\-tro\-polis-adjusted Langevin algorithm (MALA);
Me\-tro\-polis-Hastings;
Random Walk Me\-tro\-polis (RWM);
optimal scaling.}

\bigskip
\noindent
AMS MSC 2010: Primary 60J22 
Secondary 65C05, 60F05
% \NB[Submission plan]{Try these in order: Annals of Applied Probability, then SPA or EJS/EJP?}

% ===================================================
%
\section{Introduction}\label{sec:introduction}
Probabilistic computation and optimisation are tools of widespread importance in
applied mathematical science,
and are widely used in order to facilitate
the use of Bayesian statistics, especially in machine learning contexts.
In particular, the use of Markov chain Monte Carlo (MCMC) methods 
is now wide-spread. 
This greatly increases the value of mathematical theory underlying these methods;
many significant theoretical advances have 
indeed been made but theory still lags behind the explosive growth 
of many varieties of applications. 
Theory typically provides significant help and guidance by studying 
basic building blocks of these algorithms,
applied to toy examples which are nevertheless representative of applications
\citep{Diaconis-2013,RobertsRosenthal-2001b}.

A remarkable example of theoretical guidance is provided by
results on
optimal scaling of MCMC
(\citet{RobertsGelmanGilks-1997,RobertsRosenthal-1998}, see also \citet{Gelfand-1991} who establish diffusion limits for MCMC algorithms). 
Here the toy examples are high-dimensional ``product targets''
(multivariate probability densities which render all 
\(n\) coordinates
independent and identically distributed). 
Optimal scaling results show that
(under suitable regularity conditions)
as dimension \(n\) increases
so the proposal variances of each coordinate 
of the Random walk Metropolis (RWM) 
and the Metropolis adjusted Langevin algorithm (MALA) proposals 
should respectively be chosen proportional to
$n^{-1}$ and $n^{-1/3}$. 
Furthermore, it proves optimal as \(n\to\infty\) to choose the constant of proportionality so as to obtain average acceptance rates of
\(0.234\) of the proposed moves for RWM and \(0.574\) for MALA. 
These results were originally proved only for the toy example of product targets 
given above; 
nevertheless simulation evidence suggests that they 
should hold in much greater generality 
and notable progress has been made to generalise the theory
towards more general targets, 
especially in the case of RWM: see for example 
\citet{YangRobertsRosenthal-2019}. 
Consequently the theory does indeed provide very practical and useful 
guidelines for practitioners,
and additionally provides an important context for motivating and assessing
adaptive MCMC methods.

The theoretical results require smoothness assumptions 
for the underlying marginal target density function. 
\citet{RobertsGelmanGilks-1997}
actually
required \(3\) continuous derivatives
for their RWM result 
while \citet{RobertsRosenthal-1998}
needed \(8\) continuous derivatives
for their approach to MALA.
These assumptions were necessitated by the methods of proof but did not
otherwise seem particularly natural 
and it was unclear to what extent they were actually necessary.
Recent work has used different methods of proof
to establish, at least in the case of RWM, that 
the original smoothness assumptions were
indeed much stricter than is really
required 
\citep{DurmusLeCorffMoulinesRoberts-2016,ZanellaKendallBedard-2016}.
The main focus of our paper is to develop a class of counterexamples to
demonstrate
the extent to which some kinds of
smoothness assumption
are genuinely necessary for both RWM and MALA.

In summary, we show that a certain level of smoothness of 
the marginal target density
function
is indeed required in order to deliver the original optimal scaling guidelines.
To be specific,
RWM essentially requires
\(1\) continuous derivative almost everywhere 
while
MALA requires
\(3\) continuous derivatives
almost everywhere.
Note that 
no derivatives are required
in order 
for RWM to deliver the prescribed target probability measure as a large-time equilibrium,
while MALA requires just one derivative.
Nevertheless we show that some 
higher order smoothness is indeed necessary
if the algorithms are to scale in a standard way.
In the following it is shown that, in
the absence of suitable smoothness, 
there exist classes of targets for which
the above optimality results do not apply,
and indeed
different, anomalous, tuning guidelines 
appear to be optimal. 
Note 
in particular
that failure of smoothness
at isolated points
(as often occurs in applications)
need
not 
be
sufficient to destroy standard smoothing
\citep{DurmusLeCorffMoulinesRoberts-2016}:
our counterexamples 
are necessarily non-smooth over a substantial range. 
However, the counterexamples tell us something fundamental about the way in which RWM and MALA
really do depend on regularity and are thus methodologically interesting. They quantifiably exhibit another, often overlooked, way in which MCMC can perform badly, different for instance from the target having multiple modes or being zero in large parts of space. A bottleneck in MCMC mixing can also be caused by local roughness or oscillations and we believe the results presented below  
do indicate useful
aspects of scaling behaviour for MCMC methods in such cases (see Section~\ref{sec:UsefullHeuristics}).

For RWM, for each \(0<H<1\) we use 
a probabilistic approach
to construct a 
class of product targets
which
lie in $\mathcal{C}^\gamma(\Reals)$ (for \(\gamma<H\)) but not in $\mathcal{C}^H(\Reals)$,
and for
which the RWM algorithm does \emph{not} scale optimally in the way
indicated by the theory in \cite{RobertsGelmanGilks-1997}.
Indeed an
``Expected Squared Jump Distance'' (ESJD) approach
indicates a different and anomalous manner of optimal scaling.
% \xNB[JV8]{Regarding ESJD: JV to place short proof in the conclusion.}
For MALA, for each \(0<H<1\) we similarly
use 
a probabilistic approach
to construct a class of product targets
which
lie in $\mathcal{C}^{2+\gamma}(\Reals)$ (for \(\gamma<H\))
but not in $\mathcal{C}^{2+H}(\Reals)$ 
and for which 
again the MALA algorithm does not
scale optimally
according to the
regular-case
theory of \citet{RobertsRosenthal-1998};
here an ESJD approach again indicates
a different and anomalous manner
of optimal scaling.

Our method of approach is to generate random targets
-- in effect, random environments --
based on a random realisation of a two-sided \(H\)-fractional Brownian motion path.
Indeed, bearing in mind appropriate density theorems for Gaussian measures,
in some sense our counterexamples are generic!
We use the generated path to construct a 
marginal probability density function
such that 
any possibility of optimal 
scaling could only arise by
tuning the coordinate variance of proposals for the associated $n$-dimensional product targets 
to be proportional to $n^{-1/H}$ for RWM
and to $n^{-1/(2+H)}$ for MALA (instead of $n^{-1}$ for RWM and $n^{-1/3}$ for MALA).

In addition the method of proof may be of independent interest. Section~\ref{sec:CLTgeneral} provides a  very suitable framework for addressing optimal scaling questions for Metropolis-class MCMC methods, particularly for identifying minimal required smoothness conditions. It is plausible that similar frameworks can be obtained for other classes of MCMC algorithms. Independently of that, an approach involving random targets, similar to Section~\ref{sec:anomalous_scaling}, could be used to construct other kinds of counterexamples in MCMC.

The rest of the paper is organized as follows. 
Section \ref{sec:results} states and discusses the main results of the paper. 
Section~\ref{sec:CLTgeneral} establishes conditions,
nearly as general as possible,
in the setting of product targets 
under which an associated Central Limit Theorem holds for the log Metropolis-Hastings ratio and 
a non-trivial limiting acceptance rate exists.
Section~\ref{sec:isserlis} states and proves consequences
of the celebrated Isserlis theorem which will later be used to control distributions of important quantities expressed 
in the context of a random environment.
Section~\ref{sec:anomalous_scaling} introduces a general framework for showing when anomalous scaling
can occur for general Metropolis-Hastings algorithms applied to product targets
in which the marginal product target density depends on a random environment which is a continuous Gaussian process.
Sections~\ref{sec:RWM} and Sections~\ref{sec:MALA} respectively verify that that the general framework of Section~\ref{sec:anomalous_scaling} 
is satisfied in cases of anomalously scaled RWM and MALA. Targets used for RWM (respectively MALA) are perturbations, on the level of potential (respectively second derivative of the potential), of the standard Normal density. 
Finally, Section~\ref{sec:conclusion} discusses 
considerations concerning Expected Square Jump Distance,
open questions, potential extensions and 
how heuristics 
suggested by these theoretical results could be useful in applications.

% ===================================================
%
\section{Main results of the paper}\label{sec:results}

This section
presents our main results in more detail.
First of all, recall the mathematical framework of optimal scaling for MCMC.
The marginal probability density function for the product target measure
(assumed here to be strictly positive)
is denoted by \(\pi\).
Thus the product target measure on \(\Reals^n\) is given by
\[
\Pi_n({\d}x)\quad=\quad\Pi_n({\d}x_1,\dots {\d}x_n)
\quad=\quad
\prod_{i=1}^n\Big(\pi(x_i){\d}x_i\Big)
\,.
\]
Our results concern asymptotic behaviour 
 (as the dimension \(n\) grows to \(\infty\))
 of MCMC algorithms
 which deliver this target measure as large-time equilibrium.
The relevant algorithms, RWM and MALA, give rise to Markov chains
 $\left(X^{\text{RWM},(n)}_{k} \;:\; k=1, 2, \ldots\right)$  for RWM
 and 
 $\left(X^{\text{MALA},(n)}_{k}\;:\; k=1, 2, \ldots\right)$ for MALA
 (here the dummy index \(k\) is the discrete time variable for the Markov chains).
 The chains are Metropolis-Hastings (MH) algorithms with 
 target probability measures \(\Pi_n\)
 based on multivariate normal proposals
$Q^{\text{RWM},(n)}(x,{\d}y)\sim N\left(x,\frac{\ell^2}{n}\cdot I_n\right)$  for RWM
and 
$Q^{\text{MALA},(n)}(x,{\d}y)\sim N\left(x+\frac{\ell^2}{2n^{1/3}}\;\nabla (\log\Pi_n(x)),\frac{\ell^2}{n^{1/3}}\cdot I_n\right)$ for MALA,
and we consider the stationary versions of all these chains 
(so initial distribution is always \(\Pi_n\)).
Here \(\ell>0\) is a parameter determining the asymptotic scale of the proposal.

% for a constant $\ell >0$ (started in stationarity). In 

The classic results of \citet{RobertsGelmanGilks-1997} and \cite{RobertsRosenthal-1998} state 
(respectively for
$\pi\in\mathcal{C}^3(\Reals)$
for RWM,
and 
 $\pi\in\mathcal{C}^8(\Reals)$
for MALA)
that
as \(n\to\infty\) there is weak convergence 
of the first coordinate of the chain
(under certain conditions on the decay of the tails and the regularity of the
marginal probability density of \(\Pi_n\))
 \begin{equation}\label{eq:Langevin}
 X^{\text{RWM},(n)}_{\lfloor n\cdot t\rfloor,1}
\quad \xrightarrow{w} \quad U_t
\qquad\text{and}\qquad 
 X^{\text{MALA},(n)}_{\lfloor n^{1/3}\cdot t\rfloor,1}
\quad \xrightarrow{w} \quad U_t
 \end{equation}
to a Langevin diffusion \(U\), a solution of the \emph{continuous time}
stochastic 
differential equation
\[
{\d}U_t
\quad=\quad
h(\ell)^{1/2}{\d}B_t+\frac{h(\ell)}{2}\;\nabla(\log(\pi(U_t)){\d}t\,.
\]
That is to say, 
the accelerated first coordinates $X^{\text{RWM},(n)}_{\lfloor nt\rfloor,1}$ 
and 
$X^{\text{MALA},(n)}_{\lfloor n^{1/3}t\rfloor,1}$, 
when considered as piece-wise constant continuous time processes,
converge weakly to the Langevin diffusion $U$ as the dimension $n$ increases.
% \[
% dU_t=h(l)^{1/2}dB_t+\frac{h(l)}{2}\;\nabla(\log(\pi(U_t)).
% \]
The expressions for diffusion speeds $h(\ell)$ are different in RWM and MALA cases 
and optimizing over the choice of $\ell$ then leads 
to different (but appealingly simple) acceptance ratio guidelines.

The computational heart of these results lies in
the task of showing 
that the acceptance ratio converges to a constant different to zero or one,
and this follows by application of 
% This is achieved by
a version of the Central limit theorem (CLT) 
that applies to the coordinate-wise logarithms of MH acceptance ratios
for these algorithms. 
For instance, if $X_1,\dots X_n$ are the 
independent and identically distributed 
(IID) coordinates of $X^{RWM,(n)}\sim\Pi_n$ 
and $Y_1,\dots Y_n$ are the IID coordinates of the RWM proposal
$Y^{RWM,(n)}\sim Q^{\text{RWM},(n)}(X^{RWM,(n)},{\d}y)$ 
then the following CLT
\[
\sum_{i=1}^n \log(\pi(Y_i))-\log(\pi(X_i))
\quad\xrightarrow{w}\quad
N\left(-\half\sigma^2,\sigma^2\right)
\] 
holds for an appropriate constant 
$\sigma^2=\ell^2\int_{\Reals}\left((\log\pi)'(x))\right)^2\pi(x){\d}x$. 
This then identifies the limiting average acceptance ratio via
\[
\alpha\left(X^{RWM,(n)},Y^{RWM,(n)}\right)
\;=\;
(1\wedge \exp)\left(\sum_{i=1}^n \log(\pi(Y_i))-\log(\pi(X_i))\right)
\quad\xrightarrow{w}\quad
(1\wedge\exp)\left(N\left(-\half\sigma^2,\sigma^2\right)\right)
\,,
\]
where $(1\wedge \exp)(x)$ denotes $\min(1,e^x)$ for $x\in\Reals$.
Note that this identifies 
the optimal scaling rate for the coordinates of the proposal:
if a scaling rate is not asymptotic to the rate giving a CLT 
($n^{-1}$ for RWM and $n^{-1/3}$ for MALA)
then either there is no limiting average acceptance rate
or the limit 
% the resulting limiting average acceptance rate 
is necessarily $0$ or $1$.

We now construct classes of marginal probability density functions for which 
\emph{anomalous} scaling occurs at least at the level of ESJD.
We do this 
by using a randomized construction based on fractional Brownian motions. 
%\xNB[WSK2]{Be careful to say all we need to say about fBM at the point of this footnote.}
Recall that $\{\fBM_x,~x\in\Reals\}$ is a two-sided 
\emph{fractional Brownian motion} (fBM) with Hurst parameter $H\in(0,1)$
if it is a centred zero-mean Gaussian process with 
covariance defined for \emph{arbitrary}
$x,y\in\Reals$ by
\begin{equation}\label{eq:fBM}
\Gamma^{(H)}(x,y)
\quad=\quad
\Expect{\fBM_x \fBM_y}
\quad=\quad
\tfrac{1}{2}|x|^{2H}+\tfrac{1}{2}|y|^{2H}-\tfrac{1}{2}|x-y|^{2H}\,.
\end{equation}
Here we refer to \citet[Chapter 5]{Nualart-2006}
for fBM theory.
This reference covers the single-sided fBM
with \(x\geq0\): however extension to the double-sided
case is immediate if one notes that \eqref{eq:fBM}
remains non-negative definite for all \(x,y\).
\citet[Chapter I, Exercise (3.9)]{RevuzYor-1991}
gives an explicit and succinct
construction for all \(x,y\)
(see also \citealp{MandelbrotVanNess-1968}).
The sample paths of fBM with Hurst parameter \(H\)  
are almost surely H\"older continuous of exponent \(\gamma\) 
whenever \(0<\gamma<H\) (though not for \(\gamma=H\)). Let $\Omega^{(H)}$ denote the space of all two-sided paths that are zero at time zero and are in $\mathcal{C}^{\gamma}(\Reals)$ for all $0<\gamma<H$, so $\Omega^{(H)}$ is in fact a probability space equipped with a measure provided by two-sided fBM with Hurst parameter \(H\).

The main result concerning RWM counterexamples can be summarised as follows 
(where $I_n$ denotes the $n$-dimensional identity matrix):

\begin{thm}[Anomalous scaling for RWM]\label{thm:CLT_RWMvanilla}
Consider the random function \(\upi_\fBM\) depending on 
the fractional Brownian motion \(\fBM\)
and defined by
\[
\upi(x|\fBM)\quad= \quad \frac{1}{\sqrt{2\pi}}\exp\left(\fBM_x-\frac{x^2}{2}\right)\,.
\]
Almost surely $\int_{-\infty}^\infty \upi(x|\fBM){d}x<\infty$,
so
\(\upi(\cdot\,|\fBM)\) can be renormalized to provide a (random) target density
\[
\pi(x|\fBM)\quad=\quad
\frac{\upi(x|\fBM)}{\int_{-\infty}^\infty \upi(u|\fBM){d}u}\,.
\]
Condition on \(\fBM\) and consider a stationary RWM chain with 
target $\Pi_n(~\cdot~|\fBM)=\prod_{i=1}^n \pi(~\cdot~|\fBM)$ and 
proposal $Q^{\text{RWM},(n)}(x,{\d}y)\sim N\left(x,{\ell^2}{n^{-1/H}}\cdot I_n\right)$.
Then there is a constant 
\(\sigma^2=\ell^{2H}~\frac{2^H}{\sqrt{\pi}}\Gamma(H+\frac{1}{2})\) 
such that, as \(n\to\infty\),
the probability of acceptance of the proposal (conditional on the underlying \(\fBM\))
satisfies
\[
\alpha\left(X^{RWM,(n)},Y^{RWM,(n)}\right)
\quad\xrightarrow{w} \quad
(1\wedge\exp)\left(N\left(-\half\sigma^2,\sigma^2\right)\right).
\]
almost surely (for almost all realisations of the fBM $\fBM$).
\end{thm}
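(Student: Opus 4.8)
The plan is to establish that the construction is well posed and then to obtain the stated limit as a \emph{quenched} central limit theorem by invoking the framework of Section~\ref{sec:anomalous_scaling}. For almost sure finiteness of \(\int \upi(x|\fBM)\,\d x\) I would use the standard almost sure growth bound for fBM, \(\fBM_x = O(|x|^H(\log|x|)^{1/2})\) as \(|x|\to\infty\) (a consequence of Gaussian maximal inequalities / the law of the iterated logarithm); since \(H<1\) this is dominated by the \(-x^2/2\) term, so \(\upi(x|\fBM)\le \exp(-x^2/2+o(x^2))\) is integrable and \(\pi(\cdot|\fBM)\) is a bona fide probability density for almost every path. Conditioning on \(\fBM\) throughout, I would write the symmetric-proposal RWM log acceptance ratio as \(W_n=\sum_{i=1}^n\bigl(\log\pi(Y_i|\fBM)-\log\pi(X_i|\fBM)\bigr)\); the normalising constant cancels, so \(W_n=R_n+S_n\) with rough part \(R_n=\sum_i(\fBM_{Y_i}-\fBM_{X_i})\) and smooth part \(S_n=-\half\sum_i(Y_i^2-X_i^2)\). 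The crucial structural observation is that, conditionally on \(\fBM\), the pairs \((X_i,Z_i)\) (with \(Z_i=Y_i-X_i\sim N(0,\ell^2 n^{-1/H})\) and \(X_i\sim\pi(\cdot|\fBM)\)) are i.i.d., so \(W_n\) is a sum of conditionally i.i.d.\ summands and the problem reduces to a triangular-array CLT of the kind handled by Section~\ref{sec:CLTgeneral}.

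Next I would dispose of the smooth part. Under the anomalous scaling the coordinate displacement has size \(\ell n^{-1/(2H)}\), which for \(H<1\) is smaller than \(n^{-1/2}\); writing \(Y_i^2-X_i^2=2X_iZ_i+Z_i^2\) one checks that the centred sum has variance \(O(n^{1-1/H})\to 0\) and its mean is \(O(n^{1-1/H})\to 0\), so \(S_n\to 0\) in probability conditionally on \(\fBM\). All the mass therefore comes from \(R_n\). For the variance I would use that fBM has stationary increments, so that (as a statement about the \emph{law} of \(\fBM\)) \(\operatorname{Var}(\fBM_{x+z}-\fBM_x)=|z|^{2H}\), together with the Gaussian absolute moment \(\Expect{|Z_i|^{2H}}=\ell^{2H}n^{-1}\,\tfrac{2^H}{\sqrt{\pi}}\Gamma(H+\half)\); summing over \(n\) coordinates yields exactly the claimed \(\sigma^2=\ell^{2H}\tfrac{2^H}{\sqrt{\pi}}\Gamma(H+\half)\). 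The \(-\half\sigma^2\) drift need not be computed separately: it is forced by the stationary reversibility identity \(\Expect{f(W_n)}=\Expect{f(-W_n)\exp(W_n)}\) (valid because the RWM proposal is symmetric and \(X\sim\Pi_n\)), which constrains any Gaussian weak limit \(N(m,s^2)\) to satisfy \(m=-s^2/2\); this relation is part of the output of the Section~\ref{sec:CLTgeneral} framework.

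The genuinely delicate point — and the main obstacle — is that the limit is quenched: although we condition on a single realisation of \(\fBM\), the limiting variance \(\sigma^2\) is a deterministic constant. Conditionally, the relevant quantity is \(\sigma_n^2(\fBM)=n\,\Expect{(\fBM_{X+Z}-\fBM_X)^2\mid\fBM}\), an integral of squared fBM increments against the \emph{path-dependent} sampling density \(\pi(\cdot|\fBM)\) and the Gaussian kernel \(\phi_{\ell n^{-1/(2H)}}\). For a fixed path the local increment statistics fluctuate in \(x\), so one cannot argue pointwise; instead the integration over the spread-out sampling points performs a spatial average of the local self-similar behaviour of \(\fBM\), and self-averaging should make \(\sigma_n^2(\fBM)\to\sigma^2\) for almost every path. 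I expect this to be the crux: one must show that the \(\fBM\)-variance of \(\sigma_n^2(\fBM)\) tends to zero fast enough for a Borel--Cantelli argument (along a suitable subsequence, then interpolate), which is exactly what the Isserlis-theorem estimates of Section~\ref{sec:isserlis} deliver, since the variance of a quadratic functional of the Gaussian field \(\fBM\) reduces to sums of products of the covariances \(\Gamma^{(H)}\). A subsidiary nuisance within the same step is the coupling between the environment and the law of the sampling points \(X_i\) (both involve \(\fBM\)), which I would control using the Gaussian structure of \(\fBM\) and the same moment machinery.

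Finally I would verify the remaining hypothesis of the triangular-array CLT, namely a conditional Lindeberg (or Lyapunov) condition, by bounding a \((2+\delta)\)-moment of the summands \(\fBM_{X_i+Z_i}-\fBM_{X_i}\) — again reducing, via Gaussianity, to absolute moments of fBM increments and hence to \(\Expect{|Z|^{2H(1+\delta/2)}}=O(n^{-(1+\delta/2)})\), whose \(n\)-fold sum is \(O(n^{-\delta/2})\to 0\). With \(S_n\to 0\), \(\sigma_n^2(\fBM)\to\sigma^2\) almost surely, and Lindeberg in hand, the conditional CLT gives \(W_n\xrightarrow{w}N(-\half\sigma^2,\sigma^2)\) for almost every \(\fBM\); applying the bounded continuous map \((1\wedge\exp)\) and the continuous mapping theorem then yields the stated weak convergence of \(\alpha(X^{RWM,(n)},Y^{RWM,(n)})\), completing the proof.
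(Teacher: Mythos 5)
Your proposal is correct and follows essentially the same route as the paper: the paper proves Theorem~\ref{thm:CLT_RWMvanilla} by verifying the Anomalous Scaling Framework~\ref{theassumption} for RWM (with $K(x\,|\fBM)=\fBM_x$, $M_n(x,z\,|\fBM)=\fBM_{x+\sigma_n z}-\fBM_x$, and the quadratic remainder $\Delta_n=-\sigma_n z(x+\tfrac12\sigma_n z)$, i.e.\ Lemmas~\ref{lem:RWM:NormalPotentialFluctuations}--\ref{lem:RWM:AsymptoticWeakDependence}) and then invoking Theorem~\ref{thm:scaling_general}, which is exactly your decomposition into rough and smooth parts, your variance computation $\sigma^2=\ell^{2H}\Expect{|N(0,1)|^{2H}}$, and your identification of the quenched self-averaging of $\sigma_n^2(\fBM)$ as the crux handled by the Isserlis estimates plus Borel--Cantelli, with the $-\tfrac12\sigma^2$ mean and the Lindeberg condition supplied by the Section~\ref{sec:CLTgeneral} machinery. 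The only presentational difference is that you unroll the argument rather than formally checking Assumptions~\ref{assumption:NormalPotentialFluctuations}--\ref{assumption:AsymptoticWeakDependence}; in particular your appeal to "Isserlis makes the variance of the quadratic functional small" is precisely what the weak-dependence bounds of Lemma~\ref{lem:RWM:AsymptoticWeakDependence} on the sets $\auxset_n$ make quantitative.
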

In effect \(\fBM\) is providing a random environment, \(X^{RWM,(n)}\)
is a Markov chain using this random environment, and Theorem \ref{thm:CLT_RWMvanilla}
refers to the quenched behaviour of this Markov chain in a random environment.

We draw attention to the 
anomalous rate of scaling of the proposal variances.
The reader should 
keep in mind that 
substantially
different rates of proposal scaling will yield either a trivial limit
or no limiting behaviour at all. 
%\xNB[JV]{say more!}
In particular, the optimal scaling of \citet{RobertsGelmanGilks-1997} cannot here apply. It is also possible to see that this rate of proposal variance decay is optimal in terms of the ESJD. However, we have to pose it in a slightly different way than classically: for any decay rate of proposal variances the ESJD (random, because it depends on the environment) divided by the optimal
ESJD rate converges to zero in probability.
We outline the proof and discuss this further in Section~\ref{sec:ESJD}.

Given
the Hurst parameter $H$ and the rate $\ell n^{-1/H}$
of optimal proposal variance decay,
one can then optimise the ESJD decay rate over the choice of $\ell$. This gives us an optimal acceptance rate for each $H$. 
The function cannot be expressed in closed form 
but can be plotted numerically, see the left panel of Figure~\ref{fig:figure1}.
Note that the optimal acceptance rate converges to zero as $H\to 0$, and for example it is optimal to accept approximately $7\%$ of the proposals for $H=1/2$ and only $0.7\%$ for $H=1/4$.

The analogous result concerning MALA requires
definition of a localisation function 
$\qu(x)\colon\Reals\to[0,1]$ 
depending on a parameter $c>0$ and defined for $x\in\Reals$ (with $\qu(0)=1$) 
as follows
\begin{equation}\label{eq:localisation}
\qu(x)\quad=\quad 1\wedge \left(c^{\frac{3}{2H}}\;|x|^{-3}\right)
\quad=\quad\min\left\{1,~ c^{\frac{3}{2H}}\;|x|^{-3}\right\}\,.
\end{equation}
We will consider perturbations of a normal density by a fBM path at the level of the second derivative of the potential and the localisation function is introduced to control fBM fluctuations and ensure the resulting random target is integrable.

Anomalous scaling of MALA can then occur as follows.
\begin{thm}[Anomalous scaling for MALA]\label{thm:CLT_MALAvanilla}
Consider the random function \(\upi(\cdot\,|\fBM;c)\) depending on 
the fractional Brownian motion \(\fBM\)
and defined by
\[
\upi\left(x|\fBM;c\right)\quad=\quad 
\frac{1}{\sqrt{2\pi}}\exp\left(-\frac{x^2}{2}+x^2\int_{0}^1\fBM_{xs}\varphi_c(xs)(1-s){\d}s\right)\,.
\] 
For every Hurst index $H\in(0,1)$ there exists a small enough $c>0$, such that 
almost surely
$\int_{-\infty}^\infty \upi(x|\fBM;c){d}x$
is finite,
so
\(\upi(\cdot\,|\fBM;c)\) can be renormalized to provide a (random) target density
\[
\pi(x|\fBM;c)\quad=\quad
\frac{\upi(x|\fBM;c)}{\int_{-\infty}^\infty \upi(u|\fBM;c){d}u}\,.
\]
Condition on \(\fBM\) and consider a stationary MALA chain with 
target $\Pi_n(~\cdot~|\fBM;c)=\prod_{i=1}^n \pi(~\cdot~|\fBM;c)$ and 
proposal 
$Q^{\text{MALA},(n)}(x,{\d}y)\sim N\left(x+\frac{1}{2}\ell^2n^{-1/(2+H)}\;\nabla (\log\Pi_n(x)),\,\ell^2n^{-1/(2+H)}\cdot I_n\right)$. Then there is a constant $\sigma^2>0$
such that, as \(n\to\infty\),
the probability of acceptance of the proposal (conditional on the underlying \(\fBM\))
satisfies
\[
\alpha\left(X^{MALA,(n)},Y^{MALA,(n)}\right)
\quad\xrightarrow{w} \quad
(1\wedge\exp)\left(N\left(-\half\sigma^2,\sigma^2\right)\right)
\]
almost surely (for almost all realisations of the fBM $\fBM$).
We may take
\[
\sigma^2\quad=\quad
\ell^{4+2H}\times \frac{2^{1+H}\Gamma(H+\tfrac52)}{\sqrt{\pi}}\times\frac{H}{2+7H+7H^2+2H^3}\times\int_{-\infty}^\infty \qu(x)^2\pi(x|\fBM;c)dx\,.
\]
\end{thm}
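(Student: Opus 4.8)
The plan is to deduce the statement from the general quenched central-limit framework of Sections~\ref{sec:CLTgeneral} and~\ref{sec:anomalous_scaling} by checking its hypotheses for this particular MALA example; the genuine work is the per-coordinate expansion of the log acceptance ratio and the identification of the limiting variance. First I would record the structural facts about the target. Writing the potential as $\V{x}=\half x^2-\int_0^x(x-u)\fBM_u\qu(u)\,\d u+\text{const}$ (so that $\pi\propto e^{-V}$), one checks that the double integral in the exponent of $\upi$ is exactly $\int_0^x(x-u)\fBM_u\qu(u)\,\d u$, whence $\Vdd{x}=1-\fBM_x\qu(x)$: the construction perturbs the standard-normal potential precisely at the level of the second derivative. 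Because $\qu(x)=1\wedge(c^{3/(2H)}|x|^{-3})$ balances the $|x|^{H}$ growth of fBM against an $|x|^{-3}$ decay, with the extremes meeting at $|x|\asymp c^{1/(2H)}$, the quantity $\sup_x|\fBM_x\qu(x)|$ is almost surely small with $c$; choosing $c$ small thus keeps $\Vdd{\cdot}$ in a fixed interval around $1$, which simultaneously yields almost-sure integrability of $\upi$ and supplies the Gaussian tails and strong-log-concavity-type control demanded by the moment and stability hypotheses of the framework.

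The crucial step is an \emph{exact} representation of the MALA log acceptance ratio that never differentiates $\fBM$. For a single coordinate, writing $z=Y-X$ and using the proposal mean $X-\half\ell^2n^{-1/(2+H)}\Vd{X}$ with $\sigma_n^2=\ell^2n^{-1/(2+H)}$, routine algebra gives the log ratio exactly as $\V{X}-\V{Y}+\tfrac{z}{2}(\Vd{X}+\Vd{Y})+\tfrac{\sigma_n^2}{8}(\Vd{X}^2-\Vd{Y}^2)$, the last term being $O(\sigma_n^3)$. Integrating by parts once (a Peano-kernel identity) I would then use
\[
\V{X}-\V{Y}+\tfrac{z}{2}(\Vd{X}+\Vd{Y})\;=\;\int_X^Y \Vdd{t}\,\Big(t-\tfrac{X+Y}{2}\Big)\,\d t\;=\;-\tfrac{z^2}{2}\int_0^1(2s-1)\,\fBM_{X+sz}\,\qu(X+sz)\,\d s\,,
\]
where the constant part $1$ of $\Vdd{t}$ contributes nothing because $\int_0^1(2s-1)\,\d s=0$. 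The same antisymmetry annihilates the constant $\fBM_X\qu(X)$, so the leading behaviour is $-\tfrac{z^2}{2}\qu(X)\int_0^1(2s-1)(\fBM_{X+sz}-\fBM_X)\,\d s$, of size $z^2\cdot|z|^{H}\sim\sigma_n^{2+H}$. This both explains why $n^{-1/(2+H)}$ is the unique scaling giving a non-degenerate limit and shows the discarded drift term is genuinely $O(\sigma_n^3)=o(\sigma_n^{2+H})$ for $H<1$.

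The variance is then computed by combining the self-similarity of fBM with Gaussian absolute moments. Conditioning on $z$ and taking the fBM expectation, $\Expect{\big(\int_0^1(2s-1)(\fBM_{X+sz}-\fBM_X)\,\d s\big)^2}=|z|^{2H}\int_0^1\!\int_0^1(2s-1)(2s'-1)\,\Gamma^{(H)}(s,s')\,\d s\,\d s'$; this double integral against the fBM covariance is an elementary Beta-function computation evaluating to a rational function of $H$ whose denominator factorises as $2+7H+7H^2+2H^3=(H+1)(2H+1)(H+2)$. Multiplying by $\tfrac14\qu(X)^2$, taking $z\sim N(0,\sigma_n^2)$ so that $\Expect{|z|^{4+2H}}=\sigma_n^{4+2H}\,2^{2+H}\Gamma(H+\tfrac52)/\sqrt\pi$, integrating $X\sim\pi$ to produce the factor $\int\qu(x)^2\pi(x\,|\fBM;c)\,\d x$, and using $n\,\sigma_n^{4+2H}=\ell^{4+2H}$, the sum $\sum_i\operatorname{Var}(\cdot\,|\fBM)$ assembles into the stated $\sigma^2$. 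The limiting mean is pinned to $-\half\sigma^2$ automatically by the Metropolis--Hastings structure already built into the framework of Section~\ref{sec:CLTgeneral}, and applying $(1\wedge\exp)$ to the resulting $N(-\half\sigma^2,\sigma^2)$ gives the acceptance limit.

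The hard part is upgrading the variance from an fBM-expectation to an almost-sure (quenched) statement: for a fixed path $\fBM$ one must show that $n\,\operatorname{Var}(\cdot\,|\fBM)$, which is a spatial average against $\pi\,\qu^2$ of normalised local fBM fluctuations at the vanishing scale $\sigma_n$, concentrates at its deterministic fBM-expectation. This is exactly where the Isserlis/Wick consequences of Section~\ref{sec:isserlis} enter: they reduce the relevant fourth-order fBM moments to products of covariances, whose off-diagonal decay for $|x-x'|\gg\sigma_n$ forces the path-variance of $n\,\operatorname{Var}(\cdot\,|\fBM)$ to zero and so delivers almost-sure convergence as $n\to\infty$. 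The residual obstacles are bookkeeping: controlling the expansion error uniformly through the tails and across the transition region of $\qu$, where $\qu$ is only Lipschitz, and verifying the Lindeberg condition for the quenched CLT, both of which I expect to dispatch using the uniform smallness of $|\fBM_x\qu(x)|$ and the Gaussian tails secured in the first step.
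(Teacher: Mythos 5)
Your strategy coincides with the paper's: verify the Anomalous Scaling Framework~\ref{theassumption} and invoke Theorem~\ref{thm:scaling_general}, with the trapezoid (Peano-kernel) identity of Lemma~\ref{lem:taylor2} producing the leading term, $\qu$ frozen at the current state, the integral of $(1-2t)(1-2s)|t-s|^{2H}$ giving the rational function of $H$, and Isserlis-based concentration delivering the quenched limit. This is not a different route; the issues below are genuine gaps in your version of it.

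First, your justification of the moment hypotheses fails on a quantifier. You argue that $\sup_x|\fBM_x\qu(x)|$ is ``almost surely small with $c$,'' so that a small $c$ keeps $\Vdd{\cdot}$ near its unperturbed value almost surely, yielding integrability and the framework's moment and stability conditions. But the theorem requires a single $c$ depending only on $H$, fixed before the path is drawn, and for every fixed $c>0$ the random variable $\sup_x|\fBM_x\qu(x)|$ has unbounded support: since $\qu\bigl(c^{1/(2H)}\bigr)=1$, it dominates $\bigl|\fBM_{c^{1/(2H)}}\bigr|\sim N(0,c)$. The pathwise statement that the sup tends to $0$ as $c\to0$ does not convert into ``for some fixed $c$ the sup is a.s.\ small,'' so neither the claimed containment of $\Vdd{\cdot}$ in a fixed interval nor any ``strong-log-concavity-type control'' holds almost surely. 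Assumption~\ref{assumption:NormalPotentialFluctuations} is in any case not a sup-norm condition: the paper verifies it (Lemma~\ref{lem:MALA:NormalPotentialFluctuations}) via the deterministic variance bound $k(x)\le 3\left(1+\frac{1}{2-2H}\right)c^{1+1/H}x^2$ and finiteness of $\int e^{2m^2k(x)}\nu_1(x){\d}x$ for small $c$. You also never address Assumption~\ref{assumption:AsymptoticProposal}: the MALA proposal density differs from the Gaussian reference by the factor $L_n=\exp\left(\frac{z\sigma_n}{2}\Vd{x}-\frac{\sigma_n^2}{8}\Vd{x}^2\right)$, whose control needs the exponential moments of $\Vd{x}$ (the paper's Lemmas~\ref{lem:potentialdot_properties} and~\ref{lem:MALA:AsymptoticRWMproposal}); your exact representation of $\rho_n$ handles the log-ratio of proposal densities but not this change of reference measure inside $\mathcal{I}_n$.

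Second, your variance assembly does not produce the stated constant, and you assert agreement without checking. Your leading term carries the trapezoid factor $\frac12$, and your double integral evaluates to $\iint(2s-1)(2s'-1)\Gamma^{(H)}(s,s'){\d}s{\d}s'=\frac{H}{2\,(2+7H+7H^2+2H^3)}$ (note the extra $\frac12$ from the increment covariance), so per coordinate you obtain $\frac{1}{8}\,\qu^2(x)\,\frac{H}{2+7H+7H^2+2H^3}\,|\sigma_nz|^{4+2H}$, and in total the constant $2^{H-1}$ where the theorem states $2^{1+H}$ --- a factor of $4$ apart. Your number is in fact the defensible one: the paper's definition \eqref{eq:MALA:M_n} of $M_n$ omits the factor $\frac12$ that its own decomposition in the proof of Lemma~\ref{lem:MALA:ApproximateLMHRNormality} produces, and the stated $\sigma^2$ is computed from that $M_n$; a correct write-up must either reproduce the stated constant or flag this inconsistency, not silently claim both. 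Separately, ``fourth-order fBM moments'' do not suffice for the quenched statement: the Borel--Cantelli argument of Proposition~\ref{prop:BorelCantelli} needs $m$-th moment bounds with $m>3+\frac{144\beta}{\min(24\gamma,1)}$, here with $\beta=2+H$ and $\gamma=\min(H,1-H)$, to make the error probabilities summable --- which is exactly why the Isserlis machinery of Section~\ref{sec:isserlis} is developed at arbitrary even order.
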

We emphasize
that here the log marginal target density is twice differentiable
and \(H\) measures the roughness of the second derivative as noted below.
(In the RWM case \(H\)  measures the roughness 
of the log marginal target density itself.)

Again, the anomalous rate
means that the optimal scaling of \cite{RobertsRosenthal-1998} cannot here apply, 
moreover that the rate of proposal variance decay is optimal. Again optimizing over the choice of $\ell$ leads to different optimal acceptance rates for different values of $H$. 
As exhibited in the right panel of Figure~\ref{fig:figure1},
and as in the RWM case, the optimal acceptance rate increases
as \(H\) increases. In the MALA case $H$ measures the roughness of the second derivative of the target (so $\pi$ is "$2+H$ smooth") and the optimal acceptance rate does not decay to zero as $H$ does. In fact both plots of Figure~\ref{fig:figure1} are obtained by numerically solving the same equation (see Section~\ref{sec:ESJD})) over different ranges of smoothness parameter, $(0,1)$ for RWM and $(2,3)$ MALA.
The choice of optimal acceptance rate also seems to be much more robust in case of MALA; this is supported by the numerical examples in Section~\ref{sec:UsefullHeuristics}.

\begin{Figure}
% \begin{center}
\begin{subfigure}{0.5\textwidth}
\begin{center}
\includegraphics[width=0.85\linewidth]{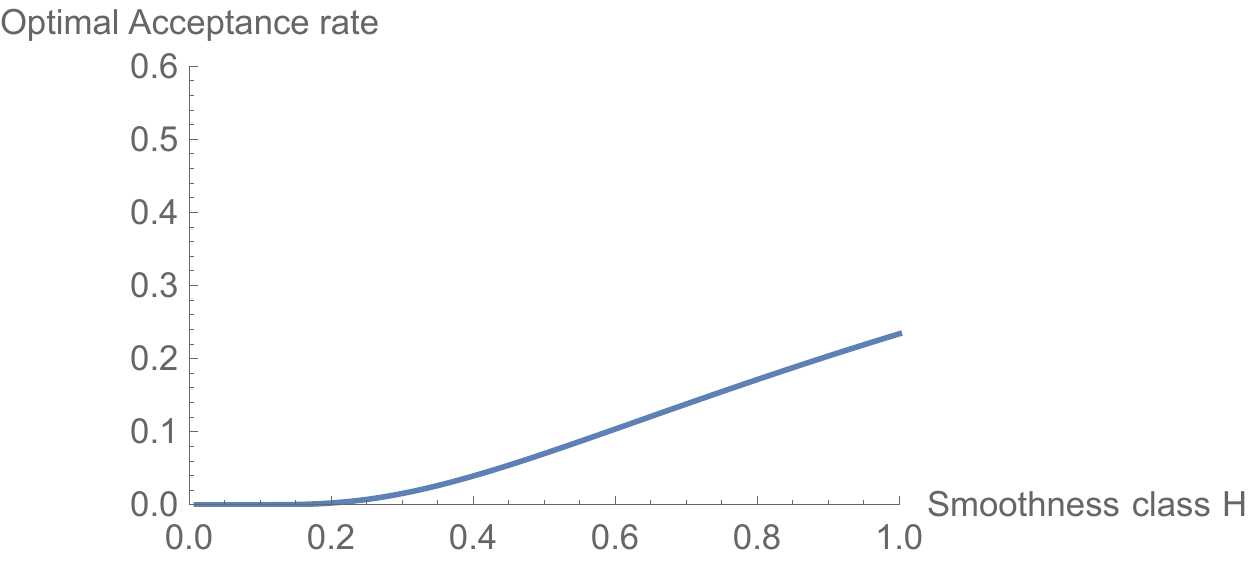}
\end{center}
\end{subfigure}%
\begin{subfigure}{0.5\textwidth}
\begin{center}
\includegraphics[width=0.85\linewidth]{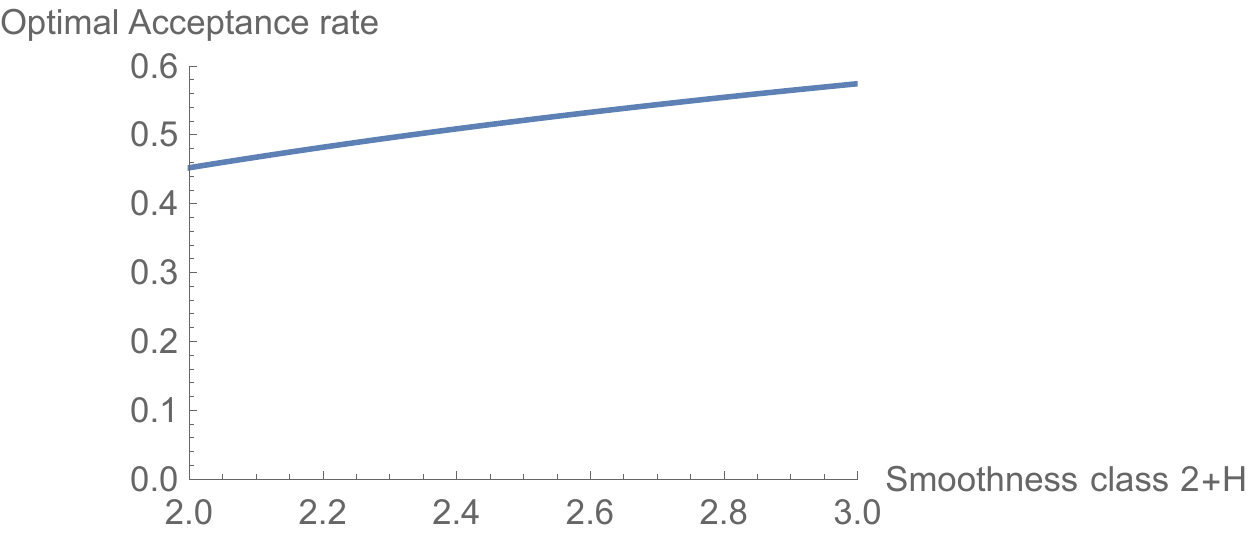}
\end{center}
\end{subfigure}
\caption{Optimal acceptance rates for RWM (left) for smoothness classes $H\in(0,1)$ and MALA (right) for smoothness classes $2+H$ with $H\in(0,1)$.
The optimal acceptance rates are obtained by numerically solving the same
equation (in terms of smoothness class) in both cases.}\label{fig:figure1}
\end{Figure}

The marginal target probability densities 
for these counterexamples are chosen to facilitate simple proofs; many other constructions work equally well.
The RWM choice is a fractional Brownian perturbation of a Normal density;
the MALA choice is based on a fractional Brownian perturbation 
at the level of the second derivative of the log-density, so
\[
\left(\frac{d}{{\d}x}\right)^2\left(\log(\pi(x|\fBM;c))\right) \quad=\quad-1+\qu(x)\fBM\,.
\]
The proofs will work for other kinds of perturbation,
and indeed it is an interesting question
what exactly are the analytical features of a marginal target probability density
that would lead to anomalous scaling.

In this paper we do not proceed to establish weak convergence to Langevin diffusion
limits, because the current results are sufficient to establish counterexamples. This and other related questions are further discussed in Section~\ref{sec:D&D}. Section~\ref{sec:UsefullHeuristics} discussed the question of what useful heuristics can be learned from these results.

% ===================================================
%
\section{Generalities concerning Metropolis-Hastings log acceptance rates}\label{sec:CLTgeneral}
In this section we describe a 
general framework for proving CLT-type results such as Theorems~\ref{thm:CLT_RWMvanilla} and \ref{thm:CLT_MALAvanilla}. 
The treatment follows the style of 
\citet{Tierney-1998}, and applies to rather general Metropolis-Hastings (MH) samplers. 

Let $(\X,\SigmaField)$ be a measurable space,
supporting a probability measure \(\pi\) (the ``target probability distribution'') 
and a Markov kernel \((Q(x,\cdot):x\in\X)\) (the ``proposal mechanism'').
Recall that \((Q(x,\cdot):x\in\X)\) is a Markov kernel if 
(i) \(x\mapsto Q(x,A)\) is measurable for any \(A\in\SigmaField\)
and
(ii) \(B\mapsto Q(x,B)\) is a probability measure on $(\X,\SigmaField)$ 
for any \(x\in\X\).
As described by \citet[Proposition 1]{Tierney-1998}, 
let \(\Region\in\SigmaField\otimes\SigmaField\) be the symmetric set 
such that \(\pi(\d{x})Q(x,\d{y})\) and \(\pi(\d{y})Q(y,\d{x})\) are mutually absolutely continuous on $\Region$ and mutually singular off $\Region$.
\citeauthor{Tierney-1998} notes that
$\Region$ is unique up to differences of sets which are null with respect to both these measures.

We define the log MH acceptance ratio (log-MH-ratio) \(\rho\) by%
\begin{equation}\label{eq:LMHRgeneral}
\LMHR(x,y)\quad=\quad 
 \begin{cases}
\log\left(\frac{\pi(\d{y})Q(y,\d{x})}{\pi(\d{x})Q(x,\d{y})}\right)
           & \text{if } (x,y)\in \Region\,,\\
0          & \text{otherwise.}  
 \end{cases}
\end{equation}
We write \(\LMHR\) for the random variable \(\LMHR(X,Y)\),
where \((X,Y)\) has 
distribution given by \(\pi(\d{x})Q(x,\d{y})\).

It is straightforward to verify that the definition of \(\LMHR\)
and the reversibility of the MH algorithm under \(\pi\)
together imply the following computational relationships.

\begin{proposition}\label{prop:LMHR_properties}
With \(\LMHR(x,y)\) defined as above,
\begin{enumerate}[label=\emph{\ref{prop:LMHR_properties}.(\alph*)}]
\item\label{subprop:antisymmetry}
$\LMHR (x,y)=-\LMHR (y,x)$ almost everywhere with respect to $\pi(\d x)Q(x,\d y)$.
\item\label{subprop:de-exponentiation}
Let \(f: \Reals\to\Reals\) be a measurable function such that
\(\Expect{|f(\LMHR)|}<\infty\), so that \(f\circ\LMHR\) is integrable with respect to
\(\pi(\d{x})Q(x,\d{y})\). Then 
$\Expect{f(-\LMHR )e^{\LMHR }}=\Expect{f(\LMHR )}$.
\end{enumerate}
%\xNB[WSK]{Italicise, or make uniform notation throughout the paper}
\end{proposition}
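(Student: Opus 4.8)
The plan is to reduce both identities to the single structural fact that the two measures $\mu(\d x,\d y)=\pi(\d x)Q(x,\d y)$ and $\tilde\mu(\d x,\d y)=\pi(\d y)Q(y,\d x)$ are interchanged by the coordinate-swap involution $\tau(x,y)=(y,x)$; that is, $\tilde\mu=\tau_*\mu$ (hence also $\mu=\tau_*\tilde\mu$, since $\tau\circ\tau=\mathrm{id}$), while $\Region$ is $\tau$-invariant, being symmetric, and carries the two measures as mutually absolutely continuous. First I would record, from \citet[Proposition 1]{Tierney-1998}, that on $\Region$ the Radon--Nikodym derivative $r=\d\tilde\mu/\d\mu$ exists, is strictly positive and finite a.e., and satisfies $\d\mu/\d\tilde\mu=1/r$ there; by definition $\LMHR=\log r$ on $\Region$ and $\LMHR=0$ off $\Region$.

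For part \ref{subprop:antisymmetry} I would push the defining identity $\tilde\mu(A)=\int_A r\,\d\mu$ through $\tau$. Using the change-of-variables rule $\int h\,\d(\tau_*\mu)=\int (h\circ\tau)\,\d\mu$ together with $\mu=\tau_*\tilde\mu$ and $\tau$-invariance of $\Region$, a short computation on an arbitrary measurable $A\subseteq\Region$ gives $\int_A (r\circ\tau)\,\d\tilde\mu=\mu(A)=\int_A (1/r)\,\d\tilde\mu$, whence $r\circ\tau=1/r$ a.e. on $\Region$. Taking logarithms yields $\LMHR\circ\tau=-\LMHR$ on $\Region$; off $\Region$ both $\LMHR(x,y)$ and $\LMHR(y,x)$ vanish (again by symmetry of $\Region$), so the antisymmetry holds $\mu$-almost everywhere.

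For part \ref{subprop:de-exponentiation} I would use the change-of-measure identity on $\Region$ in the form $\int_\Region g\,e^{\LMHR}\,\d\mu=\int_\Region g\,\d\tilde\mu=\int_\Region (g\circ\tau)\,\d\mu$, the first equality because $e^{\LMHR}=r=\d\tilde\mu/\d\mu$ there and the second because $\tilde\mu=\tau_*\mu$ with $\Region$ being $\tau$-invariant. Taking $g(x,y)=f(-\LMHR(x,y))$ and invoking part \ref{subprop:antisymmetry} gives $(g\circ\tau)(x,y)=f(-\LMHR(y,x))=f(\LMHR(x,y))$, so that $\int_\Region f(-\LMHR)\,e^{\LMHR}\,\d\mu=\int_\Region f(\LMHR)\,\d\mu$. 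Off $\Region$ one has $\LMHR=0$, whence $f(-\LMHR)\,e^{\LMHR}=f(0)=f(\LMHR)$ pointwise, so the two integrals agree there as well; summing the contributions gives the claim. The integrability hypothesis is what legitimises these manipulations: applying the same identity to $|f|$ shows $\Expect{|f(-\LMHR)|\,e^{\LMHR}}=\Expect{|f(\LMHR)|}<\infty$, so no term is ill-defined.

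The work here is structural bookkeeping rather than analytic depth: the one point demanding care is that the reciprocal-derivative relation and the change-of-measure step are valid only on $\Region$, where mutual absolute continuity holds, so one must treat the singular off-$\Region$ part separately and verify that its contributions match. Establishing the pushforward relation $\tilde\mu=\tau_*\mu$ cleanly, since it underlies both parts, is the single step most worth stating with precision.
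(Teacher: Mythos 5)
Your proof is correct, and it rests on exactly the same two pillars as the paper's argument: the Radon--Nikodym change of measure \(e^{\LMHR}=\d\tilde\mu/\d\mu\) on \(\Region\) (with \(\tilde\mu=\tau_*\mu\) the swapped measure), plus separate treatment of the singular set \(\Region^c\), where both integrands reduce to \(f(0)\). The one genuine difference is how integrability is handled in part (b): the paper truncates on the event \(\{\LMHR\leq\kappa\}\), proves \(\Expect{f(-\LMHR)e^{\LMHR}\;;\;\LMHR\leq\kappa}=\Expect{f(\LMHR)\;;\;\LMHR\geq-\kappa}\) by the same change-of-measure and swap computation, and then lets \(\kappa\to\infty\), using monotone convergence applied to \(|f|\) to get \(\Expect{|f(-\LMHR)|e^{\LMHR}}=\Expect{|f(\LMHR)|}<\infty\) and dominated convergence to conclude for signed \(f\); you instead run the identities first for the non-negative function \(|f|\), where change of measure and change of variables hold unconditionally with values in \([0,\infty]\), obtain the same finiteness statement, and only then apply them to the signed \(f\). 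Both routes are valid and of comparable length; yours dispenses with the limiting arguments entirely, while the paper's truncation keeps every intermediate expectation manifestly finite at each step. Your explicit derivation of \(r\circ\tau=1/r\) a.e.\ on \(\Region\) is also a worthwhile expansion of part (a), which the paper dismisses as immediate from the Radon--Nikodym theorem and the symmetry of \(\Region\).
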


\begin{proof} 
By Radon-Nikodym theorem \ref{subprop:antisymmetry}
is immediate from \eqref{eq:LMHRgeneral} and the symmetry of \(\Region\).

To establish Proposition \ref{subprop:de-exponentiation},
argue as follows. 
Using \ref{subprop:antisymmetry},
we know
that \(f(-\LMHR)e^{\LMHR}\Indicator{\LMHR\leq \kappa}\) will be integrable
against the probability measure \(\pi(\d{x})Q(x,\d{y})\)
for each positive constant \(\kappa\), hence
\begin{multline*}
 \Expect{f(-\LMHR)e^{\LMHR}\;;\;\LMHR\leq \kappa}\quad=\quad
 {\int\int}_{\LMHR(x,y)\leq \kappa} f(-\LMHR(x,y))e^{\LMHR(x,y)} \pi(\d{x})Q(x,\d{y})
 \\
 \quad=\quad
 {\int\int}_{(x.y)\in\Region \text{ and }\LMHR(x,y)\leq \kappa} f(-\LMHR(x,y))e^{\LMHR(x,y)} \pi(\d{x})Q(x,\d{y})
  +f(0)\iint_{\Region^c}\pi(\d{x})Q(x,\d{y})
 \\
 \;=\;
 {\int\int}_{(x.y)\in\Region \text{ and }\LMHR(x,y)\leq \kappa} f(-\LMHR(x,y)) 
 \frac{\pi(\d{y})Q(y,\d{x})}{\pi(\d{x})Q(x,\d{y})} \pi(\d{x})Q(x,\d{y})
  +f(0)\iint_{\Region^c}\pi(\d{x})Q(x,\d{y})
  \\
 \;=\;
 {\int\int}_{(x.y)\in\Region \text{ and }\LMHR(x,y)\leq \kappa} f(-\LMHR(x,y))\pi(\d{y})Q(y,\d{x})
 +f(0)\iint_{\Region^c}\pi(\d{y})Q(y,\d{x}) 
 \\
 \quad=\quad
 {\int\int}_{\LMHR(y,x)\geq -\kappa} f(\LMHR(y,x))\pi(\d{y})Q(y,\d{x})
 \quad=\quad 
 \Expect{f(\LMHR)\;;\;\LMHR\geq -\kappa}\,.
\end{multline*}
The fact that $\Region$ is a symmetric set is used for the fourth equality, 
while Proposition \ref{subprop:antisymmetry} is used for the fifth.

Applying the above to the function \(|f|\), and letting \(\kappa\to\infty\),
yields \(\Expect{|f(-\LMHR )|e^{\LMHR }}=\Expect{|f(\LMHR )|}<\infty\) by monotone convergence.
Hence \(\Expect{f(-\LMHR)e^{\LMHR}\;;\;\LMHR\leq \kappa}\to\Expect{f(-\LMHR)e^{\LMHR}}\), by dominated convergence.
Proposition \ref{subprop:de-exponentiation} then follows by letting $\kappa\to\infty$ in the equation above.
\end{proof}

\begin{rem}\label{rem:identities}
Useful identities following from Proposition~\ref{prop:LMHR_properties} include:
\begin{enumerate}[label=\emph{\ref{rem:identities}.(\alph*)}]
 \item \label{item:unit}$\Expect{e^\LMHR}=1$; 
 \item\label{item:swap} 
 $\Expect{f(\LMHR)e^{\LMHR/2}}=\Expect{f(-\LMHR)e^{\LMHR/2}}$ whenever 
 \(f(\LMHR)e^{\LMHR/2}\) is an integrable random variable;
 \item\label{item:odd} 
 $\Expect{f(\LMHR)e^{\LMHR/2}}=0$ whenever $f$ is an odd function and
 \(f(\LMHR)e^{\LMHR/2}\) is an integrable random variable.
  \item\label{item:even} 
 If $f$ is an even function and $f(\LMHR)$ is an integrable random variable then $f(\LMHR)e^{t\LMHR}$ is an integrable random variable for all $t\in(0,1)$.
\end{enumerate}
\end{rem}

Typically, when establishing optimal scaling results, a key task
is to determine
when it is valid to assert asymptotically negligibility
of
half the variance of \(\rho\)
plus its mean.
The next few results establish when this asymptotically negligibility holds for a rather general context.
\begin{proposition}\label{prop:expectation identity}
Suppose that
\(\Expect{\LMHR^2}<\infty\).
Then 
$\Expect{\LMHR} +{\half}\Expect{\LMHR ^2}
=\Expect{\LMHR ^2\int_0^{{\half}}(1-e^{t\LMHR })\d{t}}$.
\end{proposition}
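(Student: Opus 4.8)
The plan is to establish the identity by expressing both sides in terms of the probability measure $\pi(\d{x})Q(x,\d{y})$ and exploiting the de-exponentiation relationship from Proposition~\ref{subprop:de-exponentiation}, which lets me trade a factor of $e^{\LMHR}$ against a sign-flip $\LMHR\mapsto-\LMHR$. The integrability hypothesis $\Expect{\LMHR^2}<\infty$ is exactly what will license the manipulations, via Remark~\ref{item:even} (with $f(\LMHR)=\LMHR^2$, which is even) to guarantee that $\LMHR^2 e^{t\LMHR}$ is integrable for $t\in(0,1)$, and hence that all the intermediate expectations I write down are finite.

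First I would rewrite the right-hand side by interchanging expectation and the $\d{t}$-integral (justified by Tonelli/Fubini once integrability is in hand), obtaining
\[
\Expect{\LMHR^2\int_0^{1/2}(1-e^{t\LMHR})\d{t}}
\quad=\quad
\int_0^{1/2}\Big(\Expect{\LMHR^2}-\Expect{\LMHR^2 e^{t\LMHR}}\Big)\d{t}\,.
\]
The term $\int_0^{1/2}\Expect{\LMHR^2}\d{t}=\tfrac12\Expect{\LMHR^2}$ already matches the variance term on the left, so the content of the proposition reduces to showing
\[
\Expect{\LMHR}\quad=\quad-\int_0^{1/2}\Expect{\LMHR^2 e^{t\LMHR}}\d{t}\,.
\]

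To handle the remaining integral, the idea is to recognize $\int_0^{1/2}\LMHR^2 e^{t\LMHR}\d{t}$ as an exact antiderivative in $t$: since $\tfrac{\d}{\d{t}}(\LMHR e^{t\LMHR})=\LMHR^2 e^{t\LMHR}$, evaluating between $0$ and $\tfrac12$ gives $\LMHR e^{\LMHR/2}-\LMHR$. Taking expectations yields $-\int_0^{1/2}\Expect{\LMHR^2 e^{t\LMHR}}\d{t}=\Expect{\LMHR}-\Expect{\LMHR e^{\LMHR/2}}$, so it remains only to verify that $\Expect{\LMHR e^{\LMHR/2}}=0$. This is precisely Remark~\ref{item:odd} applied to the odd function $f(\LMHR)=\LMHR$, whose integrability against $e^{\LMHR/2}$ follows from the $t=\tfrac12$ case of Remark~\ref{item:even}.

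The main obstacle I anticipate is purely the bookkeeping of integrability: I must confirm that $\LMHR e^{\LMHR/2}$ is genuinely integrable so that Remark~\ref{item:odd} applies, and that the Fubini interchange and the pointwise antiderivative computation are valid almost surely. All of these follow from $\Expect{\LMHR^2}<\infty$ together with Remark~\ref{item:even}, since $|\LMHR|e^{t\LMHR}\le \tfrac12(\LMHR^2+1)e^{t\LMHR}$ is dominated by an integrable random variable uniformly for $t\in[0,\tfrac12]$. Once these finiteness checks are dispatched, the identity collapses immediately through the antiderivative and the vanishing of the odd moment.
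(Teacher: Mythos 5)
Your proof is correct, but it follows a genuinely different route from the paper's. The paper starts from the exact Taylor expansion $e^{\LMHR}=1+\LMHR+\half\LMHR^2+\LMHR^2\int_0^1(1-t)(e^{t\LMHR}-1){\d}t$, takes expectations using $\Expect{e^{\LMHR}}=1$ (Remark~\ref{item:unit}), and then folds the portion of the $t$-integral over $[\half,1]$ back onto $[0,\half]$ by applying Proposition~\ref{subprop:de-exponentiation} to $f(\rho)=\rho^2e^{t\rho}$ and substituting $u=1-t$. You instead attack the right-hand side directly: after Fubini you evaluate the inner integral in closed form, $\int_0^{\half}\LMHR^2e^{t\LMHR}{\d}t=\LMHR e^{\LMHR/2}-\LMHR$, which reduces the whole identity to the vanishing of the odd moment $\Expect{\LMHR e^{\LMHR/2}}=0$, i.e.\ Remark~\ref{item:odd} with $f(x)=x$. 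Both arguments ultimately rest on the same reversibility input (Proposition~\ref{subprop:de-exponentiation}), but yours consumes it only through Remarks~\ref{item:odd} and~\ref{item:even}, never needs $\Expect{e^{\LMHR}}=1$, and replaces the Taylor-remainder bookkeeping with the fundamental theorem of calculus in $t$ --- arguably the more transparent calculation. The one point you should make fully explicit is the uniform dominating variable for the Fubini interchange: for $t\in[0,\half]$ one has $e^{t\LMHR}\leq 1+e^{\LMHR/2}$ pointwise, hence $\LMHR^2e^{t\LMHR}\leq(\LMHR^2+1)\left(1+e^{\LMHR/2}\right)$, which is integrable by the hypothesis together with Remark~\ref{item:even}; the bound you wrote, $|\LMHR|e^{t\LMHR}\le\tfrac12(\LMHR^2+1)e^{t\LMHR}$, still carries the $t$-dependence and so is not by itself the required uniform domination, though the gap is trivial to close.
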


\begin{proof}
Consider the exact Taylor expansion (valid for all values of $\LMHR $)
\[
e^{\LMHR } \quad=\quad 
1+\LMHR +{\half}\LMHR ^2
+\LMHR ^2\int_0^1(1-t)(e^{t\LMHR }-1){\d}t \,.
\]
%
% === NOTE THIS! ===
%
% \NoteThis{
%  To see this take a substitution \(u=t\rho\). Then notice that second derivatives of LHS and RHS, with respect to $\rho$ are identical and that \(LHS(0)=RHS(0)\) and \(LHS'(0)=RHS'(0)\).
% }
Taking expectations and using Remark \ref{item:unit},
\[
\Expect{\LMHR +\LMHR ^2/2}
\quad=\quad
\Expect{\LMHR ^2\int_0^1(1-t)(1-e^{t\LMHR }){\d}t}\,.
\]
The proof is concluded by the following sequence of equalities. They are justified respectively
by applying the Fubini-Tonelli theorem for exchanging order of integral
when the integrand is bounded above, 
using Proposition \ref{subprop:de-exponentiation} for the function $f(\rho)=\rho^2e^{t\rho}$ (which is integrable by Remark~\ref{item:even}), 
changing variables using $u=1-t$, and finally 
applying the Fubini-Tonelli theorem once more:
\begin{multline*}
\Expect{\LMHR ^2\int_{{\half}}^1(1-t)(1-e^{t\LMHR }){\d}t}
\quad=\quad
\int_{{\half}}^1(1-t)\Expect{\LMHR ^2(1-e^{t\LMHR })}{\d}t
\quad=\quad
\int_{{\half}}^1(1-t)\Expect{\LMHR ^2(1-e^{(1-t)\LMHR })}{\d}t\\
\quad=\quad
\int_{0}^{{\half}}u\Expect{\LMHR ^2(1-e^{u\LMHR })}{\d}u
\quad=\quad
\Expect{\LMHR ^2\int_0^{{\half}}u(1-e^{u\LMHR }){\d}u}\,.
\end{multline*}
\end{proof}

We now establish a bound on the right-hand side 
\(\Expect{\LMHR ^2\int_0^{{\half}}(1-e^{t\LMHR })\d{t}}\)
of Proposition \ref{prop:expectation identity}
which will be sufficient for our purposes.

\begin{lem}\label{lem:expansion remainder bound} 
Suppose that \(\Expect{\rho^2}<\infty\).
For every $\kappa>0$ the following bound holds:
\[
\left|\Expect{\LMHR ^2\int_0^{{\half}}(1-e^{t\LMHR }){\d}t}\right|
\quad<\quad
\sinh\left({\half}\kappa\right)\Expect{\LMHR ^2}+\Expect{\LMHR ^2\;;\;\LMHR<-\kappa}\,.
% \quad<\quad 
% {\half} (e^{{\half} \kappa}+1)\Expect{\LMHR ^2}\,.
\]
\end{lem}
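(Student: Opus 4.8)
The plan is to reduce the whole expectation to the region $\{\LMHR<0\}$ by means of the de-exponentiation identity of Proposition~\ref{subprop:de-exponentiation}, after which the integrand becomes non-negative, bounded, and transparently controlled. Write $A=\Expect{\LMHR^2\int_0^{\half}(1-e^{t\LMHR}){\d}t}$ for the quantity to be bounded. First I would invoke Fubini--Tonelli to obtain
\[
A\quad=\quad\int_0^{\half}\Expect{\LMHR^2(1-e^{t\LMHR})}{\d}t\,.
\]
This is legitimate: Remark~\ref{item:even} guarantees that $\LMHR^2 e^{t\LMHR}$ is integrable for each $t\in(0,1)$, and since $t\mapsto\Expect{\LMHR^2 e^{t\LMHR}}$ is finite and convex on $[0,\half]$ it is bounded there by its endpoint values $\Expect{\LMHR^2}$ and $\Expect{\LMHR^2 e^{\LMHR/2}}$, so the double integral is absolutely convergent.

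The key step, and the one genuine obstacle, is to rewrite each inner expectation over $\{\LMHR<0\}$ only, thereby taming the positive tail on which $e^{t\LMHR}$ blows up. Applying Proposition~\ref{subprop:de-exponentiation} to $f(\LMHR)=\LMHR^2 e^{(1-t)\LMHR}\Indicator{\LMHR<0}$ (integrable because the exponential is $\le1$ there) yields $\Expect{\LMHR^2 e^{t\LMHR}\Indicator{\LMHR>0}}=\Expect{\LMHR^2 e^{(1-t)\LMHR}\Indicator{\LMHR<0}}$, and applying it to $f(\LMHR)=\LMHR^2\Indicator{\LMHR>0}$ yields $\Expect{\LMHR^2\Indicator{\LMHR>0}}=\Expect{\LMHR^2 e^{\LMHR}\Indicator{\LMHR<0}}$. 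Substituting both into $\Expect{\LMHR^2(1-e^{t\LMHR})}=\Expect{\LMHR^2}-\Expect{\LMHR^2 e^{t\LMHR}}$ and using the algebraic factorisation $1+e^{\LMHR}-e^{t\LMHR}-e^{(1-t)\LMHR}=(1-e^{t\LMHR})(1-e^{(1-t)\LMHR})$ gives
\[
\Expect{\LMHR^2(1-e^{t\LMHR})}\quad=\quad\Expect{\LMHR^2(1-e^{t\LMHR})(1-e^{(1-t)\LMHR})\Indicator{\LMHR<0}}\,.
\]
On $\{\LMHR<0\}$ both factors lie in $[0,1)$, so the integrand is non-negative; in particular $A\geq0$ equals its own absolute value. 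This folding of the dangerous positive tail onto the negative axis is exactly why only the negative tail $\Expect{\LMHR^2\;;\;\LMHR<-\kappa}$ appears on the right-hand side of the lemma.

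It then remains to integrate in $t$ and split $\{\LMHR<0\}$ at $-\kappa$. Setting $h(\LMHR)=\int_0^{\half}(1-e^{t\LMHR})(1-e^{(1-t)\LMHR}){\d}t$, so that $A=\Expect{\LMHR^2 h(\LMHR)\Indicator{\LMHR<0}}$, I would discard one factor ($\le1$) to get $h(\LMHR)\le\int_0^{\half}(1-e^{t\LMHR}){\d}t$. For $\LMHR\in[-\kappa,0)$ one has $t\LMHR\ge-\tfrac{\kappa}{2}$, whence $1-e^{t\LMHR}\le1-e^{-\kappa/2}$ and therefore $h(\LMHR)\le\half(1-e^{-\kappa/2})\le\sinh\!\left(\half\kappa\right)$; for $\LMHR<-\kappa$ one simply uses $h(\LMHR)<\half<1$. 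The two contributions are thus at most $\sinh\!\left(\half\kappa\right)\Expect{\LMHR^2}$ and $\Expect{\LMHR^2\;;\;\LMHR<-\kappa}$ respectively, giving
\[
A\quad<\quad\sinh\!\left(\half\kappa\right)\Expect{\LMHR^2}+\Expect{\LMHR^2\;;\;\LMHR<-\kappa}\,,
\]
which is the claim (the inequality is strict except in the degenerate case $\LMHR\equiv0$, where both sides vanish). Once the de-exponentiation reduction of the middle paragraph is in place, every remaining estimate is elementary.
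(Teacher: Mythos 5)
Your proof is correct, and although it leans on the same key tool as the paper --- the de-exponentiation identity of Proposition~\ref{subprop:de-exponentiation} --- it is organised quite differently. The paper's proof splits the expectation pathwise into the three events $\{-\kappa\le\LMHR\le\kappa\}$, $\{\LMHR<-\kappa\}$ and $\{\LMHR>\kappa\}$, bounds $\int_0^{\half}(1-e^{t\LMHR})\,{\d}t$ on each, and invokes de-exponentiation only once (with $f(\rho)=\rho^2\Indicator{\kappa<\rho}$) to convert the upper-tail term $\half\Expect{\LMHR^2e^{\LMHR}\,;\,\LMHR>\kappa}$ into $\half\Expect{\LMHR^2\,;\,\LMHR<-\kappa}$; the two tail contributions of $\half$ each then sum to the coefficient $1$ in the statement. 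You instead exchange the $t$-integral and the expectation first, and for each fixed $t$ fold the entire positive half-line onto the negative one, arriving at the exact identity $\Expect{\LMHR^2(1-e^{t\LMHR})}=\Expect{\LMHR^2(1-e^{t\LMHR})(1-e^{(1-t)\LMHR})\Indicator{\LMHR<0}}$ \emph{before} any estimation, and only then split at $-\kappa$. Your route costs a little extra bookkeeping (the Fubini justification via convexity of $t\mapsto\Expect{\LMHR^2e^{t\LMHR}}$, and integrability checks for the chosen $f$'s, all of which you carry out correctly), but it buys genuinely more: the remainder is shown to be non-negative, so the absolute value in the statement comes for free; the factorised identity is a reusable piece of structure; and your negative-tail coefficient is $\half$ rather than $1$, so your bound is in fact slightly sharper than the one claimed. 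Your parenthetical remark about the degenerate case is also fair: when $\LMHR=0$ almost surely both sides vanish and the strict inequality fails --- an edge case the paper's own proof shares but does not mention --- while for $\Expect{\LMHR^2}>0$ strictness holds under either argument.
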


\begin{proof} 
Fix $\kappa>0$ and split the above integral into parts 
according to whether $\LMHR\in [-\kappa,\kappa]$, $\LMHR<-\kappa$ or $\LMHR>\kappa$. 
If $\LMHR \in [-\kappa,\kappa]$, then 
$\left|\int_0^{{\half}}(1-e^{t\LMHR }){\d}t\right|\leq 
{\half}\max\{1-e^{-{\half} \kappa},e^{{\half} \kappa}-1\}
=
{\half} (e^{{\half} \kappa}-1)$.
Using \({\half} (e^{{\half} \kappa}-1)<\sinh\left({\half}\kappa\right)\)
for the sake of simplicity, we obtain
% yielding 
\[
\left|\Expect{\LMHR ^2\int_0^{{\half}}(1-e^{t\LMHR }){\d}t
\;;\; -\kappa\leq \LMHR^2\leq \kappa}\right|
\quad<\quad
\sinh\left({\half}\kappa\right)\Expect{\LMHR ^2}\,.
\]

If $\LMHR <-\kappa$, then $1-e^{t\LMHR }<1$ so 
$\left|\Expect{\LMHR ^2\int_0^{{\half}}(1-e^{t\LMHR})\d{t}\;;\; \LMHR<-\kappa}\right|<{\half}\Expect{\LMHR ^2\;;\;\LMHR < -\kappa}$. 
Similarly, if $\LMHR >\kappa$, then $e^{t\LMHR }-1<e^{\LMHR }$.
Applying Proposition~\ref{subprop:de-exponentiation} 
to the function $f(\rho)=\rho^2 \Indicator{\kappa<\rho}$, this implies
\begin{equation*}
\left|\Expect{\LMHR ^2 \int_0^{{\half}}(1-e^{t\LMHR }){\d}t\;;\; \kappa< \LMHR}\right|
\quad<\quad {\half} \Expect{\LMHR ^2 e^{\LMHR}\;;\; \kappa< \LMHR}
\quad=\quad
{\half} \Expect{\LMHR ^2\;;\; \LMHR<-\kappa}\,.
\end{equation*}
The result follows by adding these three terms.
% and using 
% \`(\Expect{\LMHR ^2\;;\; \LMHR<-\kappa}\leq \Expect{\LMHR ^2}\).
\end{proof}

Now consider a sequence of log-MH-ratios \(\LMHR_n\), 
possibly defined on different probability spaces
and associated with different target probability distributions \(\pi_n\),
proposal kernels \(Q_n\) and derived log-MH-ratios \(\rho_n\). 
Asymptotic negligibility of the second moment of
\(\LMHR_n\) and a technical condition weaker than the uniform integrability of
the scaled random variables
\(\LMHR_n^2/\Expect{\LMHR_n^2}\) on \(\rho_n<0\),
imply that
mean plus half variance of \(\LMHR_n\) is asymptotically negligible:

\begin{thm}[``half variance plus mean is asymptotically negligible'']\label{thm:mean+half-variance-is-small}
Suppose that \(\Expect{\LMHR_n^2}\to0\) as \(n\to\infty\),
and suppose moreover we can find positive constants \(\kappa_n\to0\) such that 
$\Expect{\LMHR_n^2 \;;\; \LMHR_n<-\kappa_n}=o(\Expect{\LMHR_n^2})$ as \(n\to\infty\). 
Then 
$\Expect{\LMHR_n}+{\half} \Expect{\LMHR_n^2}=o(\Expect{\LMHR_n^2})$\
as \(n\to\infty\); indeed
\[
 \left|\Expect{\LMHR_n}+{\half} \Expect{\LMHR_n^2}\right|
 \quad\leq\quad
  \sinh({\tfrac12}\kappa_n)\Expect{\LMHR_n^2}+\Expect{\LMHR_n^2\;;\;\LMHR_n<-\kappa_n}
 \quad=\quad o\left(\Expect{\LMHR_n^2}\right)\,.
\]
Moreover
\[
 \Expect{\LMHR_n}+{\half} \Var{\LMHR_n}\quad=\quad o\left(\Expect{\LMHR_n^2}\right)\,.
\]
\end{thm}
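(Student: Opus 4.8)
The plan is to feed the two preceding results straight into the hypotheses, since Proposition~\ref{prop:expectation identity} and Lemma~\ref{lem:expansion remainder bound} already carry out all the analytic work; what then remains is essentially bookkeeping with the order symbols. First I would observe that the hypothesis $\Expect{\LMHR_n^2}\to0$ forces $\Expect{\LMHR_n^2}<\infty$ for all sufficiently large $n$, so that both earlier results apply to each such $\LMHR_n$. Applying Proposition~\ref{prop:expectation identity} yields the exact identity $\Expect{\LMHR_n}+\half\Expect{\LMHR_n^2}=\Expect{\LMHR_n^2\int_0^{\half}(1-e^{t\LMHR_n})\d t}$, and then Lemma~\ref{lem:expansion remainder bound}, applied with the particular choice $\kappa=\kappa_n$, bounds the right-hand side in absolute value by $\sinh(\half\kappa_n)\Expect{\LMHR_n^2}+\Expect{\LMHR_n^2\;;\;\LMHR_n<-\kappa_n}$. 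This is exactly the middle expression of the displayed chain, so the first inequality is immediate.

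Next I would read off the asymptotics of this bound. Because $\kappa_n\to0$ we have $\sinh(\half\kappa_n)\to0$, so the first summand is $o(\Expect{\LMHR_n^2})$, while the second summand is $o(\Expect{\LMHR_n^2})$ directly by hypothesis. Adding these establishes $\left|\Expect{\LMHR_n}+\half\Expect{\LMHR_n^2}\right|=o(\Expect{\LMHR_n^2})$, which is the first assertion together with its quantitative form.

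Finally, for the variance statement I would write $\Var{\LMHR_n}=\Expect{\LMHR_n^2}-(\Expect{\LMHR_n})^2$, so that $\Expect{\LMHR_n}+\half\Var{\LMHR_n}=\bigl(\Expect{\LMHR_n}+\half\Expect{\LMHR_n^2}\bigr)-\half(\Expect{\LMHR_n})^2$. The bracketed term is $o(\Expect{\LMHR_n^2})$ by what has just been shown, so it only remains to control $(\Expect{\LMHR_n})^2$. The same estimate gives $\Expect{\LMHR_n}=-\half\Expect{\LMHR_n^2}+o(\Expect{\LMHR_n^2})$, whence $\Expect{\LMHR_n}=O(\Expect{\LMHR_n^2})$ and therefore $(\Expect{\LMHR_n})^2=O\bigl((\Expect{\LMHR_n^2})^2\bigr)$; since $\Expect{\LMHR_n^2}\to0$ this is $o(\Expect{\LMHR_n^2})$. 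Combining the two pieces gives $\Expect{\LMHR_n}+\half\Var{\LMHR_n}=o(\Expect{\LMHR_n^2})$, as required. The argument is routine once the earlier results are in hand; the only point needing a moment's care is this last step, where one must notice that it is precisely the smallness of the second moment that downgrades $(\Expect{\LMHR_n})^2$ from the same order as $\Expect{\LMHR_n^2}$ to a strictly smaller one.
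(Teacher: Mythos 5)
Your proposal is correct and follows essentially the same route as the paper's own proof: the displayed inequality is obtained by feeding Proposition~\ref{prop:expectation identity} into Lemma~\ref{lem:expansion remainder bound} with $\kappa=\kappa_n$, and the variance statement is deduced by writing $\Var{\LMHR_n}=\Expect{\LMHR_n^2}-\Expect{\LMHR_n}^2$ and noting that the first part forces $\Expect{\LMHR_n}=O(\Expect{\LMHR_n^2})$, so that $\Expect{\LMHR_n}^2=o(\Expect{\LMHR_n^2})$ because $\Expect{\LMHR_n^2}\to0$. Your write-up is, if anything, slightly more explicit than the paper's terse ``iterate the argument'' remark, but the content is identical.
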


\begin{proof}
The inequality follows directly from
Proposition~\ref{prop:expectation identity} and Lemma~\ref{lem:expansion remainder bound}.
The asymptotic negligibility of mean plus half variance follows from
the observation that \(\Var{\LMHR_n}=\Expect{\LMHR_n^2}-\Expect{\LMHR_n}^2\):
we control \(\Expect{\LMHR_n}^2\) by iterating the argument, since
\(\left(\Expect{\LMHR_n}\right)^2=
\left(o(\Expect{\LMHR_n^2})-{\half} \Expect{\LMHR_n^2}\right)^2=o(\Expect{\LMHR_n^2})\)
(expanding the quadratic and using \(\Expect{\LMHR_n^2}\to0\)).
\end{proof}

We now establish a central limit theorem for suitable sums of independent log-Metropolis-Hastings ratios (as would arise when considering 
suitable product-distribution targets). The main requirement is
simply a
particular uniform integrability condition on the sequence of 
scaled squares of the 
log-Metropolis-Hastings ratios, corresponding to a Lindeberg condition.
\begin{thm}\label{thm:unconditional-clt}
Consider a triangular array formed by
\(\LMHR_{n,j}\) (for \(j=1, \ldots, m_n\), \(n=1,2,\ldots\)),
built out of row-wise independent 
log-Metropolis-Hastings-ratio random variables for targets 
$\pi_{nj}$ and proposals $q_{nj}$.
%\xNB[WSK1]{Conditions for CLT. I believe that the convergence 
%of $\Expect{\LMHR_{n,j}}^2$ to zero as $n\to\infty$ follows from the other conditions. See Remark %%%\ref{rem:uniform-negligibility}.} 
% Assume, for each \(j\), 
% that $\lim_{n\to\infty}\Expect{\LMHR_{n,j}}^2=0$,
% and suppose 
Suppose that there exists a sequence of positive numbers $\kappa_{n,j}$
such that $\lim_{n\to\infty}\sup_{j\leq m_n}\kappa_{n,j}=0$, 
and 
\(\Expect{\LMHR_{n,j}^2\;;\; \LMHR_{n,j} < -\kappa_{n,j}}=o\left(\Expect{\LMHR_{n,j}^2}\right)\)
uniformly, in the sense that
\begin{equation}\label{eq:control}
\lim_{n\to\infty}\sup_{j\leq m_n}
\frac{\Expect{\LMHR_{n,j}^2\;;\;\LMHR_{n,j}<-\kappa_{n,j}}}{\Expect{\LMHR_{n,j}^2}}
\quad=\quad
0\,.
\end{equation}
Suppose further that there exists a constant $\sigma^2<\infty$ such that 
$\lim_{n\to\infty}\sum_{j=1}^{m_n}\Expect{\LMHR^2_{nj}}=\sigma^2$. 
Then 
\[
\sum_{j=1}^{m_n}\LMHR_{n,j}
\quad\xrightarrow{w}\quad N\left(-\half\sigma^2,\sigma^2\right)\,. 
\]
\end{thm}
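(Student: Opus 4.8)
The plan is to apply the Lindeberg--Feller central limit theorem for triangular arrays to the centred variables $Y_{n,j}=\LMHR_{n,j}-\Expect{\LMHR_{n,j}}$, and then to recover the asserted mean $-\half\sigma^2$ by a separate analysis of the row sums of the individual means. The crucial structural input, which converts the one-sided hypothesis \eqref{eq:control} into genuine two-sided control, is Proposition~\ref{subprop:de-exponentiation}: applied to $f(\rho)=\rho^2\Indicator{\rho<-\epsilon}$ it gives $\Expect{\LMHR_{n,j}^2 e^{\LMHR_{n,j}}\;;\;\LMHR_{n,j}>\epsilon}=\Expect{\LMHR_{n,j}^2\;;\;\LMHR_{n,j}<-\epsilon}$, and since $e^{\LMHR_{n,j}}>1$ on $\{\LMHR_{n,j}>\epsilon\}$ this yields the pivotal tail bound $\Expect{\LMHR_{n,j}^2\;;\;\LMHR_{n,j}>\epsilon}\leq\Expect{\LMHR_{n,j}^2\;;\;\LMHR_{n,j}<-\epsilon}$. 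In other words the positive tail of $\LMHR_{n,j}^2$ is always dominated by the negative tail, which is the only tail the hypotheses control.

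First I would establish uniform asymptotic negligibility, $\sup_{j\leq m_n}\Expect{\LMHR_{n,j}^2}\to0$. Writing $\delta_n=\sup_{j\leq m_n}\Expect{\LMHR_{n,j}^2\;;\;\LMHR_{n,j}<-\kappa_{n,j}}/\Expect{\LMHR_{n,j}^2}$, so that $\delta_n\to0$ by \eqref{eq:control}, fix $\epsilon>0$; for all large $n$ we have $\epsilon>\kappa_{n,j}$ for every $j$ because $\sup_j\kappa_{n,j}\to0$. Then $\Expect{\LMHR_{n,j}^2}\leq\epsilon^2+\Expect{\LMHR_{n,j}^2\;;\;|\LMHR_{n,j}|>\epsilon}\leq\epsilon^2+2\Expect{\LMHR_{n,j}^2\;;\;\LMHR_{n,j}<-\kappa_{n,j}}\leq\epsilon^2+2\delta_n\Expect{\LMHR_{n,j}^2}$, and rearranging gives $\sup_j\Expect{\LMHR_{n,j}^2}\leq 2\epsilon^2$ eventually. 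As $\epsilon$ is arbitrary, negligibility follows.

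Next I would pin down the two constants. Summing the inequality of Theorem~\ref{thm:mean+half-variance-is-small} (equivalently Proposition~\ref{prop:expectation identity} with Lemma~\ref{lem:expansion remainder bound}) over $j$, and bounding $\sum_j\sinh(\half\kappa_{n,j})\Expect{\LMHR_{n,j}^2}\leq\sinh(\half\sup_j\kappa_{n,j})\sum_j\Expect{\LMHR_{n,j}^2}$ and $\sum_j\Expect{\LMHR_{n,j}^2\;;\;\LMHR_{n,j}<-\kappa_{n,j}}\leq\delta_n\sum_j\Expect{\LMHR_{n,j}^2}$, shows $\sum_j\Expect{\LMHR_{n,j}}+\half\sum_j\Expect{\LMHR_{n,j}^2}\to0$, so $\sum_j\Expect{\LMHR_{n,j}}\to-\half\sigma^2$. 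Since $\Expect{\LMHR_{n,j}}\leq0$ by Jensen applied to Remark~\ref{item:unit}, this forces $\sum_j|\Expect{\LMHR_{n,j}}|\to\half\sigma^2$, while negligibility of the second moments gives $\sup_j|\Expect{\LMHR_{n,j}}|\to0$; hence $\sum_j\Expect{\LMHR_{n,j}}^2\leq\sup_j|\Expect{\LMHR_{n,j}}|\cdot\sum_j|\Expect{\LMHR_{n,j}}|\to0$, and therefore $\sum_j\Var{\LMHR_{n,j}}=\sum_j\Expect{\LMHR_{n,j}^2}-\sum_j\Expect{\LMHR_{n,j}}^2\to\sigma^2$.

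Finally I would verify the Lindeberg condition for the $Y_{n,j}$ and conclude. Fix $\epsilon>0$; because $\sup_j|\Expect{\LMHR_{n,j}}|\to0$, eventually $\{|Y_{n,j}|>\epsilon\}\subseteq\{|\LMHR_{n,j}|>\epsilon/2\}$, and combining $Y_{n,j}^2\leq2\LMHR_{n,j}^2+2\Expect{\LMHR_{n,j}}^2$ with the two-sided tail bound (now at level $\epsilon/2>\kappa_{n,j}$) and bounding the probability factor by $1$ yields $\sum_j\Expect{Y_{n,j}^2\;;\;|Y_{n,j}|>\epsilon}\leq 4\sum_j\Expect{\LMHR_{n,j}^2\;;\;\LMHR_{n,j}<-\kappa_{n,j}}+2\sum_j\Expect{\LMHR_{n,j}}^2\to0$. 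With $\sum_j\Var{Y_{n,j}}\to\sigma^2$ already in hand, the Lindeberg--Feller theorem gives $\sum_j Y_{n,j}\xrightarrow{w}N(0,\sigma^2)$, and adding back the deterministic sum $\sum_j\Expect{\LMHR_{n,j}}\to-\half\sigma^2$ gives the claim by Slutsky's lemma. The main obstacle is precisely the absence of any a priori two-sided tail control in the hypotheses: everything hinges on using the reversibility identity of Proposition~\ref{subprop:de-exponentiation} to dominate the positive tail (where $e^{\LMHR}$ is large and could otherwise destroy both negligibility and the Lindeberg estimate) by the controlled negative tail.
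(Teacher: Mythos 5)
Your proof is correct and takes essentially the same route as the paper's: both hinge on Proposition~\ref{subprop:de-exponentiation} to dominate the positive tail of \(\LMHR_{n,j}^2\) by the hypothesis-controlled negative tail, on the mean-plus-half-second-moment negligibility coming from Proposition~\ref{prop:expectation identity} and Lemma~\ref{lem:expansion remainder bound}, and on uniform negligibility of the second moments, before invoking a Lindeberg-type CLT. The only cosmetic differences are that you centre the variables and finish with Slutsky, using Jensen's inequality (via \(\Expect{e^{\LMHR}}=1\)) to control \(\sum_j \Expect{\LMHR_{n,j}}^2\), whereas the paper verifies the truncated-form Lindeberg conditions for the uncentred array directly and controls that same sum by substituting \(\Expect{\LMHR_{n,j}}\approx-\half\Expect{\LMHR_{n,j}^2}\).
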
 
 
\begin{proof}
The Lindeberg Central Limit Theorem
(see for example \citealp[Theorem 4.15]{Kallenberg-2010}) follows from the following conditions:  
\begin{enumerate}[label=(\roman*)]
\item \label{item:point1}
$\sum_{j=1}^{m_n}\Prob{|\LMHR_{n,j}|>\eps}\to 0$ for any $\eps>0$,
\item \label{item:point2}
 $\sum_{j=1}^{m_n}\Expect{\LMHR_{n,j}\text{; }|\LMHR_{n,j}|\leq 1}\to -\half\sigma^2$,
\item \label{item:point3}
 $\sum_{j=1}^{m_n}\Var{\LMHR_{n,j}\text{; }|\LMHR_{n,j}|\leq 1}\to \sigma^2$.
\end{enumerate}

Note that
Proposition~\ref{subprop:de-exponentiation}
implies that, 
for any non-negative function \(f\)
and
for any $n,j$,
\[
\Expect{f(|\LMHR_{n,j}|) \;;\; \LMHR_{n,j} > \kappa_{n,j}}
\quad\leq\quad 
\Expect{f(|\LMHR_{n,j}|)e^{\LMHR_{n,j}} \;;\; \LMHR_{n,j} > \kappa_{n,j}}
\quad=\quad
\Expect{f(|\LMHR_{n,j}|)\;;\; \LMHR_{n,j} < - \kappa_{n,j}}\,,
\]
and hence
\begin{equation}\label{eq:aux3}
\Expect{f(|\LMHR_{n,j}|) \;;\; |\LMHR_{n,j}| > \kappa_{n,j}}
\quad\leq\quad
2\Expect{f(|\LMHR_{n,j}|)\;;\; \LMHR_{n,j} < -\kappa_{n,j}}\,.
\end{equation}

Taking $f(x)=x^2$ in \eqref{eq:aux3} yields
the following 
for fixed \(\eps>0\) 
and for all sufficiently large integers $n$:
\begin{multline*}
\sum_{j=1}^{m_n}\Prob{\LMHR_{n,j}^2>\eps^2}
\quad\leq\quad
\frac{1}{\eps^2}\sum_{j=1}^{m_n}
 \Expect{\LMHR_{n,j}^2\;;\;|\LMHR_{n,j}|>\eps}
\quad\leq\quad \frac{1}{\eps^2}\sum_{j=1}^{m_n}\Expect{\LMHR_{n,j}^2\;;\;|\LMHR_{n,j}|>\kappa_{n,j}}
\\
\quad\leq\quad 
\frac{2}{\eps^2}\sum_{j=1}^{m_n}
\Expect{\LMHR_{n,j}^2\;;\; \LMHR_{n,j} < -\kappa_{n,j}}
\quad\leq\quad
\frac{2}{\eps^2} 
\left(\sum_{j=1}^{m_n}\Expect{\LMHR_{n,j}^2}\right)
\times
\sup_{j\leq m_n}\frac{\Expect{\LMHR_{n,j}^2\;;\;  \LMHR_{n,j} < -\kappa_{n,j}}}{\Expect{\LMHR_{n,j}^2}}\,.
\end{multline*} 
But we have supposed that 
\(\sum_{j=1}^{m_n}\Expect{\LMHR^2_{nj}}\to\sigma^2<\infty\), 
so CLT requirement \ref{item:point1} follows from \eqref{eq:control}.

Note \(\Expect{\LMHR_{n,j}\;;\;|\LMHR_{n,j}|>1}\leq \Expect{\LMHR_{n,j}^2\;;\;|\LMHR_{n,j}|>1}\).
% Notice that 
It follows that
$\sum_{j=1}^{m_n}\Expect{\LMHR_{n,j}\;;\;|\LMHR_{n,j}|>1}\to 0$.
% (since \(\Expect{\LMHR_{n,j}\;;\;|\LMHR_{n,j}|>1}\leq \Expect{\LMHR_{n,j}^2\;;\;|\LMHR_{n,j}|>1}\));

Note also
\(\Var{\LMHR_{n,j}\;;\;|\LMHR_{n,j}|> 1}\leq
\Expect{\LMHR_{n,j}^2\;;\;|\LMHR_{n,j}|> 1}\)
and therefore it also follows that
\begin{equation}\label{eq:negligible-excess-variance}
\sum_{j=1}^{m_n}\Var{\LMHR_{n,j}\;;\;|\LMHR_{n,j}|> 1}\quad\to\quad 0\,.
\end{equation}

Proposition~\ref{prop:expectation identity} and Lemma~\ref{lem:expansion remainder bound} imply
the asymptotic relationship 
\begin{multline}\label{eq:asymp-relation}
\sum_{j=1}^{m_n}
\left|\Expect{\LMHR_{n,j}}+\frac{1}{2}\Expect{\LMHR_{n,j}^2}
\right|
\quad\leq\quad
\sum_{j=1}^{m_n}\left(
\sinh\left({\half}\kappa_{n,j}\right) \Expect{\LMHR_{n,j}^2}
+
\Expect{\LMHR_{n,j}^2\;;\; \LMHR_{n,j}<-\kappa_{n,j}}
\right)
\\
\quad\leq\quad
\left( 
% \sinh\left({\half} \sup_{j\leq m_n}\kappa_{n,j}\right)
\sinh\left({\half}\sup_{j\leq m_n}\kappa_{n,j}\right)
+\sup_{j\leq m_n}\frac{\Expect{\LMHR_{n,j}^2\;;\; \LMHR_{n,j}<-\kappa_{n,j}}}{\Expect{\LMHR_{n,j}^2}}
\right)\sum_{j=1}^{m_n}\Expect{\LMHR_{n,j}^2}
\quad\to\quad0\,.
\end{multline}
The combination of
\(\sup_{j\leq m_n}\kappa_{n,j}\to0\) and
\(\sum_{j=1}^{m_n}\Expect{\LMHR^2_{nj}}\to\sigma^2\)
and \eqref{eq:control} together imply convergence to \(0\).

Hence CLT requirement \ref{item:point2} follows, 
since
$\sum_{j=1}^{m_n}\Expect{\LMHR_{n,j}\;;\;|\LMHR_{n,j}|>1}\to 0$,
together with the asymptotic relationship \eqref{eq:asymp-relation},
implies that
\[
\lim_{n\to\infty}
\sum_{j=1}^{m_n}\Expect{\LMHR_{n,j}\;;\;|\LMHR_{n,j}|\leq 1}
\quad=\quad
\lim_{n\to\infty}\sum_{j=1}^{m_n}\Expect{\LMHR_{n,j}}
\quad=\quad
-\frac{1}{2}\lim_{n\to\infty}\sum_{j=1}^{m_n}\Expect{\LMHR_{n,j}^2}
\quad=\quad
-\frac{\sigma^2}{2}\,.
\]

Finally, we deal with CLT requirement \ref{item:point3}.
First note that inequality \eqref{eq:aux3} 
implies $\Expect{\LMHR_{n,j}^2}\leq \kappa^2_{n,j}+2\Expect{\LMHR_{n,j}^2\;;\;\LMHR_{n,j}<-\kappa_{n,j}}$ for every $n,j$.
Hence $\lim_{n\to\infty}\sup_{j\leq m_n}\kappa_{n,j}=0$
and the uniform integrability
together imply that 
\begin{equation}\label{eq:uniform-negligibility}
\lim_{n\to\infty}\sup_{j\leq m_n}\Expect{\LMHR_{n,j}^2}=0\,. 
\end{equation}
Thus the asymptotic relationship \eqref{eq:asymp-relation} allows us to deduce
that as \(n\to\infty\) so
\begin{multline*}
\lim_{n\to\infty}\sum_{j=1}^{m_n}\Expect{\LMHR_{n,j}}^2
\quad=\quad
\lim_{n\to\infty}\frac{1}{4}\sum_{j=1}^{m_n}\Expect{\LMHR_{n,j}^2}^2
\quad\leq\quad 
\lim_{n\to\infty}\frac{1}{4}\sum_{j'=1}^{m_n}\Expect{\LMHR_{n,j'}^2}\;\cdot\;\sup_{j\leq m_n}\Expect{\LMHR_{n,j}^2}
\\
\quad=\quad
\frac{\sigma^2}{4} \lim_{n\to\infty}\sup_{j\leq m_n}\Expect{\LMHR_{n,j}^2}
\quad=\quad0\,.
\end{multline*}
This implies CLT requirement \ref{item:point3} as follows:
using \eqref{eq:negligible-excess-variance},
\begin{multline*}
\lim_{n\to\infty}\sum_{j=1}^{m_n}\Var{\LMHR_{n,j}\;;\;|\LMHR_{n,j}|\leq 1}
\quad=\quad
\lim_{n\to\infty}\sum_{j=1}^{m_n}\Var{\LMHR_{n,j}}
\\
\quad=\quad
\lim_{n\to\infty}\left(\left(\sum_{j=1}^{m_n}\Expect{\LMHR_{n,j}^2}\right)
-\left(\sum_{j=1}^{m_n}\Expect{\LMHR_{n,j}}^2\right)
\right)
\quad=\quad
\lim_{n\to\infty}\sum_{j=1}^{m_n}\Expect{\LMHR_{n,j}^2}
\quad=\quad\sigma^2
\,.
\end{multline*}
\end{proof}
 
\begin{rem}\label{rem:uniform-negligibility}
 Note that asymptotic as \(n\to\infty\) the second moment \(\Expect{\LMHR_{n,j}^2}\) is uniformly negligible (in \(j\)). 
 This is established by 
 \eqref{eq:uniform-negligibility} as a consequence of the assumptions of Theorem
 \ref{thm:unconditional-clt}.
\end{rem}

\begin{rem}\label{rem:CLTconditional}
The fundamental difference between this central limit theorem and those established in \citet{RobertsGelmanGilks-1997} and all subsequent optimal scaling results is as follows. Our result is not conditional on a specific target location. It concerns random variables that are simultaneously dependent on the target and the proposal draw, as opposed to showing that for most fixed target draws the log Metropolis Hastings ratios (viewed only as functions of the proposal) satisfy a central limit theorem. This subtle difference allows the use of weaker smoothness conditions.
\end{rem}

% ===================================================
%
\section{Variations on Isserlis Theorem}\label{sec:isserlis}
We seek an analysis of optimal scaling
for RWM and MALA when the marginal
target probability density function depends on the Gaussian
random process given by the two-sided fBM \(\fBM\), as prescribed in 
Theorems \ref{thm:CLT_RWMvanilla} and \ref{thm:CLT_MALAvanilla}.
This analysis requires a variation on the
classical result 
of \citet{Isserlis-1918},
and consequent estimates and computations, 
which we now describe.

First we introduce some preliminary combinatorial notation.
Given a multiset $S$,
%\xNB[WSK1]{Give a reference to theory of multisets?}
a \emph{pairing} is a partition of $S$ into pairs 
(each pair possibly containing the same element twice). 
A pairing is called \emph{proper} 
if each of its pairs contains two distinct elements.
Let $\mathcal{P}(S)$ denote
the set of all pairings of the multiset $S$, 
and let $\mathcal{P}^*(S)\subseteq\mathcal{P}(S)$ 
denote the set of all proper pairings. 
\citeauthor{Isserlis-1918}' theorem, sometimes also called Wick's theorem 
runs as follows:

\begin{thm}[\citealp{Isserlis-1918}]\label{thm:Isserlis}
Let $X=(X_1,X_2,\dots X_n)$ be centred multivariate Normal random variable
and consider a general multi\-set $S=\{s_1,s_2,\dots, s_m\}$, 
with $s_i\in\{1,2,\dots n\}$ for $i=1,\ldots, m$. Then
\[
\Expect{X_{s_1}X_{s_2}\cdots X_{s_m}}=
\begin{cases}
\sum_{p\in\mathcal{P}(S)}\prod_{\Lambda\in p}\Expect{X_{\lambda_1}X_{\lambda_2}},	
                &\text{ if $m$ is even,}\\
0,		&\text{ if $m$ is odd,}\\
\end{cases}
\]
where the product is taken
over all pairs $\Lambda=\{\lambda_1,\lambda_2\}$ of a pairing $p$.
\end{thm}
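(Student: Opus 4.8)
The plan is to prove the identity through the moment generating function (MGF) of the centred multivariate Normal. Writing $\Sigma_{ij}=\Expect{X_iX_j}$ for the covariance entries, the MGF is $M(t)=\Expect{\exp(\sum_i t_iX_i)}=\exp(Q(t))$ where $Q(t)=\tfrac12\sum_{i,j}\Sigma_{ij}t_it_j$; this expression is valid for every centred Gaussian, whether or not $\Sigma$ is invertible, so no nondegeneracy hypothesis is needed. Since $M$ is an entire function of $t$, its mixed moments are recovered by differentiation at the origin, $\Expect{X_{s_1}\cdots X_{s_m}}=\partial_{s_1}\cdots\partial_{s_m}M(t)\big|_{t=0}$, where $\partial_{s_j}$ denotes $\partial/\partial t_{s_j}$.

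First I would expand $M=\sum_{k\ge0}Q^k/k!$ and observe that each $Q^k$ is a homogeneous polynomial of degree $2k$. Consequently the $m$-fold derivative at $0$ annihilates every term except the one with $k=m/2$; in particular, when $m$ is odd no term of the correct degree exists and the moment vanishes, recovering the odd case (equivalently, this case is immediate from the symmetry $X\overset{d}{=}-X$).

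The combinatorial heart of the argument is the evaluation of $\partial_{s_1}\cdots\partial_{s_m}(Q^k)\big|_0$ for $m=2k$. I would first record the elementary facts $Q(0)=0$, $\partial_aQ|_0=0$, and $\partial_a\partial_bQ|_0=\Sigma_{ab}$ (the prefactor $\tfrac12$ in $Q$ being exactly what makes the second mixed partial equal $\Sigma_{ab}$, with no stray factor of two, even when $a=b$). Applying the Leibniz rule, $\partial_{s_1}\cdots\partial_{s_m}(Q^k)$ expands as a sum over all assignments $\phi\colon\{1,\dots,m\}\to\{1,\dots,k\}$ of the $m$ derivatives to the $k$ identical factors of $Q$. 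Because $Q$ and its first derivatives vanish at $0$, an assignment contributes nothing at $t=0$ unless every factor receives at least two derivatives; with exactly $m=2k$ derivatives and $k$ factors this forces each factor to receive precisely two, and each such factor then contributes $\Sigma_{s_as_b}$ by the second-derivative formula. Each unordered pairing $p$ of the positions $\{1,\dots,m\}$ (equivalently, of the multiset $S$) arises from exactly $k!$ such assignments, one for each way of labelling its pairs by the factors $1,\dots,k$. Hence $\partial_{s_1}\cdots\partial_{s_m}(Q^k)|_0=k!\sum_{p\in\mathcal P(S)}\prod_{\{a,b\}\in p}\Sigma_{s_as_b}$, and multiplying by the series coefficient $1/k!$ cancels the $k!$ to give the claimed formula.

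The step I expect to be the main obstacle is this combinatorial bookkeeping: keeping straight the distinction between labelled assignments $\phi$ and unordered pairings, and verifying that the factor $k!$ produced by the labelling exactly cancels the $1/k!$ from the exponential series (while the $\tfrac12$ in $Q$ is absorbed into the second-derivative identity). Everything else is routine: term-by-term differentiation of the power series is justified because $M$ is entire, and the passage between pairings of the multiset $S$ and pairings of the index positions $\{1,\dots,m\}$ is purely notational. As an alternative I would note that the same identity can be proved by induction on $m$ using the Gaussian integration-by-parts identity $\Expect{X_{s_1}g(X)}=\sum_j\Sigma_{s_1j}\Expect{\partial_jg(X)}$ with $g=X_{s_2}\cdots X_{s_m}$, which unfolds directly into the sum over pairings; this avoids power-series manipulation at the cost of a slightly more intricate inductive pairing argument.
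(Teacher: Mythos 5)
Your proof is correct, and it is worth noting that the paper itself offers no proof of Theorem~\ref{thm:Isserlis}: the statement is followed only by a pointer to \citet[Theorem~1.28]{Janson-1997}, so your argument is not paralleling an internal derivation but replacing an external citation with a self-contained one. The moment-generating-function route you take is sound: $M(t)=\exp\left(\tfrac12 t^{\top}\Sigma t\right)$ holds for every centred Gaussian vector with no nondegeneracy hypothesis, $Q^k$ is homogeneous of degree $2k$ so only the term $k=m/2$ survives $m$-fold differentiation at the origin (which gives the odd case for free), and your Leibniz bookkeeping is exactly right --- each factor of $Q^k$ must absorb precisely two derivatives, $\partial_a\partial_b Q\big|_{0}=\Sigma_{ab}$ (including when $a=b$), and the $k!$ ways of labelling the pairs of an unordered pairing by the $k$ identical factors cancel the $1/k!$ from the exponential series. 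The one point to make explicit, which you correctly flag as notational, is that $\mathcal{P}(S)$ must be read as the set of pairings of the $m$ index positions (occurrences), not of the multiset up to multiplicity; otherwise already $\Expect{X_1^4}=3\Expect{X_1^2}^2$ would be miscounted, and it is this occurrence-level reading that the paper relies on later in Proposition~\ref{prop:Isserlis1}. Your alternative route via Gaussian integration by parts, $\Expect{X_{s_1}g(X)}=\sum_j\Sigma_{s_1 j}\Expect{\partial_j g(X)}$ with $g=X_{s_2}\cdots X_{s_m}$, is likewise a standard and valid induction; either version would serve as a drop-in replacement for the citation.
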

A proof can be found, for example, in \citet[Theorem~1.28]{Janson-1997}.

\citeauthor{Isserlis-1918}' theorem leads to the following proposition.
\begin{proposition}\label{prop:Isserlis1}
Let $X=(X_1,X_2,\dots X_{n+k})$ be a 
centred multivariate Normal random variable with covariance matrix $R$ 
and let $S=2\times\{1,2,\dots,n\}\cup\{n+1,n+2,\dots,n+k\}$,
using a compact multiset notation to signal that
elements $1,\dots n$ appear twice while elements 
$n+1,\dots,n+k$ appear once only. Then
\begin{equation*}
\Expect{(X_1^2 -R _{11})(X_2^2-R_{22})\cdots (X_n^2-R_{nn})X_{n+1}X_{n+2}\cdots X_{n+k}}
\,=\,
\begin{cases}
\sum_{p\in\mathcal{P}^*(S)}\prod_{\Lambda\in p}\Expect{X_{\lambda_1}X_{\lambda_2}}
               &\text{ for even $k$,}\\
0 	       &\text{ for odd $k$.}\\
\end{cases}
\end{equation*}
\end{proposition}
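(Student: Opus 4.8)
The plan is to apply the ordinary Isserlis Theorem (Theorem~\ref{thm:Isserlis}) to the product and then \emph{cancel} exactly those pairings which use a self-pairing of a repeated index, since these are precisely what the centring terms $R_{ii}$ are designed to remove. Concretely, expand the product
\[
\prod_{i=1}^n (X_i^2 - R_{ii}) \cdot X_{n+1}\cdots X_{n+k}
\]
by multiplying out the $n$ binomial factors. This yields $2^n$ terms indexed by subsets $A\subseteq\{1,\dots,n\}$: for $i\in A$ we keep the factor $X_i^2$, and for $i\notin A$ we keep the factor $-R_{ii}$. Each resulting term is (up to the sign and the scalar product $\prod_{i\notin A} R_{ii}$) the expectation of a monomial in the $X$'s, to which the classical Isserlis Theorem applies.

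The key observation is that $R_{ii} = \Expect{X_i^2} = \Expect{X_iX_i}$ is itself the Isserlis weight of the self-pair $\{i,i\}$. So when I apply Theorem~\ref{thm:Isserlis} to the monomial associated with a subset $A$, I obtain a sum over pairings of the multiset in which indices $i\in A$ appear twice and indices $i\notin A$ do not appear at all (while $n+1,\dots,n+k$ always appear once). Multiplying by $\prod_{i\notin A} R_{ii}$ has the effect of reinstating the forced self-pairs $\{i,i\}$ for each $i\notin A$, so that each term contributes a signed sum over \emph{all} pairings of the full multiset $S$ whose self-paired repeated indices are exactly those outside $A$. The sign attached is $(-1)^{|A^c|} = (-1)^{\#\{\text{self-pairs among repeated indices}\}}$.

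The main step is then a cancellation/inclusion–exclusion argument. Summing over all subsets $A$, a fixed pairing $p\in\mathcal{P}(S)$ of the full multiset will appear once for each subset $A$ compatible with its forced self-pairs, weighted by the appropriate sign. If $p$ uses at least one self-pair $\{i,i\}$ of a repeated index, I partition the admissible subsets according to whether each such self-paired $i$ lies in $A$ or not, and the alternating signs produce a telescoping cancellation (a standard $\sum_{B}(-1)^{|B|}=0$ over a nonempty index set), so the net contribution of any \emph{improper} pairing is zero. Only proper pairings $p\in\mathcal{P}^*(S)$ survive, each with coefficient $+1$, giving exactly $\sum_{p\in\mathcal{P}^*(S)}\prod_{\Lambda\in p}\Expect{X_{\lambda_1}X_{\lambda_2}}$. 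The parity statement is inherited directly: the multiset $S$ has total cardinality $2n+k$, which is even precisely when $k$ is even, and Isserlis' theorem forces the expectation to vanish for odd total cardinality, hence for odd $k$.

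The hard part will be organising the bookkeeping of the cancellation cleanly — carefully matching each term of the binomial expansion against the self-pair structure of pairings of $S$, and verifying that the signs align so that improper pairings cancel in exact telescoping groups while proper pairings are counted exactly once. The combinatorial identity itself is elementary, but stating the correspondence between subsets $A$ and self-pair patterns precisely enough to justify the cancellation is where care is needed.
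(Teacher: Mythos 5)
Your argument is correct, but it takes a genuinely different route from the paper. You expand $\prod_{i=1}^n(X_i^2-R_{ii})$ binomially over subsets $A\subseteq\{1,\dots,n\}$, apply the classical Isserlis theorem to each resulting monomial, identify each $R_{ii}$ factor with a forced self-pair $\{i,i\}$, and then kill every improper pairing by the alternating-sum identity $\sum_{B\subseteq T}(-1)^{|B|}=0$ for $T\neq\emptyset$ (here $T$ is the set of self-paired repeated indices of a fixed pairing, and the admissible subsets are exactly $A=\{1,\dots,n\}\setminus B$ with $B\subseteq T$); proper pairings survive with coefficient $+1$, and the odd-$k$ case is immediate since $|S|=2n+k$ is then odd. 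This is a purely combinatorial inclusion--exclusion proof requiring nothing beyond Theorem~\ref{thm:Isserlis}. The paper instead argues analytically: it first reduces by continuity to covariance matrices $R$ in the interior of the positive-definite cone, views both $I_1=\Expect{X_1^2\cdots X_n^2X_{n+1}\cdots X_{n+k}}$ and the centred expectation $I_2$ as polynomials in the entries of $R$, and then uses the moment generating function $\exp(\tfrac12 t^\top Rt)$ to show $\partial I_2/\partial R_{ii}=0$ for each $i\leq n$; hence $I_2$ contains no monomial involving a diagonal entry, and must therefore equal the diagonal-free part of the Isserlis expansion of $I_1$, which is exactly the proper-pairing sum. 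What your approach buys is elementarity and self-containedness (no continuity reduction, no interchange of differentiation and limits to justify); what the paper's approach buys is that it avoids the pairing bookkeeping entirely — the cancellation you must organise by hand is delegated to the observation that a polynomial with vanishing partial derivative in a variable does not depend on that variable. As you note yourself, your remaining work is purely notational: making the bijection between pairs $(A,q)$ and pairings $p$ of $S$ with prescribed self-pair set explicit, which is routine.
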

\noindent
Note the crucial difference between expansions
in Theorem \ref{thm:Isserlis}
and Proposition \ref{prop:Isserlis1}:
in the proposition the sum is taken over the set \(\mathcal{P}^*\)
of \emph{proper} pairings.

\begin{proof}
It suffices to consider the result when the covariance matrix \(R\)
lies in the interior of the set of all valid covariance matrices:
the general result then follows by a continuity argument.
This allows us to argue algebraically,
viewing relevant expectations as multivariate
polynomials in the entries of \(R\).

First note that the result follows trivially if \(k\) is odd:
apply inclusion-exclusion
of \(R_{11}, R_{22}, \ldots, R_{nn}\) to
\[
\Expect{(X_1^2 -R _{11})(X_2^2-R^2_{22})\cdots (X_n^2-R_{nn})X_{n+1}X_{n+2}\cdots X_{n+k}}\,.
\] 
In case of odd $k$, each term in the inclusion-exclusion expansion must vanish
by Theorem \ref{thm:Isserlis}. So we need consider only the case of even \(k\).

Consider the \citeauthor{Isserlis-1918} expansion
of 
$I_1=\Expect{X_1^2X_2^2\cdots X_n^2\cdot X_{n+1}X_{n+2}\cdots X_{n+k}}$,
viewed as a sum of monomials in the entries of \(R\).
According to the combinatorial expression for this 
given in Theorem \ref{thm:Isserlis}, 
if we remove all monomials involving any of \(R_{11}, R_{22}, \ldots, R_{nn}\)
then the remaining sum is exactly
\(\sum_{p\in\mathcal{P}^*(S)}\prod_{\Lambda\in p}\Expect{X_{\lambda_1}X_{\lambda_2}}\).

Now consider the \citeauthor{Isserlis-1918} expansion of 
\(I_2=\Expect{(X_1^2-R_{11})(X_2^2-R_{22})\cdots (X_n^2-R_{nn})X_{n+1}\cdots X_{n+k}}\),
viewed as a linear combination of monomials of \(X_1, X_2, \ldots, X_{n+k}\).
This agrees with the \citeauthor{Isserlis-1918} expansion
of 
$I_1$
up to a difference of a linear combination of monomials involving non-empty
selections of \(R_{11}, R_{22}, \ldots, R_{nn}\).

The result is therefore proved if we can establish that, after cancellation,
the expansion of
\(I_2\) contains no terms involving any of the diagonal entries $R_{11},R_{22},\dots,R_{nn}$.

The joint moment generating function of the multivariate Normal variable $X$ equals $\exp\left(\frac{1}{2}t^{\top}Rt\right)$, for $t\in\Reals^{n+k}$. Hence \begin{equation*}
I_2\quad=\quad
% \Expect{(X_1^2-R_{11})(X_2^2-R_{22})\cdots (X_n^2-R_{nn})X_{n+1}X_{n+2}\cdots X_{n+k}}=\\
% &\qquad 
\left[\left(\frac{\partial^2}{\partial t_1^2}-R_{11}\right)\dots \left(\frac{\partial^2}{\partial t_{n}^2}-R_{nn}\right)\frac{\partial}{\partial t_{n+1}}\dots\frac{\partial}{\partial t_{n+k}}\exp\left({\frac{1}{2}t^{\top}Rt}\right)\right]_{t_1=0,\dots,t_{n+k}=0}\,.
\end{equation*}
Viewing this as a smooth function of the vector \(t\) and the entries of \(R\) and denoting
the differential operator 
\(D=\left(\frac{\partial^2}{\partial t_2^2}-R_{22}\right)\dots \left(\frac{\partial^2}{\partial t_{n}^2}- R_{nn}\right) \frac{\partial}{\partial t_{n+1}}\dots\frac{\partial}{\partial t_{n+k}}\)
observe that
\begin{multline*}
\frac{\partial}{\partial R_{11}}\Expect{(X_1^2-R_{11})\cdots (X_n^2-R_{nn})X_{n+1}\cdots X_{n+k}}
\\
\quad=\quad
\frac{\partial}{\partial R_{11}}\left[\left(\frac{\partial^2}{\partial t_1^2}-R_{11}\right)D
~~\exp\left({\frac{1}{2}t^{\top}Rt}\right)\right]_{t_1=0,\dots,t_{n+k}=0}
\\
\;=\;
\left[D\frac{\partial}{\partial R_{11}}\left(\frac{\partial^2}{\partial t_1^2}-R_{11}\right)
~~\exp\left({\frac{1}{2}t^{\top}Rt}\right)\right]_{t_1=0,\dots,t_{n+k}=0}
% \\
\;=\;
\left[
 D
 \frac{\partial}{\partial R_{11}}
 ~~\left(\sum_{i=1}^{n+k} R_{1i}t_i\right)^2
 \exp\left({\frac{1}{2} t^{\top}Rt}\right)
\right]_{t_1=0,\dots,t_{n+k}=0}
\\
\quad=\quad
\left[
t_1\;D
 ~~\exp\left({\frac{1}{2} t^{\top}Rt}\right)
\left(t_1\left(\sum_{i=1}^{n+k}R_{1i}t_i\right)^2+2\sum_{i=1}^{n+k}R_{1i}t_i\right)
\right]_{t_1=0,\dots,t_{n+k}=0}
\quad=\quad 0\,.
\end{multline*}
The second identity holds because we can swap the order of differentiation
and interchange differentiation with taking the limit $t_i\to 0$. 
This is justified since $\exp\left(\frac{1}{2}t^{\top}Rt\right)$ 
and all its derivatives are smooth 
and for a smooth function $f$ 
the functions $R_{11}\mapsto f(R_{11},t)$ converge uniformly to 
$R_{11}\mapsto f(R_{11},0)$ (as $t\to 0$) 
in some compact neighbourhood of $(R_{11},0)$.

The same argument applies for differentiation
with respect to $R_{22},\dots,R_{nn}$.
Thus it follows that
that \(I_2\) is free of all terms involving $R_{11}, R_{22},\dots,R_{nn}$,
hence must equal
\(\sum_{p\in\mathcal{P}^*(S)}\prod_{\Lambda\in p}\Expect{X_{\lambda_1}X_{\lambda_2}}\)
as required.
\end{proof}

%If $\gamma=(\gamma_1\gamma_2\dots\gamma_m)$ is a cycle of length $m>1$ it defines a partition of multiset $\{1,1,2,2,\dots,m,m\}$ into pairs $\{\{\gamma_1,\gamma_2\},\{\gamma_2,\gamma_3\},\dots ,\{\gamma_m,\gamma_1\}\}$ such that no number is paired with itself.

% Each such pairing uniquely determines a graph on vertexes $\{1,2,\dots n+k\}$ by connecting $i$ and $j$ if the pairing includes a pair $X_i,X_j$.

\begin{lem}\label{lem:Isserlis_exponential}
Let $X=(X_0,X_1,X_2,\dots X_n)$ be a 
centred multivariate Normal random variable with covariance matrix $R$. 
Then
%\xNB[WSK1]{How are we going to use Lemma \ref{lem:Isserlis_exponential}? and how will it combine with Lemma \ref{lem:Isserlis_product_bound}?}
\begin{multline*}
\Expect{\exp\left(X_0\right) \times \prod_{i=1}^n(X_i^2 -R _{ii})}
\quad=\quad
\exp\left(\tfrac12 {R_{00}}\right)	
\times \Expect{\prod_{i=1}^n\left((X_i+R_{0i})^2 -R _{ii}\right) }
\\
\quad=\quad\exp\left(\tfrac12{R_{00}}\right)
\sum_{\substack{{A_1\sqcup A_2\sqcup A_3=\{1,2,\dots,n\}}\\ |A_2|\text{ is even}}}
\quad
\sum_{p\in\mathcal{P}^*(2A_1\cup A_2)}
2^{|A_2|} \prod_{i\in A_2}R_{0i}\times\prod_{i\in A_3}R_{0i}^2	\times\prod_{\Lambda\in p}R_{\lambda_1\lambda_2}
\,,
\end{multline*}
where the inner sum ranges over $\mathcal{P}^*(2A_1\cup A_2)$, the set of all proper pairings of the multiset $2A_1\cup A_2$
in which elements of $A_1$ appear twice and 
elements of $A_2$ appear once. The outer sum ranges over all three-fold partitions \(A_1, A_2, A_3\) 
of the set $\{1,2,\dots,n\}$ with $A_2$ containing evenly many elements.
\end{lem}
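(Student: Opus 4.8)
The plan is to handle the two claimed equalities separately. For the first equality, I would exploit the Gaussian structure directly by completing the square. Writing the joint density of $(X_0,X_1,\dots,X_n)$ and integrating out $X_0$ against $\exp(X_0)$ amounts to a Gaussian shift: the factor $\exp(X_0)$ tilts the measure, and under this tilt the conditional mean of each $X_i$ shifts by $\operatorname{Cov}(X_0,X_i)=R_{0i}$, while the overall normalisation picks up the moment generating factor $\exp(\tfrac12 R_{00})=\Expect{e^{X_0}}$. Concretely, I would use the standard identity $\Expect{e^{X_0}g(X_1,\dots,X_n)}=\exp(\tfrac12 R_{00})\,\Expect{g(X_1+R_{01},\dots,X_n+R_{0n})}$ for suitable $g$, which is most cleanly justified via the joint moment generating function or by the Cameron--Martin/Girsanov shift for Gaussians. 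Applying this with $g=\prod_{i=1}^n(X_i^2-R_{ii})$ immediately yields the first line, with each $X_i$ replaced by $X_i+R_{0i}$ inside the product.

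For the second equality, the task is combinatorial: expand $\prod_{i=1}^n\bigl((X_i+R_{0i})^2-R_{ii}\bigr)$ and take the expectation. I would expand each factor as $(X_i^2-R_{ii})+2R_{0i}X_i+R_{0i}^2$, so that multiplying out the product sorts the indices $\{1,\dots,n\}$ into three groups according to which of the three summands is chosen: $A_1$ for the centred-square term $X_i^2-R_{ii}$, $A_2$ for the linear term $2R_{0i}X_i$, and $A_3$ for the constant $R_{0i}^2$. This produces exactly the outer sum over ordered three-fold partitions $A_1\sqcup A_2\sqcup A_3=\{1,\dots,n\}$, contributing the scalar factor $2^{|A_2|}\prod_{i\in A_2}R_{0i}\times\prod_{i\in A_3}R_{0i}^2$ out front and leaving the expectation
\[
\Expect{\prod_{i\in A_1}(X_i^2-R_{ii})\times\prod_{i\in A_2}X_i}\,.
\]
This is precisely the quantity computed in Proposition~\ref{prop:Isserlis1}, with the role of the doubled indices $\{1,\dots,n\}$ there played by $A_1$ and the singleton indices by $A_2$. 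Proposition~\ref{prop:Isserlis1} therefore evaluates it as $\sum_{p\in\mathcal{P}^*(2A_1\cup A_2)}\prod_{\Lambda\in p}R_{\lambda_1\lambda_2}$ when $|A_2|$ is even, and as $0$ when $|A_2|$ is odd; the latter is exactly why the outer sum is restricted to $|A_2|$ even. Substituting this in gives the stated double sum.

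The main obstacle is not any single step but ensuring the bookkeeping is airtight: one must verify that the expansion genuinely yields \emph{ordered} partitions (so that each assignment of indices to $A_1,A_2,A_3$ is counted once), that the factor $2^{|A_2|}$ correctly accumulates the coefficient $2$ from each linear term $2R_{0i}X_i$, and that the indices of $A_3$ contribute $R_{0i}^2$ rather than entering the pairing sum. The key conceptual point making the whole argument clean is the appeal to Proposition~\ref{prop:Isserlis1}: because that result already restricts to \emph{proper} pairings and already forces vanishing for an odd number of singleton indices, the parity constraint on $|A_2|$ and the appearance of $\mathcal{P}^*$ rather than $\mathcal{P}$ both follow for free, with no further Gaussian computation needed.
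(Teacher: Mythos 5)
Your proposal is correct and follows essentially the same route as the paper's proof: the paper establishes the first equality by the same Gaussian shift (carried out via an explicit change of variables $z\mapsto z+Ce_0$, i.e.\ completing the square), then expands each factor as $(X_i^2-R_{ii})+2R_{0i}X_i+R_{0i}^2$ to produce the partition into $A_1,A_2,A_3$ and invokes Proposition~\ref{prop:Isserlis1} exactly as you describe. No gaps.
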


\begin{proof}
 We may suppose that \(X = C^\top Z\),
 where 
 \(Z\) is a \(k\)-dimensional
 standard Normal random variable
 and \(C\) is a \((k\times (n+1)\) matrix.
%  \xNB[WSK1]{Proof of Lemma \ref{lem:Isserlis_exponential}: would be better to figure out a direct argument that doesn't employ joint densities!}
 Thus the covariance matrix of \(X\) is
 \(R=\Expect{X X^\top}=\Expect{C^\top Z Z^\top C}
 =C^\top C\),
 and
 \(X_i=e_i^\top C^\top Z\)
%  Rotating the \(Z\) vector and \(C\) matrix
%  appropriately, 
%  we may suppose that
%  \(X_0 = \sqrt{R_{00}} Z_0= e_0^\top C^\top Z\),
 where
 \(e_i\) is an \((n+1)\)-vector 
with \(1\) as the \(i^\text{th}\) entry and \(0\) elsewhere.
Hence, using \(z\) for a \((n+1)\)-vector
of integration corresponding to \(Z\),
and the translation invariance of Lebesgue measure,
\begin{multline*}
\Expect{\exp\left(X_0\right) \times \prod_{i=1}^n(X_i^2 -R _{ii})}
\quad=\quad
\Expect{
\exp\left(e_0^\top C^\top Z\right)
\prod_{i=1}^n\left(|e_i^\top C^\top Z|^2-R_{ii}\right)
}
\\
\quad=\quad
\frac{1}{(2\pi)^{(n+1)/2}}\int
\exp\left(e_0^\top C^\top z\right)
\prod_{i=1}^n\left(|e_i^\top C^\top z|^2-R_{ii}\right)
\exp\left(-\tfrac12 |z|^2\right)
{\d}z
\\
\quad=\quad
\frac{1}{(2\pi)^{(n+1)/2}}\int
\exp\left(e_0^\top C^\top (z+Ce_0)\right)
\prod_{i=1}^n\left(|e_i^\top C^\top (z+Ce_0)|^2-R_{ii}\right)
\exp\left(-\tfrac12 |z+Ce_0|^2\right)
{\d}z
\\
\,=\,
\frac{1}{(2\pi)^{(n+1)/2}}\int
\exp\left(e_0^\top C^\top (z+Ce_0)-\tfrac12 |z+Ce_0|^2+
\tfrac12 |z|^2\right)
\prod_{i=1}^n\left(|e_i^\top C^\top (z+Ce_0)|^2-R_{ii}\right)
\exp\left(-\tfrac12 |z|^2\right)
{\d}z
\\
\quad=\quad
\frac{1}{(2\pi)^{(n+1)/2}}\int
\exp\left(\tfrac12 e_0^\top C^\top Ce_0\right)
\prod_{i=1}^n\left(|e_i^\top C^\top z +R_{i0}|^2-R_{ii}\right)
\exp\left(-\tfrac12 |z|^2\right)
{\d}z
\\
\quad=\quad
\Expect{\exp(\tfrac12 R_{00})
\prod_{i=1}^n\left(\left(X_i +R_{i0}\right)^2-R_{ii}\right)}
\,.
\end{multline*}

Finally,
rewrite each factor \(\left((X_i+R_{0i})^2 -R _{ii}\right)\)
as $(X^2_i-R_{ii})+2R_{0i}X_i+R^2_{0i}$. 
Expanding the product accordingly, we obtain
\begin{multline*}
\Expect{\exp\left(X_0\right) \times \prod_{i=1}^n(X_i^2 -R _{ii})}
\\
\quad=\quad
\exp\left(\frac{R_{00}}{2}\right)
\sum_{A_1\sqcup A_2\sqcup A_3=\{1,2,\dots,n\}}
% \\	
% A_1, A_2, A_3 \text{ is a disjoint partion.}}
\quad
2^{|A_2|}		
\Expect{\prod_{i\in A_1}(X_i^2-R_{ii})\times \prod_{i\in A_2}X_i}\times \prod_{i\in A_2}R_{0i}\times\prod_{i\in A_3}R_{0i}^2
\,.
\end{multline*}

The result follows by applying Proposition~\ref{prop:Isserlis1}.
\end{proof}

We will also require the following combinatorial lemma 
in order to separate out groups of integration variables.

\begin{lem}\label{lem:Isserlis_product_bound}
Suppose that $S=2\times\{1,2,\dots,n\}\cup\{n+1,n+2,\dots,n+2k\}$ and $p\in\mathcal{P}^*(S)$ is a proper pairing. 
Then it is possible to partition $p$ into three disjoint sets of pairs
$p_1,p_2,p_3$ such that 
the pairs in each $p_i$ ($i=1,2,3$) are pairwise disjoint 
and 
$\lfloor \frac{n+k}{3}\rfloor\leq|p_i|\leq \lceil \frac{n+k}{3}\rceil$.

Furthermore,
if $X=(X_1,X_2,\dots X_{n+2k})$ 
is a centred multivariate Normal random variable, then
\[
\prod_{\Lambda\in p}	\Expect{X_{\lambda_1}X_{\lambda_2}}^2
\leq 
\frac{1}{3}	\prod_{\Lambda\in p_1}	\Expect{X_{\lambda_1}X_{\lambda_2}}^6
+\frac{1}{3}	\prod_{\Lambda\in p_2}	|\Expect{X_{\lambda_1}X_{\lambda_2}}^6
+\frac{1}{3}	\prod_{\Lambda\in p_3}	|\Expect{X_{\lambda_1}X_{\lambda_2}}^6.
\]
\end{lem}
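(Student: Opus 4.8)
The statement has two parts: a purely combinatorial partitioning claim about the pairing $p$, and an analytic inequality about products of covariances. The plan is to handle the combinatorial claim first, since it supplies the sets $p_1,p_2,p_3$ used in the inequality. The multiset $S=2\times\{1,\dots,n\}\cup\{n+1,\dots,n+2k\}$ has total cardinality $2n+2k$, so any proper pairing $p$ consists of exactly $n+k$ pairs. The task is to three-colour these $n+k$ pairs so that within each colour class the pairs are pairwise disjoint (share no index of $S$), and so that the colour classes are as balanced as possible, each of size in $\{\lfloor\frac{n+k}{3}\rfloor,\lceil\frac{n+k}{3}\rceil\}$.

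First I would reduce the disjointness requirement to a graph-colouring statement. Form the \emph{conflict graph} $G$ on vertex set $p$ (one vertex per pair), joining two pairs by an edge whenever they share an element of the ground set $\{1,\dots,n+2k\}$. Because $p$ is a \emph{proper} pairing, each of its pairs contains two distinct ground-set elements; moreover each ground-set element lies in at most two pairs of $p$ (the doubled elements $1,\dots,n$ sit in at most two pairs, and the singletons $n+1,\dots,n+2k$ in at most one). Consequently each pair conflicts with at most two others, so $G$ has maximum degree at most $2$. A graph of maximum degree $\le 2$ is a disjoint union of paths and cycles, and any such graph is properly $3$-colourable; each colour class is then an independent set, i.e.\ a set of pairwise disjoint pairs. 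The remaining work is to \emph{balance} the three colour classes to the stated sizes: starting from any proper $3$-colouring, I would move vertices between colour classes to equalise their sizes, using the fact that in a graph of maximum degree $\le 2$ one has ample freedom (any vertex has a non-neighbouring colour available whenever the other two classes are not both forced). The balancing can be phrased as repeatedly recolouring a vertex from an over-full class to an under-full non-adjacent class until all three sizes land in $\{\lfloor\frac{n+k}{3}\rfloor,\lceil\frac{n+k}{3}\rceil\}$; I expect this rebalancing to be the fussiest step, though it is elementary.

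For the analytic inequality, write $c_\Lambda=\Expect{X_{\lambda_1}X_{\lambda_2}}$ for each pair $\Lambda\in p$. The left-hand side is $\prod_{\Lambda\in p}c_\Lambda^2=\prod_{i=1}^{3}\prod_{\Lambda\in p_i}c_\Lambda^2$. Applying the AM--GM inequality to the three non-negative factors $T_i=\prod_{\Lambda\in p_i}c_\Lambda^2$ gives
\[
\prod_{\Lambda\in p}c_\Lambda^2
\;=\;
T_1T_2T_3
\;\le\;
\tfrac13\bigl(T_1^3+T_2^3+T_3^3\bigr)
\;=\;
\tfrac13\sum_{i=1}^3\prod_{\Lambda\in p_i}c_\Lambda^6\,,
\]
which is exactly the claimed bound. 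This reproduces the three terms $\tfrac13\prod_{\Lambda\in p_i}\Expect{X_{\lambda_1}X_{\lambda_2}}^6$ on the right-hand side. The point of the partition is that each $p_i$ consists of pairwise disjoint pairs, so that in the later application each factor $\prod_{\Lambda\in p_i}\Expect{X_{\lambda_1}X_{\lambda_2}}^6$ involves disjoint groups of integration variables and so can be estimated one coordinate at a time; the balancing of $|p_i|$ is what will keep these three estimates comparable. The genuine obstacle is the combinatorial balancing; the inequality itself is a one-line application of AM--GM once the partition is in hand.
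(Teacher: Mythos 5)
Your conflict-graph setup (each pair meets at most two others, so the graph has maximum degree $2$ and is a disjoint union of paths and cycles) and your AM--GM step are both correct and coincide with the paper's own argument: the inequality is indeed the one-liner $T_1T_2T_3\le\tfrac13\left(T_1^3+T_2^3+T_3^3\right)$ applied to $T_i=\prod_{\Lambda\in p_i}c_\Lambda^2$. The genuine gap is in the balancing step, which you flag as the fussiest part and then settle by a greedy repair that does not work. Recolouring single vertices ``from an over-full class to an under-full non-adjacent class'' can get stuck before balance is reached. Concretely, take $n=8$, $k=0$ and the proper pairing $p=\{1,2\},\{2,3\},\dots,\{7,8\},\{8,1\}$, whose conflict graph is a single $8$-cycle, with the proper colouring $A,B,A,C,A,B,A,C$ in cyclic order. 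The class sizes are $(4,2,2)$ while balance requires $(3,3,2)$, so a vertex must leave class $A$; but every $A$-vertex has one $B$-neighbour and one $C$-neighbour, so no single recolouring of an $A$-vertex preserves properness, and your procedure halts at an unbalanced colouring. A balanced proper colouring of the $8$-cycle certainly exists (e.g.\ $A,B,C,A,B,C,A,B$), but reaching it requires moves that temporarily recolour vertices of the classes that are not over-full (recolour vertex $2$ from $B$ to $C$, then vertex $3$ from $A$ to $B$), which your invariant does not permit.

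The paper sidesteps this entirely by never producing an unbalanced colouring in the first place: it colours each path or cycle \emph{separately} so that within each component the three colour counts differ by at most one (elementary to exhibit directly for paths, and for cycles in each residue class mod $3$), and then exploits the fact that the three colour names can be permuted independently within each component, assigning each component's excess colours greedily so that the global counts also differ by at most one. If you want to keep your ``colour first, then repair'' structure you would need stronger repair moves (for instance Kempe-chain-style swaps along segments of a path or cycle), and proving those always suffice is more work than the per-component construction; I would recommend switching to the component-wise argument.
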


\begin{proof}
The pairing $p$ defines a graph 
on the set of its pairs $\{\Lambda_1,\dots \Lambda_{n+k}\}$, 
where $\Lambda_i$ and $\Lambda_j$ are connected 
if and only if $\Lambda_i\cap\Lambda_j\neq\emptyset$. 
The maximal degree of this graph is two, 
hence it is a disjoint union of paths and cycles (and isolated points). 
Each path or cycle can be coloured with three colours (red, green and blue)
so that no neighbouring vertexes are of the same colour and the numbers of vertexes of different colours differ by at most one. 
Finally colours are interchangeable within each cycle or path, 
so by careful selection of excess colours for each cycle or path
we can ensure that the numbers of vertexes of different colours 
in the entire graph also differ by at most one.

The colors give us the partition. 
By definition all the pairs in each $p_i$ are disjoint,
and by construction 
$\lfloor \frac{n+k}{3}\rfloor\leq|p_i|\leq \lceil \frac{n+k}{3}\rceil$. 
The bound follows by the inequality between geometric and arithmetic means.
\end{proof}

% ===================================================
%
\section{Anomalous scaling for Metropolis-Hastings algorithms in random environment}\label{sec:anomalous_scaling}

In this section we develop a framework 
for proving anomalous scaling results for Metropolis-Hastings algorithm for product targets which depend on random environments. The aim is to establish sufficient conditions under which the algorithm will exhibit anomalous scaling behaviour
for almost all realisations of the random environment. 
Sections~\ref{sec:RWM}~and~\ref{sec:MALA} will
then use this framework in the contexts of RWM and MALA algorithms to produce proofs of Theorems~\ref{thm:CLT_RWMvanilla}~and~\ref{thm:CLT_MALAvanilla}.

For the sake of definiteness and computational convenience,
we denote the random environment by $B$,
and suppose this to be determined by
a stationary continuous Gaussian process.
A (non-normalised) random marginal target density is then
produced by a map
$\upi\colon\Reals\times\Omega\to [0,\infty)$,
required to deliver
$\int_{\Reals}\upi(x\,|B){\d}x<\infty$ 
for almost all realisations of 
the random environment $B$. 
The normalization of the random marginal target density
is denoted by
$\pi\colon\Reals\times\Omega\to \Reals$,
so that
\[
 \int_\Reals \pi(x\,|B) {\d}{x}
 \quad=\quad
 \int_\Reals\left(\frac{
 \upi(x\,|B)
 }{
 \int_\Reals \upi(u\,|B){\d}{u}
 }\right) {\d}{x}
 \quad=\quad1\,.
\]
Finally,
let
$\rho_n(x,y\,|B)=\log\left(\frac{\pi(y|B)q_n(y,x\,|B)}{\pi(x\,|B)q_n(x,y\,|B)}\right)=\log\left(\frac{\upi(y|B)q_n(y,x\,|B)}{\upi(x\,|B)q_n(x,y\,|B)}\right)$ 
denote
the logarithm of the acceptance ratio of the Metropolis-Hastings algorithm with 
marginal target density $\pi(\cdot\,|B)$ 
(equivalently $\upi(\cdot\,|B)$) 
and proposal density 
$q_n\colon\Reals^2\times\Omega\to\Reals$.
Note that \(q_n(x,y\,|B)\) may
also depend on the random environment.
% \xNB[WSK7]{Section \ref{sec:anomalous_scaling}: need to be very explicit about how this relates to product targets! I know we cover this at the end of the section: however we need to be clear at the start so that the reader doesn't get confused.}

In light of Section~\ref{sec:CLTgeneral}, particularly the Central Limit Theorem~\ref{thm:unconditional-clt}, the crucial step is to identify the decay rate of the second moments of the log acceptance ratio $\Expectp{\pi(\cdot\,|B), q_n(\cdot\,|B)}{\rho^2_n\,|B}$ or equivalently the decay rate of functionals (differing only by a normalising constant that does not depend on $n$)
\begin{align}\label{eq:I_n}
\mathcal{I}_n(B)\quad &=\quad
\iint_{\Reals^2}\rho_n^2(x,y\,|B)
\,\upi(x\,|B)
\;q_n(x,y\,|B){\d}x{\d}y\nonumber\\
&=\quad	\iint_{\Reals^2}\rho_n^2(x,x+\sigma_nz\,|B)\,
\upi(x\,|B)\
\;
\sigma_nq_n(x,x+\sigma_nz\,|B){\d}x{\d}z\,,
\end{align} 
for some positive sequence 
\(\sigma_1 > \sigma_2 > \sigma_3 > \ldots > 0\).

Throughout the remainder of the paper, 
for two sequences $a_1, a_2, \ldots$ 
and $b_1, b_2, \ldots$ of positive real numbers,
the notation $a_n\precsim b_n$ indicates 
that there is a positive constant $C>0$ 
such that $a_n\leq Cb_n$ holds for all $n$.

We consider the situation in which there is a product target with
marginal target density depending on a random environment 
and a product Metropolis-Hasting proposal. 
In this section we consider the implications for optimal scaling
if the following framework of assumptions
is valid.
\begin{AssumptionFramework}\label{theassumption}
Let $\nu_1,\nu_2$ be probability density functions on $\Reals$ with all polynomial moments finite.  
Fix positive constants $\beta ,\gamma$ and $\ell$,
and choose a positive integer $m$
such that $m>3+\frac{144\beta}{\min(24\gamma,1)}$.
Finally, set $\nu=\nu_1\times\nu_2$ to be a joint density function,
and set $\sigma_n=\ell n^{-\frac{1}{2\beta}}$.
The sequence of assumptions (depending implicitly
on \(\beta , \gamma, \ell, m\)) are as follows:

\begin{enumerate}[label=\Alph*), ref=\ref{theassumption}.\Alph*]
\item \label{assumption:NormalPotentialFluctuations}
\textbf{Mixed Gaussian perturbation of log marginal target density:}

For every real $x$, the (un-normalized)
marginal target density is given by
\[
\upi(x\,|B)\quad=\quad\exp(K(x\,|B))\nu_1(x)\,,
\]
where $(K(x\,|B):x\in\Reals)$
is a centred Gaussian process such that $K(x\,|B)$
has variance $k(x)$. Furthermore,
we suppose \(K(x,B)\) has a particular
unconditional exponential moment that is finite:
\[
\Expect{\int_{\Reals}\exp(2m^2 K(x,B))\nu_1(x){\d}x}
\quad=\quad
 \int_{\Reals}\exp(2m^2k(x))\nu_1(x){\d}x
 \quad<\quad\infty
\]
(with \(m\) chosen as above). Particularly, this moment condition implies $\upi(x\,|B)$ is indeed a target density for almost every realisation of the random environment $B$.

\item\label{assumption:AsymptoticProposal}
\textbf{Asymptotic behaviour of perturbation of marginal proposal:}

For every real $x,z$ and positive integer $n$,
the marginal proposal density \(q_n\) satisfies
\[
\sigma_nq_n(x,x+\sigma_n z\,|B)\quad=\quad L_n(x,z\,|B)\nu_2(z)\,,
\]
where the random variable $L_n(x,z\,|B)$ is controlled by
\[
\iint_{\Reals^2}
\Expect{\left|L_n(x,z\,|B)-1\right|^{4m}}\nu(x,z){\d}x{\d}z
\quad\precsim\quad
\sigma_n^{4m\gamma}\,.
\]

\item \label{assumption:ApproximateLMHRNormality}
\textbf{Approximate Normality of log Metropolis Hastings ratio (LMHR):}

For every real $x,z$ and positive integer $n$
\[
\rho_n(x,x+\sigma_nz\,|B)\quad=\quad M_n(x,z\,|B)\quad+\quad\Delta_n(x,z\,|B)\,,
\]
% where $M_n(x,z\,|B)$ is a centred Normal random variable with variance $h(x,z)~\sigma_n^{2\beta }$, for some function $h$ exhibiting at most polynomial growth, and $\Delta_n(x,z\,|B)$ is a random variable satisfying
where (for each \(n\)) the random process $(M_n(x,z\,|B):x,z\in\Reals)$ 
is a centred Gaussian process such that processes $K$ and $M_n$ are also jointly Gaussian. Furthermore,
\(M_n(x,z\,|B)\) has variance $h(x,z)~\sigma_n^{2\beta }$, 
for some function $h$ exhibiting at most polynomial growth, 
and $\Delta_n(x,z\,|B)$ is a random variable satisfying
%\xNB[WSK4]{Strengthened requirements on $M_n$ as required in course of proof of Lemma \ref{lem:hatI_to_J} -- see NB below. \textbf{DOUBLED} moment requirement for $\Delta_n(x,z\,|B)$ because of needs of Lindeberg lemma \ref{lem:Lindeberg}. \textbf{Not yet checked for implications for RWM and MALA sections!}}
 \[
 \iint_{\Reals^2}\Expect{\left|\Delta_n(x,z\,|B)\right|^{8m}}\nu(x,z){\d}x{\d}z
 \quad\precsim\quad
 \sigma_n^{8m\beta +8m\gamma}\,.
 \]
%\[
%\iint_{\Reals^2}\Expect{\left|\Delta_n(x,z\,|B)\right|^{16m}}\nu(x,z){\d}x{\d}z
%\quad\precsim\quad
%\sigma_n^{16m\beta +16m\gamma}\,.
%\]

\item\label{assumption:AsymptoticWeakDependence}
\textbf{Asymptotic Weak Dependence:}
There exist sets $\auxset_n\subset\Reals^4$ taking
up increasingly larger parts of the space,
specifically $\int_{\auxset_n^c}\nu(x_1,z_1)\nu(x_2,z_2){\d}x_1 {\d}z_1 {\d}x_2 {\d}z_2\precsim\sigma_n^{1/2}$,
and fixed polynomials \(g_1, g_2\),
 such that for  $(x_1,z_1,x_2,z_2)\in\auxset_n$
\[
\left|\Expect{M_n(x_1,z_1\,|B)M_n(x_2,z_2\,|B)}\right|
\quad\leq\quad
 g_1(x_1,z_1,x_2,z_2)\times\sigma_n^{2\beta+\gamma}\,
\]
while, for all real $x_1,z_1,x_2$,
\[
\left|\Expect{M_n(x_1,z_1\,|B)K(x_2\,|B)}\right|
\quad\leq\quad
 g_2(x_1,z_1,x_2)\times\sigma_n^{\beta+\gamma}\,.
\]
\end{enumerate}
\end{AssumptionFramework}

Sections~\ref{sec:RWM} and \ref{sec:MALA} respectively give concrete examples of anomalous RWM and MALA algorithms in random environment that can be cast in terms of the above framework, that is they satisfy Assumptions~\ref{assumption:NormalPotentialFluctuations}-\ref{assumption:AsymptoticWeakDependence}.

Assumptions \ref{assumption:NormalPotentialFluctuations}-\ref{assumption:AsymptoticWeakDependence}
allow the
approximation of functionals $\mathcal{I}_n(B)$ by 
progressively simpler functionals. Initially,
consider
\begin{equation}\label{eq:tildeI_n}
\tilde{\mathcal{I}}_n(B)
\quad=\quad
\iint_{\Reals^2}\rho_n^2(x,x+\sigma_nz\,|B)\exp(K(x\,|B))\nu(x,z){\d}x{\d}z\,.
\end{equation}
We prove a quantitative result which will imply almost sure decay at the same speed as $\mathcal{I}_n(B)$.
\begin{lem}\label{lem:ItotildeI}
Let Assumptions
\ref{assumption:NormalPotentialFluctuations},
\ref{assumption:AsymptoticProposal} 
and \ref{assumption:ApproximateLMHRNormality}
be satisfied. Then 
\[
\Expect{\left|\mathcal{I}_n(B)-\tilde{\mathcal{I}}_n(B)\right|^{m}}
\quad\precsim\quad
\sigma_n^{2m\beta+m\gamma}\,.
\]
\end{lem}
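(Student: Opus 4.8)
The plan is to bound the $m$-th moment of the difference by reducing it to a pointwise (in $x,z$) moment estimate and then separating the three sources of smallness — the smallness of $\rho_n$, the exponential tilt $\exp(K)$, and the smallness of $L_n-1$ — by a single Hölder inequality whose exponents are tuned to the three moment budgets in Assumptions~\ref{assumption:NormalPotentialFluctuations}--\ref{assumption:ApproximateLMHRNormality}. First I would substitute Assumptions~\ref{assumption:NormalPotentialFluctuations} and \ref{assumption:AsymptoticProposal} into \eqref{eq:I_n} to write
\[
\mathcal{I}_n(B)-\tilde{\mathcal{I}}_n(B)=\iint_{\Reals^2}\rho_n^2(x,x+\sigma_nz\,|B)\exp(K(x\,|B))\bigl(L_n(x,z\,|B)-1\bigr)\nu(x,z){\d}x{\d}z\,,
\]
since the two functionals differ only in carrying the factor $L_n-1$ in place of $1$. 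Because $\nu=\nu_1\times\nu_2$ is a probability density, this is an average against $\nu$, so Jensen's inequality for the convex map $t\mapsto|t|^m$ (here $m\geq1$) gives $\bigl|\mathcal{I}_n(B)-\tilde{\mathcal{I}}_n(B)\bigr|^m\leq\iint|\rho_n|^{2m}\exp(mK)|L_n-1|^m\nu{\d}x{\d}z$. Taking the environment-expectation and applying Fubini--Tonelli (the integrand is non-negative) reduces the claim to bounding $\iint_{\Reals^2}\Expect{|\rho_n|^{2m}\exp(mK)|L_n-1|^m}\nu{\d}x{\d}z$.

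Regarding this last quantity as a single integral over the product of the environment probability space with $(\Reals^2,\nu)$, I would apply Hölder's inequality with exponents $(p,q,r)=(4,2,4)$, assigning exponent $4$ to $|\rho_n|^{2m}$, exponent $2$ to $\exp(mK)$ and exponent $4$ to $|L_n-1|^m$ (note $\tfrac14+\tfrac12+\tfrac14=1$). This produces the product of $\bigl(\iint\Expect{|\rho_n|^{8m}}\nu\bigr)^{1/4}$, $\bigl(\iint\Expect{\exp(2mK)}\nu\bigr)^{1/2}$ and $\bigl(\iint\Expect{|L_n-1|^{4m}}\nu\bigr)^{1/4}$, each of which is then handled by a different hypothesis. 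For the middle factor, $K(x\,|B)$ is centred Gaussian with variance $k(x)$, so $\Expect{\exp(2mK(x\,|B))}=\exp(2m^2k(x))$, and Assumption~\ref{assumption:NormalPotentialFluctuations} makes this a finite constant independent of $n$. For the last factor, Assumption~\ref{assumption:AsymptoticProposal} gives $\iint\Expect{|L_n-1|^{4m}}\nu\precsim\sigma_n^{4m\gamma}$, contributing $\sigma_n^{m\gamma}$ after the fourth root.

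For the first factor I would decompose $\rho_n=M_n+\Delta_n$ (Assumption~\ref{assumption:ApproximateLMHRNormality}) and use $|\rho_n|^{8m}\leq2^{8m-1}(|M_n|^{8m}+|\Delta_n|^{8m})$. The $\Delta_n$ term is $\precsim\sigma_n^{8m\beta+8m\gamma}$ directly by Assumption~\ref{assumption:ApproximateLMHRNormality}, whereas $M_n(x,z\,|B)$ is centred Gaussian with variance $h(x,z)\sigma_n^{2\beta}$, so its $8m$-th moment equals $c_m\,h(x,z)^{4m}\sigma_n^{8m\beta}$ for a constant $c_m$ depending only on $m$; integrating the polynomial $h^{4m}$ against $\nu$ (all of whose polynomial moments are finite) gives a finite constant times $\sigma_n^{8m\beta}$. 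Since $\sigma_n$ is bounded the $M_n$ term dominates, so the first factor is $\precsim(\sigma_n^{8m\beta})^{1/4}=\sigma_n^{2m\beta}$. Multiplying the three contributions yields $\sigma_n^{2m\beta}\cdot O(1)\cdot\sigma_n^{m\gamma}=\sigma_n^{2m\beta+m\gamma}$, which is the asserted bound.

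There is no deep obstacle here; the one thing that must be got right is the alignment of the Hölder exponents with the three moment budgets. The exponential moment in Assumption~\ref{assumption:NormalPotentialFluctuations} is stated at level $2m^2=\tfrac12(2m)^2$, which is exactly what the exponent-$2$ slot demands of $\exp(mK)$; the $4m$ budget of Assumption~\ref{assumption:AsymptoticProposal} and the $8m$ budget of Assumption~\ref{assumption:ApproximateLMHRNormality} match the two exponent-$4$ slots. It is worth emphasising that the joint Gaussianity of $K$ and $M_n$ plays no role at this stage, since Hölder needs no independence; it becomes relevant only later, when $\tilde{\mathcal{I}}_n(B)$ itself is analysed.
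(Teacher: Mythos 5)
Your proposal is correct and takes essentially the same route as the paper: the paper's ``Jensen followed by double use of Cauchy--Schwarz'' is exactly your single H\"older inequality with exponents $(4,2,4)$, and your handling of the $\iint\Expect{\rho_n^{8m}}\nu$ factor via the decomposition $\rho_n=M_n+\Delta_n$, Gaussian moments of $M_n$, and the $8m$-moment budget for $\Delta_n$ mirrors the paper's argument step for step. No gaps.
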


\begin{proof}
Writing \(\nu(x,z)=\nu_1(x)\nu_2(z)\) for convenience,
the expectation can be rewritten using Assumptions \ref{assumption:NormalPotentialFluctuations} and \ref{assumption:AsymptoticProposal} of Framework \ref{theassumption} and then bounded by a combination of Jensen's inequality and double usage of
Cauchy-Schwarz inequality (all with respect to $\nu(x,z)\d x\d z\d\mathbb{P}$) to give
\begin{multline*}
\Expect{\left|\mathcal{I}_n(B)-\tilde{\mathcal{I}}_n(B)\right|^{m}}
\quad\leq\quad
\\
\Expect{\iint_{\Reals^2}\rho_n^{2m}(x,x+\sigma_nz\,|B)\exp(mK(x\,|B))\left|L_n(x,z\,|B)-1\right|^{m}\nu(x,z){\d}x{\d}z}
\\
\quad\leq\quad
\left(\iint_{\Reals^2}\Expect{\exp(2mK(x\,|B))}\nu(x,z){\d}x{\d}z\right)^{1/2}
\times\left(\iint_{\Reals^2}\Expect{\left(L_n(x,z\,|B)-1\right)^{4m}}\nu(x,z){\d}x{\d}z\right)^{1/4}
\\
\times
\left(\iint_{\Reals^2}\Expect{\rho_n^{8m}(x,x+\sigma_nz\,|B)}\nu(x,z){\d}x{\d}z\right)^{1/4}\,.
\end{multline*}
The first factor is bounded 
by application of Assumption \ref{assumption:NormalPotentialFluctuations} 
followed by marginalization over \(z\).
The second factor decays at least as $\sigma_n^{m\gamma}$ by Assumption \ref{assumption:AsymptoticProposal}.

The proof will be concluded once we establish the last factor decays at least as $\sigma_n^{2m\beta}$. Indeed
\begin{multline*}
\left(\iint_{\Reals^2}\Expect{\rho_n^{8m}(x,x+\sigma_nz\,|B)}\nu(x,z){\d}x{\d}z\right)^{1/4}
\quad=\quad
\\
\left(\Expect{\iint_{\Reals^2}
\left(M_n(x,z\,|B)+\Delta_n(x,z\,|B)\right)^{8m}\nu(x,z){\d}x{\d}z
}\right)^{1/4}
\\
\quad\leq\quad
2^{2m-1}\cdot
\Bigg(
\left(\Expect{\iint_{\Reals^2}
M_n(x,z\,|B)^{8m}\nu(x,z){\d}x{\d}z
}\right)^{1/4}
\quad+\quad
\left(\Expect{\iint_{\Reals^2}
\Delta_n(x,z\,|B)^{8m}\nu(x,z){\d}x{\d}z
}\right)^{1/4}
\Bigg)
\\
\quad\leq\quad
2^{2m-1}
\cdot
\Bigg(
\left(\Expect{N(0,1)^{8m}}\iint_{\Reals^2}
h(x,z)^{4m}\nu(x,z){\d}x{\d}z
\right)^{1/4}
\sigma_n^{2m\beta}
\quad+\quad
C\sigma_n^{2m\beta+2m\gamma}
\Bigg)
\quad\precsim\quad
\sigma_n^{2m\beta}\,,
\end{multline*}
where $C$ is some positive constant.
The identity holds by Assumption \ref{assumption:ApproximateLMHRNormality}. The first inequality 
follows from the elementary bound $(a+b)^{2m}\leq 2^{2m-1}(a^{2m}+b^{2m})$ 
together with application of a triangle inequality in $L^4(\nu\times\mathbb{P})$ norm. 
The remainder follows from the Fubini-Tonelli theorem and the details of \ref{assumption:ApproximateLMHRNormality}.
\end{proof}

The functionals $\tilde{I}_n(B)$ can now be simplified further by approximating 
\(\rho_n(x,x+\sigma_nz\;|B)\approx M_n(x,z\,|B)\),
and controlling the approximation using
Assumptions
\ref{assumption:NormalPotentialFluctuations} and\ref{assumption:ApproximateLMHRNormality}. Let
\begin{equation}\label{eq:hatI_n}
\hat{\mathcal{I}}_n(B)=\iint_{\Reals^2}M^2_n(x,z\,|B)\exp(K(x\,|B))\nu(x,z){\d}x{\d}z\,.
\end{equation}
Again the functionals $\tilde{\mathcal{I}}_n(B)$ and $\hat{\mathcal{I}}_n(B)$ can be shown to be close to each other.

\begin{lem}\label{lem:tildeI_to_hatI}
Suppose
Assumptions
\ref{assumption:NormalPotentialFluctuations} and\ref{assumption:ApproximateLMHRNormality}
are satisfied. Then
\[
\Expect{\left|\tilde{\mathcal{I}}_n(B)-\hat{\mathcal{I}}_n(B)\right|^m}\quad\precsim\quad \sigma_n^{2m\beta+m\gamma}\,.
\]
\end{lem}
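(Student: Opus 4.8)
The plan is to follow the pattern already established in the proof of Lemma~\ref{lem:ItotildeI}, now exploiting the additive decomposition of Assumption~\ref{assumption:ApproximateLMHRNormality}. The first step is purely algebraic: since $\rho_n(x,x+\sigma_n z\,|B)=M_n(x,z\,|B)+\Delta_n(x,z\,|B)$, we have $\rho_n^2-M_n^2=2M_n\Delta_n+\Delta_n^2$, so that
\[
\tilde{\mathcal{I}}_n(B)-\hat{\mathcal{I}}_n(B)
\;=\;
\iint_{\Reals^2}\left(2M_n(x,z\,|B)\Delta_n(x,z\,|B)+\Delta_n^2(x,z\,|B)\right)\exp(K(x\,|B))\,\nu(x,z){\d}x{\d}z\,.
\]
Applying the elementary inequality $(a+b)^m\precsim a^m+b^m$ together with a triangle inequality then reduces the problem to bounding separately the $m$-th moments of the cross-term integral $\iint M_n\Delta_n\exp(K)\nu$ and of the square-term integral $\iint\Delta_n^2\exp(K)\nu$.

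For each of these integrals I would first use Jensen's inequality (valid since $\nu(x,z){\d}x{\d}z$ is a probability measure) to pull the $m$-th power inside the integral, and then separate the integrand by H\"older's inequality with respect to the probability measure $\nu(x,z){\d}x{\d}z\,{\d}\mathbb{P}$, exactly as before. For the cross term I would use the three exponents $(\tfrac12,\tfrac14,\tfrac14)$ attached respectively to $\exp(mK)$, $|M_n|^m$ and $|\Delta_n|^m$, producing the factors
\[
\left(\iint_{\Reals^2}\Expect{\exp(2mK(x\,|B))}\nu(x,z){\d}x{\d}z\right)^{1/2},\quad
\left(\iint_{\Reals^2}\Expect{|M_n|^{4m}}\nu(x,z){\d}x{\d}z\right)^{1/4},\quad
\left(\iint_{\Reals^2}\Expect{|\Delta_n|^{4m}}\nu(x,z){\d}x{\d}z\right)^{1/4}\,.
\]
For the square term I would instead use Cauchy--Schwarz with exponents $(\tfrac12,\tfrac12)$ attached to $\exp(mK)$ and to $|\Delta_n|^{2m}$.

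Bounding each factor is then routine. The exponential factor is $\precsim 1$ by Assumption~\ref{assumption:NormalPotentialFluctuations}: since $\Expect{\exp(2mK(x\,|B))}=\exp(2m^2k(x))$, marginalising over $z$ and invoking the finite moment hypothesis gives a finite constant independent of $n$. Because $M_n$ is centred Gaussian with variance $h(x,z)\sigma_n^{2\beta}$, one has $\Expect{|M_n|^{4m}}=\Expect{|N(0,1)|^{4m}}\,h(x,z)^{2m}\sigma_n^{4m\beta}$, and integrating against $\nu$ (finite because $h$ grows only polynomially and $\nu$ has all polynomial moments) yields the $M_n$-factor as $\precsim\sigma_n^{m\beta}$. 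For $\Delta_n$, Assumption~\ref{assumption:ApproximateLMHRNormality} supplies control of the $8m$-th moment, so a further Jensen step gives $\iint\Expect{|\Delta_n|^{4m}}\nu\leq\bigl(\iint\Expect{|\Delta_n|^{8m}}\nu\bigr)^{1/2}\precsim\sigma_n^{4m\beta+4m\gamma}$; hence the $\Delta_n$-factor is $\precsim\sigma_n^{m\beta+m\gamma}$ in the cross term and $\precsim\sigma_n^{2m\beta+2m\gamma}$ in the square term. Multiplying the factors out, the cross term contributes $\precsim\sigma_n^{2m\beta+m\gamma}$ and the square term contributes $\precsim\sigma_n^{2m\beta+2m\gamma}$; since $\sigma_n\to0$ and $\gamma>0$, the cross term dominates and delivers the claimed bound.

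The computation is entirely mechanical once this structure is in place; the only points requiring care are the exponent bookkeeping so that the powers of $\sigma_n$ combine to exactly $\sigma_n^{2m\beta+m\gamma}$, and the recognition that it is the $M_n\Delta_n$ cross term, not the $\Delta_n^2$ term, that is the dominant contribution. This matches the heuristic scaling $M_n\sim\sigma_n^{\beta}$ and $\Delta_n\sim\sigma_n^{\beta+\gamma}$, whence $M_n\Delta_n\sim\sigma_n^{2\beta+\gamma}$ raised to the $m$-th power.
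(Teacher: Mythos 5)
Your proof is correct and follows essentially the same route as the paper's: Jensen's inequality to pull the $m$-th power inside the $\nu$-integral, then H\"older/Cauchy--Schwarz with exponents $(\tfrac12,\tfrac14,\tfrac14)$, with Assumption~\ref{assumption:NormalPotentialFluctuations} bounding the exponential factor and Assumption~\ref{assumption:ApproximateLMHRNormality} supplying exactly the moment decays you cite. The only cosmetic difference is that you split $\rho_n^2-M_n^2=2M_n\Delta_n+\Delta_n^2$ into two separately estimated terms, whereas the paper keeps the product $\Delta_n\left(2M_n+\Delta_n\right)$ together and applies a single triple H\"older step; the exponent bookkeeping and the resulting bound $\sigma_n^{2m\beta+m\gamma}$ are identical.
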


\begin{proof}
Arguing as in Lemma \ref{lem:ItotildeI}.
Jensen's inequality yields
\[
\left|\tilde{\mathcal{I}}_n(B)-\hat{\mathcal{I}}_n(B)\right|^{m}
\quad\leq\quad
 \iint_{\Reals^2}\left|\rho^2_n(x,x+\sigma_nz\,|B)-M^2_n(x,z\,|B)\right|^{m}\exp(mK(x\,|B))\nu(x,z){\d}x{\d}z\,.
\]
Recall that by Assumption~\ref{assumption:ApproximateLMHRNormality}
\[
\rho^2_n(x,x+\sigma_nz\,|B)-M^2_n(x,z\,|B)
\quad=\quad
\Delta_n(x,z\,|B)\left(2M^2_n(x,z\,|B)+\Delta_n(x,z\,|B)\right).
\]
Exchanging the
expectation with the double integral using the Fubini-Tonelli theorem,
and then applying the Cauchy-Schwarz inequality twice over,
\begin{multline*}
\Expect{\left|\tilde{\mathcal{I}}_n(B)-\hat{\mathcal{I}}_n(B)\right|^{m}}
\quad\leq\quad
\left(\iint_{\Reals^2}\Expect{\exp\left(2mK(x\,|B)\right)}\nu(x,z){\d}x{\d}z)\right)^{1/2}
\\
\times\quad
\left(\iint_{\Reals^2}\Expect{\Delta_n(x,z\,|B)^{4m}}\nu(x,z){\d}x{\d}z\right)^{1/4} 
\\
\times\quad
\left(\iint_{\Reals^2}\Expect{\left(2M_n(x,z\,|B)+\Delta_n(x,z\,|B)\right)^{4m}}\nu(x,z){\d}x{\d}z\right)^{1/4}\,.
\end{multline*}
As in the proof of Lemma \ref{lem:ItotildeI},
Assumption \ref{assumption:NormalPotentialFluctuations}
implies the first factor is bounded and Assumption~\ref{assumption:ApproximateLMHRNormality} guarantees
second factor decays at least as $\sigma_n^{m\beta +m\gamma}$ and the third as $\sigma_n^{m\beta }$.
\end{proof}

The final step is to consider the functional
obtained from $\hat{\mathcal{I}}_n(B)$ 
by replacing $\left(M_n(x,z\,|B)\right)^2$ by its expectation (see Assumptions \ref{assumption:NormalPotentialFluctuations} and \ref{assumption:ApproximateLMHRNormality}):
\begin{multline}\label{eq:J_n}
\mathcal{J}_n(B)\quad=\quad \iint_{\Reals^2}\Expect{\left(M_n(x,z\,|B)\right)^2}
\exp\left(K(x\,|B)\right)\nu(x,z){\d}x{\d}z.
\\
\quad=\quad
\sigma_n^{2\beta}\cdot\iint_{\Reals^2}h(x,z)
\upi(x\,|B)\nu_2(z){\d}x{\d}z
\\
\quad=\quad
\sigma_n^{2\beta}\cdot\iint_{\Reals^2}h(x,z)
\exp(K(x\,|B))\nu_1(x)\nu_2(z){\d}x{\d}z\,.
\end{multline}
Note that the double integral is almost surely finite:
this follows from the polynomial growth of \(h(x,z)\)
(Assumption \ref{assumption:ApproximateLMHRNormality}), Cauchy-Schwarz inequality,
the fact that the densities \(\nu_1\) and \(\nu_2\) have finite polynomial moments
(stipulated in the Framework \ref{theassumption}),
and the fact that \(\exp(k(x))\) is integrable with respect to \(\nu_1\)
(Assumption~\ref{assumption:NormalPotentialFluctuations}).

Again we need to establish that the functionals $\hat{\mathcal{I}}_n(B)$ and $\mathcal{J}_n(B)$ are close.
\begin{lem}\label{lem:hatI_to_J}
Let Assumptions 
\ref{assumption:NormalPotentialFluctuations},
\ref{assumption:ApproximateLMHRNormality}
and
\ref{assumption:AsymptoticWeakDependence}
be satisfied. Then
\[
\Expect{\left|\hat{\mathcal{I}}_n(B)-\mathcal{J}_n(B)\right|^{m}}
\quad\precsim\quad
\sigma_n^{2m\beta + m\min(\frac{\gamma}{2},\frac{1}{48})}\,.
\]
\end{lem}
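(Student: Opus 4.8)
The quantity to control is
\[
D_n(B)\;=\;\hat{\mathcal{I}}_n(B)-\mathcal{J}_n(B)\;=\;\iint_{\Reals^2}\left(M_n^2(x,z\,|B)-\Expect{M_n^2(x,z\,|B)}\right)\exp(K(x\,|B))\,\nu(x,z)\,{\d}x{\d}z\,,
\]
where by Assumption~\ref{assumption:ApproximateLMHRNormality} the centring constant equals $\Expect{M_n^2(x,z\,|B)}=h(x,z)\,\sigma_n^{2\beta}$, the variance of the Gaussian $M_n(x,z\,|B)$. To sidestep the sign of $|D_n|^m$ when $m$ is odd, I would first apply Lyapunov's inequality, $\Expect{|D_n|^m}\le\Expect{D_n^{2m}}^{1/2}$, reducing the task to showing $\Expect{D_n^{2m}}\precsim\sigma_n^{4m\beta+2m\min(\tfrac{\gamma}{2},\tfrac{1}{48})}$ and taking the square root.

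I would then expand $D_n^{2m}$ as a $2m$-fold integral over sites $(x_1,z_1),\dots,(x_{2m},z_{2m})$, move the expectation inside by Fubini--Tonelli, and observe that the inner expectation is exactly of the shape handled by Lemma~\ref{lem:Isserlis_exponential}, with $X_0=\sum_{j}K(x_j\,|B)$ and $X_i=M_n(x_i,z_i\,|B)$. Here the joint Gaussianity of $K$ and the $M_n$ (Assumption~\ref{assumption:ApproximateLMHRNormality}) is essential, and the agreement between the centring constants and the diagonal covariances $R_{ii}$ lets the lemma apply directly. The output is a sum over partitions $A_1\sqcup A_2\sqcup A_3=\{1,\dots,2m\}$ and \emph{proper} pairings $p\in\mathcal{P}^*(2A_1\cup A_2)$; the structural point, inherited from Proposition~\ref{prop:Isserlis1}, is that properness rules out every diagonal factor $R_{ii}$, so that each surviving factor is a genuine cross term $R_{0i}$ or $R_{\lambda_1\lambda_2}$ with $\lambda_1\ne\lambda_2$.

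A degree count shows the total power of $\sigma_n^{\beta}$ is $4m\beta$ for every term, giving the baseline decay; the extra decay is supplied by the weak-dependence estimates of Assumption~\ref{assumption:AsymptoticWeakDependence}, under which each cross term $R_{\lambda_1\lambda_2}$ (respectively $R_{0i}$) carries a spare factor $\sigma_n^{\gamma}$ on the good set $\auxset_n$. The least-decaying term is the one with all sites doubled ($A_1=\{1,\dots,2m\}$), whose $2m$-pair proper pairing contributes $\sigma_n^{2m\gamma}$. To integrate the coupled product $\prod_{\Lambda\in p}R_{\lambda_1\lambda_2}$ over the sites --- the pairs of $p$ share sites and form a graph of maximal degree two --- I would use the decoupling of Lemma~\ref{lem:Isserlis_product_bound} to split $p$ into disjoint matchings and factorise the site-integral into one- and two-site pieces, each finite by the polynomial growth of $h$, the polynomial moments of $\nu_1,\nu_2$, and the integrability of $\exp(\tfrac12 R_{00})$ against $\nu_1$ furnished by Assumption~\ref{assumption:NormalPotentialFluctuations} (the generous exponent $2m^2$ there leaves ample room). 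This settles the good-set contribution comfortably, indeed with decay faster than is needed.

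The decisive part, and the main obstacle, is the bad set. Where a pair of sites lies in $\auxset_n^c$ the weak-dependence bound fails, so I would bound the offending correlation by the universal Cauchy--Schwarz estimate $|R_{\lambda_1\lambda_2}|\le\sqrt{R_{\lambda_1\lambda_1}R_{\lambda_2\lambda_2}}=\sqrt{h(x_{\lambda_1},z_{\lambda_1})h(x_{\lambda_2},z_{\lambda_2})}\,\sigma_n^{2\beta}$, forfeiting the $\sigma_n^{\gamma}$ gain on those edges. This is compensated by the smallness of the bad set: I would split the domain according to which edges are bad, extract from any collection of $j$ bad edges a sub-family of pairwise disjoint bad pairs comprising a fixed positive fraction of them (possible because the pairing graph has maximal degree two), and transfer the hypothesis $\int_{\auxset_n^c}\nu(x_1,z_1)\nu(x_2,z_2)\precsim\sigma_n^{1/2}$ from the reference measure to the polynomial-weighted one by a further Cauchy--Schwarz step, so that each disjoint bad pair returns a compensating power of $\sigma_n$. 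Balancing the $\gamma$ forfeited per bad edge against this measure gain is exactly the optimisation yielding the conservative exponent $\min(\tfrac{\gamma}{2},\tfrac{1}{48})$ per site; summing the finitely many partition terms and regions and taking the square root then gives the stated bound. I expect this bad-set bookkeeping, rather than the Isserlis algebra, to be the delicate step.
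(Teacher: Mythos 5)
Your proposal is correct and follows essentially the same route as the paper's proof: the Lyapunov reduction $\Expect{|D_n|^m}\le\Expect{D_n^{2m}}^{1/2}$, Fubini--Tonelli plus Lemma~\ref{lem:Isserlis_exponential} applied with $X_0=\bar K=\sum_j K(x_j\,|B)$, the $4m\beta$ degree count, decoupling of the pairing product via Lemma~\ref{lem:Isserlis_product_bound}, and the good-set/bad-set trade-off (weak dependence on $\auxset_n$ versus Cauchy--Schwarz together with the $\sigma_n^{1/2}$ measure bound off $\auxset_n$) that yields the exponent $\min(\tfrac{\gamma}{2},\tfrac{1}{48})$. The only organizational difference is that the paper removes the globally coupling factors $\exp(\tfrac12\Expect{\bar K^2})\prod_i\Expect{M_i\bar K}$ in a single Cauchy--Schwarz step \emph{before} decoupling, exploiting that the second bound of Assumption~\ref{assumption:AsymptoticWeakDependence} holds for all arguments (not just on $\auxset_n$), so bad-set bookkeeping is needed only for the pairing factors $\Expect{M_{\lambda_1}M_{\lambda_2}}$, not for the $K$-correlations as your sketch suggests.
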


\begin{proof}
It suffices to bound 
$\Expect{\left|\hat{\mathcal{I}}_n(B)-\mathcal{J}_n(B)\right|^{2m}}$,
since by Jensen's inequality
\[\Expect{\left|\hat{\mathcal{I}}_n(B)-\mathcal{J}_n(B)\right|^{m}}
\quad\leq\quad
\Expect{\left|\hat{\mathcal{I}}_n(B)-\mathcal{J}_n(B)\right|^{2m}}^{1/2}\,.\]

Formulae \eqref{eq:hatI_n} 
for \(\hat{\mathcal{I}}_n(B)\)
and \eqref{eq:J_n} 
for \(\mathcal{J}_n(B)\) together imply
\[
\hat{\mathcal{I}}_n(B)-\mathcal{J}_n(B)
\quad=\quad
\iint_{\Reals^2}\left(M_n(x,z\,|B)^2-\Expect{M_n(x,z\,|B)^2}\right) \exp\left(K(x\,|B)\right)\nu(x,z){\d}x{\d}z
\]
and consequently
\begin{multline*}
\left(\hat{\mathcal{I}}_n(B)-\mathcal{J}_n(B)\right)^{2m}
\quad=\quad
\\
\idotsint_{\Reals^{4m}}
\left(\prod_{i=1}^{2m}\left(M_n(x_i,z_i\,|B)^2-\Expect{M_n(x_i,z_i\,|B)^2}\right) 
\right)
\times
\exp\left(\sum_{i=1}^{2m}\left(K(x_i|B)\right)\right)
\times
\prod_{i=1}^{2m}\left(\nu(x_i,z_i){\d}x_i{\d}z_i\right)
\\
\quad=\quad
\idotsint_{\Reals^{4m}}
\left(\prod_{i=1}^{2m}\left(M^2_i-R_{ii}\right)\right)
\times \exp(\bar{K})
\times 
\left(\prod_{i=1}^{2m}\nu(x_i,z_i){\d}x_i{\d}z_i\right)
\,,
\end{multline*}
where we abbreviate notation by writing
$M_i=M_n(x_i,z_i\,|B)$, 
$R_{ii}=\Expect{M_i^2}$ 
and $\bar{K}=\sum_{i=1}^{2m}\left(K(x_i|B)\right)$.
Note that the various
\(M_i\) and \(K(x_j|B)\) are \emph{not}
necessarily independent, and typically will not be so.

Using the Fubini-Tonelli theorem 
to exchange the expectation
in \(\Expect{\left(\hat{\mathcal{I}}_n(B)-\mathcal{J}_n(B)\right)^{2m}}\)
with the implicit
multiple integrals,
% Assumptions \ref{assumption:ApproximateLMHRNormality} and \ref{assumption:NormalPotentialFluctuations}
% then imply:
we now obtain
\begin{equation}\label{eq:hatItoJ:MotherIntegral}
\Expect{\left(\hat{\mathcal{I}}_n(B)-\mathcal{J}_n(B)\right)^{2m}}
\quad=\quad	
\idotsint_{\Reals^{4m}}\Expect{\exp(\bar{K}) \prod_{i=1}^{2m}\left(M^2_i-R_{ii}\right) }
\prod_{i=1}^{2m}\left(\nu(x_i,z_i){\d}x_i{\d}z_i\right)
\,.
\end{equation}

By Lemma~\ref{lem:Isserlis_exponential} the expectation $\Expect{\exp(\bar{K}) \prod_{i=1}^{2m}\left(M^2_i-R_{ii}\right)}$ equals
%\xNB[WSK3]{Near-clash of notation between $\auxset_n$ and $A_1$ etc.}
\[
\exp\left(\half\Expect{\bar{K}^2}\right)		\sum_{\substack{A_1\cup A_2\cup A_3=\{1,2,\dots,2m\}	\\	A_1\cap A_2=A_1\cap A_3=A_2\cap A_3=\emptyset\\ |A_2|\text{ is even}}}
\sum_{p\in\mathcal{P}^*(2A_1\cup A_2)}
2^{|A_2|} \prod_{i\in A_2}\Expect{M_i\bar{K}}\times\prod_{i\in A_3}\Expect{M_i\bar{K}}^2	\times\prod_{\Lambda\in p}\Expect{M_{\lambda_1}M_{\lambda_2}}.
\]
Inserting the above into  \eqref{eq:hatItoJ:MotherIntegral}, we obtain
%\xNB[WSK3]{I think Equation (\ref{eq:hatItoJ:aux1}) contained a spurious $(\sqrt{2\pi})^{2m}$, which I have commented out.}
\begin{multline}\label{eq:hatItoJ:aux1}
\Expect{\left(\hat{\mathcal{I}}_n(B)-\mathcal{J}_n(B)\right)^{2m}}
\quad=\quad
\sum_{\substack{A_1\cup A_2\cup A_3=\{1,2,\dots,2m\}	\\	A_1\cap A_2=A_1\cap A_3=A_2\cap A_3=\emptyset\\ |A_2|\text{ is even}}}
\sum_{p\in\mathcal{P}^*(2A_1\cup A_2)}		2^{|A_2|}
% \left(\sqrt{2\pi}\right)^{2m} 
\\
\times\quad
\idotsint_{\Reals^{4m}} \prod_{i\in A_2}\Expect{M_i\bar{K}}\cdot\prod_{i\in A_3}\Expect{M_i\bar{K}}^2	\cdot\prod_{\Lambda\in p}\Expect{M_{\lambda_1}M_{\lambda_2}}
\cdot \exp\left(\Expect{\tfrac12\bar{K}^2}\right)	\cdot\prod_{i=1}^{2m}\left(\nu(x_i,z_i){\d}x_i{\d}z_i\right)
\,.
\end{multline}

Now focus attention on a typical summand 
in the above sum. 
This corresponds to fixing
a partition $A_1,A_2,A_3$ with prescribed properties and a proper pairing $p$ of $2A_1\cup A_2$. 
Applying the
Cauchy-Schwarz inequality 
with respect to the measure 
$\prod_{i=1}^{2m}\left(\nu(x_i,z_i){\d}x_i{\d}z_i\right)$,
\begin{multline}\label{eq:hatItoJ:aux2}
\idotsint_{\Reals^{4m}} 
\prod_{i\in A_2}\Expect{M_i\bar{K}}
\cdot
\prod_{i\in A_3}\Expect{M_i\bar{K}}^2
\cdot
\prod_{\Lambda\in p}
\Expect{M_{\lambda_1}M_{\lambda_2}}
\cdot 
\exp\left(\half\Expect{\bar{K}^2}\right)
\cdot
\prod_{i=1}^{2m}\left(\nu(x_i,z_i){\d}x_i{\d}z_i\right)
\\
\quad\leq\quad
\left(
\idotsint_{\Reals^{4m}} 
\prod_{i\in A_2}\Expect{M_i\bar{K}}^2
\cdot
\prod_{i\in A_3}\Expect{M_i\bar{K}}^4
\cdot 
\exp\left(\Expect{\bar{K}^2}\right)
\cdot
\prod_{i=1}^{2m}\left(\nu(x_i,z_i){\d}x_i{\d}z_i\right)
\right)^{1/2}
\\
\times\quad
\left(
\idotsint_{\Reals^{4m}} 
\prod_{\Lambda\in p}\Expect{M_{\lambda_1}M_{\lambda_2}}^2
\cdot
\prod_{i=1}^{2m}\left(\nu(x_i,z_i){\d}x_i{\d}z_i\right)
\right)^{1/2}
\,.
\end{multline}

Consider the first factor.
We can bound
each $|\Expect{M_i\bar{K}}|$ by
$|\Expect{M_i\bar{K}}|\leq g(z_i,x_1,\dots,x_n)\sigma_n^{\beta+\gamma}$,
using a polynomial 
$g(z_i,x_1,\dots,x_n)=\sum_{j=1}^{2m} g_2(x_i,z_i,x_j)$,
generated from the second point of
Assumption \ref{assumption:AsymptoticWeakDependence}.

By the Cauchy-Schwarz inequality,
\[
 \left(\frac{1}{2m}\right)^2\bar{K}^2
 \quad=\quad
 \left(\frac{1}{2m}\sum_{i=1}^{2m}K(x_i|B)\right)^2
 \quad\leq\quad
 \frac{1}{2m}\sum_{i=1}^{2m}K(x_i|B)^2
\]
and so
\begin{equation*}
 \prod_{i\in A_2}\Expect{M_i\bar{K}}^2\cdot\prod_{i\in A_3}\Expect{M_i\bar{K}}^4\cdot \exp\left(\Expect{\bar{K}^2}\right)
 \quad\leq\quad
 \prod_{i\in A_2}\Expect{M_i\bar{K}}^2\cdot\prod_{i\in A_3}\Expect{M_i\bar{K}}^4\cdot \exp\left(2m\sum_{j=1}^{2m}\Expect{K(x_j|B)^2}\right)
 \,.
\end{equation*}

Hence, 
% by inequality between arithmetic and quadratic mean, 
% by definition of $m$ and using 
Assumptions \ref{assumption:NormalPotentialFluctuations} and \ref{assumption:AsymptoticWeakDependence} yield
%\xNB[WSK3]{Argument leading to (\ref{eq:hatItoJ:aux3}): it would be convenient perhaps to refer to Assumption \ref{assumption:AsymptoticWeakDependence}(ii) rather than just Assumption \ref{assumption:AsymptoticWeakDependence}.}
\begin{multline}\label{eq:hatItoJ:aux3}
\prod_{i\in A_2}\Expect{M_i\bar{K}}^2\cdot\prod_{i\in A_3}\Expect{M_i\bar{K}}^4\cdot \exp\left(\Expect{\bar{K}^2}\right)
% \\
% \leq\quad
% \prod_{i\in A_2}\Expect{M_i\bar{K}}^2\cdot\prod_{i\in A_3}\Expect{M_i\bar{K}}^4\cdot \exp\left(2m\sum_{j=1}^{2m}\Expect{K(x_j|B)^2}\right) 
\\
\leq\quad
\exp\left(2m\sum_{j=1}^{2m}k(x_j)\right)
\prod_{i\in A_2}g(z_i,x_1,\dots,x_n)^2\prod_{i\in A_3}g(z_i,x_1,\dots,x_n)^4
~\times~\sigma_n^{(2|A_2|+4|A_3|)(\beta +\gamma)}\,.
\end{multline}
Application of the Cauchy-Schwarz inequality, and
the exponential integrability of 
\(2 m^2 k(x)\) (with respect to \(\nu_1(x){\d}x\))
assured by Assumption \ref{assumption:NormalPotentialFluctuations},
shows that this is
integrable with respect to the probability measure $\prod_{i=1}^{2m}\left(\nu(x_i,z_i){\d}x_i{\d}z_i\right)$. 
Consequently we obtain
\begin{equation}\label{eq:hatItoJ:aux4} 
\left(\idotsint_{\Reals^{4m}} \prod_{i\in A_2}\Expect{M_i\bar{K}}^2\cdot\prod_{i\in A_3}\Expect{M_i\bar{K}}^4\cdot \exp\left(\Expect{\bar{K}^2}\right)	\cdot\prod_{i=1}^{2m}\nu(x_i,z_i){\d}x_i{\d}z_i\right)^{1/2}
\quad\precsim\quad
\sigma_n^{(|A_2|+2|A_3|)(\beta  +\gamma)}\,.
\end{equation}

Consider now the second factor in \eqref{eq:hatItoJ:aux2}.
As $p$ is a proper pairing, Lemma \ref{lem:Isserlis_product_bound} asserts there is a
partition of $p$
into three sets of pairs $p_1,p_2,p_3$ 
of size at least $\lfloor|A_1|/3+|A_2|/6\rfloor$
so that all pairs within each $p_i$ are disjoint 
and moreover
%\xNB[WSK3]{\textbf{ISSUE!}: application of Lemma \ref{lem:Isserlis_product_bound} requires multivariate normality of all the $M_i=M_n(x_i,z_i|B)$! Of course we have this in our case, but it needs for be made explicit in Assumption \ref{assumption:ApproximateLMHRNormality}. \textbf{I have now fixed this.}}
\[
\prod_{\Lambda\in p}	\Expect{M_{\lambda_1}M_{\lambda_2}}^2
\leq 
\frac{1}{3}	\prod_{\Lambda\in p_1}	\Expect{M_{\lambda_1}M_{\lambda_2}}^6
+\frac{1}{3}	\prod_{\Lambda\in p_2}	\Expect{M_{\lambda_1}M_{\lambda_2}}^6
+\frac{1}{3}	\prod_{\Lambda\in p_3}	\Expect{M_{\lambda_1}M_{\lambda_2}}^6
\,.
\]

This allows us to split the integral over $\Reals^{4m}$ into a product of integrals over $\Reals^4$
\begin{multline}\label{eq:hatItoJ:aux5}
\idotsint_{\Reals^{4m}} \prod_{\Lambda\in p}\Expect{M_{\lambda_1}M_{\lambda_2}}^2\cdot\prod_{i=1}^{2m}\nu(x_i,z_i){\d}x_i{\d}z_i
\\
\leq\quad
 \frac{1}{3}\sum_{j=1}^3
\idotsint_{\Reals^{4m}}
\prod_{\Lambda\in p_j}\Expect{M_{\lambda_1}M_{\lambda_2}}^6
\prod_{i=1}^{2m}\nu(x_i,z_i){\d}x_i{\d}z_i
\\
=\quad
 \frac{1}{3}\sum_{j=1}^3
\prod_{\Lambda\in p_j} 
\int_{\Reals^{4}}\Expect{M_{\lambda_1}M_{\lambda_2}}^6\nu(x_{\lambda_1},z_{\lambda_1})\nu(x_{\lambda_2},z_{\lambda_2}){\d}x_{\lambda_1}{\d}z_{\lambda_1}{\d}x_{\lambda_2}{\d}z_{\lambda_2}.
\end{multline}
The last equality holds because pairs within each $p_j$ are by construction disjoint which imposes a product structure on the high-dimensional integral.

For each of the factors of \eqref{eq:hatItoJ:aux5}, the first bound of
Assumption \ref{assumption:AsymptoticWeakDependence} 
yields
\[
\int_{\auxset_n}\Expect{M_{\lambda_1}M_{\lambda_2}}^6\nu(x_{\lambda_1},z_{\lambda_1})
\nu(x_{\lambda_2},z_{\lambda_2}){\d}x_{\lambda_1}{\d}z_{\lambda_1}{\d}x_{\lambda_2}{\d}z_{\lambda_2}
\quad\precsim\quad
\sigma_n^{12\beta +6\gamma }\,.
\]
The Cauchy-Schwartz inequality,
together with
Assumptions \ref{assumption:ApproximateLMHRNormality} and \ref{assumption:AsymptoticWeakDependence} control the integral off the set $\auxset_n$,
\begin{multline*}
\int_{\auxset_n^c}\Expect{M_{\lambda_1}M_{\lambda_2}}^6\nu(x_{\lambda_1},z_{\lambda_1})\nu(x_{\lambda_2},z_{\lambda_2}){\d}x_{\lambda_1}{\d}z_{\lambda_1}{\d}x_{\lambda_2}{\d}z_{\lambda_2}\\
\leq\quad
\int_{\Reals^4}\Expect{M^2_{\lambda_1}}^3\Expect{M^2_{\lambda_2}}^31_{\auxset^c_n}(x_{\lambda_1},z_{\lambda_1},x_{\lambda_2},z_{\lambda_2})\nu(x_{\lambda_1},z_{\lambda_1})\nu(x_{\lambda_2},z_{\lambda_2}){\d}x_{\lambda_1}{\d}z_{\lambda_1}{\d}x_{\lambda_2}{\d}z_{\lambda_2}
\\
\quad\precsim\quad
\sigma_n^{12\beta }
\left(\int_{\auxset_n^c}\nu(x_{\lambda_1},z_{\lambda_1})\nu(x_{\lambda_2},z_{\lambda_2}){\d}x_{\lambda_1}{\d}z_{\lambda_1}{\d}x_{\lambda_2}{\d}z_{\lambda_2}\right)^{1/2}
\quad\precsim\quad
\sigma_n^{12\beta +\frac{1}{4}}\,.
\end{multline*}
Together the above bounds give 
\begin{equation*}
\int_{\Reals^4}\Expect{M_{\lambda_1}M_{\lambda_2}}^6\nu(x_{\lambda_1},z_{\lambda_1})\nu(x_{\lambda_2},z_{\lambda_2}){\d}x_{\lambda_1}{\d}z_{\lambda_1}{\d}x_{\lambda_2}{\d}z_{\lambda_2}
\quad\precsim\quad
\sigma_n^{12\beta +6\min(\gamma ,\tfrac{1}{24})}.
\end{equation*}

Since Lemma~\ref{lem:Isserlis_product_bound}
asserts that each set of pairs $p_j$ contains 
at least $\lfloor|A_1|/3+|A_2|/6\rfloor$ pairs,
the above together with \eqref{eq:hatItoJ:aux5} gives 
\begin{equation}\label{eq:hatItoJ:aux6}
\left(\idotsint_{\Reals^{4m}} \prod_{\Lambda\in p}\Expect{M_{\lambda_1}M_{\lambda_2}}^2\cdot\prod_{i=1}^{2m}\nu(x_i,z_i){\d}x_i{\d}z_i\right)^{1/2}
\quad\precsim\quad 
\sigma_n^{\left(6\beta +3\min(\gamma ,\tfrac{1}{24})\right)\left\lfloor|A_1|/3+|A_2|/6\right\rfloor}\,.
\end{equation}

Combining \eqref{eq:hatItoJ:aux6} with \eqref{eq:hatItoJ:aux2}
and \eqref{eq:hatItoJ:aux4},
we obtain
the following bound for each fixed partition:
\begin{align*}
&\left|\idotsint_{\Reals^{4m}} \prod_{i\in A_2}\Expect{M_i\bar{K}}\cdot\prod_{i\in A_3}\Expect{M_i\bar{K}}^2	\cdot\prod_{\Lambda\in p}\Expect{M_{\lambda_1}M_{\lambda_2}}
\cdot \exp\left(\Expect{\half\bar{K}^2}\right)	\cdot\prod_{i=1}^{2m}\nu(x_i,z_i){\d}x_i{\d}z_i\right|.
\\
&\precsim\quad
\sigma_n^{(|A_2|+2|A_3|)\left(\beta  +\gamma \right)}
\quad\times\quad
\sigma_n^{\lfloor|A_1|/3+|A_2|/6\rfloor\times (6\beta  +3\min(\gamma ,\tfrac{1}{24}))}.
\\
&\precsim \quad
\sigma_n^{2\beta (|A_1|+|A_2|+|A_3|)}
\quad\times\quad
\sigma_n^{2\gamma |A_3|~+~(\gamma +\tfrac{1}{2}\min\left(\gamma ,\frac{1}{24})\right)|A_2|~+~\min(\gamma ,\tfrac{1}{24})|A_1|
\quad-\quad 6\beta ~-~3\min(\gamma ,\tfrac{1}{24})}
\\
&\leq\quad
\sigma_n^{4m\beta }\times \sigma_n^{2m\min(\gamma ,\tfrac{1}{24})}\times \sigma_n^{-6\beta  -3\min(\gamma ,\tfrac{1}{24})}
\quad\leq\quad
\sigma_n^{4m\beta }\times \sigma_n^{m\min(\gamma ,\tfrac{1}{24})}\,.
\end{align*}
The argument for this uses $|A_1|+|A_2|+|A_3|=2m$
together with crude bounds to reduce
coefficients of remaining \(A_1\), \(A_2\), \(A_3\) 
to \(\min(\gamma ,\tfrac{1}{24})\) 
and then employs $m>3+\frac{144\beta}{\min(24\gamma,1)}$ as stipulated in
the Framework \ref{theassumption}.

The above bound no longer depends on the choice of partition $A_1,A_2,A_3$ and so can be used in \eqref{eq:hatItoJ:aux1} to achieve a bound of
\[
 \left(\hat{\mathcal{I}}_n(B)-\mathcal{J}_n(B)\right)^{2m}
 \quad\leq\quad
 \text{constant} \times \sigma_n^{4m\beta +m\min(\gamma ,\tfrac{1}{24})}\,,
\]
where the constant depends on \(m\) but not on \(n\).
As noted at the start of the proof, this establishes the lemma.
\end{proof}

We now require the following application of the Borel-Cantelli lemma.
\begin{proposition}\label{prop:BorelCantelli}
Let $U_1, U_2, \ldots$ and $\tilde{U}_1, \tilde{U}_2, \ldots$ 
be sequences of random variables, 
let $\delta_1, \delta_2, \ldots$ be a positive sequence converging to zero,
and suppose $\kappa$ is a positive constant. 
Assume there is a constant $C>0$ and an integer $m>\frac{1}{\kappa}$ 
such that the inequality 
$\Expect{|U_n-\tilde{U}_n|^m}\leq C\delta_n^m n^{-m \kappa}$ is satisfied for every $n$. 
Then $\Prob{\delta_n^{-1}(U_n-\tilde{U}_n)\xrightarrow{n\to\infty}0}=1$.
\end{proposition}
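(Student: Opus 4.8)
The plan is to use a standard Borel--Cantelli argument combined with Markov's inequality applied to the rescaled differences $\delta_n^{-1}(U_n-\tilde U_n)$. First I would fix an arbitrary $\eps>0$ and estimate the probability that $\delta_n^{-1}|U_n-\tilde U_n|$ exceeds $\eps$. By Markov's inequality applied to the $m$-th power,
\[
\Prob{\delta_n^{-1}|U_n-\tilde U_n|>\eps}
\quad=\quad
\Prob{|U_n-\tilde U_n|^m>\eps^m\delta_n^m}
\quad\leq\quad
\frac{\Expect{|U_n-\tilde U_n|^m}}{\eps^m\delta_n^m}\,.
\]
The hypothesis $\Expect{|U_n-\tilde U_n|^m}\leq C\delta_n^m n^{-m\kappa}$ then lets me cancel the $\delta_n^m$ factor, leaving the clean bound $C\eps^{-m}n^{-m\kappa}$, which is independent of the (possibly awkward) behaviour of the sequence $\delta_n$.

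Next I would verify summability. Because the integer $m$ was chosen so that $m>1/\kappa$, we have $m\kappa>1$, and hence the series $\sum_{n=1}^\infty n^{-m\kappa}$ converges. Consequently
\[
\sum_{n=1}^\infty\Prob{\delta_n^{-1}|U_n-\tilde U_n|>\eps}
\quad\leq\quad
\frac{C}{\eps^m}\sum_{n=1}^\infty n^{-m\kappa}
\quad<\quad\infty\,.
\]
The Borel--Cantelli lemma then gives $\Prob{\delta_n^{-1}|U_n-\tilde U_n|>\eps\text{ infinitely often}}=0$, so that almost surely $\delta_n^{-1}|U_n-\tilde U_n|\leq\eps$ for all sufficiently large $n$.

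Finally I would promote this from a fixed $\eps$ to full almost sure convergence by taking a countable sequence $\eps_k\downarrow 0$ (say $\eps_k=1/k$) and intersecting the corresponding almost sure events, which is still an almost sure event. On that intersection, $\limsup_{n\to\infty}\delta_n^{-1}|U_n-\tilde U_n|\leq\eps_k$ for every $k$, forcing $\delta_n^{-1}(U_n-\tilde U_n)\to 0$. This yields $\Prob{\delta_n^{-1}(U_n-\tilde U_n)\to 0}=1$, as required.

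I do not anticipate a genuine obstacle here: the proposition is a quantitative Borel--Cantelli packaging, and the only point requiring care is the bookkeeping that the $\delta_n^m$ in the moment bound exactly cancels the $\delta_n^{-m}$ produced by rescaling, so that convergence of $\sum_n n^{-m\kappa}$ (guaranteed by $m\kappa>1$) is what drives the conclusion, with no residual dependence on the rate at which $\delta_n\to0$.
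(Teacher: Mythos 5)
Your proof is correct and follows essentially the same route as the paper's: Markov's inequality applied to the $m$-th moment so that the $\delta_n^m$ factors cancel, summability of $\sum_n n^{-m\kappa}$ from $m\kappa>1$, and the Borel--Cantelli lemma. The only cosmetic difference is that you spell out the final step via a countable intersection over $\eps_k=1/k$, whereas the paper simply notes that $\eps>0$ was arbitrary.
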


\begin{proof}
Take an arbitrary $\epsilon>0$. By Markov's inequality
\[
\Prob{\left|U_n-\tilde{U}_n\right|>\epsilon \delta_n}
\quad\leq\quad
\frac{1}{\epsilon^m \delta^{m}_n}\Expect{\left|U_n-\tilde{U}_n\right|^m}
\quad\leq\quad
\frac{C}{\epsilon^m} n^{-m \kappa}\,.
\]
Summing over \(n=1, 2,\ldots\), and noting that $m\kappa>1$,
\[
\sum_{n=1}^\infty\Prob{\left|U_n-\tilde{U}_n\right|>\epsilon \delta_n}
\quad\leq\quad
\frac{C}{\epsilon^m}\sum_{n=1}^\infty n^{-m\kappa}
\quad<\quad\infty\,.
\]
It now follows from the Borel-Cantelli lemma that
$\Prob{\left|U_n-\tilde{U}_n\right|>\epsilon \delta_n\quad\text{i.o.}}=0$. Since $\epsilon>0$ was arbitrary, the result follows.
%\xNB[JV]{The notation too different from everything else}
\end{proof}

This enables us to show that the functionals $\mathcal{I}_n(B),~\tilde{\mathcal{I}}_n(B),~\hat{\mathcal{I}}_n(B)$ and $\mathcal{J}_n(B)$ indeed decay with the same speed almost surely (for almost all realisations of the random environment $B$) and thus identify the almost sure decay of $\mathcal{I}_n(B)$.

\begin{proposition}\label{prop:Indecay}
Let the assumptions
of Framework \ref{theassumption} be satisfied. Then (almost surely in the random environment $B$)
\[
\sigma_n^{-2\beta }\mathcal{I}_n(B)
\quad\xrightarrow{n\to\infty}\quad
\iint_{\Reals^2}h(x,z)\upi(x\,|B))\nu_2(z){\d}x{\d}z\,.
\]
\end{proposition}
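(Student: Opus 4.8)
The plan is to combine the three moment estimates of Lemmas~\ref{lem:ItotildeI}, \ref{lem:tildeI_to_hatI} and \ref{lem:hatI_to_J} through the telescoping decomposition
\[
\mathcal{I}_n(B)-\mathcal{J}_n(B)
\;=\;
\bigl(\mathcal{I}_n(B)-\tilde{\mathcal{I}}_n(B)\bigr)
+\bigl(\tilde{\mathcal{I}}_n(B)-\hat{\mathcal{I}}_n(B)\bigr)
+\bigl(\hat{\mathcal{I}}_n(B)-\mathcal{J}_n(B)\bigr)\,,
\]
while observing from \eqref{eq:J_n} that \(\sigma_n^{-2\beta}\mathcal{J}_n(B)=\iint_{\Reals^2}h(x,z)\upi(x\,|B)\nu_2(z){\d}x{\d}z\) does not depend on \(n\) and is almost surely finite. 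Hence it suffices to prove that \(\sigma_n^{-2\beta}\) times each of the three differences tends to zero almost surely; the assertion then follows by a triangle inequality on the finite (hence full-measure) intersection of the three almost-sure events.

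First I would apply Proposition~\ref{prop:BorelCantelli} to each difference, in every case with \(\delta_n=\sigma_n^{2\beta}=\ell^{2\beta}n^{-1}\to0\), so that \(\delta_n^{-1}=\sigma_n^{-2\beta}\) is exactly the scaling we want. For the first two differences the lemmas give bounds of order \(\sigma_n^{2m\beta+m\gamma}=\delta_n^m\sigma_n^{m\gamma}\); since \(\sigma_n^{m\gamma}=\ell^{m\gamma}n^{-m\gamma/(2\beta)}\), the hypothesis of Proposition~\ref{prop:BorelCantelli} holds with \(\kappa=\gamma/(2\beta)\). For the third difference, Lemma~\ref{lem:hatI_to_J} gives order \(\sigma_n^{2m\beta+m\min(\gamma/2,1/48)}=\delta_n^m\sigma_n^{m\min(\gamma/2,1/48)}\), so the hypothesis holds with \(\kappa=\min(\gamma,1/24)/(4\beta)\).

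The only genuine obstacle is to confirm that the integer \(m\) fixed in Framework~\ref{theassumption} is large enough to meet the requirement \(m>1/\kappa\) in all three applications. The binding constraint is the third, whose reciprocal \(1/\kappa=4\beta/\min(\gamma,1/24)\) dominates the reciprocal \(2\beta/\gamma\) of the first two (since \(\min(\gamma,1/24)\leq\gamma\)). Rewriting \(\min(24\gamma,1)=24\min(\gamma,1/24)\), the stipulation \(m>3+\tfrac{144\beta}{\min(24\gamma,1)}\) becomes \(m>3+\tfrac{6\beta}{\min(\gamma,1/24)}\), which exceeds \(4\beta/\min(\gamma,1/24)\) and a fortiori \(2\beta/\gamma\). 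Thus all three moment conditions are satisfied, Proposition~\ref{prop:BorelCantelli} delivers the three almost-sure convergences, and combining them as above gives
\[
\sigma_n^{-2\beta}\mathcal{I}_n(B)\xrightarrow{n\to\infty}\iint_{\Reals^2}h(x,z)\upi(x\,|B)\nu_2(z){\d}x{\d}z
\]
almost surely, as required.
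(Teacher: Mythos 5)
Your proof is correct and follows essentially the same route as the paper's: three applications of Proposition~\ref{prop:BorelCantelli} with $\delta_n=\sigma_n^{2\beta}$ and the same values of $\kappa$ (the paper's $\kappa=\min(24\gamma,1)/(96\beta)$ for the third difference equals your $\min(\gamma,1/24)/(4\beta)$), followed by the observation that $\sigma_n^{-2\beta}\mathcal{J}_n(B)$ is exactly the claimed limit by \eqref{eq:J_n}. Your verification that the Framework's choice of $m$ satisfies $m>1/\kappa$ in each case is a slightly more explicit version of the paper's check that $m\kappa>1$, and nothing is missing.
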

So in this case $\mathcal{I}_n(B)$ almost surely decays as $\sigma_n^{2\beta }$.

\begin{proof}
Note that the Framework
\ref{theassumption} includes a stipulation
that \(\sigma_n=\ell n^{-\tfrac1{2\beta}}\), as well as a requirement 
that \(m>3+\frac{144\beta}{\min(24\gamma,1)}\).

Apply Proposition~\ref{prop:BorelCantelli} 
together with Lemma~\ref{lem:ItotildeI} 
in the case that $U_n=\mathcal{I}_n$, $\tilde{U}_n=\tilde{\mathcal{I}}_n$, 
$\delta_n=\sigma^{2\beta}_n$ and $\kappa=\tfrac{\gamma}{2\beta}$. 
Since \(m\kappa=m\frac{\gamma}{2\beta}\geq 3>1\),
it follows that the difference $\left|\mathcal{I}_n(B)-\tilde{\mathcal{I}}_n(B)\right|$ 
almost surely decays faster than $\sigma_n^{2\beta}$. 
% We only need to verify that $m\kappa=m\frac{\gamma}{2\beta}>3>1$ by Assumption~\ref{theassumption}.

Similarly, 
apply
Proposition~\ref{prop:BorelCantelli} 
together with Lemma~\ref{lem:tildeI_to_hatI} 
in the case that
$U_n=\tilde{\mathcal{I}}_n$, $\tilde{U}_n=\hat{\mathcal{I}}_n$, $\delta_n=\sigma^{2\beta}_n$ 
and $\kappa=\tfrac{\gamma}{2\beta}$.
Since again $m\kappa=m\frac{\gamma}{2\beta}\geq 3>1$,
it follows that the difference $\left|\tilde{\mathcal{I}}_n(B)-\hat{\mathcal{I}}_n(B)\right|$
almost surely decays faster than $\sigma_n^{2\beta}$. 
% Again $m\kappa=m\frac{\gamma}{2\beta}>3>1$ by Assumption~\ref{theassumption}.

Finally, apply Proposition~\ref{prop:BorelCantelli} 
together with Lemma~\ref{lem:hatI_to_J} 
in the case that $U_n=\hat{\mathcal{I}}_n$, $\tilde{U}_n=\mathcal{J}_n$, 
$\delta_n=\sigma^{2\beta}_n$ and $\kappa=\frac{\min(24\gamma,1)}{96\beta }$.
Now 
\(m\kappa=m\frac{\min(24\gamma,1)}{96\beta }>\frac{144\beta}{\min(24\gamma,1)}\frac{\min(24\gamma,1)}{96\beta }=\frac{3}{2}>1\),
and so
the difference $\left|\hat{\mathcal{I}}_n(B)-\mathcal{J}_n(B)\right|$ 
almost surely decays faster than $\sigma_n^{2\beta}$. 
% This time we need to verify $m\kappa=m\frac{\min(12\gamma,1)}{48a}>\frac{144\beta}{\min(24\gamma,1)}\frac{\min(12\gamma,1)}{48a}=\frac{3}{2}>1$ by Assumption~\ref{theassumption}.

Consequently the difference $\left|I_n(B)-\mathcal{J}_n(B)\right|$ 
almost surely decays faster than $\sigma_n^{2\beta}$. 
But $\mathcal{J}_n(B)$ is calculated exactly in \eqref{eq:J_n},
and demonstrably almost surely decays exactly as $\sigma_n^{2\beta}$. 
Consequently the same must hold for $I_n(B)$ and so the 
proposition follows. 
\end{proof}

Since the random targets $\upi(\cdot\,|B))$ are almost surely integrable and independent of $n$
(Assumption \ref{assumption:NormalPotentialFluctuations}), the following corollary follows by normalisation.
\begin{cor}\label{cor:rho2decay}
Let the assumptions of
Framework \ref{theassumption} be satisfied. Then (almost surely 
in the random environment $B$)
\[
\sigma_n^{-2\beta }\Expectp{\pi(\cdot\,|B), q_n(\cdot\,|B)}{\rho^2_n\,|B}
\quad\xrightarrow{n\to\infty}\quad
\iint_{\Reals^2}h(x,z)\pi(x\,|B)\nu_2(z){\d}x{\d}z \,.
\]
\end{cor}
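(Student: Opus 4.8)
The plan is to obtain Corollary~\ref{cor:rho2decay} directly from Proposition~\ref{prop:Indecay}, the only additional ingredient being the passage from the unnormalised density \(\upi(\cdot\,|B)\) to its normalisation \(\pi(\cdot\,|B)\).

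First I would introduce the normalising constant \(Z(B)=\int_\Reals\upi(u\,|B){\d}u\). By Assumption~\ref{assumption:NormalPotentialFluctuations} this is almost surely finite and strictly positive, and --- crucially --- it does not depend on \(n\). Since \(\pi(x\,|B)=\upi(x\,|B)/Z(B)\), the definition of the conditional expectation together with the expression \eqref{eq:I_n} for \(\mathcal{I}_n(B)\) gives at once
\[
\Expectp{\pi(\cdot\,|B),\,q_n(\cdot\,|B)}{\rho^2_n\,|B}
\quad=\quad
\iint_{\Reals^2}\rho_n^2(x,y\,|B)\,\pi(x\,|B)\,q_n(x,y\,|B){\d}x{\d}y
\quad=\quad
\frac{1}{Z(B)}\,\mathcal{I}_n(B)\,.
\]

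Next I would multiply through by \(\sigma_n^{-2\beta}\) and invoke Proposition~\ref{prop:Indecay}. Because \(Z(B)\) is constant in \(n\), it factors through the limit, so that almost surely
\[
\sigma_n^{-2\beta}\,\Expectp{\pi(\cdot\,|B),\,q_n(\cdot\,|B)}{\rho^2_n\,|B}
\quad=\quad
\frac{1}{Z(B)}\,\sigma_n^{-2\beta}\,\mathcal{I}_n(B)
\quad\xrightarrow{n\to\infty}\quad
\frac{1}{Z(B)}\iint_{\Reals^2}h(x,z)\,\upi(x\,|B)\,\nu_2(z){\d}x{\d}z\,.
\]
Absorbing \(1/Z(B)\) back into the density via \(\upi(x\,|B)/Z(B)=\pi(x\,|B)\) then identifies the limit as \(\iint_{\Reals^2}h(x,z)\,\pi(x\,|B)\,\nu_2(z){\d}x{\d}z\), exactly as claimed.

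I do not expect a genuine obstacle here: the argument is pure normalisation. The only two points needing a word of justification are that the normalising constant is \(n\)-independent, which is precisely what allows the almost sure convergence to survive the division by \(Z(B)\), and that the limiting integral is almost surely finite --- but the latter was already established in the remarks following \eqref{eq:J_n}.
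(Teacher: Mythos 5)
Your proof is correct and is exactly the paper's argument: the paper disposes of this corollary in one line, noting that since $\upi(\cdot\,|B)$ is almost surely integrable and independent of $n$ (Assumption~\ref{assumption:NormalPotentialFluctuations}), the result follows from Proposition~\ref{prop:Indecay} by normalisation. Your write-up simply makes explicit the division by $Z(B)=\int_\Reals\upi(u\,|B)\,{\d}u$ and the identification $\upi(\cdot\,|B)/Z(B)=\pi(\cdot\,|B)$, which is precisely what the paper intends.
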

Thus $\Expectp{\pi(\cdot\,|B), q_n(\cdot\,|B)}{\rho^2_n\,|B}$ 
almost surely decays as $\sigma_n^{2\beta }$.

The final task is to
show that a Lindeberg-type condition holds almost surely.

\begin{lem}\label{lem:Lindeberg}
Let the assumptions of
Framework \ref{theassumption} be satisfied.
Then almost surely (for almost every realisation of the random environment $B$)
\[
\Expect{\Expectp{\pi(\cdot\,|B), q_n(\cdot\,|B)}{\rho^2_n1_{\rho^2_n>\sigma^\beta_n}\,|B}^m}
\quad\precsim\quad
\sigma_n^{3m\beta}.
\]
\end{lem}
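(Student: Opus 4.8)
The plan is to discard the indicator in favour of a plain fourth moment, evaluate that fourth moment by Isserlis' theorem, and absorb both the Gaussian perturbation \(\exp(K)\) and the normalising constant into \(n\)-independent factors. First I would remove the truncation: on \(\{\rho_n^2>\sigma_n^\beta\}\) one has \(1\leq\sigma_n^{-\beta}\rho_n^2\), so pointwise \(\rho_n^2 1_{\rho_n^2>\sigma_n^\beta}\leq\sigma_n^{-\beta}\rho_n^4\). Integrating against the probability measure \(\pi(x\,|B)q_n(x,y\,|B){\d}x{\d}y\), raising to the power \(m\) and taking \(\Expect{\cdot}\) gives
\[
\Expect{\Expectp{\pi(\cdot\,|B),q_n(\cdot\,|B)}{\rho_n^2 1_{\rho_n^2>\sigma_n^\beta}\,|B}^m}
\;\leq\;
\sigma_n^{-m\beta}\,\Expect{\Expectp{\pi(\cdot\,|B),q_n(\cdot\,|B)}{\rho_n^4\,|B}^m}\,,
\]
so it suffices to establish \(\Expect{\Expectp{\pi(\cdot\,|B),q_n(\cdot\,|B)}{\rho_n^4\,|B}^m}\precsim\sigma_n^{4m\beta}\).

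Next I would reduce to a single Gaussian functional, following the template of Lemma~\ref{lem:ItotildeI}. Writing \(Z(B)=\int_\Reals\upi(u\,|B){\d}u\) and using Assumptions~\ref{assumption:NormalPotentialFluctuations} and~\ref{assumption:AsymptoticProposal},
\[
\Expectp{\pi(\cdot\,|B),q_n(\cdot\,|B)}{\rho_n^4\,|B}
=\frac{1}{Z(B)}\iint_{\Reals^2}\rho_n^4(x,x+\sigma_nz\,|B)\exp(K(x\,|B))L_n(x,z\,|B)\nu(x,z){\d}x{\d}z\,.
\]
By Assumption~\ref{assumption:ApproximateLMHRNormality} write \(\rho_n=M_n+\Delta_n\), so \(\rho_n^4\leq 8(M_n^4+\Delta_n^4)\), and split \(L_n=1+(L_n-1)\). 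Expanding the \(m\)-th power and applying the triangle inequality in \(L^m(\nu\times\mathbb{P})\) exactly as in Lemma~\ref{lem:ItotildeI}, every term other than the pure term built from \(M_n^4\) and \(L_n\equiv 1\) carries a factor of \(\Delta_n\) or of \(L_n-1\), which by the \(L^{8m}\) bound of Assumption~\ref{assumption:ApproximateLMHRNormality} (of order \(\sigma_n^{\beta+\gamma}\) per factor) and the \(L^{4m}\) bound of Assumption~\ref{assumption:AsymptoticProposal} (of order \(\sigma_n^{\gamma}\) per factor) decays strictly faster than \(\sigma_n^{4m\beta}\). It therefore remains to bound the \(m\)-th moment of \(Z(B)^{-1}\iint M_n^4\exp(K)\nu\).

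For the surviving term I would treat the normalisation by Jensen's inequality: since \(\nu_1\) is a probability density, \(Z(B)=\int_\Reals e^{K(u\,|B)}\nu_1(u){\d}u\geq \exp\!\big(\int_\Reals K(u\,|B)\nu_1(u){\d}u\big)=e^{W}\), where \(W\) is a centred, \(n\)-independent Gaussian (a linear functional of \(K\)), so \(Z(B)^{-m}\leq e^{-mW}\). Expanding the \(m\)-th power as an \(m\)-fold integral and taking expectations inside by Fubini--Tonelli, the integrand becomes \(\Expect{\exp\!\big(\sum_{i=1}^m K(x_i\,|B)-mW\big)\prod_{i=1}^m M_n(x_i,z_i\,|B)^4}\), an expectation of a product of jointly Gaussian variables (Assumption~\ref{assumption:ApproximateLMHRNormality}) against the exponential of a Gaussian. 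A Cauchy--Schwarz split separates the exponential factor, whose expectation is finite (Gaussian moment generating function) and integrable against \(\nu\) by the \(2m^2\)-exponential-moment bound of Assumption~\ref{assumption:NormalPotentialFluctuations}, from \(\Expect{\prod_{i=1}^m M_n(x_i,z_i\,|B)^8}^{1/2}\). The latter is evaluated by Isserlis' Theorem~\ref{thm:Isserlis}: it is a sum over pairings of \(4m\) covariances, each bounded by \(|\Expect{M_n(x_i,z_i|B)M_n(x_j,z_j|B)}|\leq\sqrt{h(x_i,z_i)h(x_j,z_j)}\,\sigma_n^{2\beta}\), so the whole product is \(\precsim\sigma_n^{8m\beta}\) times a polynomial in the \(x_i,z_i\), and its square root contributes \(\sigma_n^{4m\beta}\). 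Combined with the \(\sigma_n^{-m\beta}\) from the truncation step this yields the claimed \(\sigma_n^{3m\beta}\).

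The main obstacle is the exponential bookkeeping rather than the Isserlis count: one must absorb both the perturbation \(\exp(\sum_i K(x_i\,|B))\) and the normalisation \(Z(B)^{-m}\) into finite \(n\)-independent constants without degrading the \(\sigma_n^{4m\beta}\) rate, and this is precisely what forces the unusually strong \(2m^2\)-exponential moment in Assumption~\ref{assumption:NormalPotentialFluctuations} and the large value of \(m\) fixed in the Framework. By contrast the covariance estimate is crude: only the Cauchy--Schwarz bound \(|\Expect{M_n(x_i,z_i|B)M_n(x_j,z_j|B)}|\leq\sqrt{h(x_i,z_i)h(x_j,z_j)}\,\sigma_n^{2\beta}\) is needed here, not the sharper decorrelation of Assumption~\ref{assumption:AsymptoticWeakDependence}, which is required only for the matching lower-order differences of Lemma~\ref{lem:hatI_to_J}.
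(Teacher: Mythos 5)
Your proof is correct, and it reaches the bound by a genuinely different route from the paper's in the key estimation step. The opening move is the same up to cosmetics: your pointwise inequality $\rho_n^2 1_{\rho_n^2>\sigma_n^\beta}\le \sigma_n^{-\beta}\rho_n^4$ gives exactly what the paper obtains from Cauchy--Schwarz plus Markov, and both arguments then reduce to showing that the fourth-moment functional is $\precsim \sigma_n^{4m\beta}$. The paper, however, next applies conditional Jensen, $\Expectp{\pi(\cdot\,|B),q_n(\cdot\,|B)}{\rho_n^{4}\,|B}^m \le \Expectp{\pi(\cdot\,|B),q_n(\cdot\,|B)}{\rho_n^{4m}\,|B}$, so that only a \emph{first} moment over the environment of a single double integral remains; one Fubini--Tonelli exchange and two Cauchy--Schwarz applications against Assumptions~\ref{assumption:NormalPotentialFluctuations}, \ref{assumption:AsymptoticProposal} and \ref{assumption:ApproximateLMHRNormality} (the $\iint\Expect{\rho_n^{8m}}\nu$ estimate being the one already established inside Lemma~\ref{lem:ItotildeI}) then finish the proof, with no $m$-fold expansion and no Isserlis theorem. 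You instead keep the $m$-th power, expand it as an $m$-fold integral, and invoke Theorem~\ref{thm:Isserlis} to control $\Expect{\prod_{i=1}^m M_n(x_i,z_i\,|B)^8}$; this is sound, but it is heavier machinery than the lemma requires --- in this paper the pairing combinatorics is only indispensable in Lemma~\ref{lem:hatI_to_J}, where cancellation (proper pairings) rather than crude size is at stake. What your route buys in return is a rigorous treatment of the normalising constant: you keep $Z(B)=\int_\Reals \upi(u\,|B)\,{\d}u$ explicit and bound $Z(B)^{-m}\le e^{-mW}$ with $W=\int_\Reals K(u\,|B)\nu_1(u)\,{\d}u$ a centred, $n$-independent Gaussian, absorbing it into the exponential-moment bookkeeping permitted by Assumption~\ref{assumption:NormalPotentialFluctuations}. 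The paper's own proof silently replaces $\Expect{\Expectp{\pi(\cdot\,|B),q_n(\cdot\,|B)}{\rho_n^{4m}\,|B}}$ by the unnormalised quantity $\Expect{\iint\rho_n^{4m}\exp(K(x\,|B))L_n\,\nu\,{\d}x\,{\d}z}$, i.e.\ drops the random, unbounded factor $Z(B)^{-1}$; this slip is harmless in context (one can prove the lemma for the unnormalised functional and divide by the a.s.\ positive, $n$-independent $Z(B)$ only when deducing Corollary~\ref{cor:LindeberAS}), but your argument repairs it at source.
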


\begin{proof}
By a combination of the Cauchy-Schwarz and Markov inequalities,
for almost every realisation of $B$,
\begin{align*}
\Expectp{\pi(\cdot\,|B), q_n(\cdot\,|B)}{\rho^2_n1_{\rho^2_n>\sigma^\beta_n}\,|B}
\quad&\leq\quad
\Expectp{\pi(\cdot\,|B), q_n(\cdot\,|B)}{\rho^4_n\,|B}^{1/2}
\quad\cdot\quad
\mathbb{P}_{\pi(\cdot\,|B), q_n(\cdot\,|B)}\left[\LMHR_n^2>\sigma^\beta_n\,|B\right]^{1/2}
\\
&\leq\quad
\sigma^{-\beta}_n~\Expectp{\pi(\cdot\,|B), q_n(\cdot\,|B)}{\rho^4_n\,|B}\,.
\end{align*}
Hence, by Jensen's inequality
\[
\Expect{\Expectp{\pi(\cdot\,|B), q_n(\cdot\,|B)}{\rho^2_n1_{\rho^2_n>\sigma^\beta_n}\,|B}^m}
\quad\leq\quad 
\sigma^{-ma}_n~\Expect{\Expectp{\pi(\cdot\,|B), q_n(\cdot\,|B)}{\rho^{4m}_n\,|B}}\,.
\]

As in the case of $\mathcal{I}_n(B)$ (see \eqref{eq:I_n}) the random functional $\Expectp{\pi(\cdot\,|B), q_n(\cdot\,|B)}{\rho^{4m}_n\,|B}$ differs from
%\xNB[WSK4]{Lemma \ref{lem:Lindeberg}: again I'm probably being very stupid, but why don't we have to worry in case the normalizing constants converge to zero, or to infinity?}
\[
\iint_{\Reals^2}\rho^{4m}_n(x,x+\sigma_nz\,|B)
\upi(x\,|B)\,\sigma_nq_n(x,x+\sigma_nz\,|B){\d}x{\d}z
\]
just by a normalising constant.
Using Assumptions \ref{assumption:NormalPotentialFluctuations} and \ref{assumption:AsymptoticProposal}, the Fubini-Tonelli theorem,
and the Cauchy-Schwarz inequality twice over,
%\xNB[JV2]{I just noticed a small error. In Lemma \ref{lem:Lindeberg}we need $\rho^{16m}$ to be integrable (actually $\rho^{8m}$ if we reverse the order of Cauchy’s inequalities. This is not completely in line with Assumption 14. We either need to assume more moments or maybe we can do the proof better. \textbf{NOTE by WSK} I tried to fix this.}
\begin{multline*}
\Expect{\Expectp{\pi(\cdot\,|B), q_n(\cdot\,|B)}{\rho^{4m}_n\,|B}}
\quad=\quad
\Expect{\iint_{\Reals^2}\rho^{4m}_n(x,x+\sigma_nz\,|B)\exp(K(x\,|B)L_n(x,z\,|B)\nu(x,z){\d}x{\d}z}\\
\quad\leq\quad
\Expect{\iint_{\Reals^2}\rho^{8m}_n(x,x+\sigma_nz\,|B)\nu(x,z){\d}x{\d}z}^{1/2}
\times\Expect{\iint_{\Reals^2}\exp(4K(x\,|B)\nu(x,z){\d}x{\d}z}^{1/4}\\
\times \quad\Expect{\iint_{\Reals^2}L^4_n(x,z\,|B)\nu(x,z){\d}x{\d}z}^{1/4}\,.
\end{multline*}
The first factor is finite
%\xNB[WSK4]{Lemma \ref{lem:Lindeberg}: Assumption \ref{assumption:ApproximateLMHRNormality} tweaked to fix this: haven't tried to finesse $16m$ down to $8m$.}
and decays as $\sigma_n^{4\beta m}$ by 
Assumption \ref{assumption:ApproximateLMHRNormality}, 
the second is bounded because of Assumption
\ref{assumption:NormalPotentialFluctuations}
and the third is bounded because of Assumption
\ref{assumption:AsymptoticProposal}. Hence, the result follows.
\end{proof}

\begin{cor}\label{cor:LindeberAS}
Let the assumptions of Framework
\ref{theassumption} be satisfied. Then almost surely (for almost every realisation of the random environment $B$)
\[
\sigma_n^{-2\beta }\Expectp{\pi(\cdot\,|B), q_n(\cdot\,|B)}{\rho^2_n1_{\rho^2_n>\sigma^\beta_n}\,|B}
\quad\xrightarrow{n\to\infty}\quad
0 \,.
\]
That is $\Expectp{\pi(\cdot\,|B), q_n(\cdot\,|B)}{\rho^2_n1_{\rho^2_n>\sigma^\beta_n}\,|B}$ almost surely decays faster than $\sigma_n^{2\beta }$.
\end{cor}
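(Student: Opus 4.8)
The plan is to deduce the almost-sure decay directly from the $L^m$ moment bound of Lemma~\ref{lem:Lindeberg} using the Borel--Cantelli argument packaged in Proposition~\ref{prop:BorelCantelli}, with the comparison sequence taken to be identically zero. In other words, all of the genuine analytic content is already established in Lemma~\ref{lem:Lindeberg}, and this corollary is a mechanical transfer from an $L^m$ bound to almost-sure convergence; the only point requiring care is the bookkeeping of exponents.

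First I would set $U_n=\Expectp{\pi(\cdot\,|B), q_n(\cdot\,|B)}{\rho^2_n1_{\rho^2_n>\sigma^\beta_n}\,|B}$, which is a non-negative random variable depending on the environment $B$, and take $\tilde U_n=0$ together with $\delta_n=\sigma_n^{2\beta}$. Lemma~\ref{lem:Lindeberg} then supplies the moment bound $\Expect{|U_n-\tilde U_n|^m}=\Expect{U_n^m}\precsim\sigma_n^{3m\beta}$, so it remains only to recast this right-hand side in the form $C\delta_n^m n^{-m\kappa}$ demanded by Proposition~\ref{prop:BorelCantelli}.

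The key step is this exponent computation. Recall from Framework~\ref{theassumption} that $\sigma_n=\ell n^{-1/(2\beta)}$, whence $\sigma_n^{m\beta}=\ell^{m\beta}n^{-m/2}$. Consequently $\sigma_n^{3m\beta}=\sigma_n^{2m\beta}\cdot\sigma_n^{m\beta}\precsim\delta_n^m\,n^{-m/2}$, which is exactly $C\delta_n^m n^{-m\kappa}$ with the choice $\kappa=\tfrac12$. The integer requirement of Proposition~\ref{prop:BorelCantelli}, namely $m>1/\kappa=2$, is satisfied because Framework~\ref{theassumption} stipulates $m>3+\tfrac{144\beta}{\min(24\gamma,1)}>2$; equivalently the summability exponent $m\kappa=m/2>1$.

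Applying Proposition~\ref{prop:BorelCantelli} with these choices then yields $\Prob{\delta_n^{-1}U_n\to0}=1$, that is $\sigma_n^{-2\beta}\Expectp{\pi(\cdot\,|B), q_n(\cdot\,|B)}{\rho^2_n1_{\rho^2_n>\sigma^\beta_n}\,|B}\to0$ almost surely, which is the claim. I do not anticipate any real obstacle: the delicate estimation (the double Cauchy--Schwarz splitting and the use of Assumptions~\ref{assumption:NormalPotentialFluctuations}, \ref{assumption:AsymptoticProposal} and \ref{assumption:ApproximateLMHRNormality}) is discharged in Lemma~\ref{lem:Lindeberg}, and the surplus factor $\sigma_n^{m\beta}$ beyond the target rate $\delta_n^m=\sigma_n^{2m\beta}$ is precisely what converts into the polynomial decay $n^{-m/2}$ that Borel--Cantelli needs.
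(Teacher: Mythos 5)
Your proof is correct and is essentially identical to the paper's own argument: both invoke Proposition~\ref{prop:BorelCantelli} with $U_n=\Expectp{\pi(\cdot\,|B), q_n(\cdot\,|B)}{\rho^2_n1_{\rho^2_n>\sigma^\beta_n}\,|B}$, $\tilde U_n=0$, $\delta_n=\sigma_n^{2\beta}$ and $\kappa=\tfrac12$, feeding in the moment bound of Lemma~\ref{lem:Lindeberg} and checking $m\kappa=m/2>1$ from the Framework's stipulation on $m$. Your explicit conversion of the surplus factor $\sigma_n^{m\beta}$ into $n^{-m/2}$ via $\sigma_n=\ell n^{-1/(2\beta)}$ is exactly the bookkeeping the paper leaves implicit.
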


\begin{proof}
Note that all integrands are strictly positive and use Lemma~\ref{lem:Lindeberg} together with Proposition~\ref{prop:BorelCantelli} for $U_n=\Expectp{\pi(\cdot\,|B), q_n(\cdot\,|B)}{\rho^2_n1_{\rho^2_n>\sigma^\beta_n}\,|B}$, $\tilde{U}_n=0$, $\delta_n=\sigma^{2\beta}_n$ and $\kappa:=\frac{1}{2}$. Note that $m\kappa=\frac{m}{2}>\frac{3}{2}>1$ by Assumption~\ref{theassumption}.
\end{proof}

\begin{thm}\label{thm:scaling_general}
Let the assumptions of Framework
\ref{theassumption} be satisfied. 
For \(n=1, 2, \ldots\),
and for each $\bar{x}=(x_1,\dots,x_n)\in\Reals^n$,
let $\Pi_n(\bar{x}\,|B)=\prod_{i=1}^n\pi(x_i\,|B)$ and
$Q_n(\bar{x},\d\bar{y}\,|B)=\prod_{i=1}^nq_n(x_i,y_i\,|B){\d}y_i$
be respectively a target and a proposal on $\Reals^n$, 
both depending on a random environment $B$. 
If $X^{(n)}(B)\sim \Pi_n(\cdot\,|B)$ 
and $Y^{(n)}(B)\sim Q_n(X^{(n)},\d\bar{y}\,|B)$ then there is $\sigma^2>0$
such that
the Metropolis-Hastings acceptance probabilities (conditional on the underlying $B$) satisfy
\[
\alpha\left(X^{(n)}(B),Y^{(n)}(B)\right)
\quad\xrightarrow{w}\quad
\left(1\wedge\exp\right)\left(N\left(-\half\sigma^2,\sigma^2\right)\right)
\hfill\text{ as }n\to\infty\,,
\]
almost surely (almost surely in the random environment $B$).
Moreover, we may take 
\[
\sigma^2
\quad=\quad
\ell^{2\beta}\iint_{\Reals^2}h(x,z)\pi(x\,|B)\nu_2(z){\d}x{\d}z\,.
 \]
\end{thm}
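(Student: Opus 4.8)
The plan is to apply the triangular-array central limit theorem, Theorem~\ref{thm:unconditional-clt}, conditionally on the random environment $B$, for almost every realisation of $B$. Because both $\Pi_n(\cdot\,|B)$ and $Q_n(\cdot\,|B)$ have product form, the full $n$-dimensional log Metropolis-Hastings ratio decomposes as $\sum_{j=1}^n\rho_n(X_j,Y_j\,|B)$, where, conditionally on $B$, the pairs $(X_j,Y_j)$ are independent and identically distributed with law $\pi(x\,|B)q_n(x,y\,|B)\,\d x\,\d y$. I would therefore set $m_n=n$ and $\rho_{n,j}=\rho_n(X_j,Y_j\,|B)$, obtaining a row-wise independent array whose row entries are moreover identically distributed. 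Writing $v_n(B)=\Expectp{\pi(\cdot\,|B),q_n(\cdot\,|B)}{\rho_n^2\,|B}$, the identical distribution within rows gives $\sum_{j=1}^n\Expectp{\pi(\cdot\,|B),q_n(\cdot\,|B)}{\rho_{n,j}^2\,|B}=n\,v_n(B)$, and since $\sigma_n=\ell n^{-1/(2\beta)}$ we have $\sigma_n^{2\beta}=\ell^{2\beta}n^{-1}$. Corollary~\ref{cor:rho2decay} then yields
\[
n\,v_n(B)\quad=\quad n\,\sigma_n^{2\beta}\bigl(\sigma_n^{-2\beta}v_n(B)\bigr)\quad\xrightarrow{n\to\infty}\quad\ell^{2\beta}\iint_{\Reals^2}h(x,z)\pi(x\,|B)\nu_2(z)\,\d x\,\d z\,=:\,\sigma^2
\]
almost surely in $B$, which is exactly the claimed value of $\sigma^2$.

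Next I would establish the uniform Lindeberg-type control \eqref{eq:control}. The key simplification is that identical distribution within each row renders the supremum over $j$ trivial, so it suffices to control a single ratio. I would take the deterministic truncation level $\kappa_{n,j}=\kappa_n=\sigma_n^{\beta/2}$, which tends to zero and hence satisfies $\sup_{j\le n}\kappa_{n,j}\to0$. The inclusion $\{\rho_n<-\kappa_n\}\subseteq\{\rho_n^2>\sigma_n^\beta\}$ gives $\Expectp{\pi(\cdot\,|B),q_n(\cdot\,|B)}{\rho_n^2\,;\,\rho_n<-\kappa_n\,|B}\le\Expectp{\pi(\cdot\,|B),q_n(\cdot\,|B)}{\rho_n^2 1_{\rho_n^2>\sigma_n^\beta}\,|B}$; dividing by $v_n(B)$ and comparing the $\sigma_n^{-2\beta}$-rescaled numerator and denominator, Corollaries~\ref{cor:LindeberAS} and~\ref{cor:rho2decay} combine to show this ratio tends to $0$ almost surely in $B$ (using that the limiting integral $\sigma^2$ is strictly positive).

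With all three hypotheses verified on an event of full probability in $B$, Theorem~\ref{thm:unconditional-clt} delivers $\sum_{j=1}^n\rho_{n,j}\xrightarrow{w}N(-\half\sigma^2,\sigma^2)$, conditionally on $B$, for almost every $B$. The conclusion then follows by the continuous mapping theorem applied to the bounded continuous map $(1\wedge\exp)$, since $\alpha(X^{(n)}(B),Y^{(n)}(B))=(1\wedge\exp)\bigl(\sum_{j=1}^n\rho_{n,j}\bigr)$. The main obstacle, and the point requiring care, is the bookkeeping of the various almost-sure statements: the convergences in Corollaries~\ref{cor:rho2decay} and~\ref{cor:LindeberAS} each hold off a $B$-null set, and one must intersect these (still full-probability) events so that the deterministic Theorem~\ref{thm:unconditional-clt} can be invoked simultaneously for the conditional law given $B$. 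One also needs strict positivity of $\sigma^2=\ell^{2\beta}\iint_{\Reals^2}h(x,z)\pi(x\,|B)\nu_2(z)\,\d x\,\d z$ for almost every $B$, both to make sense of the Lindeberg ratio and to guarantee a non-degenerate limit.
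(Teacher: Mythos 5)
Your proposal is correct and follows essentially the same route as the paper's own proof: exploit the product structure to write the log Metropolis-Hastings ratio as a row-wise IID sum, use Corollary~\ref{cor:rho2decay} to identify $\lim_n n\,\Expectp{\pi q_n}{\rho_n^2\,|B}=\sigma^2$, use Corollary~\ref{cor:LindeberAS} for the Lindeberg-type condition, invoke Theorem~\ref{thm:unconditional-clt} on the intersection of the two full-probability events, and finish via boundedness and continuity of $1\wedge\exp$. Your explicit choice $\kappa_n=\sigma_n^{\beta/2}$ (so that $\{\rho_n<-\kappa_n\}\subseteq\{\rho_n^2>\sigma_n^\beta\}$) and your remark on the strict positivity of $\sigma^2$ merely make precise steps the paper leaves implicit.
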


\begin{proof}
We restrict ourselves to the almost sure event 
of realisations of the random environment such that 
the conclusions of Corollary~\ref{cor:rho2decay} 
and  Corollary~\ref{cor:LindeberAS} both hold simultaneously. 
For notational convenience we fix an arbitrary realisation of the random environment $B$ satisfying this event and condition on this realization, 
and in the remainder of the proof 
we omit all reference to the random environment.

The $i$-th coordinates $X^{(n)}_i$ and $Y^{(n)}_i$ of $X^{(n)}$ and $Y^{(n)}$ are jointly distributed according to
the product probability measure $\pi(x)q_n(x,y){\d}x{\d}y$.
The product structure implies 
\[
\Psi\left(X^{(n)},Y^{(n)}\right)
\quad\colon=\quad
\log\left(\frac{\Pi_n\left(Y^{(n)}\right)	Q_n\left(Y^{(n)},X^{(n)}\right)}{\Pi_n\left(X^{(n)}\right)		Q_n\left(X^{(n)},Y^{(n)}\right)}\right)
\quad=\quad
\sum_{i=1}^n\rho_n\left(X^{(n)}_i,Y^{(n)}_i\right)
\,.
\]

Because of Corollary~\ref{cor:rho2decay}, 
if we set $\sigma_n=\ell n^{-\frac{1}{2\beta}}$ then
\begin{multline*}
\Expectp{\pi q_n}{\sum_{i=1}^n\rho^2_n\left(X^{(n)}_i,Y^{(n)}_i\right)}
\quad=\quad
n\Expectp{\pi q_n}{\rho^2_n}\\
\quad=\quad
\ell^{2\beta}\sigma_n^{-2\beta}\Expectp{\pi q_n}{\rho^2_n}
\quad\xrightarrow{n\to\infty}\quad
\sigma^2 \;=\;
\ell^{2\beta}\iint_{\Reals^2}h(x,z)\pi(x)\nu_2(z){\d}x{\d}z
\,.
\end{multline*}

Moreover Corollary~\ref{cor:rho2decay} and
Corollary~\ref{cor:LindeberAS} imply that for each coordinate
$\Expectp{\pi q_n}{\rho^2_n\left(X^{(n)}_i,Y^{(n)}_i\right)}=
\Expectp{\pi q_n}{\rho^2_n}$
decays as $\sigma^{2\beta}_n$,
and 
$\Expectp{\pi q_n}{\rho^2_n\left(X^{(n)}_i,Y^{(n)}_i\right)1_{\rho_n^2\left(X^{(n)}_i,Y^{(n)}\right)_i>\sigma_n^\beta}}
=\Expectp{\pi q_n}{\rho^2_n1_{\rho^2_n>\sigma^\beta_n}}$ 
decays faster than $\sigma^{2\beta}_n$.

It is therefore a consequence of Theorem~\ref{thm:unconditional-clt} that,
as $n\to\infty$,
\[
\Psi\left(X^{(n)},Y^{(n)}\right)
\quad\xrightarrow{w}\quad
N\left(-\frac{1}{2}\sigma^2,\sigma^2\right)\,.
\]
It is immediate from the definition
of weak convergence
that
the desired result for acceptance probabilities follows,
since $1\wedge\exp$ is a bounded Lipschitz 
(hence continuous) function.
\end{proof}

% ===================================================
%
\section{Application to Random walk Metropolis algorithms}\label{sec:RWM}
In this section we show that the 
Anomalous Scaling Framework 
\ref{theassumption} of Section 
\ref{sec:anomalous_scaling} holds for
the Random walk Metropolis algorithm
based on centered Normal proposals
when applied to a suitably perturbed
product target.
The perturbation is applied to the marginal log-density
and corresponds to addition of
a typical fBM path.

The random environment is given by a 
typical path of a two sided fBM $\fBM$ path 
with Hurst index $H\in(0,1)$.
This is
a stationary centred Gaussian process 
with covariance function given by Equation \eqref{eq:fBM} 
and with paths that are almost surely 
$\gamma$-H\"older continuous everywhere, for $0<\gamma<H$. 
In particular $\fBM\) is a continuous Gaussian process with stationary increments.

As stipulated by Theorem~\ref{thm:CLT_RWMvanilla},
the RWM proposal is symmetric multivariate normal, with
marginal probability density
given by the kernel
\(q_n(x,dz)=\tfrac{1}{\sqrt{2\pi\sigma^2_n}}
\exp\left(-\tfrac{|z-x|^2}{2\sigma^2_n}\right){\d}z\),
% $q_n(x,dy)\sim N(x,\sigma^2_n)$ 
where $\sigma^2_n=\ell^2n^{-\frac{1}{2H}}$ for some positive constant $\ell$. The 
reference measures \(\nu_1(x)\) and 
\(\nu_2(z)\)  of the 
Framework \ref{theassumption}
are both taken to be standard Normal densities,
so that
% probability density function 
$\nu(x,z)=\nu_1(x)\nu_2(z)=\frac{1}{2\pi}e^{-\frac{1}{2}(x^2+z^2)}$.
We will also take $\beta =H$ and $\gamma=\min(H,1-H)$.
If it can be established that the assumptions
listed in the Framework \ref{theassumption} 
all hold,
then  Theorem~\ref{thm:CLT_RWMvanilla} 
will be an immediate consequence of Theorem~\ref{thm:scaling_general}. 

The first task is to control the fluctuations
of the potential given by the random environment
\(\fBM\). As indicated above, 
we consider
\begin{equation}\label{eq:RWM:K}
K(x\,|\fBM)	\quad=\quad		\fBM_x\,.	
\end{equation}

\begin{lem}\label{lem:RWM:NormalPotentialFluctuations}
Assumption \ref{assumption:NormalPotentialFluctuations}
is satisfied.
\end{lem}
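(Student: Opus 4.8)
The plan is to verify directly that each ingredient of Assumption~\ref{assumption:NormalPotentialFluctuations} is present in the RWM specialization, the only substantive point being a tail integrability estimate. First I would record that, by the choice \eqref{eq:RWM:K} of $K(x\,|\fBM)=\fBM_x$ and with $\nu_1$ taken to be the standard Normal density, one has $\exp(K(x\,|\fBM))\nu_1(x)=\tfrac{1}{\sqrt{2\pi}}\exp\!\big(\fBM_x-\tfrac{x^2}{2}\big)$, which is precisely the un-normalised target $\upi(x\,|\fBM)$ of Theorem~\ref{thm:CLT_RWMvanilla}; so the required multiplicative form holds by construction.

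Next I would observe that $\fBM$ is by definition a centred Gaussian process, hence so is $K(\cdot\,|\fBM)$, and its pointwise variance is read straight off the covariance \eqref{eq:fBM}:
\[
k(x)\;=\;\Gamma^{(H)}(x,x)\;=\;\tfrac12|x|^{2H}+\tfrac12|x|^{2H}-\tfrac12|x-x|^{2H}\;=\;|x|^{2H}\,.
\]

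The heart of the lemma is the finiteness of the prescribed exponential moment. Here I would use the Fubini--Tonelli theorem (the integrand is nonnegative) to interchange expectation with the $x$-integral, and then apply the Gaussian moment generating function $\Expect{e^{\lambda\fBM_x}}=\exp(\tfrac12\lambda^2 k(x))$ to replace the expectation by a deterministic factor. This reduces the claim to showing that an integral of the form $\int_{\Reals}\exp(c\,|x|^{2H})\,\nu_1(x)\,{\d}x$ is finite for the relevant constant $c$ (depending on the chosen $m$).

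Finally, the integrability itself is the one genuine — though elementary — observation, and the only place where $H<1$ is used. Since $H\in(0,1)$ we have $2H<2$, so the exponent $c\,|x|^{2H}$ grows strictly more slowly than the quadratic $\tfrac{x^2}{2}$ supplied by $\nu_1(x)=\tfrac{1}{\sqrt{2\pi}}e^{-x^2/2}$. Consequently $\exp(c\,|x|^{2H}-\tfrac{x^2}{2})$ tends to $0$ faster than any inverse polynomial as $|x|\to\infty$, the integral converges for every $c>0$, and in particular for the value determined by $m$; no upper restriction on $m$ is needed, since the sub-quadratic fBM variance is dominated by the Gaussian reference density whatever the size of the constant. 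This same finiteness delivers the asserted almost-sure integrability of $\upi(\cdot\,|\fBM)$, justifying the renormalisation in Theorem~\ref{thm:CLT_RWMvanilla}.
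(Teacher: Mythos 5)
Your proposal is correct and follows essentially the same route as the paper: identify $K(\cdot\,|\fBM)=\fBM$ as a centred Gaussian process with variance $k(x)=|x|^{2H}$, reduce the exponential-moment condition via the Gaussian moment generating function, and conclude finiteness of $\int_{\Reals}\exp\left(c|x|^{2H}\right)\nu_1(x)\,{\d}x$ from the sub-quadratic growth $2H<2$. The only cosmetic difference is that the paper evaluates the Gaussian exponential moment by an explicit integral where you invoke the MGF formula directly.
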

\begin{proof}
Evidently, \((K(x|\fBM):x\in\Reals)\)
is a centered Gaussian process, since it is simply
fractional Brownian motion.
% we have
% \[
% \upi(x\,|\fBM)	\quad=\quad		\exp(K(x\,|\fBM))\nu_1(x)\,;
% \]
Moreover its variance function is $k(x)=|x|^{2H}$ 
(defined for every real $x$).
Assumption \ref{assumption:NormalPotentialFluctuations}
requires finiteness of 
$\Expect{\exp\left(mK(x\,|\fBM)\right)}$ 
for some suitable \(m\).

In fact for every real \(m\), for any real $x$,
\[
 \Expect{\exp\left(mK(x\,|\fBM)\right)}\quad=\quad
 \frac{1}{\sqrt{2\pi |x|^{2H}}}
 \int_\Reals \exp\left(-\frac{y^2}{2|x|^{2H}}
 + my\right){\d}y
 \quad=\quad
 e^{\frac{m^2|x|^{2H}}{2}}\,,
\]
and
\[
 \int_\Reals e^{\frac{m^2|x|^{2H}}{2}}\nu_1(x){\d}x
 \quad=\quad
 \frac{1}{\sqrt{2\pi}}
 \int_\Reals e^{\frac{m^2|x|^{2H}}{2}-\frac{x^2}{2}}{\d}x
 \quad<\quad\infty\,.
\]
% 
% 
% $\Expect{\exp\left(mK(x\,|\fBM)\right)}=e^{\frac{m^2|x|^{2H}}{2}}$ which is integrable with respect to $\nu_1$.
\end{proof}
%
% === NOTE THIS! ===
%
% \begin{NoteThis2}
%  \begin{multline*}
%   \frac{1}{\sqrt{2\pi}}
%  \int_\Reals e^{\frac{m^2|x|^{2H}}{2}-\frac{x^2}{2}}{\d}x
%  \quad=\quad
%    \frac{2}{\sqrt{\pi}}
%  \int_0^\infty e^{\frac{m^2|x|^{2H}}{2}-\frac{x^2}{2}}{\d}x
%  \\
%  \quad=\quad
%  \frac{2}{\sqrt{\pi}}
%  \int_0^{(2m^2)^{1/(2(1-H))}} e^{\frac{m^2|x|^{2H}}{2}-\frac{x^2}{2}}{\d}x
%  +
%  \frac{2}{\sqrt{\pi}}
%  \int_{(2m^2)^{1/(2(1-H))}}^\infty e^{\frac{m^2|x|^{2H}}{2}-\frac{x^2}{2}}{\d}x
% \\
% \quad\leq\quad
%  \frac{2}{\sqrt{\pi}}
%  \int_0^{(2m^2)^{1/(2(1-H))}} 1 {\d}x
%  +
%  \frac{2}{\sqrt{\pi}}
%  \int_{(2m^2)^{1/(2(1-H))}}^\infty e^{\frac{m^2|x|^{2H}}{2}-\frac{x^2}{2}}{\d}x
% \\
% \quad\leq\quad
% \frac{2}{\sqrt{\pi}} (2m^2)^{1/(2(1-H))}
% +
% \frac{2}{\sqrt{\pi}}
%  \int_{(2m^2)^{1/(2(1-H))}}^\infty e^{-\frac{x^2}{4}}{\d}x
%  \quad\leq\quad
%  \frac{2}{\sqrt{\pi}} (2m^2)^{1/(2(1-H))}
% +
%  \sqrt{2}\,,
%  \end{multline*}
%  since
%  if \(x>2m^2)^{1/(2(1-H))}\) then
%  \(\half x^2 > m^2 |x|^{2H}\).
% \end{NoteThis2}

For the RWM case the
``asymptotic behaviour of proposal'' property follows directly.
\begin{lem}\label{lem:RWM:AsymptoticRWMproposal}
Assumption \ref{assumption:AsymptoticProposal}
is satisfied. 
\end{lem}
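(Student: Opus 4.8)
The plan is to observe that this is by far the simplest of the four assumptions to verify, precisely because the Random walk Metropolis proposal is a symmetric random-walk kernel that does not depend on the environment $\fBM$ at all. First I would substitute the explicit Gaussian proposal density into the defining relation of Assumption~\ref{assumption:AsymptoticProposal}, evaluating $\sigma_n q_n(x, x+\sigma_n z\,|\fBM)$ directly.

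The key computation is a one-line cancellation. Writing $q_n(x,y)=\tfrac{1}{\sqrt{2\pi\sigma_n^2}}\exp\bigl(-\tfrac{(y-x)^2}{2\sigma_n^2}\bigr)$ and setting $y=x+\sigma_n z$, the scale factor and the normalising constant combine, while the exponent collapses because $(\sigma_n z)^2/\sigma_n^2 = z^2$:
\[
\sigma_n q_n(x, x+\sigma_n z\,|\fBM)
\quad=\quad
\sigma_n\cdot\frac{1}{\sqrt{2\pi\sigma_n^2}}\exp\left(-\frac{(\sigma_n z)^2}{2\sigma_n^2}\right)
\quad=\quad
\frac{1}{\sqrt{2\pi}}\,e^{-z^2/2}
\quad=\quad
\nu_2(z)\,.
\]
Comparing with the prescribed form $\sigma_n q_n(x,x+\sigma_n z\,|\fBM)=L_n(x,z\,|\fBM)\nu_2(z)$ from the Framework, I would read off that $L_n(x,z\,|\fBM)=1$ identically, for every $x,z$, every $n$, and every realisation of $\fBM$.

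Consequently $L_n(x,z\,|\fBM)-1\equiv 0$, so the quantity $\iint_{\Reals^2}\Expect{|L_n(x,z\,|\fBM)-1|^{4m}}\nu(x,z){\d}x{\d}z$ is exactly zero and the required bound $\precsim\sigma_n^{4m\gamma}$ holds trivially, indeed with a vanishing left-hand side. There is no genuine obstacle here: the entire content is the observation that a location-invariant random-walk proposal, being independent of $\fBM$, rescales exactly onto its reference density under the change of variables $y=x+\sigma_n z$. I would flag that this is special to RWM; for MALA the Langevin drift injects genuine $x$- and $z$-dependence into the rescaled proposal, so that $L_n$ is no longer constant and Assumption~\ref{assumption:AsymptoticProposal} will there demand a real estimate.
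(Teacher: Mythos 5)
Your proof is correct and is essentially identical to the paper's own argument: both substitute the Gaussian RWM density, observe that $\sigma_n q_n(x,x+\sigma_n z)=\nu_2(z)$ exactly so that $L_n\equiv 1$, and conclude that the integral in Assumption~\ref{assumption:AsymptoticProposal} vanishes identically. Nothing is missing.
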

\begin{proof}
The RWM proposal 
is given by 
\[
q_n(x,dz)
\quad=\quad
\frac{1}{\sqrt{2\pi\sigma^2_n}}
\exp\left(-\tfrac{(z-x)^2}{2\sigma^2_n}\right){\d}z
\,,
\]
so \(\sigma_n q_n(x,x+\sigma_n z)=\nu_2(z)\)
identically.
Accordingly \(L_n(x,z|\fBM)=1\),
and thus, for all \(m\),
\[
\iint_{\Reals^2}
\Expect{\left|L_n(x,z\,|B)-1\right|^{4m}}\nu(x,z){\d}x{\d}z
\quad\equiv\quad
0\,.
\]
% 
% That is for every $x,z\in\Reals$ and $n\in\Numbers$
% \[
% \sigma_nq_n(x,x+\sigma_n z)		\quad=\quad		\nu_2(z)\,.
% \]
% 
\end{proof}

To establish 
the
``approximate normality of LMHR''
property we need to define
\begin{equation}\label{eq:RWM:M}
M_n(x,z\,|\fBM)	\quad=\quad		\fBM_{x+\sigma_nz}-\fBM_x\,.	
\end{equation}

\begin{lem}\label{lem:RWM:ApproximateLMHRNormality}
Assumption \ref{assumption:ApproximateLMHRNormality} 
is satisfied if \(M_n\) is defined using Equation \eqref{eq:RWM:M}.
\end{lem}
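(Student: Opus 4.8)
The plan is to compute $\rho_n$ explicitly and read off the decomposition $\rho_n=M_n+\Delta_n$ demanded by Assumption~\ref{assumption:ApproximateLMHRNormality}. Since the RWM proposal is symmetric, $q_n(x,y)=q_n(y,x)$, so the proposal contribution to the log-MH-ratio cancels and $\rho_n(x,x+\sigma_n z\,|\fBM)=\log\upi(x+\sigma_n z\,|\fBM)-\log\upi(x\,|\fBM)$. Using $\log\upi(x\,|\fBM)=\fBM_x-\tfrac12 x^2$ up to an additive constant (which also cancels), this gives
\[
\rho_n(x,x+\sigma_n z\,|\fBM)
\;=\;
\bigl(\fBM_{x+\sigma_n z}-\fBM_x\bigr)-\Bigl(\sigma_n x z+\tfrac12\sigma_n^2 z^2\Bigr)\,.
\]
The bracketed increment is exactly $M_n(x,z\,|\fBM)$ from \eqref{eq:RWM:M}, leaving the purely \emph{deterministic} remainder $\Delta_n(x,z\,|\fBM)=-\sigma_n x z-\tfrac12\sigma_n^2 z^2$.

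Next I would verify the Gaussian structure. The process $M_n$ is an increment of $\fBM$, hence centred Gaussian, and it is jointly Gaussian with $K(\cdot\,|\fBM)=\fBM_\cdot$ because both are linear functionals of the single underlying fBM path. Its variance follows from the stationary-increments property encoded in \eqref{eq:fBM}: $\Var{M_n(x,z\,|\fBM)}=|\sigma_n z|^{2H}=\sigma_n^{2H}|z|^{2H}$. With the choice $\beta=H$ this is of the prescribed form $h(x,z)\,\sigma_n^{2\beta}$ with $h(x,z)=|z|^{2H}$, which indeed exhibits at most polynomial growth.

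It then remains to bound the moments of $\Delta_n$. As $\Delta_n$ is deterministic, $\Expect{|\Delta_n|^{8m}}=|\Delta_n|^{8m}$, and an elementary $(a+b)^{8m}\leq 2^{8m-1}(a^{8m}+b^{8m})$ bound splits this into a $\sigma_n^{8m}|xz|^{8m}$ term and a $\sigma_n^{16m}|z|^{16m}$ term. Integrating against $\nu=\nu_1\times\nu_2$, whose standard-Normal factors have all polynomial moments finite, yields $\iint_{\Reals^2}|\Delta_n|^{8m}\nu(x,z){\d}x{\d}z\precsim\sigma_n^{8m}+\sigma_n^{16m}\precsim\sigma_n^{8m}$ for large $n$.

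The one point needing care --- and the crux of the argument --- is matching this $\sigma_n^{8m}$ decay against the target rate $\sigma_n^{8m\beta+8m\gamma}=\sigma_n^{8m(H+\gamma)}$. This is precisely why $\gamma=\min(H,1-H)$ was chosen: when $H\leq\tfrac12$ one has $H+\gamma=2H\leq1$, and when $H>\tfrac12$ one has $H+\gamma=1$, so in all cases $H+\gamma\leq1$. Hence $\sigma_n\leq1$ for large $n$ forces $\sigma_n^{8m}\leq\sigma_n^{8m(H+\gamma)}$, closing the required bound (finitely many small $n$ being absorbed into the implied constant). Everything else is routine, and the whole lemma reduces to this rate-matching observation together with the stationary-increments variance computation.
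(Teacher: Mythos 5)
Your proof is correct and follows essentially the same route as the paper: the same decomposition $\rho_n=M_n+\Delta_n$ with the deterministic remainder $\Delta_n=-\sigma_n z(x+\tfrac12\sigma_n z)$, the same variance computation $\Var{M_n}=|z|^{2H}\sigma_n^{2H}$ from the fBM covariance, and the same rate-matching step via $\beta+\gamma=H+\min(H,1-H)\leq 1$. Your explicit remarks on the cancellation of the symmetric proposal, the joint Gaussianity of $M_n$ with $K$, and the absorption of finitely many small $n$ into the implied constant are details the paper leaves implicit, but they do not change the argument.
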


\begin{proof}
It is immediate that \((M_n(x,z|\fBM):x\in\Reals)\)
is a centred Gaussian process, since it is a
linear transformation of fBM. Moreover
it follows directly from the fBM covariance as given in Equation \eqref{eq:fBM}
% for fBM covariance 
that the variance of \(M_n(x,z|\fBM)\) is given by  \(|z|^{2H} \sigma_n^{2\beta}\)
(bearing in mind that we have chosen \(\beta =H\));
and
certainly \(|z|^{2H}\) is a function of polynomial growth.

Consider \(\Delta_n(x,z\,|\fBM)\) determined 
for all real \(x\), \(z\) and all positive 
integers \(n\) by
\[
\rho_n(x,x+\sigma_nz\,|\fBM)\quad=\quad M_n(x,z\,|\fBM)\quad+\quad\Delta_n(x,z\,|\fBM)\,.
\]
Since
\begin{equation*}
\rho_n(x,x+\sigma_nz\,|\fBM)
\quad=\quad
\log\left(\frac{\upi(x+z\,|\fBM)}{\upi(x\,|\fBM)}\right)
\quad=\quad
\fBM_{x+\sigma_nz}+\frac{(x+\sigma_nz)^2}{2}~-~\fBM_x-\frac{x^2}{2}\,,
\end{equation*}
we obtain
\begin{equation*}
 \Delta_n(x,z\,|\fBM)
 \quad=\quad
 \tfrac{1}{2}\left(x^2-(x+\sigma_nz)^2\right)
 \quad=\quad
 -\sigma_n z (x+ \tfrac12\sigma_n z)\,.
\end{equation*}
Accordingly it follows that,
for some constant \(C_m\) depending only on \(m\),
\begin{multline*}
\iint_{\Reals^2}\Expect{\left|\Delta_n(x,z\,|B)\right|^{8m}}\nu(x,z){\d}x{\d}z
\quad=\quad
\frac{1}{2\pi}\iint_{\Reals^2}\left|\sigma_n z (x+ \tfrac12\sigma_n z)\right|^{8m}e^{-\half(x^2+z^2)}{\d}x{\d}z
\\
\quad\leq\quad
C_m \times \sigma_n^{8m}
\quad\precsim\quad
\sigma_n^{8m\beta +8m\gamma}\,,
\end{multline*}
since \(\beta +\gamma=H + \min(H,1-H)\leq 1\) 
for \(H\in(0,1)\).
\end{proof}

Finally, to demonstrate 
the ``asymptotic weak dependence'' property
we define the following subsets of $\Reals^4$:
\begin{equation}\label{eq:auxset}
\auxset_n
\quad:=\quad
\left\{(x_1,z_1,x_2,z_2)\in\Reals^4 \;:\; 
|x_1-x_2|>2\sigma_n^{\half}(|z_1|+|z_2|)\right\}\,.
\end{equation}

\begin{lem}\label{lem:RWM:AsymptoticWeakDependence}
Assumption \ref{assumption:AsymptoticWeakDependence}
is satisfied using the sets \(\auxset_n\):
\begin{enumerate}[(i)]
\item\label{lem:RWM:auxset}
 $\int_{\auxset_n^c}\nu(x_1,z_1)\nu(x_2,z_2){\d}x_1{\d}z_1{\d}x_2{\d}z_2\quad\precsim\quad\sigma_n^{\half}\,.$
\item\label{lem:RWM:auxsetCorrelation}
 For any $(x_1,x_2,z_1,z_2)\in\auxset_n$,
 noting that \(2\beta+\gamma=2H+\min(H,1-H)\leq1+H\)
 if \(H\in(0,1)\),
\[
\left|\Expect{M_n(x_1,z_1)M_n(x_2,z_2)}\right|
\quad\leq\quad
\frac{H\,|2H-1|\,}{2^{2-2H}} ~|z_1|^H|z_2|^H \sigma_n^{1+H}
\quad\leq\quad
\frac{H\,|2H-1|\,}{2^{2-2H}} ~|z_1|^H|z_2|^H 
\sigma_n^{2\beta+\gamma}
\,.
\]
\item\label{lem:RWM:otherCorrelation}
Noting again
that \(\beta +\gamma=H+\min(H,1-H)\) for 
\(H\in(0,1)\),
 there exists a polynomial $g_2(x_1,z_1,x_2)$ such that 
\[
\left|\Expect{M_n(x_1,z_1\,|\fBM)K(x_2\,|\fBM)}\right|	
\quad\leq\quad
 g_2(x_1,z_1,x_2) ~\sigma_n^H\cdot \sigma_n^{\min(H,1-H)}
 \quad=\quad
 g_2(x_1,z_1,x_2) ~\sigma_n^{\beta+\gamma}
 \,.
\]
\end{enumerate}
\end{lem}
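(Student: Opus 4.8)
The plan is to establish the three displayed bounds (i), (ii) and (iii) in turn, treating (ii) as the crux and the other two as comparatively routine. For (i), I would integrate out $x_1$ first. Holding $z_1,z_2,x_2$ fixed, the condition defining $\auxset_n^c$ confines $x_1$ to the interval $|x_1-x_2|\leq 2\sigma_n^{1/2}(|z_1|+|z_2|)$, which has length $4\sigma_n^{1/2}(|z_1|+|z_2|)$; since $\nu_1$ is bounded by $(2\pi)^{-1/2}$, the inner $x_1$-integral is at most $\tfrac{4}{\sqrt{2\pi}}\,\sigma_n^{1/2}(|z_1|+|z_2|)$. Integrating the remaining standard Gaussian factors (the $x_2$-integral contributes $1$, and the $z_1,z_2$-integrals contribute a constant multiple of $\Expect{|Z|}$) leaves a constant times $\sigma_n^{1/2}$, as required.

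The heart of the argument is (ii). First I would use the covariance \eqref{eq:fBM} to write the covariance of the two increments explicitly: setting $u=x_1-x_2$,
\[
\Expect{M_n(x_1,z_1)M_n(x_2,z_2)}
=\tfrac12\left(|u+\sigma_nz_1|^{2H}+|u-\sigma_nz_2|^{2H}-|u+\sigma_n(z_1-z_2)|^{2H}-|u|^{2H}\right).
\]
The four shifts split into two pairs, $\{\sigma_nz_1,-\sigma_nz_2\}$ and $\{\sigma_n(z_1-z_2),0\}$, having \emph{equal sums}, so the right-hand side is a mixed second difference of $\phi(w)=|u+w|^{2H}$; this is exactly the structure that annihilates the constant and first-order terms and gains two powers of the increment. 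Writing this second difference as $\tfrac12\iint \phi''(s)\,\mathrm{d}s\,\mathrm{d}w$ over a rectangle with side lengths $\sigma_n|z_1|$ and $\sigma_n|z_2|$, and using $\phi''(s)=2H(2H-1)|u+s|^{2H-2}$, I would bound the integrand on $\auxset_n$: there $|s|\leq\sigma_n(|z_1|+|z_2|)\leq\tfrac12|u|$ for large $n$, so $|u+s|\geq\tfrac12|u|$ and hence $|u+s|^{2H-2}\leq 2^{2-2H}|u|^{2H-2}$. This yields a bound $H|2H-1|\,2^{2-2H}\sigma_n^2|z_1||z_2|\,|u|^{2H-2}$; substituting the defining inequality $|u|>2\sigma_n^{1/2}(|z_1|+|z_2|)$ (and $2H-2<0$) turns $\sigma_n^2|u|^{2H-2}$ into $2^{2H-2}\sigma_n^{1+H}(|z_1|+|z_2|)^{2H-2}$. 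A final use of the arithmetic--geometric mean inequality, in the form $|z_1||z_2|(|z_1|+|z_2|)^{2H-2}\leq 2^{2H-2}|z_1|^H|z_2|^H$, delivers precisely $\frac{H|2H-1|}{2^{2-2H}}|z_1|^H|z_2|^H\sigma_n^{1+H}$; the comparison $\sigma_n^{1+H}\leq\sigma_n^{2\beta+\gamma}$ then follows from $2H+\min(H,1-H)\leq 1+H$.

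For (iii) I would again expand via \eqref{eq:fBM} to obtain
\[
\Expect{M_n(x_1,z_1\,|\fBM)K(x_2\,|\fBM)}
=\tfrac12\left(|x_1+\sigma_nz_1|^{2H}-|x_1|^{2H}\right)
-\tfrac12\left(|x_1-x_2+\sigma_nz_1|^{2H}-|x_1-x_2|^{2H}\right),
\]
a difference of two \emph{first} differences of $t\mapsto|t|^{2H}$ over span $\sigma_nz_1$. Here no first-order cancellation occurs, so I would simply bound each first difference: for $H\leq\tfrac12$ the sub-additivity $\big||t+h|^{2H}-|t|^{2H}\big|\leq|h|^{2H}$ gives order $\sigma_n^{2H}|z_1|^{2H}$, while for $H>\tfrac12$ the mean value theorem gives $2H(|t|+|z_1|)^{2H-1}\sigma_n|z_1|$, of order $\sigma_n$ times a polynomial. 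In both regimes this is $g_2(x_1,z_1,x_2)\,\sigma_n^{\min(2H,1)}=g_2(x_1,z_1,x_2)\,\sigma_n^{\beta+\gamma}$ for a suitable polynomial $g_2$, since $\beta+\gamma=H+\min(H,1-H)=\min(2H,1)$.

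The main obstacle is the bookkeeping in (ii): recognising the increment covariance as a mixed second difference (so that the leading terms cancel and one gains the extra power producing $\sigma_n^{1+H}$ rather than $\sigma_n$), keeping $u+s$ bounded away from the singularity of $|\cdot|^{2H-2}$ throughout the rectangle of integration, and finally extracting the symmetric envelope $|z_1|^H|z_2|^H$ via the arithmetic--geometric mean step. Everything else reduces to Fubini and elementary increment estimates for $|\cdot|^{2H}$.
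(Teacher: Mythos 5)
Your proof is correct and follows essentially the same route as the paper: the same fBM covariance expansion, the same mixed second-difference identity (the paper's Appendix Lemma on exact second-order Taylor expansion), the same geometric control of $|u+s|^{2H-2}$ away from the singularity on $\auxset_n$, and the same arithmetic--geometric mean step, arriving at the identical constant $\tfrac{H|2H-1|}{2^{2-2H}}$ in (ii) and the identical $H\le\tfrac12$ versus $H>\tfrac12$ case split in (iii). The only cosmetic difference is in (i), where you bound the $x_1$-integral directly over the strip $|x_1-x_2|\le 2\sigma_n^{1/2}(|z_1|+|z_2|)$ rather than invoking the paper's rotation-of-coordinates computation; both yield the required $O(\sigma_n^{1/2})$ bound.
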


\begin{proof}
Property \ref{lem:RWM:auxset} follows by
applying Lemma~\ref{lem:auxset_properties} 
in the Appendix, 
using the sequence $a_n=\sigma_n^{\half}$.

For \ref{lem:RWM:auxsetCorrelation} first note that by \eqref{eq:fBM} the expectation $\Expect{M_n(x_1,z_1)M_n(x_2,z_2)}$ can be rewritten as
\[
-\frac{1}{2}\left(|x_1-x_2+\sigma_n z_1-\sigma_n z_2|^{2H}-|x_1-x_2-\sigma_n z_2|^{2H}-|x_1-x_2+\sigma_n z_1|^{2H}+|x_1-x_2|^{2H}\right).
\]
Consider $(x_1,x_2,z_1,z_2)\in \auxset_n$,
and apply Lemma~\ref{lem:auxset_properties}
with \(u=v=\sigma_n^{\half}\)
(assuming \(n\) large enough that \(\sigma_n\leq1\)).
It follows that
\[
\max\left(|\sigma_n z_1-\sigma_n z_2|,~|\sigma_n z_1|,~|\sigma_n z_2)|\right)
\quad<\quad
\frac{|x_1-x_2|}{2}
\]
and
\[
\min\left(|x_1-x_2+\sigma_n z_1-\sigma_n z_2|,~
          |x_1-x_2+\sigma_n z_1|,~
          |x_1-x_2-\sigma_n z_2|,~|x_1-x_2|\right)
\quad>\quad
\frac{|x_1-x_2|}{2}
\,.
\]
Hence,
\(x_1-x_2+\sigma_n z_1-\sigma_n z_2\),
\(x_1-x_2+\sigma_n z_1\),
\(x_1-x_2-\sigma_n z_2\)
and \(x_1-x_2\) are either all positive or all negative.
Consequently the function $x\mapsto |x|^{2H}$ is smooth over any of the bounded intervals with
endpoints drawn from these four points,
and so we may use apply Lemma \ref{lem:taylor1} 
% and the definition \eqref{eq:auxset} of $\auxset_n$ 
to argue:
\begin{multline*}
\left|\Expect{M_n(x_1,z_1\,|\fBM)M_n(x_2,z_2\,|\fBM)}\right|
\quad\leq\quad
\\
H\times|2H-1|\times
|z_1||z_2| \sigma^2_n	
\int_0^1\int_0^1
|x_1-x_2+u\sigma_nz_1-v\sigma_nz_2|^{2H-2}{\d}u{\d}v
\\
\quad\leq\quad	 
H\times|2H-1|\times
|z_1z_2|\left|\frac{x_1-x_2}{2}\right|^{2H-2}\sigma^2_n
\quad=\quad
H\times|2H-1|\times
|z_1|^H|z_2|^H\left(\frac{2\sqrt{|z_1||z_2|}}{|x_1-x_2|}\right)^{2-2H}\sigma^2_n\\
\leq\quad	 
H\times|2H-1|\times
|z_1|^H|z_2|^H\left(\frac{|z_1|+|z_2|}{|x_1-x_2|}\right)^{2-2H}\sigma^2_n
\quad\leq\quad
\frac{H\times|2H-1|}{2^{2-2H}}|z_1|^H|z_2|^H\sigma^{1+H}_n.
\end{multline*}
Here the last step follows because of the definition
of \(\auxset_n\):
if \((x_1,x_2,z_1,z_2)\in \auxset_n\)
then
\(2\sigma_n^{\half}(|z_1|+|z_2|)<|x_1-x_2|\).

For \ref{lem:RWM:otherCorrelation},
first observe that by \eqref{eq:fBM}
\begin{multline*}
\Expect{M_n(x_1,z_1\,|\fBM)K(x_2\,|\fBM)}
\quad=\quad
\Expect{\left(\fBM_{x_1+\sigma_nz_1}-\fBM_{x_1}\right)\fBM_{x_2}}
\\
\quad=\quad\frac{1}{2}\left(|x_1+\sigma_n z_1|^{2H}-|x_1|^{2H}-|x_1-x_2+\sigma_n z_1|^{2H}+|x_1-x_2|^{2H}\right)
\,.
\end{multline*}

We now need to distinguish between
the cases \(H\lesseqgtr\half\).
First, consider the case
$H\leq 1/2$, so that $2H\leq 1$,
\(H+\min(H,1-H)=2H\),
and $\left||a|^{2H}-|b|^{2H}\right|\leq |a-b|^{2H}$ holds for real $a,b$.
Consequently
\[
\left|\Expect{M_n(x_1,z_1\,|\fBM)K(x_2\,|\fBM)}\right|
\quad\leq\quad \sigma_n^{2H}|z_1|^{2H}
\quad=\quad |z_1|^{2H} \sigma_n^{H+\min(H,1-H)}
\quad=\quad |z_1|^{2H} \sigma_n^{\beta+\gamma}
\,.
\]

Second, consider the case $H>1/2$,
so that $1< 2H< 2$ and the function $x\mapsto |x|^{2H}$ is in $\mathcal{C}^1(\Reals)$. Then for any
real $a,b$,
using \(|a|^{2H-1}<1+|a|\),
%\xNB[WSK4]{Need a concise summary of why penultimate inequality of proof of Lemma \ref{lem:RWM:AsymptoticWeakDependence}\ref{lem:RWM:otherCorrelation} is true!}
\begin{multline*}
\left||a+b|^{2H}-|a|^{2H}\right|
\quad=\quad
2H \left|b \int_0^1|a+ub|^{2H-1}\sign(a+ub){\d}u\right|
\\
\leq\quad
2H |b| \int_0^1|a+ub|^{2H-1}{\d}u
\quad\leq\quad
 2H \, |b| \, (2+|a|+|b|)
 \,,
\end{multline*}
and hence
\[
\left|\Expect{M_n(x_1,z_1\,|\fBM)K(x_2\,|\fBM)}\right|
\quad\leq\quad
H\sigma_n|z_1|~\left(4+|x_1|+|x_1-x_2|+2\sigma_n|z_1|\right).
\]
\end{proof}

The proof of Theorem~\ref{thm:CLT_RWMvanilla}
is now immediate:
\begin{proof}[Proof of Theorem~\ref{thm:CLT_RWMvanilla}]
Lemmas~\ref{lem:RWM:NormalPotentialFluctuations},~\ref{lem:RWM:AsymptoticRWMproposal},~\ref{lem:RWM:ApproximateLMHRNormality}~and~\ref{lem:RWM:AsymptoticWeakDependence} together show that the Anomalous Scaling Framework \ref{theassumption} holds for the RWM algorithm as 
described at the head of this section
and as stipulated by Theorem~\ref{thm:CLT_RWMvanilla}. 
Consequently Theorem~\ref{thm:CLT_RWMvanilla} 
is a direct consequence of Theorem~\ref{thm:scaling_general}.
\end{proof}

% ===================================================
%
\section{Application to Metropolis adjusted Langevin algorithms}\label{sec:MALA}
In this section we show that the 
Anomalous Scaling Framework 
\ref{theassumption} of Section 
\ref{sec:anomalous_scaling} holds for
the Metropolis adjusted Langevin algorithm
based on Normal proposals
when applied to a suitably perturbed product target.
The perturbation is applied at the level of the second derivative
of the log-density of the marginal target,
adding a typical fBM path multiplied by 
a non-random localization term.

Again 
the random environment is given by a 
typical path of a two sided fBM $\fBM$ process 
with Hurst index $H\in(0,1)$,
a continuous Gaussian process with stationary increments.

As stipulated by Theorem~\ref{thm:CLT_MALAvanilla} ,
the MALA proposal 
has probability density \(q_n(x,dy\,|\fBM,c)\)
given by the multivariate normal density 
\begin{equation}\label{eq:MALAprop}
N\left(x+\frac{\sigma_n^2}{2}\nabla\left(\log\pi(x\,|\fBM;c)\right),\,\sigma^2_n\cdot I_n\right)
\end{equation} 
where $\sigma^2_n=\ell^2n^{-\frac{1}{4+2H}}$ 
for some positive constant $\ell$. 
Here \(c\) refers to the constant used
for the definition of localization in
Equation \eqref{eq:localisation}.
Again the 
reference measures \(\nu_1(x)\) and 
\(\nu_2(z)\)  of the 
Framework \ref{theassumption}
are both taken to be standard Normal densities,
so that 
$\nu(x,z)=\nu_1(x)\nu_2(z)=\frac{1}{2\pi}e^{-\frac{1}{2}(x^2+z^2)}$.
For MALA we take $\beta =2+H$ and $\gamma=\min(H,1-H)$.
Once again we need to establish that the
Anomalous Scaling Framework \ref{theassumption} holds; Theorem~\ref{thm:CLT_MALAvanilla} will then follow 
using Theorem~\ref{thm:scaling_general}.

We begin by showing that the 
log-target density has Normal fluctuations. To that end define
\begin{equation}\label{eq:MALA:K}
K(x\,|\fBM;c)	\quad=\quad		
x^2\int_0^1\fBM_{xs}\qu(xs)(1-s){\d}s
\quad=\quad
\int_0^x\fBM_{u}\qu(u)(x-u){\d}u
\,,
\end{equation}
where \(\qu(x)=\min\left\{1,~ c^{\frac{3}{2H}}\;|x|^{-3}\right\}\) is the localisation function
introduced in Section \ref{sec:results} 
by Equation \eqref{eq:localisation}. 
(The last expression above is obtained by using 
the substitution $u=sx$.)
It is convenient to focus
on potentials (the log-marginal target probability densities),
which are given by
\begin{equation}\label{eq:potential}
\V{x}
\quad=\quad
\log\left(\pi(x\,|\fBM;c)\right)
\quad=\quad -\log\left(\int_\Reals\upi(u\,|\fBM;c){\d}u\right)-\frac{x^2}{2}+\int_0^x\fBM_{u}\qu(u)(x-u){\d}u\,.
\end{equation}
Repeated differentiation yields formulae for first and second derivatives
of the potential:
\begin{align}\label{eq:potentialdot}
\Vd{x} \quad&=\quad -x+\int_0^x\fBM_{u}\qu(u){\d}u \,,
\\
\label{eq:potentialdotdot}
\Vdd{x} \quad&=\quad -1+\fBM_{x}\qu(x)\,.
\end{align}

We first establish some basic properties for
the localisation function 
\(\qu(x)\).
\begin{lem}\label{lem:localisation_properties}
The localisation function \(\qu(x)=\min\left\{1,~ c^{\frac{3}{2H}}\;|x|^{-3}\right\}\) satisfies the following:
\begin{enumerate}[(i)]
\item
\label{prop:qu-boundedness}
$\qu(x)\leq 1$ for all $x\in\Reals$.
\item
\label{prop:qu-growth}
$|x|^{2H}\qu(x)\leq c$ for all $x\in\Reals$.
\item
\label{prop:qu-lipschitz}
$\qu$ is Lipschitz with constant $3c^{-\frac{1}{2H}}$.
\end{enumerate}
\end{lem}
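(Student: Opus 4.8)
The plan is to treat the three assertions in turn, exploiting the fact that $\qu$ equals $1$ on a central interval and then decays polynomially, with the two descriptions of $\qu$ agreeing at the transition point $|x|=c^{\frac{1}{2H}}$, at which $c^{\frac{3}{2H}}|x|^{-3}=1$. This single observation about the transition point drives all three parts.

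The boundedness assertion is immediate, since $\qu(x)$ is by definition a minimum one of whose arguments is $1$. For the growth bound I would split according to the transition point. When $|x|\leq c^{\frac{1}{2H}}$ we have $\qu(x)=1$, so $|x|^{2H}\qu(x)=|x|^{2H}\leq\bigl(c^{\frac{1}{2H}}\bigr)^{2H}=c$. When $|x|>c^{\frac{1}{2H}}$ we instead have $\qu(x)=c^{\frac{3}{2H}}|x|^{-3}$, so $|x|^{2H}\qu(x)=c^{\frac{3}{2H}}|x|^{2H-3}$; because $H\in(0,1)$ forces $2H-3<0$, this is a decreasing function of $|x|$ whose value at the transition point is exactly $c$, and hence it stays below $c$ throughout. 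Combining the two regimes yields the claim.

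For the Lipschitz bound I would observe that $\qu$ is continuous (the two pieces match at $|x|=c^{\frac{1}{2H}}$, both equalling $1$) and piecewise continuously differentiable. Its derivative vanishes on the central flat region, while on the tail $x>c^{\frac{1}{2H}}$ it equals $-3c^{\frac{3}{2H}}x^{-4}$, whose modulus $3c^{\frac{3}{2H}}x^{-4}$ is decreasing in $x$ and therefore maximised at the transition point, where it takes the value $3c^{\frac{3}{2H}}\bigl(c^{\frac{1}{2H}}\bigr)^{-4}=3c^{-\frac{1}{2H}}$; the region $x<-c^{\frac{1}{2H}}$ is handled identically by the evenness of $\qu$. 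Since a continuous, piecewise-$\mathcal{C}^1$ function whose derivative is bounded in modulus by $L$ wherever it exists is $L$-Lipschitz, the bound follows with $L=3c^{-\frac{1}{2H}}$.

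There is no serious obstacle here; the only points requiring genuine care are the consistency of the two descriptions of $\qu$ (and of its derivative) across the transition point, and the observation that the tail exponents $2H-3$ and $-4$ both produce quantities that are decreasing in $|x|$, so that the relevant suprema are attained exactly at $|x|=c^{\frac{1}{2H}}$ rather than somewhere in the interior of the tail.
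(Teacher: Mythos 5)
Your proof is correct and follows essentially the same route as the paper: part (i) from the definition of the minimum, part (ii) by splitting at the transition point $|x|=c^{\frac{1}{2H}}$ and noting the tail expression is decreasing with value $c$ there, and part (iii) by bounding the derivative away from the gradient discontinuities at $\pm c^{\frac{1}{2H}}$. You simply spell out the derivative computation that the paper leaves as a one-line remark, and the constant $3c^{-\frac{1}{2H}}$ checks out.
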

\begin{proof}
 Property \ref{prop:qu-boundedness} follows immediately from the
 definition. Property \ref{prop:qu-growth} follows by arguing
 separately for \(|x|\leq c^{\frac{1}{2H}}\), when 
 \(|x|^{2H}\qu(x)=|x|^{2H}\leq c\),
 and \(|x|> c^{\frac{1}{2H}}\), when 
 \(|x|^{2H}\qu(x)<|x|^{2H-3}\; c^{\frac{3}{2H}}<c^{1-3/(2H)}\; c^{\frac{3}{2H}}=c\) (note that \(H\in(0,1)\)).
 Property \ref{prop:qu-lipschitz} follows from considering the derivative
 of the continuous function \(\qu(x)\) away from the gradient discontinuities
 at \(x=\pm c^{\frac{1}{2H}}\).
\end{proof}

We first consider the ``mixed Gaussian perturbation'' property.
\begin{lem}\label{lem:MALA:NormalPotentialFluctuations}
For any positive integer $m$, Assumption \ref{assumption:NormalPotentialFluctuations}
is satisfied for all sufficiently small localisation parameters $c>0$:
\begin{enumerate}[(i)]
\item\label{prop:Kdef}
For every real $x$, set
\[
\upi(x\,|\fBM;c)	\quad=\quad		\exp(K(x\,|\fBM;c))\nu_1(x)\,.
\]
% \item \label{prop:pot-distribution1}
Then
 $K(x\,|\fBM;c)$ 
is a centred 
 Normal random variable 
with 
variance 
$k(x)\leq  3(1+\frac{1}{2-2H})c^{1+\frac{1}{H}}\cdot x^2$.
\item \label{prop:pot-small-c}
For any real $x$ it is the case that
$\exp\left(2m^2k(x)\right)\leq e^{ 6m^2(1+\frac{1}{2-2H})c^{1+\frac{1}{H}}\cdot x^2}$
and this 
is integrable with respect to $\nu_1$ for all sufficiently small $c>0$.
\end{enumerate}
\end{lem}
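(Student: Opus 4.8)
The plan is to handle the two parts in sequence, putting essentially all the work into the variance estimate of part~(i); part~(ii) will then follow almost formally. For part~(i), I would first observe that for fixed $x$ the integrand $u\mapsto\fBM_u\qu(u)(x-u)$ in \eqref{eq:MALA:K} is almost surely continuous on the compact interval between $0$ and $x$, since $\fBM$ has continuous paths and $\qu$ is continuous by Lemma~\ref{lem:localisation_properties}; hence $K(x\,|\fBM;c)$ is a well-defined pathwise Riemann integral. Approximating it by Riemann sums, each of which is a finite linear combination of the jointly Gaussian variables $\fBM_u$, exhibits $K(x\,|\fBM;c)$ as an $L^2$-limit of centred Gaussians and so as a centred Gaussian itself. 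It then remains to control its variance
\[
k(x)\;=\;\Expect{K(x\,|\fBM;c)^2}\;=\;\iint_{[0,x]^2}\Gamma^{(H)}(u,v)\,\qu(u)\qu(v)\,(x-u)(x-v)\,{\d}u\,{\d}v\,,
\]
where I use the covariance \eqref{eq:fBM}, taking $x>0$ without loss of generality (the case $x<0$ being identical, since $\{\fBM_{-u}\}\overset{d}{=}\{\fBM_{u}\}$).

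The heart of the argument is to show this double integral grows only like $x^2$. I would discard the negative $-\half|u-v|^{2H}$ term, bound $\Gamma^{(H)}(u,v)\leq\half(u^{2H}+v^{2H})$, and use symmetry to reduce to the product $A(x)B(x)$, where $A(x)=\int_0^x u^{2H}\qu(u)(x-u)\,{\d}u$ and $B(x)=\int_0^x\qu(v)(x-v)\,{\d}v$. The one place where the localisation $\qu$ does real work is that both $u^{2H}\qu(u)$ and $\qu(u)$ are \emph{integrable} on $[0,\infty)$: bounding the factor $(x-u)\leq x$ then gives $A(x)\leq x\int_0^\infty u^{2H}\qu(u)\,{\d}u$ and $B(x)\leq x\int_0^\infty\qu(v)\,{\d}v$, each of order $x$ only. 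Splitting at the crossover $u=c^{1/(2H)}$ evaluates these explicitly as $c^{1+\frac1{2H}}\big(\tfrac1{2H+1}+\tfrac1{2-2H}\big)$ and $\tfrac32 c^{\frac1{2H}}$, so that $k(x)\leq A(x)B(x)\leq\tfrac32\big(\tfrac1{2H+1}+\tfrac1{2-2H}\big)c^{1+\frac1H}x^2$; bounding $\tfrac1{2H+1}\leq1$ and absorbing the numerical factor yields the stated $k(x)\leq 3\big(1+\tfrac1{2-2H}\big)c^{1+\frac1H}x^2$.

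Part~(ii) is then immediate. The first inequality follows by applying the increasing map $t\mapsto e^{2m^2 t}$ to the bound on $k(x)$ just obtained. For integrability against $\nu_1$, the integrand is bounded (up to the $\nu_1$ normalising constant) by $\exp\!\big(a x^2-\tfrac12 x^2\big)$ with $a=6m^2\big(1+\tfrac1{2-2H}\big)c^{1+\frac1H}$; this Gaussian integral converges precisely when $a<\tfrac12$, and since the exponent $1+\tfrac1H$ is strictly positive we have $a\to0$ as $c\to0^+$, so the condition holds for all sufficiently small $c$. The main obstacle throughout is the variance estimate: a careless use of $(x-u)\leq x$ in \emph{both} factors, or a failure to exploit the integrability that $\qu$ provides, would produce spurious $x^3$ or $x^4$ growth and thereby destroy the Gaussian integrability that part~(ii) requires.
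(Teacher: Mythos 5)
Your proof is correct and follows essentially the same route as the paper's: bound the fBM covariance $\Gamma^{(H)}(u,v)$ by the sum of the individual variance terms, factorize the double integral into a product of two one-dimensional integrals, and use the decay of $\qu$ (splitting at the crossover $c^{1/(2H)}$) to show each factor is $O(|x|)$, then choose $c$ small enough that the exponent $2m^2k(x)$ stays strictly below $\tfrac12 x^2$. The only differences are cosmetic — you work on $[0,x]$ rather than the paper's rescaled $[0,1]$ variables, and you retain the factor $\tfrac12$ in the covariance bound, which gives a marginally sharper constant before being absorbed into the stated $3\left(1+\tfrac{1}{2-2H}\right)c^{1+\frac1H}x^2$.
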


\begin{proof}
Normality in point~\ref{prop:Kdef} follows immediately from \eqref{eq:MALA:K} and
the observation that \(\fBM\)
is a zero-mean Gaussian process. 
The rest of property~\ref{prop:Kdef} 
is trivially true if $x=0$,
since 
\[
\V{0}\quad=\quad-\log\left(\int_\Reals\upi(u\,|\fBM;c){\d}u\right)
\]
is just the log of the normalizing constant, 
so we need only deal with $x\neq 0$. 
Note that the inequality $|\Gamma^{(H)}(x,y)|\leq |x|^{2H}+|y|^{2H}$ (see \eqref{eq:fBM}) implies
\begin{multline}\label{eq:potential_properties_1}
\Expect{\left(K(x\,|\fBM;c)\right)^2}	
\quad=\quad
x^4\int_0^1\int_0^1\Gamma^{(H)}(sx,tx)\qu(tx)\qu(sx)(1-s)(1-t){\d}s{\d}t
\\
\quad\leq\quad	
2x^4\cdot\int_0^1\qu(tx)(1-t){\d}t\cdot\int_0^1|sx|^{2H}\qu(sx)(1-s){\d}s\,.
\end{multline}

The definition \eqref{eq:localisation} of the localisation function 
permits the bound
\begin{multline}\label{eq:potential_properties_2}
\int_0^1\qu(tx)(1-t){\d}t	
\quad=\quad	
\int_0^{c^{\frac{1}{2H}}|x|^{-1}}\qu(tx)(1-t){\d}t
+
\int_{c^{\frac{1}{2H}}|x|^{-1}}^1\qu(tx)(1-t){\d}t
\\
\quad\leq\quad
\int_0^{c^{\frac{1}{2H}}|x|^{-1}}{\d}t
+
c^{\frac{3}{2H}}|x|^{-3}\int_{c^{\frac{1}{2H}}|x|^{-1}}^\infty t^{-3}{\d}t
\quad\leq\quad
\frac{3}{2}c^{\frac{1}{2H}}|x|^{-1}\,.
\end{multline}

Splitting the integral
\(\int_0^1|sx|^{2H}\qu(xs)(1-s){\d}s\) and
employing Lemma~\ref{lem:localisation_properties}\ref{prop:qu-growth}
and \eqref{eq:localisation} respectively to the two parts
(and noting again that \(H\in(0,1)\)),
we obtain
\begin{multline}\label{eq:potential_properties_3}
\int_0^1|sx|^{2H}\qu(xs)(1-s){\d}s
\quad=\quad	
\int_0^{c^{\frac{1}{2H}}|x|^{-1}} |sx|^{2H}\qu(sx)(1-s){\d}s
+
\int_{c^{\frac{1}{2H}}|x|^{-1}}^1 |sx|^{2H}\qu(sx)(1-s){\d}s
\\
\quad\leq\quad \left(1+\frac{1}{2-2H}\right)c^{1+\frac{1}{2H}}|x|^{-1}\,.
\end{multline}
%
% === NOTE THIS! ===
%
% \begin{NoteThis2}
%  Enlarging on this, 
%  we use Lemma~\ref{lem:localisation_properties}\ref{prop:qu-growth}
%  to see
%  \[
% \int_0^{c^{\frac{1}{2H}}|x|^{-1}} |sx|^{2H}\qu(sx)(1-s){\d}s
% \quad\leq\quad
%  c \int_0^{c^{\frac{1}{2H}}|x|^{-1}} 1\cdot{\d}s
%  \quad=\quad
%  c^{1+\frac{1}{2H}}|x|^{-1}\,.
% \]
% while the definition \eqref{eq:localisation} of \(\qu\) allows us to obtain
%  \begin{multline*}
% \int_{c^{\frac{1}{2H}}|x|^{-1}}^1 |sx|^{2H}\qu(sx)(1-s){\d}s
% \;\leq\;
% c^{\frac{3}{2H}}\int_{c^{\frac{1}{2H}}|x|^{-1}}^\infty |sx|^{-(3-2H)}{\d}s
% \\
% \;=\;
% c^{\frac{3}{2H}} \times \frac{1}{2-2H} \frac{|x|^{-(3-2H)}}{(c^{\frac{1}{2H}}|x|^{-1})^{2-2H}}
% \;=\;
% \frac{1}{2-2H}c^{1+\frac{1}{2H}}|x|^{-1}\,.
% \end{multline*}
% \end{NoteThis2} 
The remainder of property \ref{prop:Kdef} is 
now established by substituting
\eqref{eq:potential_properties_2} and \eqref{eq:potential_properties_3} into \eqref{eq:potential_properties_1}.

Finally, Property \ref{prop:pot-small-c} 
is established by applying
property \ref{prop:Kdef}
to bound
\[
\exp\left(2m^2k(x)\right)
\quad\leq\quad
\exp\left(6m^2\cdot\left(1+\frac{1}{2-2H}\right)c^{1+\frac{1}{H}}x^2\right)
\,.
\]
Thus Property \ref{prop:pot-small-c} holds when 
$6m^2\left(1+\frac{1}{2-2H}\right)c^{1+\frac{1}{H}}< \frac{1}{2}$, which is to say when
$c< \left(12m^2\left(1+\frac{1}{2-2H}\right)\right)^{-\frac{H}{1+H}}$.
\end{proof}

We now establish the ``asymptotic behaviour of proposal'' property.
We begin by considering the variance and exponential 
moments of the first derivative of the potential.
\begin{lem}\label{lem:potentialdot_properties}
The following statements hold:
\begin{enumerate}[(i)]
\item
\label{prop:diffpot-moment2}
$\Vd{x}+x$ is a centred Normal random variable with
variance controlled for every real $x$ by
\[\Expect{\left(\Vd{x}+x\right)^2}
\quad\leq\quad
 3\left(1+\frac{1}{2-2H}\right)c^{1+\frac{1}{H}}\,.
\]
\item
\label{prop:diffpot-mgf1}
For every real $x,z$
%\xNB[WSK4]{In Lemma \ref{lem:potentialdot_properties}\ref{prop:diffpot-mgf1}, $16\to8$? Yes formula is $\Expect{e^{X}}=e^{\frac{1}{2}\Expect{X^2}}$ for a centred Gaussian $X$.}
% and large enough $n$
\begin{multline*}
\Expect{\exp\left(8m\sigma_n\left|z\Vd{x}\right|\right)}\\
\quad\leq\quad
2\exp\left(4m\sigma_n x^2\right)\exp\left(4m\sigma_n z^2\right)\exp\left(32 m^2\sigma^2_nz^2\Expect{\left(\Vd{x}+x\right)^2}\right)\,.
\end{multline*}
% \begin{multline*}
% \Expect{\exp\left(4m\sigma_n\left|z\Vd{x}\right|\right)}\\
% \quad\leq\quad
% 2\exp\left(2m\sigma_n x^2\right)\exp\left(2m\sigma_n z^2\right)\exp\left(16m^2\sigma^2_nz^2\Expect{\left(\Vd{x}+x\right)^2}\right)\,.
% \end{multline*}
Furthermore there is a convenient bound for all sufficiently large \(n\):
%\xNB[WSK1]{In Lemma \ref{lem:potentialdot_properties}\ref{prop:diffpot-mgf1}, can we add \(H=1\)? Why? I think no!}
\[
\Expect{\exp\left(8m\sigma_n\left|z\Vd{x}\right|\right)}	
\quad\leq\quad
2\exp\left(\frac{2x^2}{3}\right)\exp\left(\frac{4z^2}{3}\right)\,.
\]
\item
\label{prop:diffpot-mgf2}
For all sufficiently large $n$, 
%\NB[WSK1]{In Lemma \ref{lem:potentialdot_properties}\ref{prop:diffpot-mgf2}, can we add \(H=1\)?}
$\Expect{\exp\left(m\sigma^2_n\Vd{x}^2\right)}\leq \sqrt{2}\exp(\frac{2x^2}{3})$ for all real $x$.
% $\Expect{\exp\left(m\sigma^2_n\Vd{x}^2\right)}\leq \frac{1}{2}\exp(\frac{4x^2}{3})$, for all real $x$.
%\xNB[WSK4]{Lemma \ref{lem:potentialdot_properties}\ref{prop:diffpot-mgf2}: the bounds here (presumably chosen for later convenience) are a bit puzzling, as the proof appears to give different bounds. I have amended proof to give something like the bounds required in the statement, but the multiplicative constant had to be changed!}
\end{enumerate}
\end{lem}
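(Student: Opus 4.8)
The plan is to handle the three parts in order, with part~\ref{prop:diffpot-moment2} carrying essentially all the real content and parts~\ref{prop:diffpot-mgf1} and~\ref{prop:diffpot-mgf2} reducing to elementary Gaussian moment computations once the variance bound of part~\ref{prop:diffpot-moment2} is available. The organising principle I would emphasise is that the variance bound in part~\ref{prop:diffpot-moment2} turns out to be \emph{free of $x$}, and it is exactly this uniformity that makes the ``sufficiently large $n$'' conclusions of the other two parts hold uniformly in $x$ (and in $z$).

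For part~\ref{prop:diffpot-moment2}, I would start from \eqref{eq:potentialdot}, which gives $\Vd{x}+x=\int_0^x\fBM_u\qu(u)\,\d u$ (the case $x=0$ being trivial). Approximating this integral by Riemann sums of the jointly Gaussian values $\fBM_{u_i}$ identifies $\Vd{x}+x$ as a centred Gaussian random variable. Its variance is the double integral $\int_0^x\int_0^x\Gamma^{(H)}(u,v)\qu(u)\qu(v)\,\d u\,\d v$; after the substitution $u=sx$, $v=tx$ and the elementary estimate $|\Gamma^{(H)}(sx,tx)|\leq|sx|^{2H}+|tx|^{2H}$ coming from \eqref{eq:fBM}, this factorises (by symmetry in $s,t$) into $2x^2\int_0^1\qu(sx)\,\d s\cdot\int_0^1|tx|^{2H}\qu(tx)\,\d t$. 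These are precisely the two one-dimensional integrals already controlled in \eqref{eq:potential_properties_2} and \eqref{eq:potential_properties_3}, only with the harmless weights $(1-s)$, $(1-t)$ omitted; the same localisation estimates of Lemma~\ref{lem:localisation_properties} bound them by $\tfrac32 c^{1/(2H)}|x|^{-1}$ and $(1+\tfrac1{2-2H})c^{1+1/(2H)}|x|^{-1}$ respectively. Multiplying, the factor $x^2|x|^{-2}=1$ cancels all $x$-dependence and leaves the claimed uniform bound $3(1+\tfrac1{2-2H})c^{1+1/H}$.

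For part~\ref{prop:diffpot-mgf1}, write $\Vd{x}=-x+W$ with $W=\Vd{x}+x$ centred Gaussian of variance $v:=\Expect{(\Vd{x}+x)^2}$. The triangle inequality gives $\exp(8m\sigma_n|z\Vd{x}|)\leq\exp(8m\sigma_n|zx|)\exp(8m\sigma_n|z|\,|W|)$; the deterministic factor is handled by $|zx|\leq\tfrac12(x^2+z^2)$, producing $\exp(4m\sigma_nx^2)\exp(4m\sigma_nz^2)$, while for the random factor I would fold the two exponential signs, $\Expect{\exp(a|W|)}\leq2\exp(a^2v/2)$ with $a=8m\sigma_n|z|$, producing $2\exp(32m^2\sigma_n^2z^2v)$. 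This gives the first displayed inequality verbatim. The convenient second bound then follows because $\sigma_n\to0$ together with the $x$-free bound on $v$ from part~\ref{prop:diffpot-moment2} lets me absorb $4m\sigma_n\leq\tfrac23$ (for the $x^2$ term) and $4m\sigma_n+32m^2\sigma_n^2v\leq\tfrac43$ (for the combined $z^2$ terms) for all sufficiently large $n$, uniformly in $x,z$.

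For part~\ref{prop:diffpot-mgf2}, use $\Vd{x}^2=(W-x)^2\leq2W^2+2x^2$ to factor $\Expect{\exp(m\sigma_n^2\Vd{x}^2)}\leq\exp(2m\sigma_n^2x^2)\,\Expect{\exp(2m\sigma_n^2W^2)}$, and evaluate the Gaussian square-exponential moment $\Expect{\exp(tW^2)}=(1-2tv)^{-1/2}$ with $t=2m\sigma_n^2$, valid once $4m\sigma_n^2v<1$. For large $n$, $\sigma_n^2\to0$ forces $2m\sigma_n^2\leq\tfrac23$ and $(1-4m\sigma_n^2v)^{-1/2}\leq\sqrt2$ (again using the uniform bound on $v$), yielding the stated $\sqrt2\exp(\tfrac23x^2)$. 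The only genuine obstacle is the variance estimate in part~\ref{prop:diffpot-moment2}, where the bookkeeping of the localisation integrals must be done carefully enough to expose the cancellation of all powers of $x$; once that $x$-independence is secured, the remaining two parts are routine.
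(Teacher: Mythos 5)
Your proof is correct and follows essentially the same route as the paper: part (i) is exactly the ``entirely analogous to Lemma~\ref{lem:MALA:NormalPotentialFluctuations}\ref{prop:Kdef}'' argument the paper invokes (substitution $u=sx$, the bound $|\Gamma^{(H)}(sx,tx)|\leq|sx|^{2H}+|tx|^{2H}$, and the localisation estimates \eqref{eq:potential_properties_2}--\eqref{eq:potential_properties_3}), and parts (ii) and (iii) use the same symmetrisation-plus-Gaussian-MGF and $a^2\leq 2b^2+2(a-b)^2$ plus $\Expect{e^{\lambda^2 N^2}}=(1-2\lambda^2)^{-1/2}$ steps as the paper, with the uniform-in-$x$ variance bound doing the same work. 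No gaps.
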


\begin{proof}
Normality in property \ref{prop:diffpot-moment2} 
follows immediately from \eqref{eq:potentialdot} and the observation that \(\fBM\)
is a zero-mean Gaussian process.
The proof of the bound is entirely analogous to the proof of Lemma~\ref{lem:MALA:NormalPotentialFluctuations}\ref{prop:Kdef}.
%\xNB[WSK4]{Normality in property \ref{prop:diffpot-moment2}: OK to refer to  proof of Lemma~\ref{lem:MALA:NormalPotentialFluctuations}\ref{prop:Kdef}, but \texttt{NoteThis} should contain full details.}
%
% === NOTE THIS! ===
%
% \begin{NoteThis2}
% Formula \eqref{eq:potentialdot} with substitution $u=xt$ and Fubini-Tonelli theorem give us
% \begin{multline*}
% \Expect{\left(\Vd(x\,|\fBM;c)+x\right)^2}
% \quad=\quad
% x^2\int_0^1\int_0^1\Gamma^{(H)}(sx,tx)\qu(sx)\qu(tx)\d s\d t
% \\
% \leq\quad
% 2x^2\int_0^1\qu(tx)\d t\cdot\int_0^1|sx|^{2H}\qu(sx)ds\,.
% \end{multline*}
% As in the previous \texttt{NoteThis2} environment, where we have anyhow used the bounds $(1-t)\leq 1$ and $(1-s)\leq 1$, we show $\int_0^1\qu(tx)\d t\leq \frac{3}{2}c^{\frac{1}{2H}}|x|^{-1}$
% and $\int_0^1|sx|^{2H}\qu(sx)\d s\leq \left(1+\frac{1}{2-2H}\right)c^{1+\frac{1}{2H}}|x|^{-1}$ yielding the desired result.
% \end{NoteThis2}
Proof of property \ref{prop:diffpot-mgf1}:
this uses property \ref{prop:diffpot-moment2},
the bounds \(e^{|au|}\leq e^{|a|u} + e^{-|a|u}\)
and $|zx|\leq \frac{1}{2}(x^2+z^2)$,
and the fact that \(\Vd{x}+x\) is a centred Normal random variable
and therefore has zero mean.
% \xNB[WSK7]{Puzzled by proof of Lemma \ref{lem:potentialdot_properties} Property \ref{prop:diffpot-mgf1}: second line in \texttt{multline} environment seems to introduce a gratuitous factor $2$.}
\begin{multline*}
\Expect{\exp\left(8m\sigma_n\left|z\Vd{x}\right|\right)}\quad\leq\quad
\exp\left(8m\sigma_n|zx|\right)\Expect{\exp\left(8m\sigma_n|z|\left|\Vd{x}+x\right|\right)}
\\
\quad\leq\quad
2\exp\left(8m\sigma_n|zx|\right)\Expect{\exp\left(8m\sigma_n|z|\left(\Vd{x}+x\right)\right)}
\\
\quad\leq\quad
2\exp\left(4m\sigma_nx^2\right)\exp\left(4m\sigma_nz^2\right)\exp\left(32m^2\sigma^2_nz^2\Expect{\left(\Vd{x}+x\right)^2}\right)
\,.
\end{multline*}
Here the second step uses  \(e^{|au|}\leq e^{|a|u} + e^{-|a|u}\)
and the symmetry of the random variable \(\Vd{x}+x\),
while
the last step also employs the formula for the moment generating function of a centred Gaussian random variable.

The rest of property \ref{prop:diffpot-mgf1} follows 
by using property \ref{prop:diffpot-moment2}
for sufficiently large \(n\).

Proof of property \ref{prop:diffpot-mgf2}:
Take $n$ large enough (noting that \(\sigma_n\to0\))
so that $24m\sigma^2_{n}(1+\frac{1}{2-2H})c^{1+\frac{1}{H}}\leq \frac{1}{2}$ and also $4m\sigma^2_{n}<\frac{2}{3}$.
Using $a^2\leq 2b^2+2(a-b)^2$,
and bearing in mind the bound of property \ref{prop:diffpot-moment2},
\begin{multline*}
\Expect{\exp\left(2m\sigma^2_n\Vd{x}^2\right)} \quad\leq\quad
\exp(4m\sigma^2_nx^2)\Expect{\exp\left(4m\sigma^2_n(\Vd{x}+x)^2\right)}
\\
\quad=\quad
\exp(4m\sigma^2_nx^2)\left(1-8m\sigma_n^2\Expect{\left(\Vd{x}+x\right)^2}\right)^{-1/2}\\
\leq\quad
\exp(4m\sigma^2_nx^2)\left(1-24m\sigma^2_{n}\left(1+\frac{1}{2-2H}\right)c^{1+\frac{1}{H}}\right)^{-1/2}
\quad\leq\quad 
\sqrt{2}\exp\left(\frac{2x^2}{3}\right)
\,,
\end{multline*}
where the last line
uses 
the evaluation \(\Expect{e^{\lambda^2 N^2}}=(1-2\lambda^2)^{-\frac12}\)
for \(2\lambda^2<1\) when \(N\) is a standard Normal random variable.
\end{proof}

\begin{lem}\label{lem:MALA:AsymptoticRWMproposal}
Assumption \ref{assumption:AsymptoticProposal} 
is satisfied. 
\begin{enumerate}[(i)]
\item\label{prop:Ldef}
For every real $x,z$ and every positive
integer $n$,
\[
\sigma_nq_n(x,x+\sigma_n z)		\quad=\quad		L_n(x,z\,|\fBM;c)\nu_2(z)\,,
\]
where $L_n(x,z\,|\fBM;c)=\exp\left(\frac{z\sigma_n}{2}\Vd{x}-\frac{\sigma_n^2}{8}\Vd{x}^2\right)$.
\item\label{prop:Lbound}
Recall that we have stipulated $\gamma=\min(H,1-H)$. 
The random variable $L_n(x,z\,|\fBM;c)$ satisfies
\[
\iint_{\Reals^2}\Expect{\left|L_n(x,z\,|\fBM;c)-1\right|^{4m}}\nu(x,z)dxdz
\quad\precsim\quad
\sigma_n^{4m}
\quad\precsim\quad \sigma_n^{4m\gamma}\,.
\]
\end{enumerate}
\end{lem}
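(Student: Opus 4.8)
The plan is to handle the two parts in turn: the identity for $L_n$ in Part~\ref{prop:Ldef} is a direct Gaussian computation, while the $4m$-th moment bound in Part~\ref{prop:Lbound} is the substantive estimate. For Part~\ref{prop:Ldef} I would write out the marginal MALA proposal density from \eqref{eq:MALAprop}, i.e.\ the univariate Normal with mean $x+\tfrac{\sigma_n^2}{2}\Vd{x}$ and variance $\sigma_n^2$, substitute $y=x+\sigma_n z$, and expand the square in the exponent. The prefactor $\sigma_n$ cancels the normalisation $\sqrt{2\pi\sigma_n^2}$ down to $\sqrt{2\pi}$, the pure $z^2$ term reproduces $\nu_2(z)$, and the remaining cross and quadratic terms combine to give exactly $\exp\!\left(\tfrac{z\sigma_n}{2}\Vd{x}-\tfrac{\sigma_n^2}{8}\Vd{x}^2\right)$, which is $L_n(x,z\,|\fBM;c)$. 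This is routine.

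For Part~\ref{prop:Lbound}, write $L_n=\exp(W_n)$ with $W_n=\tfrac{z\sigma_n}{2}\Vd{x}-\tfrac{\sigma_n^2}{8}\Vd{x}^2$ and use the elementary inequality $|e^{w}-1|\le|w|e^{|w|}$ to obtain $|L_n-1|^{4m}\le|W_n|^{4m}e^{4m|W_n|}$. The key structural observation is that \emph{both} terms of $W_n$ carry at least one factor of $\sigma_n$, so once $\sigma_n\le1$ the polynomial factor satisfies $|W_n|^{4m}\le C\,\sigma_n^{4m}(1+|\Vd{x}|)^{8m}(1+|z|)^{4m}$ for a constant $C$ independent of $n$; this is where the target power $\sigma_n^{4m}$ originates. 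For the exponential factor I would use $4m|W_n|\le 2m\sigma_n|z|\,|\Vd{x}|+\tfrac{m}{2}\sigma_n^2\,\Vd{x}^2$. After exchanging expectation and integration by Fubini--Tonelli (all integrands being non-negative), I would apply Hölder's inequality over the probability space to separate the polynomial moments of $\Vd{x}$ from the two exponential moments. The polynomial moments are finite and polynomial in $x$, because $\Vd{x}+x$ is centred Gaussian with variance bounded uniformly in $x$ by Lemma~\ref{lem:potentialdot_properties}\ref{prop:diffpot-moment2}; the exponential moments are exactly what Lemma~\ref{lem:potentialdot_properties}\ref{prop:diffpot-mgf1} and \ref{prop:diffpot-mgf2} are designed to control.

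The hard part will be integrability in $(x,z)$ against the standard Normal reference $\nu(x,z)=\nu_1(x)\nu_2(z)$. The exponential-moment bounds of Lemma~\ref{lem:potentialdot_properties} produce Gaussian factors such as $e^{4z^2/3}$, which are \emph{not} integrable against $e^{-z^2/2}$. The resolution is that the offending coefficients are genuinely $O(\sigma_n)$ (for the $x^2$ term) and $O(\sigma_n)$--$O(\sigma_n^2)$ (for the $z^2$ term), so they shrink as $n\to\infty$: one therefore applies Lemma~\ref{lem:potentialdot_properties}\ref{prop:diffpot-mgf1} only to a \emph{fractional} power through the Hölder split, which scales the $z^2$ coefficient down below $\tfrac12$ for large $n$. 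Equivalently, one may integrate out $z$ first as an explicit Gaussian integral, which converts the linear-in-$z$ exponential $e^{2m\sigma_n|z|\,|\Vd{x}|}$ into $e^{2m^2\sigma_n^2\Vd{x}^2}$ and leaves only the $\Vd{x}^2$ exponential moment of Lemma~\ref{lem:potentialdot_properties}\ref{prop:diffpot-mgf2} to handle. Either route gives, for all sufficiently large $n$, coefficients of $z^2$ and $x^2$ strictly below $\tfrac12$, so the reference Gaussians dominate and the residual polynomial-in-$x$ moment integrates against $\nu_1$.

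Collecting these estimates yields the bound $\precsim\sigma_n^{4m}$ for all large $n$; the finitely many small-$n$ terms are finite (for $c$ small, by the same Gaussian integrability that underlies Lemma~\ref{lem:MALA:NormalPotentialFluctuations}) and are absorbed into the implied constant, so the bound holds for every $n$. Finally $\sigma_n^{4m}\precsim\sigma_n^{4m\gamma}$ follows at once, since $\gamma=\min(H,1-H)\le 1$ and $\sigma_n\le1$ for all large $n$. The only genuinely delicate point is thus the uniform-in-$n$ integrability, and it is resolved entirely by the vanishing of $\sigma_n$.
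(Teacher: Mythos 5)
Your proposal is correct in substance and follows essentially the same route as the paper's proof: the identical Gaussian computation for part~(i), and for part~(ii) the same inequality $|e^{w}-1|\le|w|e^{|w|}$, extraction of the factor $\sigma_n^{4m}$ from the polynomial term, a Cauchy--Schwarz/H\"older split separating one polynomial moment of $\Vd{x}$ from the two exponential moments, and the bounds of Lemma~\ref{lem:potentialdot_properties}\ref{prop:diffpot-mgf1} and \ref{prop:diffpot-mgf2}, whose fractional ($1/4$) powers bring the Gaussian coefficients down to $1/3<1/2$ so that the reference density $\nu$ dominates. The only point to drop is your closing claim that the finitely many small-$n$ terms are finite and can be absorbed into the implied constant: for large $\ell$ (hence large $\sigma_n$ at small $n$) the integral $\iint\Expect{|L_n(x,z\,|\fBM;c)-1|^{4m}}\nu(x,z)\,{\d}x\,{\d}z$ can in fact be infinite, but this is immaterial since the paper's own proof likewise establishes the bound only for all sufficiently large $n$, which is all that the asymptotic arguments of Framework~\ref{theassumption} require.
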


\begin{proof}
Property~\ref{prop:Ldef} holds by definition, since
\begin{multline*}
\sigma_nq_n(x,x+\sigma_nz)
\quad=\quad
\frac{\sigma_n}{\sigma_n\sqrt{2\pi}}\exp\left(-\frac{1}{2\sigma_n^2}\left(\sigma_nz-\frac{\sigma^2_n}{2}\Vd{x}\right)^2\right)
\\	\quad=\quad
\frac{1}{\sqrt{2\pi}}\exp\left(-\frac{1}{2}\left(z-\frac{\sigma	_n}{2}\Vd{x}\right)^2\right)
\\
\quad=\quad\frac{1}{\sqrt{2\pi}}e^{-\frac{z^2}{2}}~\cdot~\exp\left(\frac{z\sigma_n}{2}\Vd{x}-\frac{\sigma^2_n}{8}\Vd{x}^2\right)\,.
\end{multline*} 

To see \ref{prop:Lbound} note that $|e^t-1|\leq |t|e^{|t|}$ holds for all $t\in\Reals$. Using this together with a
repeated application of the Cauchy-Schwarz inequality,
note that for all large enough $n$
\begin{multline*}
\Expect{\left|L_n(x,z\,|\fBM;c)-1\right|^{4m}}
\quad=\quad
\Expect{\left|\exp\left(z\frac{\sigma_n}{2}\Vd{x}-\frac{\sigma^2_n}{8}\Vd{x}^2\right)-1\right|^{4m}}
\\
\quad\leq\quad 
\mathbb{E}\Bigg[
\left(
z\frac{\sigma_n}{2}\Vd{x}-\frac{\sigma^2_n}{8}\Vd{x}^2
\right)^{4m} \times
\hfill
\\
\times\quad \exp\left(2m\sigma_n\left|z\Vd{x}\right|\right)
\exp\left(\frac{m\sigma^2_n}{2}\Vd{x}^2\right)
\Bigg]
\\
\quad\leq\quad
 \sigma^{4m}_n~\times~\Expect{\left(\frac{z}{2}\Vd{x}-\frac{\sigma_n}{8}\Vd{x}^2\right)^{8m}}^{1/2}\times\Expect{\exp\left(8m\sigma_n\left|z\Vd{x}\right|\right)}^{1/4}\\
 \quad\times\quad\Expect{\exp\left(2m\sigma^2_n\Vd{x}^2\right)}^{1/4}\,.
\end{multline*} 
By Lemma \ref{lem:potentialdot_properties} $\Vd{x}$ is a Normal with mean $x$ and bounded variance, hence the first expectation can be controlled by a polynomial $g(x,z)$.
The second expectation is bounded by $2^{1/4}e^{\frac{x^2}{6}}e^{\frac{z^2}{3}}$ by
Lemma \ref{lem:potentialdot_properties}\ref{prop:diffpot-mgf1},
and the third is bounded by $2^{1/8}e^{\frac{x^2}{6}}$ 
by Lemma \ref{lem:potentialdot_properties}\ref{prop:diffpot-mgf2}. 
The proof is completed by observing 
that
$g(x,z)e^{\frac{x^2}{3}}e^{\frac{z^2}{3}}$ is integrable with respect to the 
reference density $\nu$,
since \(\nu(x,z)=\tfrac{1}{2\pi}e^{-\half(x^2+z^2)}\).
\end{proof}

To establish the ``approximate LMHR normality''
property,
set 
\begin{equation}\label{eq:MALA:M_n}
M_n(x,z\,|\fBM;c)
\quad=\quad \sigma^2_nz^2\int_0^1\fBM_{x+t\sigma_nz}\qu(x)(1-2t){\d}t\,.
\end{equation}

\begin{lem}\label{lem:MALA:ApproximateLMHRNormality}
Assumption \ref{assumption:ApproximateLMHRNormality} 
is satisfied
\begin{enumerate}[(i)]
\item\label{lem:MALA:Mdef} For all real $x,z$ and 
positive integers $n$
\[
\rho_n(x,x+\sigma_nz\,|\fBM;c)\quad=\quad M_n(x,z\,|\fBM;c)\quad+\quad\Delta_n(x,z\,|\fBM;c)
\]
for $\Delta_n(x,z\,|\fBM;c)=\Delta^{(1)}_n(x,z\,|\fBM;c)-\Delta^{(2)}_n(x,z\,|\fBM;c)$, where
\begin{align*}
\Delta^{(1)}_n(x,z\,|\fBM;c)
\quad&=\quad
\frac{\sigma^2_nz^2}{2}\int_0^1\fBM_{x+t\sigma_nz}\left(\qu(x+t\sigma_nz)-\qu(x)\right)(1-2t){\d}t\,,\\
\Delta^{(2)}_n(x,z\,|\fBM;c)
\quad&=\quad
\frac{\sigma^3_nz}{4} \int_0^1\Vd{x+t\sigma_nz}\Vdd{x+t\sigma_nz}{\d}t
\,.
\end{align*}

\item\label{lem:MALA:Mvariance} 
$(M_n(x,z\,|\fBM;c):x,z\in\Reals)$ 
% $M_n(x,z\,|\fBM;c)$ 
is a centred 
Gaussian process
% Normal random variable 
with 
one-point
variance 
$h(x,z)\sigma^{4+2H}_n$ where
\[
h(x,z)
\quad=\quad
\frac{1}{2}\frac{H}{2+7H+7H^2+2H^3}\qu^2(x)|z|^{4+2H}\,.
\]
\item\label{lem:MALA:Mdelta} Finally, recall $\beta=2+H$ and $\gamma=\min(H,1-H)$ implying $8\beta+8\gamma\leq 24$ and
\[
\iint_{\Reals^2}\left|\Delta^{(1)}_n(x,z\,|\fBM;c)\right|^{8m}\nu(x,z){\d}x{\d}z
\quad\precsim\quad \sigma^{24m}_n
\quad\precsim\quad \sigma^{8m\beta+8m\gamma}
\]
and
\[
\iint_{\Reals^2}\left|\Delta^{(2)}_n(x,z\,|\fBM;c)\right|^{8m}\nu(x,z){\d}x{\d}z
\quad\precsim\quad \sigma^{24m}_n
\quad\precsim\quad \sigma^{8m\beta+8m\gamma}\,.
\]
\end{enumerate}
\end{lem}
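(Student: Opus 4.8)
The plan is to read off the decomposition of \ref{lem:MALA:Mdef} from an exact expansion, and then to recognise the two remainder terms as genuinely higher order. First I would write $\rho_n(x,x+\sigma_nz\,|\fBM;c)=\big(\V{x+\sigma_nz}-\V{x}\big)+\log\big(q_n(x+\sigma_nz,x)/q_n(x,x+\sigma_nz)\big)$. For the potential difference I would use the exact second-order Taylor formula with integral remainder, namely $\V{x+d}-\V{x}=d\,\Vd{x}+d^2\int_0^1(1-t)\Vdd{x+td}\,{\d}t$ with $d=\sigma_nz$, which is legitimate because \eqref{eq:potential}--\eqref{eq:potentialdotdot} supply two genuine derivatives. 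For the Hastings ratio I would expand the two Gaussian exponents of the proposal \eqref{eq:MALAprop} (drift $\tfrac{\sigma_n^2}{2}\Vd{\cdot}$, variance $\sigma_n^2$), which gives $-\tfrac{d}{2}\big(\Vd{x}+\Vd{x+d}\big)+\tfrac{\sigma_n^2}{8}\big(\Vd{x}^2-\Vd{x+d}^2\big)$. On adding the two contributions, the terms linear in $\Vd{}$ collapse to $\tfrac{d}{2}\big(\Vd{x}-\Vd{x+d}\big)$, which I fold into the remainder using $\Vd{x}-\Vd{x+d}=-\int_x^{x+d}\Vdd{u}\,{\d}u$, so that the effective weight on $\Vdd{x+td}$ becomes $(1-t)-\tfrac12=\tfrac12(1-2t)$. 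The key observation is that $\int_0^1(1-2t)\,{\d}t=0$, whence the constant part $-1$ of $\Vdd{w}=-1+\fBM_w\qu(w)$ integrates away and the surviving leading term is proportional to $\int_0^1(1-2t)\fBM_{x+t\sigma_nz}\qu(x+t\sigma_nz)\,{\d}t$. Freezing the localisation at $\qu(x)$ identifies $M_n$ as in \eqref{eq:MALA:M_n}, the localisation-variation difference being collected into $\Delta^{(1)}_n$, while the quadratic Hastings term becomes $-\Delta^{(2)}_n$ on writing $\Vd{x+d}^2-\Vd{x}^2=2\int_x^{x+d}\Vd{u}\Vdd{u}\,{\d}u$. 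This yields exactly \ref{lem:MALA:Mdef}.

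For \ref{lem:MALA:Mvariance} I note that $M_n$ is a deterministic linear functional of the Gaussian process $\fBM$, hence a centred Gaussian random variable, and jointly Gaussian with $K$ of \eqref{eq:MALA:K} as Assumption~\ref{assumption:ApproximateLMHRNormality} requires. I would compute its variance from \eqref{eq:fBM}: after factoring out $\qu(x)^2$ and the $\sigma_n^4z^4$ prefactor, the single-argument contributions $|x+t\sigma_nz|^{2H}$ and $|x+s\sigma_nz|^{2H}$ to $\Gamma^{(H)}$ drop out (again because $\int_0^1(1-2t)\,{\d}t=0$) and only the $-\tfrac12|(t-s)\sigma_nz|^{2H}$ term survives. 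This leaves $\sigma_n^{4+2H}\qu(x)^2|z|^{4+2H}$ multiplied by the constant $-\tfrac12\int_0^1\int_0^1(1-2t)(1-2s)|t-s|^{2H}\,{\d}t\,{\d}s$, which a routine integration by parts (or the substitution $u=s-t$) evaluates in closed form; using $(1+2H)(1+H)(2+H)=2+7H+7H^2+2H^3$ this is precisely $h(x,z)\sigma_n^{4+2H}$. Polynomial growth of $h$ is immediate from $\qu\leq1$ (Lemma~\ref{lem:localisation_properties}\ref{prop:qu-boundedness}).

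For the remainder bounds in \ref{lem:MALA:Mdelta} the common strategy is to extract an explicit $\sigma_n^3$ and then bound the residual $L^{8m}(\nu\times\mathbb{P})$ norm by a $\nu$-integrable quantity of at most polynomial growth in $x$. For $\Delta^{(1)}_n$ I would use the global Lipschitz bound $|\qu(x+t\sigma_nz)-\qu(x)|\leq 3c^{-1/(2H)}\sigma_n|z|$ (Lemma~\ref{lem:localisation_properties}\ref{prop:qu-lipschitz}), supplying the extra $\sigma_n$ that upgrades the $\sigma_n^2z^2$ prefactor to $\sigma_n^3$; the remaining $\fBM_{x+t\sigma_nz}$ is centred Gaussian whose variance $|x+t\sigma_nz|^{2H}$ grows only polynomially in $x$ (uniformly in $t$ once $\sigma_n|z|\leq1$). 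For $\Delta^{(2)}_n$ the prefactor $\tfrac{\sigma_n^3z}{4}$ already provides $\sigma_n^3$, while $\Vd{x+t\sigma_nz}\Vdd{x+t\sigma_nz}$ is controlled by combining Lemma~\ref{lem:potentialdot_properties}\ref{prop:diffpot-moment2} ($\Vd{w}$ Gaussian with mean $-w$ and uniformly bounded variance) with the observation that $\Vdd{w}=-1+\fBM_w\qu(w)$ has uniformly bounded variance (since $|w|^{2H}\qu(w)^2\leq c$ by Lemma~\ref{lem:localisation_properties}\ref{prop:qu-growth} and $\qu\leq1$). A Cauchy--Schwarz splitting of the $8m$-th moment then gives polynomial-in-$x$ control, and integrating against the standard-normal $\nu$ produces $\iint_{\Reals^2}\Expect{|\Delta^{(i)}_n|^{8m}}\nu(x,z)\,{\d}x\,{\d}z\precsim\sigma_n^{24m}$ for $i=1,2$; finally $\sigma_n^{24m}\precsim\sigma_n^{8m\beta+8m\gamma}$ since $8\beta+8\gamma\leq24$ and $\sigma_n\to0$.

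The hardest part will be the uniform-in-$x$ moment control in \ref{lem:MALA:Mdelta}: one must track the polynomial growth in $x$ of the non-stationary Gaussian quantities $\fBM_{x+t\sigma_nz}$, $\Vd{x+t\sigma_nz}$ and $\Vdd{x+t\sigma_nz}$, and verify that after raising to the $8m$-th power, pulling the $t$-integration out by Jensen and Fubini, and integrating, everything remains integrable against the standard-normal reference $\nu$ and yields the clean exponent $\sigma_n^{24m}$ rather than something worse. The localisation $\qu$ is indispensable exactly here, since it is what keeps $\Vdd{\cdot}$ of bounded variance and $\qu$ globally Lipschitz. By contrast the algebraic identity \ref{lem:MALA:Mdef} and the variance evaluation \ref{lem:MALA:Mvariance}, although lengthy, are essentially mechanical once the cancellation driven by $\int_0^1(1-2t)\,{\d}t=0$ has been spotted.
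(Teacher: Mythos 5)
Your proposal is correct and takes essentially the same approach as the paper: part (i) rests on the same trapezoidal Taylor identity (the paper cites its Appendix Lemma~\ref{lem:taylor2}, which you re-derive inline from the first-order expansion plus the fundamental theorem of calculus) together with the cancellation $\int_0^1(1-2t)\,dt=0$ and the fundamental theorem of calculus for the quadratic Hastings term; part (ii) is the same covariance computation in which all cross terms vanish against $(1-2t)(1-2s)$; and part (iii) uses the same Lipschitz property of $\qu$ (Lemma~\ref{lem:localisation_properties}) to extract the extra power of $\sigma_n$, and the same Cauchy--Schwarz splitting of $\Vd{\cdot}\,\Vdd{\cdot}$ with polynomially bounded Gaussian moments. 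The only cosmetic difference is that the paper bounds the variance of the centred Gaussian $\Delta^{(1)}_n$ and deduces its $8m$-th moment from Gaussianity, whereas you bound the $8m$-th moment directly.
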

\begin{proof}
We know that by formulae \eqref{eq:potential} and \eqref{eq:MALAprop} and $a^2-b^2=(a+b)(a-b)$
% To establish \ref{lem:MALA:Mdef} imputing the formulas for $\upi(x\,|\fBM;c)$ and $q_n$ given in Theorem~\ref{thm:CLT_MALAvanilla}.
\begin{multline*}
\LMHR_{n}(x,x+\sigma_nz\,|\fBM;c)
\quad=\quad \log(\pi(x+\sigma_nz\,|\fBM;c))~-~\log(\pi(x\,|\fBM;c))\\
+\quad\log(q_n(x+\sigma_nz,x\,|\fBM;c))~-~\log(q_n(x,x+\sigma_nz\,|\fBM;c))\\
=\quad
\V{x+\sigma_nz}-\V{x}\\
-\quad\frac{1}{2\sigma_n^2}\left(-\sigma_nz-\frac{\sigma_n^2}{2}\Vd{x+\sigma_nz}\right)^2+\frac{1}{2\sigma^2_n}\left(\sigma_nz-\frac{\sigma^2_n}{2}\Vd{x}\right)^2
\\
=\quad
\V{x+\sigma_nz}-\V{x}\\
-\quad\frac{\sigma_nz}{2}\left(\Vd{x}+\Vd{x+\sigma_nz}\right)
-\frac{\sigma^2_n}{8}\left(\Vd{x+\sigma_nz}^2-\Vd{x}^2\right)\,.
\end{multline*}

By Lemma~\ref{lem:taylor2} 
from the Appendix (using \(\delta=\sigma_n z\)),
and \eqref{eq:potentialdotdot},
bearing in mind that $\int_0^1(1-2t){\d}t=0$,
\begin{multline*}
\V{x+\sigma_nz}-\V{x}~-~\frac{\sigma_nz}{2}\left(\Vd{x}+\Vd{x+\sigma_nz}\right)\\
\quad=\quad
\frac{\sigma^2_nz^2}{2}\int_0^1(1-2t)\Vdd{x+t\sigma_nz}{\d}t
\quad=\quad
\frac{\sigma^2_nz^2}{2}\int_0^1\qu(x+t\sigma_nz)\fBM_{x+t\sigma_nz}(1-2t){\d}t\\
\quad=\quad
M_n(x,z\,|\fBM;c)~+~\Delta^{(1)}_n(x,z\,|\fBM;c)\,.
\end{multline*}
On the other hand,
noting that
$\frac{\partial}{\partial t}(\dot{V}^2(t))=2\dot{V}(t)\ddot{V}(t)$,
% by simple Taylor formula (note $\frac{\partial}{\partial t}(\dot{V}^2(t))=2\dot{V}(t)\ddot{V}(t)$)
by the fundamental theorem of calculus
\begin{multline*}
\frac{\sigma^2_n}{8}\left(\Vd{x+\sigma_nz}^2-\Vd{x}^2\right)\\
\quad=\quad
\frac{\sigma^3_nz}{4}\int_0^1\Vdd{x+t\sigma_nz}\Vd{x+t\sigma_nz}{\d}t
\quad=\quad  \Delta^{(2)}_n(x,z\,|\fBM;c)
\,.
\end{multline*}

Property \ref{lem:MALA:Mvariance}:
the centred Gaussian distribution property follows from the fact that fBM
% Using again the fact that fractional Brownian motion 
is a centred Gaussian process.
Moreover
\[
\Expect{M_n(x,z\,|\fBM;c)^2}\quad=\quad
\qu(x)^2\sigma_n^4z^4\int_0^1\int_0^1
\Gamma^{(H)}(x+t\sigma_nz,x+s\sigma_nz)(1-2t)(1-2s){\d}t{\d}s\,.
\]
Recall the formula for the covariance of fBM in ~\eqref{eq:fBM} and note that all the terms that do not depend on both $t$ and $s$ 
must vanish when integrated with respect to $(1-2t)(1-2s){\d}t{\d}s$. 
Hence
\[
\Expect{M_n(x,z\,|\fBM;c)^2}\quad=\quad
-\frac{\qu(x)^2}{2}|z|^{4+2H}\sigma^{4+2H}_n\int_0^1\int_0^1\left|t-s\right|^{2H}
(1-2t)(1-2s){\d}t{\d}s
\,.
\]
The result is now obtained by noting that the
last integral equals $-\frac{H}{2+7H+7H^2+2H^3}$ 
(non-zero and bounded for $H\in(0,1)$).
%
% === NOTE THIS! ===
%
% \NoteThis{
%  We can see 
%  \[
%   \int_0^1\int_0^1 |s-t|^{2H} (1-2s)(1-2t){\d}t{\d}s
%   \quad=\quad 
%   \frac{-H}{2+7H+7H^2+2H^3}\,,
%  \]
% either by computation in \emph{Mathematica}
% (see the notebook \texttt{Lemma-17.nb})
% or directly. 
% \\
% Indeed
% \[
%  \int_0^1\int_0^1 |s-t|^{2H} (1-2s)(1-2t){\d}t{\d}s
%   \quad=\quad 
%   2 \int_0^1\int_0^s (s-t)^{2H} (1-2s)(1-2t){\d}t{\d}s
% \]
% (we need to make this observation even when using \emph{Mathematica},
% in order to avoid absolute values)
% and so
% \begin{multline*}
%  \int_0^1\int_0^1 |s-t|^{2H} (1-2s)(1-2t){\d}t{\d}s
% \\
% \quad=\quad
%  2 \int_0^1(1-2s)\int_0^s (s-t)^{2H} ((1-2s) + 2(s-t))){\d}t{\d}s
%  \\
%  \quad=\quad
%  2 \int_0^1(1-2s)^2\int_0^s (s-t)^{2H} {\d}t{\d}s
%  +
%  4 \int_0^1(1-2s)\int_0^s (s-t)^{2H+1}{\d}t{\d}s
%  \\
%  \quad=\quad
%  2\left(
%  \frac{1}{2H+1}\int_0^1(1-2s)^2 s^{2H+1} {\d}s
%  +
%  \frac{2}{2H+2}\int_0^1(1-2s)   s^{2H+2} {\d}s
%  \right)
%  \\ 
%  \quad=\quad
%  2\int_0^1\left(
%  \frac{1}{2H+1} s^{2H+1}
%  -\left(\frac{4}{2H+1}-\frac{2}{2H+2}\right) s^{2H+2}
%  +\left(\frac{4}{2H+1}-\frac{4}{2H+2}\right) s^{2H+3}
%  \right) {\d}s
%  \\
%  \quad=\quad
%  2\left(
%  \frac{1}{(2H+1)(2H+2)}
%  -\frac{4H+6}{(2H+1)(2H+2)(2H+3)}
%  +\frac{4}{(2H+1)(2H+2)(2H+4)}
%  \right)
%  \\
%  \quad=\quad
%  2\left(
%  \frac{1}{(2H+1)(2H+2)}
%  -\frac{2}{(2H+1)(2H+2)}
%  +\frac{4}{(2H+1)(2H+2)(2H+4)}
%  \right)
%  \\
%  \quad=\quad
%  2\left(
%  -\frac{1}{(2H+1)(2H+2)}
%  +\frac{4}{(2H+1)(2H+2)(2H+4)}
%  \right)
%  \\
%  \quad=\quad
%  -\frac{4H}{(2H+1)(2H+2)(2H+4)}
%  \\
%  \quad=\quad
%  -\frac{H}{2+7H+7H^2+2H^3}
% \end{multline*}
% }

Property \ref{lem:MALA:Mdelta}:
The random variable $\Delta^{(1)}_n(x,z\,|\fBM;c)$ is centred Normal:
this again follows from the fact that fBM
is a centred Gaussian process.
Also note that by the Cauchy-Schwarz inequality,
and the quantified Lipschitz property for \(\qu\) described in 
Lemma~\ref{lem:localisation_properties}~\ref{prop:qu-lipschitz},
\begin{multline*}
\Expect{\Delta^{(1)}_n(x,z\,|\fBM;c)^2}
\quad\leq\quad
\frac{\sigma^4_nz^4}{4}\int_0^1\Expect{\left(\fBM_{x+t\sigma_nz}\right)^2}\left(\qu(x+t\sigma_nz)-\qu(x)\right)^2{\d}t\\
\quad\leq\quad
\frac{9 \sigma^6_nz^6}{4} c^{\tfrac1{H}}
\int_0^1\Expect{\left(\fBM_{x+t\sigma_nz}\right)^2}t^2{\d}t
\quad\leq\quad
\frac{9}{4}c^{-\frac{1}{H}}z^6(|x|^H+|\sigma_n z|^H)^2\times\sigma_n^{6}\,,
\end{multline*}
This yields the required
control of \(\iint_{\Reals^2}\left|\Delta^{(1)}_n(x,z\,|\fBM;c)\right|^{8m}\nu(x,z){\d}x{\d}z\),
using the fact that the summand \(\Delta^{(1)}_n(x,z\,|\fBM;c)\) is centred
Normal while the polynomial moments of \(\nu\) are all bounded.
% the bound for $\Delta^{(1)}_n(x,z\,|\fBM;c)$,
% using .

For $\Delta^{(2)}_n(x,z\,|\fBM;c)$ consider the bound
\begin{multline*}
\Expect{\Delta^{(2)}_n(x,z\,|\fBM;c)^{8m}}
\quad\leq\quad
\frac{\sigma^{24m}_nz^{8m}}{4^{8m}}\int_0^1\Expect{\left(\Vd{x+t\sigma_nz}\Vdd{x+t\sigma_nz}\right)^{8m}}{\d}t\\
\quad\leq\quad
\frac{\sigma^{24m}_nz^{8m}}{4^{8m}}\int_0^1\Expect{\Vd{x+t\sigma_nz}^{16m}}^{1/2}\Expect{\Vdd{x+t\sigma_nz}^{16m}}^{1/2}{\d}t\,.
\end{multline*}
Both expectation can be bounded above with polynomials in $x$ and $z$, since they are expectations of powers of Normal random variables whose means and variances can be bounded by polynomials (see Lemma~\ref{lem:MALA:AsymptoticRWMproposal} and equations~\eqref{eq:potentialdot}, \eqref{eq:potentialdotdot}). 
\end{proof}

Finally, to demonstrate 
the ``asymptotic weak dependence'' property
we need to define suitable subsets of 
$\Reals^4$.
The definition is based on that of  \eqref{eq:auxset} 
but using different proposal variances $\sigma_n$)
\begin{equation}\label{eq:auxsetMALA}
\auxset_n
\quad:=\quad
\left\{(x_1,z_1,x_2,z_2)\in\Reals^4 \;:\; 
|x_1-x_2|>2\sigma_n^{1/2}(|z_1|+|z_2|)\right\}\,.
\end{equation}

\begin{lem}\label{lem:MALA:AsymptoticWeakDependence}
Assumption \ref{assumption:AsymptoticWeakDependence}
is satisfied:
\begin{enumerate}[(i)]
\item\label{lem:MALA:auxset}
 $\int_{\auxset_n^c}\nu(x_1,z_1)\nu(x_2,z_2){\d}x_1{\d}z_1{\d}x_2{\d}z_2\quad\precsim\quad\sigma_n^{1/2}\,.$
\item\label{lem:MALA:auxsetCorrelation}
 For any $(x_1,x_2,z_1,z_2)\in\auxset_n$ we have 
\begin{multline*}
\left|\Expect{M_n(x_1,z_1\,|\fBM;c)M_n(x_2,z_2\,|\fBM;c)}\right|
\quad\leq\quad
\frac{H\,|2H-1|\,}{2^{2-2H}} ~|z_1|^{2+H}|z_2|^{2+H}\sigma_n^{5+H}\\
\quad\precsim\quad |z_1|^{2+H}|z_2|^{2+H}\sigma_n^{2\beta+\gamma}\,.
\end{multline*}
\item\label{lem:MALA:otherCorrelation} There exists a polynomial $g_2(x_1,z_1,x_2)$ such that 
\[
\left|\Expect{M_n(x_1,z_1\,|\fBM;c)K(x_2\,|\fBM;c)}\right|	
\quad\leq\quad
 g_2(x_1,z_1,x_2) ~\sigma_n^{2+H+\min(H,1-H)}
 \quad=\quad g_2(x_1,z_1,x_2) ~\sigma_n^{\beta+\gamma}\,.
\]
\end{enumerate}
\end{lem}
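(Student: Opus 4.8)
The plan is to verify the three bullets in turn, closely following the RWM computation of Lemma~\ref{lem:RWM:AsymptoticWeakDependence}, since the only structural differences are the extra prefactor $\sigma_n^2 z^2\qu(x)$ carried by the MALA field $M_n$ (see \eqref{eq:MALA:M_n}) and the oscillatory weight $(1-2t)$ inside its defining integral. For part~\ref{lem:MALA:auxset} the set $\auxset_n$ of \eqref{eq:auxsetMALA} is \emph{identical} to the RWM set \eqref{eq:auxset}, so I would simply invoke Lemma~\ref{lem:auxset_properties} from the Appendix with $a_n=\sigma_n^{\half}$, exactly as in the RWM proof; nothing new is needed.

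For part~\ref{lem:MALA:auxsetCorrelation} I would first expand, using \eqref{eq:fBM},
\[
\Expect{M_n(x_1,z_1\,|\fBM;c)M_n(x_2,z_2\,|\fBM;c)}
=\sigma_n^4 z_1^2 z_2^2\qu(x_1)\qu(x_2)\iint_{[0,1]^2}\Gamma^{(H)}(x_1+t\sigma_nz_1,x_2+s\sigma_nz_2)(1-2t)(1-2s){\d}t{\d}s\,.
\]
The decisive simplification is $\int_0^1(1-2t){\d}t=0$: writing $a=x_1+t\sigma_nz_1$, $b=x_2+s\sigma_nz_2$ in $\Gamma^{(H)}(a,b)=\half|a|^{2H}+\half|b|^{2H}-\half|a-b|^{2H}$, the first summand depends only on $t$ and the second only on $s$, so both integrate to zero against $(1-2t)(1-2s)$, leaving only the cross term $-\half|x_1-x_2+t\sigma_nz_1-s\sigma_nz_2|^{2H}$. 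Setting $w=x_1-x_2$, on $\auxset_n$ one has (for $n$ large, so $\sigma_n\le1$) $|w|>2\sigma_n^{\half}(|z_1|+|z_2|)\ge 2(|\sigma_nz_1|+|\sigma_nz_2|)$, whence $|w+t\sigma_nz_1-s\sigma_nz_2|\ge|w|/2>0$ throughout $[0,1]^2$; thus $u\mapsto|u|^{2H}$ is smooth on the relevant range, and I may integrate by parts twice, using that $t\mapsto t-t^2$ vanishes at the endpoints, to obtain with $g(t,s)=|w+t\sigma_nz_1-s\sigma_nz_2|^{2H}$
\[
\iint_{[0,1]^2}g(t,s)(1-2t)(1-2s){\d}t{\d}s
=-\sigma_n^2z_1z_2\,2H(2H-1)\iint_{[0,1]^2}|w+t\sigma_nz_1-s\sigma_nz_2|^{2H-2}(t-t^2)(s-s^2){\d}t{\d}s\,.
\]
Bounding $|w+\cdots|^{2H-2}\le(|w|/2)^{2H-2}$, the weight integral by $1$, and $\qu\le1$, I would then reproduce the RWM algebra verbatim: factor $|z_1|^3|z_2|^3=|z_1|^{2+H}|z_2|^{2+H}|z_1z_2|^{1-H}$, rewrite $|z_1z_2|^{1-H}(|w|/2)^{2H-2}=(2\sqrt{|z_1z_2|}/|w|)^{2-2H}$, apply AM--GM and the defining inequality of $\auxset_n$ to get $(2\sqrt{|z_1z_2|}/|w|)^{2-2H}<(2\sigma_n^{\half})^{-(2-2H)}=2^{-(2-2H)}\sigma_n^{H-1}$. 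Collecting the $\sigma_n^4$ from the prefactor and the $\sigma_n^2$ from $\sigma_n^2z_1z_2$ yields the stated $\frac{H|2H-1|}{2^{2-2H}}|z_1|^{2+H}|z_2|^{2+H}\sigma_n^{5+H}$; and since $2\beta+\gamma=4+2H+\min(H,1-H)\le5+H$ with $\sigma_n\le1$, this is $\precsim|z_1|^{2+H}|z_2|^{2+H}\sigma_n^{2\beta+\gamma}$.

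For part~\ref{lem:MALA:otherCorrelation} I would again exploit $\int_0^1(1-2t){\d}t=0$ to centre the field, rewriting $M_n(x_1,z_1\,|\fBM;c)=\sigma_n^2z_1^2\qu(x_1)\int_0^1(\fBM_{x_1+t\sigma_nz_1}-\fBM_{x_1})(1-2t){\d}t$, and use \eqref{eq:MALA:K} in the form $K(x_2\,|\fBM;c)=\int_0^{x_2}\fBM_u\qu(u)(x_2-u){\d}u$, so that
\[
\Expect{M_n(x_1,z_1\,|\fBM;c)K(x_2\,|\fBM;c)}
=\sigma_n^2z_1^2\qu(x_1)\int_0^1\!\!\int_0^{x_2}\Expect{(\fBM_{x_1+t\sigma_nz_1}-\fBM_{x_1})\fBM_u}\qu(u)(x_2-u)(1-2t){\d}u{\d}t\,.
\]
The increment covariance $\Expect{(\fBM_{x_1+t\sigma_nz_1}-\fBM_{x_1})\fBM_u}=\half(|x_1+t\sigma_nz_1|^{2H}-|x_1|^{2H})-\half(|x_1+t\sigma_nz_1-u|^{2H}-|x_1-u|^{2H})$ is then controlled exactly as in the RWM case~\ref{lem:RWM:otherCorrelation}: for $H\le\half$ by the Hölder bound $||a|^{2H}-|b|^{2H}|\le|a-b|^{2H}$, giving a factor $\sigma_n^{2H}|z_1|^{2H}$, and for $H>\half$ by the mean-value estimate $||a+b|^{2H}-|a|^{2H}|\le2H|b|(2+|a|+|b|)$, giving a factor $\sigma_n|z_1|$ times a polynomial. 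Using $\qu\le1$ and integrating the remaining polynomial factors against the bounded range $u\in[0,x_2]$ produces a polynomial $g_2(x_1,z_1,x_2)$ and the overall power $\sigma_n^{2+H+\min(H,1-H)}=\sigma_n^{\beta+\gamma}$.

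The main obstacle is part~\ref{lem:MALA:auxsetCorrelation}: one must first secure that the argument $w+t\sigma_nz_1-s\sigma_nz_2$ stays bounded away from $0$ on $\auxset_n$ (so that $|\cdot|^{2H}$ is genuinely $\mathcal{C}^2$ and both the double integration by parts and the bound on $|\cdot|^{2H-2}$ are legitimate), and then track the exponents so that the $\sigma_n^{\half}$ threshold in $\auxset_n$ converts $|w|^{2H-2}$ precisely into the factor $\sigma_n^{H-1}$ that upgrades $\sigma_n^{6}$ to the sharp $\sigma_n^{5+H}$. Part~\ref{lem:MALA:otherCorrelation} is then routine once the centring is performed, the only nuisance being the $H\gtrless\half$ case split and the check that the resulting polynomial factors remain integrable against $\nu$ in the later applications.
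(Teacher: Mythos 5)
Your proof is correct and follows the paper's architecture almost exactly: part (i) is the identical appeal to Lemma~\ref{lem:auxset_properties} with $a_n=\sigma_n^{1/2}$, and parts (ii)--(iii) rest on the same two pillars as the paper's argument, namely the vanishing of $\int_0^1(1-2t)\,dt$ and the observation that on $\auxset_n$ (for $n$ with $\sigma_n\le 1$) the argument $x_1-x_2+t\sigma_n z_1-s\sigma_n z_2$ keeps a fixed sign and modulus at least $|x_1-x_2|/2$, followed by the same algebra (factoring $|z_1 z_2|^3$, AM--GM, and the $\auxset_n$ threshold converting $(|x_1-x_2|/2)^{2H-2}$ into $2^{-(2-2H)}\sigma_n^{H-1}$, whence $\sigma_n^6\mapsto\sigma_n^{5+H}$) and the same $H\lessgtr\tfrac12$ case split in part (iii). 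The one substantive deviation is the mechanism in part (ii): the paper inserts extra terms that are null against $(1-2t)(1-2s)$ so as to rewrite the kernel as a discrete second difference and then applies the exact Taylor identity of Lemma~\ref{lem:taylor1}, whereas you keep only the surviving cross term and integrate by parts twice against the antiderivatives $t-t^2$ and $s-s^2$. The two devices are equivalent: both produce $2H(2H-1)\sigma_n^2 z_1 z_2$ times a weighted average of $|x_1-x_2+\cdots|^{2H-2}$ with total weight at most one, hence the identical intermediate bound $H|2H-1|\,|z_1z_2|^3\sigma_n^6(|x_1-x_2|/2)^{2H-2}$ and the identical constant $H|2H-1|2^{-(2-2H)}$; your route is thus a legitimate, marginally more self-contained alternative (no appeal to Lemma~\ref{lem:taylor1}), at the price of checking that the boundary terms vanish, which your choice of $t-t^2$ handles. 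Likewise in part (iii), centring the field as $\fBM_{x_1+t\sigma_n z_1}-\fBM_{x_1}$ and writing $K$ as $\int_0^{x_2}\fBM_u\qu(u)(x_2-u)\,du$ is the paper's computation after the substitution $u=sx_2$, with the two bounds for $H\le\tfrac12$ and $H>\tfrac12$ imported verbatim from the RWM case, Lemma~\ref{lem:RWM:AsymptoticWeakDependence}.
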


\begin{proof}
Property \ref{lem:MALA:auxset} follows by
applying Lemma~\ref{lem:auxset_properties} 
in the Appendix, 
using the sequence $a_n=\sigma_n^{1/2}$.

Property \ref{lem:MALA:auxsetCorrelation}:
Using the formula \eqref{eq:fBM} for the 
covariance function of fBM,
\begin{multline*}
\Expect{M_n(x_1,z_1\,|\fBM;c)M_n(x_2,z_2\,|\fBM;c)}\\
\quad=\quad
z_1^2z_2^2\sigma_n^4\qu(x_1)\qu(x_2)\int_0^1\int_0^1\Gamma^{(H)}(x_1+t\sigma_nz_1,x_2+s\sigma_nz_2)(1-2t)(1-2s){\d}t{\d}s\,.
\end{multline*}
Again,
all terms not depending on both $t$ and $s$ vanish 
when integrated with respect to $(1-2t)(1-2s){\d}t{\d}s$. 
Hence, in the expression above we can swap
$\Gamma^{(H)}(x_1+t\sigma_nz_1,x_2+s\sigma_nz_2)$ for 
$$-\frac{1}{2}\left(|x_1-x_2|^{2H}-|x_1-x_2+\sigma_ntz_1|^{2H}-|x_1-x_2-\sigma_nsz_2|^{2H}+|x_1-x_2+\sigma_n(tz_1-sz_2)|^{2H}\right).$$

Using Lemma~\ref{lem:auxset_properties}(ii) 
from the Appendix with $u=\sigma_n^{1/2}t$, $v=\sigma_n^{1/2}s$ (assuming $n$ large enough that $\sigma_n\leq 1$), and 
the details of construction 
of the set $\auxset_n$ in \eqref{eq:auxsetMALA},
if it is the case that \((x_1,z_1,x_2,z_2)\in\auxset_n)\) then
it must be the case that $|x_1-x_2+\sigma_n(tz_1-sz_2)|>\frac{|x_1-x_2|}{2}>0$ and $|\sigma_n(tz_1-sz_2)|<\frac{|x_1-x_2|}{2}$ for each $t,s\in(0,1)$. So $x_1-x_2+\sigma_n(tz_1-sz_2)$ is at least a positive distance away from zero and of the same sign for all $t,s\in[0,1]$. Hence, since the function $x\mapsto |x|^{2H}$ is smooth away from zero, Lemma~\ref{lem:taylor1} from the Appendix implies that
\[
\Gamma^{(H)}(x_1+t\sigma_nz_1,x_2+s\sigma_nz_2)
\quad=\quad
H(2H-1)ts\sigma^2_nz_1z_2\int_0^1\int_0^1|x_1-x_2+\sigma_n(utz_1-vsz_2)|^{2H-2}{\d}u{\d}v\,.
\]
Therefore, by Lemma~\ref{lem:auxset_properties}(ii)
from the Appendix,
and construction of $\auxset_n$ in \eqref{eq:auxsetMALA}:
\begin{multline*}
\left|\Expect{M_n(x_1,z_1\,|\fBM;c)M_n(x_2,z_2\,|\fBM;c)}\right|	
\quad\leq\quad 
H\,|2H-1|\,|z_1z_2|^3\left|\frac{x_1-x_2}{2}\right|^{2H-2}\sigma^6_n\\
\quad=\quad
H\,|2H-1|\,|z_1|^{2+H}|z_2|^{2+H}\left(\frac{2\sqrt{|z_1||z_2|}}{|x_1-x_2|}\right)^{2-2H}\sigma^6_n
\\
\quad\leq\quad 
H\,|2H-1|\,|z_1|^{2+H}|z_2|^{2+H}\left(\frac{|z_1|+|z_2|}{|x_1-x_2|}\right)^{2-2H}\sigma^6_n\\
\quad\leq\quad
\frac{H\,|2H-1|\,}{2^{2-2H}}|z_1|^{2+H}|z_2|^{2+H}\sigma^{5+H}_n\,.
\end{multline*}

Property~\ref{lem:MALA:otherCorrelation}:
%\xNB[WSK4]{Lemma \ref{lem:MALA:auxset}\ref{lem:MALA:otherCorrelation}: My reading got a bit hurried here. It looks right, but need to check it more carefully!}
We now need to distinguish between
the cases \(H\lesseqgtr\half\).
First, consider the case
$H\leq 1/2$, so that $2H\leq 1$
% Distinguish two cases and first assume $H\leq 1/2$. 
Observe that by \eqref{eq:MALA:K} and \eqref{eq:MALA:M_n}
\begin{multline*}
\Expect{M_n(x_1,z_1\,|\fBM;c)K(x_2\,|\fBM;c)}	
\quad=\quad
\\
 \sigma_n^2z_1^2x^2_2	\int_0^1\int_0^1\qu(x_1)\qu(sx_2)\Gamma^{(H)}(x_1+t\sigma_nz_1,sx_2)(1-2t)(1-s){\d}t{\d}s
\quad=\quad
\\
\frac{1}{2} \sigma^2_nz_1^2x^2_2		\int_0^1\int_0^1\qu(x_1)\qu(sx_2)\left(|x_1+t\sigma_nz_1|^{2H}-|x_1|^{2H}-|x_1-sx_2+t\sigma_nz_1|^{2H}+|x_1-sx_2|^{2H}\right)
\\(1-2t)(1-s){\d}t{\d}s
\,.
\end{multline*}
The second equality holds since the difference of integrands does not depend on $t$ and thus integrates to zero. 
Since $2H\leq 1$ and $\left||x_1+t\sigma_nz_1|^{2H}-|x_1|^{2H}\right|\leq \sigma_n^{2H}|z_1|^{2H}$ (similarly for the other term),
we obtain $\left|\Expect{M_n(x_1,z_1\,|\fBM;c)K(x_2\,|\fBM;c)}\right|\leq \sigma^{2+2H}_n|z_1|^{2+2H}x^2_2$. 

Second, consider the case $H>1/2$, 
so that $0<2H-1<1$ and the function $x\mapsto |x|^{2H}$ has a continuous derivative $x\mapsto 2H\, \sign(x)\,|x|^{2H-1}$. The Fundamental theorem of calculus then implies
\begin{multline*}
\left||x_1+t\sigma_nz_1|^{2H}-|x_1|^{2H}\right|
\quad\leq\quad
2H\left|t\sigma_nz_1\int_0^1\sign(x_1+ut\sigma_nz_1)|x_1+ut\sigma_nz_1|^{2H-1}\d u\right|\\
\quad\leq\quad
2H\sigma_n|z_1|\int_0^1|x_1+ut\sigma_nz_1|^{2H-1}\d u
\quad\leq\quad
2H\sigma_n|z_1|\left(|x_1|^{2H-1}+\sigma_n^{2H-1}|z_1|^{2H-1}\right)\,.
\end{multline*}
An analogous bound holds for $\left||x_1-sx_2+t\sigma_nz_1|^{2H}-|x_1-sx_2|^{2H}\right|$ and together
\[
\left|\Expect{M_n(x_1,z_1\,|\fBM;c)K(x_2\,|\fBM;c)}	\right|
\quad\leq\quad
H\,|z_1|^3x_2^2\left(2|x_1|^{2H-1}+|x_2|+2\sigma_n^{2H-1}|z_1|^{2H-1}\right)\times \sigma_n^3\,.
\]
\end{proof}
 
Now we are in a position to prove Theorem~\ref{thm:CLT_MALAvanilla}.
% \begin{proof}[Proof of Theorem~\ref{thm:CLT_MALAvanilla}]
% Lemmas~\ref{lem:MALA:NormalPotentialFluctuations},~\ref{lem:MALA:AsymptoticRWMproposal},~\ref{lem:MALA:ApproximateLMHRNormality}~and~\ref{lem:MALA:AsymptoticWeakDependence} together show that the Framework
% \ref{theassumption} holds. Hence, Theorem~\ref{thm:CLT_MALAvanilla} also holds by Theorem~\ref{thm:scaling_general}.
% \end{proof}

\begin{proof}[Proof of Theorem~\ref{thm:CLT_MALAvanilla}]
Lemmas~\ref{lem:MALA:NormalPotentialFluctuations},~\ref{lem:MALA:AsymptoticRWMproposal},~\ref{lem:MALA:ApproximateLMHRNormality}~and~\ref{lem:MALA:AsymptoticWeakDependence} together show that the Anomalous Scaling Framework \ref{theassumption} holds for the MALA algorithm as 
described at the head of this section
and as stipulated by Theorem~\ref{thm:CLT_MALAvanilla}. 
Consequently Theorem~\ref{thm:CLT_MALAvanilla} 
is a direct consequence of Theorem~\ref{thm:scaling_general}.
\end{proof}

% =========================================   
%
\section{Discussion}\label{sec:conclusion}
   
In this concluding section we discuss how our results relate
to considerations of Expected Squared Jump Distance, 
further research possibilities, and some practical considerations
concerning how our results might relate to questions of
practical Markov chain Monte Carlo.

\subsection{Expected squared jump distance}\label{sec:ESJD}
In the setting of either Theorem~\ref{thm:CLT_RWMvanilla} or Theorem~\ref{thm:CLT_MALAvanilla}, 
in particular when the $X_i\sim \pi(\cdot\,|\fBM)$ are 
conditionally independent and identically distributed
% {i.i.d.}
and $\sigma_n=\ell n^{-\frac{1}{2\beta}}$,
and given a positive sequence $\vartheta_1$, $\vartheta_2$ \ldots
decaying to zero,
we define proposals $Y_i^{(n),\vartheta}\sim q^{\vartheta}_n(X_i,dy)\sim N(X_i,\vartheta_n^2\cdot I_n)$ in the RWM case and $Y_i^{(n),\vartheta}\sim q^{\vartheta}_n(X_i,dy)\sim N(X_i+\frac{\vartheta^2_n}{2}\Vd{X_i},\vartheta_n^2\cdot I_n)$ in the MALA case. 
We also define random variables 
which measure the growth/decay rate of the
Expected Squared Jump Distance (ESJD)
relative to $\sigma^2_n$ for different scalings of proposal variance;
these are conditional expectations given \(\fBM\) as follows:
\[
\ESJD_n(\fBM,\vartheta_n)\quad=\quad
n^{\frac{1}{\beta}}\times\Expect{\left(Y^{(n),\vartheta}_1-X_1\right)^2(1\wedge \exp)\left(\sum_{i=1}^n\LMHR(X_i,Y^{(n),\vartheta}_i)\right)\,\Big|\fBM}
\,.
\]
From either Theorem~\ref{thm:CLT_RWMvanilla} or Theorem~\ref{thm:CLT_MALAvanilla} we can deduce that almost surely (when conditioned on $\fBM$)
\begin{equation}\label{eq:ESJD1}
\ESJD_n(\fBM,\sigma_n)\quad\xrightarrow{\fBM \text{ a.s.}} \quad W(\ell)\,:=\,2\ell^2\Phi\left(-\frac{\ell^\beta\theta}{2}\right)
\end{equation}
for an appropriate positive random variable $\theta$ that is $\fBM$-measurable (see Theorem~\ref{thm:CLT_RWMvanilla} or Theorem~\ref{thm:CLT_MALAvanilla} and \citet[Proposition~2.4]{RobertsGelmanGilks-1997}).
This can be shown by adopting the method of proof of Corollary 18 in \cite{ZanellaKendallBedard-2016}, where we realise all the $X_i$, $Y^{(n)}_i$ on the same probability space and use the tower property.

%
% === NOTE THIS! ===
%
% \begin{NoteThis2}
% We condition on the realisation of $\fBM$,
% supposing it to fall in the almost sure set given by Theorem~\ref{thm:CLT_RWMvanilla} or Theorem~\ref{thm:CLT_MALAvanilla} and then cast random variables $X_i\sim \pi$ and $Z_i\sim N(0,1)$ as coordinates of infinite dimensional random variables and take $X^{(n)}_i=X_i$ and $Y^{(n)}_i$ as either $X_i+\sigma_nZ_i$ (RWM case)
% or $X_i+\frac{\sigma^2_n}{2}(\nabla log(\pi))(X_i)+\sigma_nZ_i$
% (MALA case). Then Theorem~\ref{thm:CLT_RWMvanilla} or Theorem~\ref{thm:CLT_MALAvanilla} imply convergence of
% the following conditional expectation given \(X_1\) and \(Y_1\)
% as well as \(\fBM\):
% \[
% \Expect{(1\wedge \exp)\left(\sum_{i=1}^n\LMHR(X_i,Y_i)\right)\Bigg|X_1,Y_1,\fBM}\quad\xrightarrow{X_1,Y_1,\fBM\text{ a.s.}}\quad W
% \]
% and \eqref{eq:ESJD1} follows by the tower property,
% noting that the quantity within the expectation is bounded in absolute value.
% \end{NoteThis2}

We seek to show that the rate of \(\ESJD_n(\fBM,\vartheta_n)\) is optimal when 
$\vartheta_n\sim \sigma_n$. More precisely, we must show
that the rate converges to zero almost surely for 
$\vartheta_n$ with decay rate differing
asymptotically from the decay
rate of \(\sigma_n\). If $\frac{\vartheta_n}{\sigma_n}\to 0$, it is straightforward to show $\ESJD_n(\fBM,\vartheta_n)\to 0$ almost surely.
Indeed, we simply note the acceptance rate is bounded above (by \(1\))
and argue that
\begin{multline*}
\limsup_{n\to\infty}n^{\frac{1}{\beta}}\times\Expect{\left(Y^{(n),\vartheta}_1-X_1\right)^2(1\wedge \exp)\left(\sum_{i=1}^n\LMHR(X_i,Y^{(n),\vartheta}_i)\right)\;\Big|\fBM}
\\
\quad\precsim\quad
\lim_{n\to\infty}\sigma_n^{-2}\times\Expect{\left(Y^{(n),\vartheta}_1-X_1\right)^2\;\Big|\fBM}
\quad=\quad 
\lim_{n\to\infty}\frac{\vartheta_n^2}{\sigma_n^{2}}
\quad \to \quad 0\,.
\end{multline*}
Unfortunately, when  $\frac{\vartheta_n}{\sigma_n}\to \infty$ we can only show convergence in probability
\begin{equation}\label{eq:ESJD:Pconv}
\ESJD_n(\fBM,\vartheta_n)\quad\xrightarrow{\text{P}} \quad 0\,.
\end{equation}

The reason is that, even though Lemmas~\ref{lem:ItotildeI},~\ref{lem:tildeI_to_hatI}~and~\ref{lem:hatI_to_J} as well as Lemma~\ref{lem:Lindeberg} all remain valid even if we use proposal variances $\vartheta^2_n$ instead of $\sigma^2_n$ (we only require $\vartheta_n\to 0$), we cannot recover a result analogous to 
the Borel-Cantelli argument of Proposition~\ref{prop:BorelCantelli} 
for arbitrary decay rates of proposal variances,
only for rates $\vartheta_n$ ``close'' to $\sigma_n$. 
In effect, it can thus be shown that 
the decay rate is ``locally optimal'', but not 
necessarily ``globally optimal''. 
This is an intrinsic issue for the Anomalous Scaling Framework method
\ref{theassumption}
described in Section \ref{sec:anomalous_scaling}: 
even with better bounds or a different random environment construction it will always be possible to construct decay rates $\vartheta_n$
that are slow enough to ensure that Borel-Cantelli arguments fail,
as a result of certain series not being summable.

The need to restrict to
convergence in probability suggests 
that the setting of \cite{RobertsGelmanGilks-1997} 
will not apply to the setting of Theorems~\ref{thm:CLT_RWMvanilla}~and~\ref{thm:CLT_MALAvanilla}, since 
of course subsequence arguments will then imply
existence of sub-sequences
of increasing dimension along which classical scaling is not optimal.
It seems unlikely that almost sure convergence would 
ever not hold, but a proof of this in the case
$\frac{\vartheta_n}{\sigma_n}\to \infty$ would have to deal with varying and very different decay rates of the proposal variance 
 $\vartheta^2_n$.

Nevertheless, for any $\frac{\vartheta_n}{\sigma_n}\to \infty$ 
we can recover weaker versions of Corollaries~\ref{cor:rho2decay}~and~\ref{cor:LindeberAS}: 
\begin{equation}\label{eq:ESJD2}
\vartheta_n^{-2\beta}\Expect{\LMHR_n^2(X,Y^{(n),\vartheta}_i\,|\fBM)\,|\fBM}\quad\xrightarrow{\text{P}}\quad \theta
\end{equation}
and
\begin{equation}\label{eq:ESJD3}
\vartheta_n^{-2\beta}\Expect{\LMHR_n^2(X,Y^{(n),\vartheta}\,|\fBM)1_{\rho^2_n(X,Y^{(n),\vartheta})>\vartheta_n^\beta}\,|\fBM}\quad\xrightarrow{\text{P}}\quad 0\,.
\end{equation}
This is enough to establish our objective, 
Equation \eqref{eq:ESJD:Pconv}. 
And for this it suffices to show that
the almost sure versions of \eqref{eq:ESJD2} and \eqref{eq:ESJD3}
imply $\ESJD_n(\fBM,\vartheta_n)\to 0$ almost surely.
Proof of convergence in probability then follows
using the celebrated
characterisation of convergence in probability 
as holding whenever sub-sequences all have 
almost surely convergent sub-sub-sequences.

\begin{lem} Assume $\frac{\vartheta_n}{\sigma_n}\to\infty$.
Almost sure versions of \eqref{eq:ESJD2} and \eqref{eq:ESJD3} imply $\ESJD_n(\fBM,\vartheta_n)\to 0$ almost surely.
\end{lem}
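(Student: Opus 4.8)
The plan is to combine the elementary bound $1\wedge e^{x}\le e^{x/2}$ (valid for all real $x$) with the conditional independence of coordinates. Writing $\Psi_n=\sum_{i=1}^n\LMHR_n(X_i,Y^{(n),\vartheta}_i)$ for the total log-acceptance ratio, and working throughout on the almost-sure event (in $\fBM$) on which the stated versions of \eqref{eq:ESJD2} and \eqref{eq:ESJD3} hold with $\theta>0$, I would bound
\[
\ESJD_n(\fBM,\vartheta_n)
\;\le\;
n^{1/\beta}\,
\Expect{\left(Y^{(n),\vartheta}_1-X_1\right)^2 e^{\LMHR_n(X_1,Y_1^{(n),\vartheta})/2}\,\Big|\fBM}
\;\prod_{i=2}^n\Expect{e^{\LMHR_n(X_i,Y_i^{(n),\vartheta})/2}\,\Big|\fBM}\,,
\]
using that, conditionally on $\fBM$, the pairs $(X_i,Y^{(n),\vartheta}_i)$ are i.i.d., so the exponential factorises and coordinate $1$ separates off. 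The first factor I would control by the Cauchy–Schwarz inequality together with the universal identity $\Expect{e^{\LMHR_n}\mid\fBM}=1$ of Remark~\ref{item:unit}, giving $\Expect{(Y^{(n),\vartheta}_1-X_1)^2 e^{\LMHR_n/2}\mid\fBM}\le\Expect{(Y^{(n),\vartheta}_1-X_1)^4\mid\fBM}^{1/2}\precsim\vartheta_n^2$; the fourth-moment bound is immediate for RWM, and for MALA follows from finiteness of the polynomial moments of $\Vd{X_1}$ (the drift contributing only at order $\vartheta_n^8$).

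The heart of the argument is a uniform lower bound on each of the remaining factors. Using $\Expect{e^{\LMHR_n}\mid\fBM}=1$ once more yields the exact identity $1-\Expect{e^{\LMHR_n/2}\mid\fBM}=\tfrac12\Expect{(e^{\LMHR_n/2}-1)^2\mid\fBM}$. Fixing $\kappa>0$ and setting $c_\kappa=\inf_{0<|x|\le\kappa}(e^{x/2}-1)^2/x^2>0$, I would restrict to $\{|\LMHR_n|\le\kappa\}$ to obtain $\Expect{(e^{\LMHR_n/2}-1)^2\mid\fBM}\ge c_\kappa\Expect{\LMHR_n^2\,\Indicator{|\LMHR_n|\le\kappa}\mid\fBM}$. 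Here \eqref{eq:ESJD3} enters decisively: because the threshold $\vartheta_n^{\beta/2}\to0$, for large $n$ the set $\{|\LMHR_n|>\kappa\}$ is contained in $\{\LMHR_n^2>\vartheta_n^\beta\}$, so the excised mass is $o(\Expect{\LMHR_n^2\mid\fBM})$, while \eqref{eq:ESJD2} identifies $\Expect{\LMHR_n^2\mid\fBM}\sim\theta\,\vartheta_n^{2\beta}$. Combining these gives $1-\Expect{e^{\LMHR_n/2}\mid\fBM}\ge \tfrac14 c_\kappa\theta\,\vartheta_n^{2\beta}$ for all large $n$, uniformly in the coordinate index by identical distribution.

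Finally I would assemble the pieces. The single-factor deficit yields $\prod_{i=2}^n\Expect{e^{\LMHR_n/2}\mid\fBM}\le\big(1-\tfrac14 c_\kappa\theta\,\vartheta_n^{2\beta}\big)^{n-1}\le\exp\!\big(-\tfrac14 c_\kappa\theta\,(n-1)\vartheta_n^{2\beta}\big)$. Since $\sigma_n=\ell n^{-1/(2\beta)}$ gives $n^{1/\beta}=\ell^2\sigma_n^{-2}$ and $(n-1)\vartheta_n^{2\beta}=\ell^{2\beta}(\vartheta_n/\sigma_n)^{2\beta}(1+o(1))\to\infty$, writing $r_n=\vartheta_n/\sigma_n\to\infty$ produces $\ESJD_n(\fBM,\vartheta_n)\precsim r_n^{2}\exp(-c\,\theta\,r_n^{2\beta})\to0$, the super-exponential decay in $r_n$ defeating the polynomial prefactor. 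I expect the main obstacle to be the middle step: converting the two scalar second-moment statements \eqref{eq:ESJD2}–\eqref{eq:ESJD3} into the uniform multiplicative deficit $1-\Expect{e^{\LMHR_n/2}\mid\fBM}\gtrsim\vartheta_n^{2\beta}$, since the quadratic comparison $(e^{x/2}-1)^2\asymp x^2$ holds only near the origin, so one must lean on \eqref{eq:ESJD3} to discard the heavy-tail contribution before the comparison can be applied.
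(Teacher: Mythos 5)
Your argument is correct, and it reaches the conclusion by a genuinely different route than the paper. The paper's proof works at the level of the sum: it invokes Theorem~\ref{thm:mean+half-variance-is-small} to convert the almost sure versions of \eqref{eq:ESJD2} and \eqref{eq:ESJD3} into the statement that the conditional mean of $\sum_i\rho_i^{(n),\vartheta}$ sits near $-\tfrac12\Var{\sum_i\rho_i^{(n),\vartheta}\,|\fBM}$, then splits the ESJD according to whether the sum exceeds $-\tfrac14$ of that variance; on the unfavourable event the acceptance probability is at most $\exp\left(-\tfrac14\Var{\sum_i\rho_i^{(n),\vartheta}\,|\fBM}\right)$, while the probability of the favourable event is controlled by a $4m$-th moment Markov inequality resting on the combinatorial bound \eqref{eq:ImprovedMarkov} (and, in the RWM case $\beta=H$, on choosing $m$ large enough that $m\beta>1$), followed by Cauchy--Schwarz and \eqref{eq:ESJD Jump Control}. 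You avoid all of this large-deviation machinery by exploiting the multiplicative structure directly: the pointwise bound $1\wedge e^{x}\le e^{x/2}$ plus conditional independence factorises the ESJD into a single coordinate term of order $\vartheta_n^2$ (via Cauchy--Schwarz and Remark~\ref{item:unit}) times $\left(\Expect{e^{\LMHR/2}\,|\fBM}\right)^{n-1}$, and the identity $\Expect{e^{\LMHR}\,|\fBM}=1$ turns the per-coordinate factor into the exact deficit $1-\Expect{e^{\LMHR/2}\,|\fBM}=\tfrac12\Expect{\left(e^{\LMHR/2}-1\right)^2\,|\fBM}$, which the quadratic comparison near the origin, together with \eqref{eq:ESJD2} to identify the scale $\theta\vartheta_n^{2\beta}$ and \eqref{eq:ESJD3} to discard the tail above the threshold $\vartheta_n^{\beta}$, bounds below by a constant times $\theta\vartheta_n^{2\beta}$; since $n\vartheta_n^{2\beta}=\ell^{2\beta}(\vartheta_n/\sigma_n)^{2\beta}\to\infty$, the product decays super-exponentially and defeats the polynomial prefactor. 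Your approach is shorter, self-contained (it uses only Proposition~\ref{prop:LMHR_properties} and its consequences, not \eqref{eq:ImprovedMarkov} or the citation to an external moment inequality), needs no choice of large $m$, and yields an explicit rate $r_n^2\exp\left(-c\,\theta\,r_n^{2\beta}\right)$ with $r_n=\vartheta_n/\sigma_n$; the paper's heavier argument, in exchange, produces the intermediate probability estimate \eqref{eq:ESJD6} on the acceptance event itself and stays entirely within the second- and $4m$-th-moment framework that the rest of Section~\ref{sec:anomalous_scaling} is built on.
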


\begin{proof}
Theorem~\ref{thm:mean+half-variance-is-small}
(``mean plus half-variance is asymptotically negligible'')
together with almost sure versions of \eqref{eq:ESJD2} and \eqref{eq:ESJD3} implies
\begin{equation}\label{eq:ESJD4}
\vartheta^{-2\beta}_n\left(\Expect{\LMHR_n(X,Y^{(n),\vartheta}\,|\fBM)}~+~\frac{1}{2}\Var{\LMHR_n(X,Y^{(n),\vartheta}\,|\fBM)\,|\fBM}\right)
\quad\xrightarrow{\text{a.s.}}\quad	0\,.
\end{equation}
Write $\rho_i^{(n),\vartheta}=\LMHR_n(X_i,Y^{(n),\vartheta}_i\,|\fBM)$. The $\rho_i^{(n),\vartheta}$ are independent given $\fBM$, and so \eqref{eq:ESJD4} yields
\begin{equation}\label{eq:ESJD5}
\frac{1}{n\vartheta_n^{2\beta}}\Var{\sum_{i=1}^n\rho_i^{(n),\vartheta}\,|\fBM}
\quad=\quad
\vartheta_n^{-2\beta}\Var{\rho_1^{(n),\vartheta}\,|\fBM}
\quad\xrightarrow{\text{a.s.}}\quad \theta
\end{equation}
and
\[
\frac{1}{n\vartheta_n^{2\beta}}\Expect{\sum_{i=1}^n\rho_i^{(n),\vartheta}\,|\fBM}
\quad=\quad
\vartheta_n^{-2\beta}\Expect{\rho_1^{(n),\vartheta}\,|\fBM}
\quad\xrightarrow{\text{a.s.}}\quad -\frac{theta}{2}\,.
\]
Consequently, for all large enough $n$,
\begin{multline}\label{eq:ESJD7}
\Prob{\sum_{i=1}^n\rho_i^{(n),\vartheta}~>~-\frac{1}{4}\Var{\sum_{i=1}^n\rho_i^{(n),\vartheta}\,|\fBM}\,|\fBM}\\
\quad\leq\quad
\Prob{\left|\sum_{i=1}^n\left(\rho_i^{(n),\vartheta}-\Expect{\rho_i^{(n),\vartheta}\,|\fBM}\right)\right|~>~\frac{1}{5}\Var{\sum_{i=1}^n\rho_{n,i}\,|\fBM}\,|\fBM}\\
=\quad
\Prob{\left(\sum_{i=1}^n\left(\rho_i^{(n),\vartheta}-\Expect{\rho_i^{(n),\vartheta}\,|\fBM}\right)\right)^{4m}~>~\frac{1}{5^{4m}}\Var{\sum_{i=1}^n\rho_{n,i}\,|\fBM}^{4m}\,\big|\fBM}\,,
\end{multline}
for $m$ 
chosen to satisfy the requirements
of Framework~\ref{theassumption}. From the proof of Lemma~\ref{lem:Lindeberg} we may conclude
\begin{equation}\label{eq:ESJD8}
\Expect{\left(\rho_i^{(n),\vartheta}-\Expect{\rho_i^{(n),\vartheta}\,|\fBM}\right)^{4m}\,|\fBM}
\quad \leq \quad
\Expect{\left(\rho_i^{(n),\vartheta}\right)^{4m}\,|\fBM}
\quad\precsim\quad
\vartheta^{4m\beta}_n\,.
\end{equation}
Following the argument
of \citet[Proposition~26]{MijatovicVogrinc-2017},  
given centred IID random variables $A_1,\dots A_n\sim A$ satisfying $\Expect{A^{4m}}<\infty$
we have
\begin{multline}\label{eq:ImprovedMarkov}
\Expect{\left(A_1+\dots +A_n\right)^{4m}}
\quad=\quad
\mathop{\sum\cdots\sum}_{\substack{s_1+\dots +s_n=4m \\ s_i\in\{0,2,3,\dots\}}}\Expect{A_1^{s_1}\cdots A_n^{s_n}}
\\
\quad\leq\quad
\mathop{\sum\cdots\sum}_{\substack{s_1+\dots +s_n=4m \\ s_i\in\{0,2,3,\dots\}}}
\Expect{|A_1|^{s_1}}
% \Expect{|A_2|^{s_2}}
\cdots 
\Expect{|A_n|^{s_n}}
\quad\leq\quad
\mathop{\sum\cdots\sum}_{\substack{s_1+\dots +s_n=4m \\ s_i\in\{0,2,3,\dots\}}}
\Expect{|A_1|^{4m}}^{\tfrac{s_1}{4m}}
% \Expect{|A_2|^{4m}}^{\tfrac{s_2}{4m}}
\cdots 
\Expect{|A_n|^{4m}}^{\tfrac{s_n}{4m}}
\\
\quad=\quad
\left(\mathop{\sum\cdots\sum}_{\substack{s_1+\dots +s_n=4m \\ s_i\in\{0,2,3,\dots\}}}
1\right)
\Expect{|A_1|^{4m}}
\quad\leq\quad
3^{2m}
\left(\mathop{\sum\cdots\sum}_{t_1+\dots +t_n=2m}
1\right)
\Expect{|A_1|^{4m}}
\quad=\quad
3^{2m} n^{2m} \Expect{|A_1|^{4m}}\,.
% \\
% \quad\leq\quad
% 3^{2m}\times\sum_{k_1+\dots+k_{n}=2m}\Expect{A_1^{2k_1}A_2^{2k_2}\cdots A_n^{2k_n}}
% \leq\quad
% 2^{3m}\times n^{2m}  \times \Expect{A^{4m}}
% \,.
\end{multline}
Here the first equality holds because all the terms containing exactly one copy of any of the $A_i$ vanish due to $A_i$ being centred and independent; the third inequality arises
from Jensen's inequality;
the fourth inequality is obtained
by mapping each tuple
\((s_1,\ldots,s_n)\) 
to \((t_1,\ldots,t_n)\) by dividing \(s_i\) by \(2\) if \(s_i\) is even, otherwise alternately
increasing or decreasing \(s_i\) by \(1\)
then dividing by \(2\). Each resulting 
tuple \((t_1,\ldots,t_n)\) sums to \(2m\)
and derives from no more than \(3^{2m}\)
of the
\((s_1,\ldots,s_n)\) tuples.

Using Markov's inequality on \eqref{eq:ESJD7} and then using \eqref{eq:ESJD8} together with \eqref{eq:ImprovedMarkov} shows
\begin{multline}\label{eq:ESJD6}
\Prob{\sum_{i=1}^n\rho_i^{(n),\vartheta}~>~-\frac{1}{4}\Var{\sum_{i=1}^n\rho_i^{(n),\vartheta}\;|\fBM}\;|\fBM}
\quad\precsim\quad
n^{2m}\times \frac{\vartheta_n^{4m\beta}}{\Var{\sum_{i=1}^n\rho_i^{(n),\vartheta}\;|\fBM}^{4m}}\\
\quad\precsim\quad
n^{2m}\times \frac{\vartheta_n^{4m\beta}}{n^{4m}\vartheta_n^{8m\beta}\theta^{4m}}
\quad=\quad
\frac{\sigma^{4m\beta}_n}{\vartheta_n^{4m\beta}\ell^{4m\beta} \theta^{4m}}\,,
\end{multline}
where we have used \eqref{eq:ESJD5} (changing the constant) for the second bound and 
have used $n\sigma_n^{2\beta}=\ell^{2\beta}$ 
(as stipulated in Framework \ref{theassumption})
for the final equality.

In both RWM and MALA case we have 
\begin{equation}\label{eq:ESJD Jump Control}
\Expect{\left(Y^{(n),\vartheta}_1-X_1\right)^4\,|\fBM}
\quad\precsim \quad 
\vartheta_n^4\,.
\end{equation}
The RWM case is trivial, while in the MALA case
\[
\Expect{\left(Y^{(n),\vartheta}_1-X_1\right)^4\,|\fBM}
\quad=\quad 
\vartheta^4_n \iint_{\Reals^2}\left(z+\frac{\vartheta_n}{2}\Vd{x}\right)^4\pi(x\,|\fBM;c)\nu_1(z)\d x\d z
\]
equals $3\vartheta_n^4$ plus a sum of higher powers of $\vartheta_n$ 
multiplied by
random variables that depend only on $\fBM$ and are almost surely finite by Assumption~\ref{assumption:NormalPotentialFluctuations} and Lemma~\ref{lem:potentialdot_properties}\ref{prop:diffpot-moment2}.

So now consider: under the constraint
$\sum_{i=1}^n\rho_i^{(n),\vartheta}~\leq~-\frac{1}{4}\Var{\sum_{i=1}^n\rho_i^{(n),\vartheta}\;|\fBM}$
the resulting upper bound on the acceptance rate leads, together with \eqref{eq:ESJD Jump Control}
and \eqref{eq:ESJD5}, to
\begin{multline*}
n^{\frac{1}{\beta}}\times
\Expect{\left(Y^{(n),\vartheta}_1-X_1\right)^2(1\wedge \exp)\left(\sum_{i=1}^n\LMHR(X_i,Y^{(n),\vartheta}_i\,|\fBM)\right)
\;;\;\sum_{i=1}^n\rho_i^{(n),\vartheta}~\leq~-\frac{1}{4}\Var{\sum_{i=1}^n\rho_i^{(n),\vartheta}\;|\fBM}\;|\fBM}
\\
\quad\precsim\quad
\frac{\vartheta^2_n}{\sigma_n^2}\exp\left(-\frac{1}{4}
\Var{\sum_{i=1}^n\rho_i^{(n),\vartheta}\;|\fBM}\right)
\quad\precsim\quad
\frac{\vartheta^2_n}{\sigma_n^2}\exp\left(-\frac{\ell^{2\beta}\theta}{3}\frac{\vartheta^{2\beta}_n}{\sigma_n^{2\beta}}\right)
\quad\xrightarrow{\text{a.s.}}\quad0\,.
\end{multline*}
(Here we reduce the denominator in the final exponent from \(4\)
to \(3\) to control fluctuations in the limit
for the scaled variance expressed by Equation 
\eqref{eq:ESJD5}.

Alternatively,
under the constraint
$\sum_{i=1}^n\rho_i^{(n),\vartheta}~>~-\frac{1}{4}\Var{\sum_{i=1}^n\rho_i^{(n),\vartheta}\;|\fBM}$
we can apply
% or otherwise by 
the Cauchy-Schwarz inequality
together with the limits \eqref{eq:ESJD6} and \eqref{eq:ESJD Jump Control}.
Using $n\sigma_n^{2\beta}=\ell^{2\beta}$ again,
\begin{multline*}
n^{\frac{1}{\beta}}\times\Expect{\left(Y^{(n),\vartheta}_1-X_1\right)^2(1\wedge \exp)\left(\sum_{i=1}^n\LMHR(X_i,Y^{(n),\vartheta}_i\,|\fBM)\right)
\;;\;{\sum_{i=1}^n\rho_i^{(n),\vartheta}~>~-\frac{1}{4}\Var{\sum_{i=1}^n\rho_i^{(n),\vartheta}\;|\fBM}}\;|\fBM}
\\
\quad\precsim\quad
\sigma_n^{-2}\times
\Expect{\left(Y^{(n),\vartheta}_1-X_1\right)^4\;|\fBM}^{1/2}
\times
\Prob{\sum_{i=1}^n\rho_i^{(n),\vartheta}~>~-\frac{1}{4}\Var{\sum_{i=1}^n\rho_i^{(n),\vartheta}\;|\fBM}\;|\fBM}^{1/2}
\\
\quad\precsim\quad
\frac{\vartheta_n^2}{\sigma^2_n}\times
\frac{1}{\ell^{2m\beta}\theta^{2m}}\times \frac{\sigma_n^{2m\beta}}{\vartheta_n^{2m\beta}}
\quad=\quad
\frac{1}{\ell^{2m\beta}\theta^{2m}}\times \frac{\sigma_n^{2m\beta-2}}{\vartheta_n^{2m\beta-2}}
\end{multline*}
%\quad\not\precsim\quad
%\frac{1}{\ell^{4m\beta}W(\ell)^{4m}}\times\frac{\vartheta_n^2}{\sigma^2_n}\times \frac{\sigma_n^{2m\beta}}{\vartheta_n^{2m\beta}}
%%\\
%\quad\precsim\quad
%\frac{\left(n^{2\beta} \sigma_n^2\right)}{\sigma_n^{2}}\times
%\Expect{\left(Y^{(n),\vartheta}_1-X_1\right)^4\;|\fBM}^{1/2}
%\times
%\Prob{\sum_{i=1}^n\rho_i^{(n),\vartheta}~>~-\frac{1}{4}\Var{\sum_{i=1}^n\rho_i^{(n),\vartheta}\;|\fBM}\;|\fBM}^{1/2}
%\\
%\quad\precsim\quad
%\left(n^{2\beta} \sigma_n^2\right)
%\frac{1}{\ell^{4m\beta}W(\ell)^{2m}}\times\frac{\vartheta_n^2}{\sigma^2_n}\times \frac{\sigma_n^{2m\beta}}{\vartheta_n^{2m\beta}}
%\quad=\quad
%\left(n^{2\beta-1/\beta} \ell^2\right)
%\frac{1}{\ell^{4m\beta}W(\ell)^{2m}}\times\frac{\vartheta_n^2}{\sigma^2_n}\times \frac{\sigma_n^{2m\beta}}{\vartheta_n^{2m\beta}}
%\end{multline*}
which almost surely converges to zero provided $m\beta>1$, since $\frac{\vartheta_n}{\sigma_n}\to\infty$.
% \xNB[WSK10]{``which almost surely converges to zero provided $m\beta>1$'': I don't see this!}
In the case of MALA $\beta=2+H$ 
we need only use $m=1$;
however in the RWM case $\beta=H$, 
so we need to choose $m$ to be
sufficiently large 
(recall that all polynomial moments of $ \rho_i^{(n),\vartheta}$ are finite). 
% \xNB[WSK10]{ESJD: I think it's the other way round. RWM is easy to control for $H\leq1/2$
% if $m$ is sufficiently large, since $2H-1/H<0$ if $H\in(0,1/2]$ say. Case $H>1/2$, and MALA, seem to need more work.}
Together the above bounds give 
\[
\ESJD(\fBM,\vartheta_n)\quad\xrightarrow{\text{a.s.}}\quad 0\,.
\]
\end{proof}

% Having convinced ourselves that $\sigma^2_n$ is the optimal decay rate for proposal variances we may turn our attention to choosing the $\ell$ that maximizes the ESJD, or equivalently (as it will turn out) choosing the optimal average acceptance rate. Revisiting \eqref{eq:ESJD1} $W(\ell)$ takes the form

Accepting that
$\sigma^2_n$ is the optimal decay rate for proposal variances, we turn our attention to choosing the $\ell$ that maximizes the ESJD, 
% or 
equivalently (as it will turn out) 
% choosing 
determining
the optimal average acceptance rate. Revisiting \eqref{eq:ESJD1},
$W(\ell)$ takes the form
\[
W(\ell)\quad=\quad
2\ell^2\times\Phi\left(-\frac{\sigma(\ell)}{2}\right)
\quad=\quad
2\ell^2\times\Phi\left(-\frac{\ell^{\beta}\theta}{2}\right)
\]
for $\sigma(\ell)$ as in Theorems~\ref{thm:CLT_RWMvanilla} and \ref{thm:CLT_MALAvanilla}, 
for $\theta$ a positive constant that depends only on $\fBM$ 
and for $\Phi$ the cumulative distribution function of a standard Normal random variable. (To obtain the first equality integrate $1\wedge \exp$ with respect to the standard Normal density as in \citealp[Proposition 2.4]{RobertsGelmanGilks-1997}.)

Clearly, $W(\ell)$ is smooth, positive and converges to zero when either $\ell\to 0$ or $\ell \to \infty$. 
Its maximum is therefore achieved at a stationary point.
Taking derivatives and 
substituting
$a=\frac{\ell^{2\beta}\theta}{2}$ leads to the equation
\[
2\Phi(-a)-\beta a\varphi(-a)\quad=\quad 0
\,,
\]
where $\varphi$ is the standard Normal density function. This equation has a unique solution for positive $a$ (because $a\mapsto a\frac{\varphi(-a)}{\Phi(-a)}$ is strictly increasing) and the average acceptance rate at the optimal $a$ (and optimal $\ell$) is then given by $2\Phi(-a)=2\Phi(-\frac{\ell^{\beta}\theta}{2})$. We can solve the above equation numerically
for various $\beta(H)=H$ for RWM and $\beta(H)=2+H$ for MALA to obtain the associated optimal acceptance rates. The numerical results for both RWM and MALA are presented in Figure~\ref{fig:figure1}. Since left and right side of Figure~\ref{fig:figure1} are both obtained by numerically solving the same equation over different disjoint ranges of parameter $\beta$,
it is tempting to speculate that when using MALA for targets of smoothness class between $1$ and $2$ the optimal acceptance rates interpolate between the plots of Figure~\ref{fig:figure1} and attain values between $23\%$ and $45\%$.

\subsection{Further work and open questions}\label{sec:D&D}
\begin{enumerate}[(a)]
\item The following question remains: does there exists a "Langevin diffusion" limit result analogous to the main weak convergence results in \cite{RobertsGelmanGilks-1997}  (see \eqref{eq:Langevin})
and \cite{RobertsRosenthal-1998}? 
We do not pursue this question here as it does not fundamentally contribute to the force of the counterexamples. Note that it is not a trivial question as the gradient of the marginal does not exist in the RWM case. Hence, we can talk about an associated Langevin diffusion in terms of its Dirichlet form but not as a strong solution of an SDE with Lipschitz coefficients.

However, we expect soon to be able to obtain a positive answer, namely that
it will prove possible to show that
RWM and MALA chains (with targets and proposals as specified respectively in Theorems~\ref{thm:CLT_RWMvanilla} and \ref{thm:CLT_MALAvanilla}) converge weakly
\[
 X^{\text{RWM},(n)}_{\lfloor n^{1/H}\cdot t\rfloor,1}
\quad \xrightarrow{w} \quad U_t
\qquad\text{and}\qquad 
 X^{\text{MALA},(n)}_{\lfloor n^{1/(2+H)}\cdot t\rfloor,1}
\quad \xrightarrow{w} \quad U_t
\,.
\]
to a "Langevin Diffusion" $U$ with a speed parameter
\[
h(\ell)\quad=\quad W(\ell)\quad=\quad
2\ell^2\times\Phi\left(-\frac{\sigma(\ell)}{2}\right)
\quad=\quad
2\ell^2\times\Phi\left(-\frac{\ell^{\beta}\theta}{2}\right)
\]
where $\sigma(\ell)$ and $\theta$ are compatible with Section~\ref{sec:ESJD} above and determined by Theorems~\ref{thm:CLT_RWMvanilla} and \ref{thm:CLT_MALAvanilla}. 
(Of course this also leads to the optimal acceptance rate heuristics as noted above
at the end of Section \ref{sec:ESJD}.)

To be specific,
we plan to adapt the Dirichlet form methodology
of \citet{ZanellaKendallBedard-2016} to deliver these anomalous scaling results
at the level of weak convergence.
With the same methodology we also expect to recover the MALA results of \citet{RobertsRosenthal-1998}
with smoothness assumptions only slightly stronger than $\mathcal{C}^3(\Reals)$.
We shall report on this more general picture as part of
an upcoming review paper that will demonstrate the use 
of Dirichlet forms to provide a general framework for proving various results on optimal MCMC scaling.

\item  We note an obvious question that expands this line of thought,
namely, how much the random environment approach to optimal scaling can be generalised 
and can anything be gained from doing so? 
For example, is it feasible to take a different realisation of $\fBM$ in each coordinate 
of the product structure?
Can the realisations of the random environment be sampled for each $n$? Can we instead deal with perturbing a deterministic product target 
by a Gaussian process indexed by $\Reals^n$? These questions are challenging but attractive for further study, since this line of thinking offers a new way of expanding optimal scaling results beyond the product case.

A possibly fruitful
extension of the random environment approach
might lie in the investigation of
MCMC smoothness requirements
for boundaries.
We also note that 
random environments could be used 
to generate further kinds of counterexamples in MCMC 
(not connected to roughness of the target) or to study properties of MCMC methods
when averaged over a random environment in contexts 
where actual properties resist direct investigation.

\item Despite presenting only very particular examples we conjecture that the type of anomalous MCMC behaviour presented here happens in substantial generality and may indeed be typical when dealing with rough targets. 
One possible approach to support this conjecture 
would be to explore the actual analytical properties
provided by the random environment when arguing for
anomalous scaling results. 
In particular it would be most interesting if one could establish
that anomalous scaling was typical within a certain class of functions 
in the sense of Baire category: compare the development
% in terms 
of sparsity results for contours,
moving from \citet{Kendall-1980} to \citet{Kendall-1982}.

\item Another line of work that may be relevant is presented in \citet{NealRobertsYuen-2012}.
They also deal with badly behaved targets for RWM. 
They consider \emph{discontinuous} product targets, such that the one dimensional marginals are $\mathcal{C}^2$ on $[0,1]$ and zero outside.
They establish optimal scaling rate for the proposal variance $n^{-2}$ for dimension \(n\), coinciding with the case $H=\tfrac12$ in our setting.
However optimal acceptance rates differ because of different constructions of the Langevin diffusion.
Is there a link between the behaviours captured in their paper and in ours? It is natural to wonder whether both phenomena could be explained within a common framework.

\item Understanding the behaviour of MCMC methods not initiated in stationarity is very important for practical applications. Theoretically this has been studied together with optimal scaling results for instance in \citet{Christensen-2005,Jourdain-2014,Jourdain-2015,Kuntz-2018,Kuntz-2019}. It is demonstrated that (for MALA) not starting in stationarity can worsen the optimal scaling rate for 
some initial configurations, particularly some
chosen close to the mode of the target.

We did not theoretically study this question in our setting. However, numerically the RWM chains on rough targets introduced in Theorem~\ref{thm:CLT_RWMvanilla} seem to behave as predicted by the theorem despite not initiated in stationarity (see Section \ref{sec:UsefullHeuristics}\ref{item:roughRWM}).

Appropriate modification of the random environment approach could potentially be used to identify further examples of MCMC in a non-stationary phase exhibiting worse than expected scaling behaviour.

\end{enumerate}

\subsection{Heuristics for use in applications}\label{sec:UsefullHeuristics}

\begin{enumerate}[(a)]

\item\label{item:roughRWM} Let us first numerically verify what theoretical results predict. Consider an $n=200$ dimensional RWM example with $H=\half$. We pre-simulate a Brownian motion path at a very fine resolution (at $2\cdot 10^5$ equally spaced points between $-9$ and $9$) and use linear interpolation in between grid points to evaluate the target. All computation below was done with the same fixed pre-simulated Brownian motion path. Additionally, we start the RWM according to a standard Normal and use a large burn in to achieve approximate stationarity.

Even with these numerical
imperfections, the results still echo what the theory predicts. Set the variance of the marginal proposal to be $\ell^2/n^2$ for different values of $\ell$. At $\ell=5.5$ the average acceptance rate is $23.7\%$, 
with ESJD of $3.57\cdot 10^{-2}$, 
while at $\ell=13.0$ the average acceptance rate is $7.1\%$
with ESJD of $5.89\cdot 10^{-3}$, 
which appears to be near optimal.
Some other average acceptance rates and ESJD are reported in Table~\ref{tableAnomalousRWM}
(all numbers are based on a single RWM run of 
length $10^5$).
The top left image of Figure~\ref{fig2:figure2} depicts the marginal target density. The top right compares the autocorrelation of the
first coordinate of RWM algorithms 
for proposal variance tuned 
on the one hand 
to accept around $23\%$ of the proposals ($\ell=5.5$, dashed line)
and on the other hand
to accept $7\%$ of the proposals ($\ell=13$, solid line) and to attain near optimal ESJD value. The bottom picture depicts $10^4$ steps of the first coordinate of the same RWM algorithms.

\begin{Figure}
% \begin{center}
\begin{subfigure}{0.5\textwidth}
\begin{center}
\includegraphics[width=0.7\linewidth]{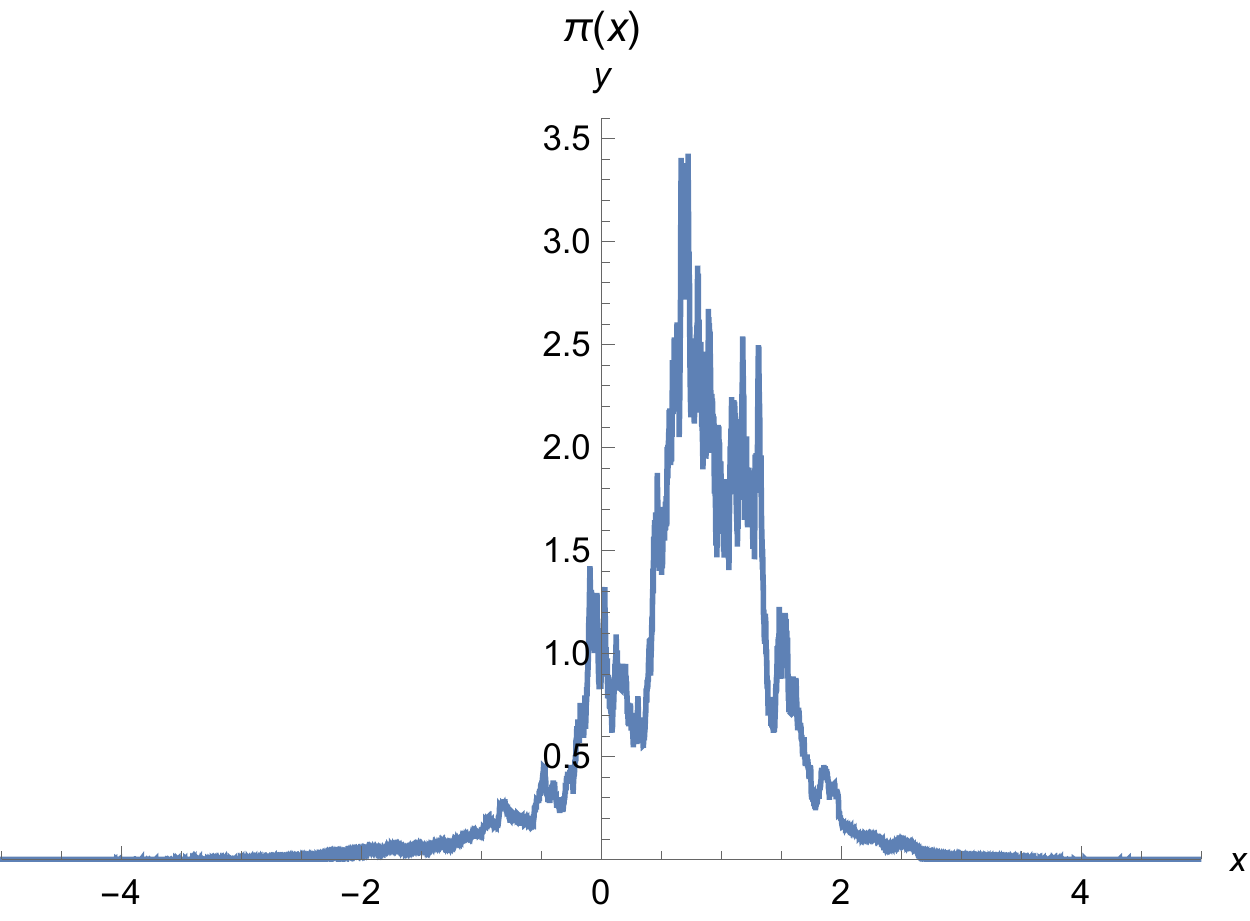}
\end{center}
\end{subfigure}%
\begin{subfigure}{0.5\textwidth}
\begin{center}
\includegraphics[width=0.7\linewidth]{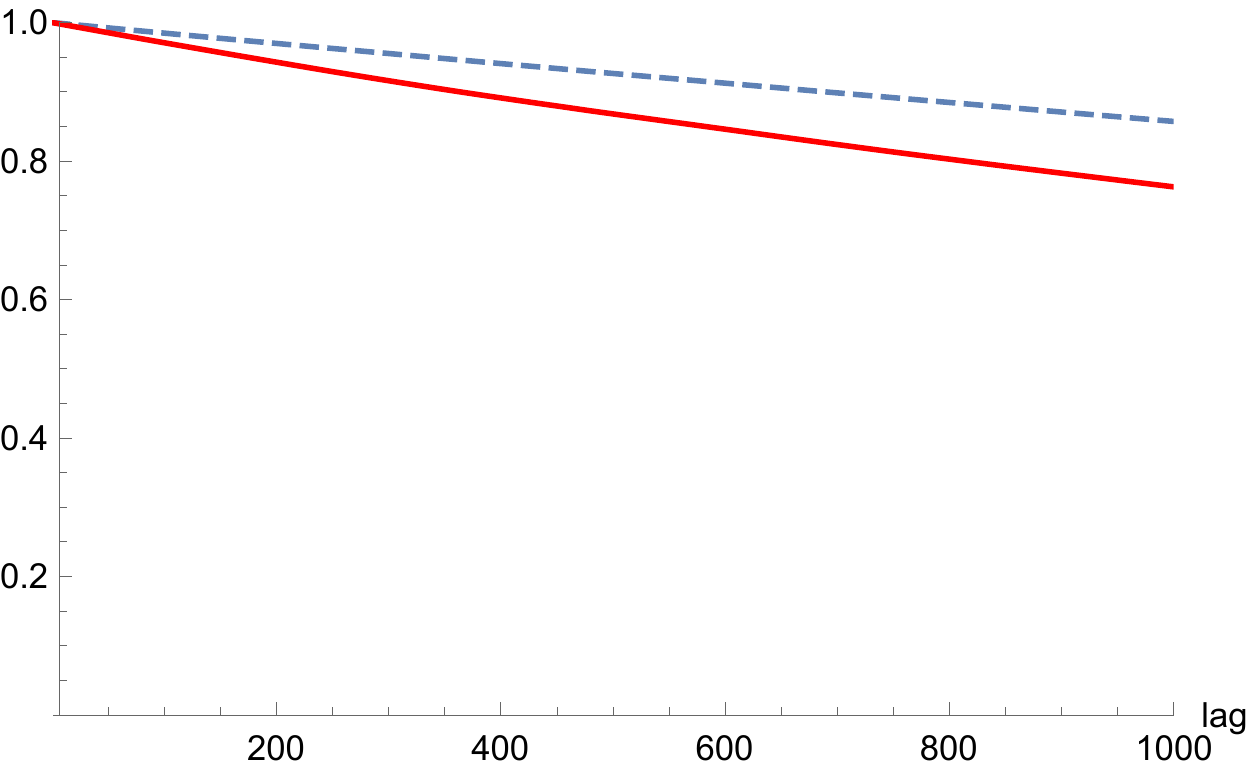}
\end{center}
\end{subfigure}\\
\vspace{10pt}

\begin{subfigure}{\textwidth}
\begin{center}
\includegraphics[width=0.7\linewidth]{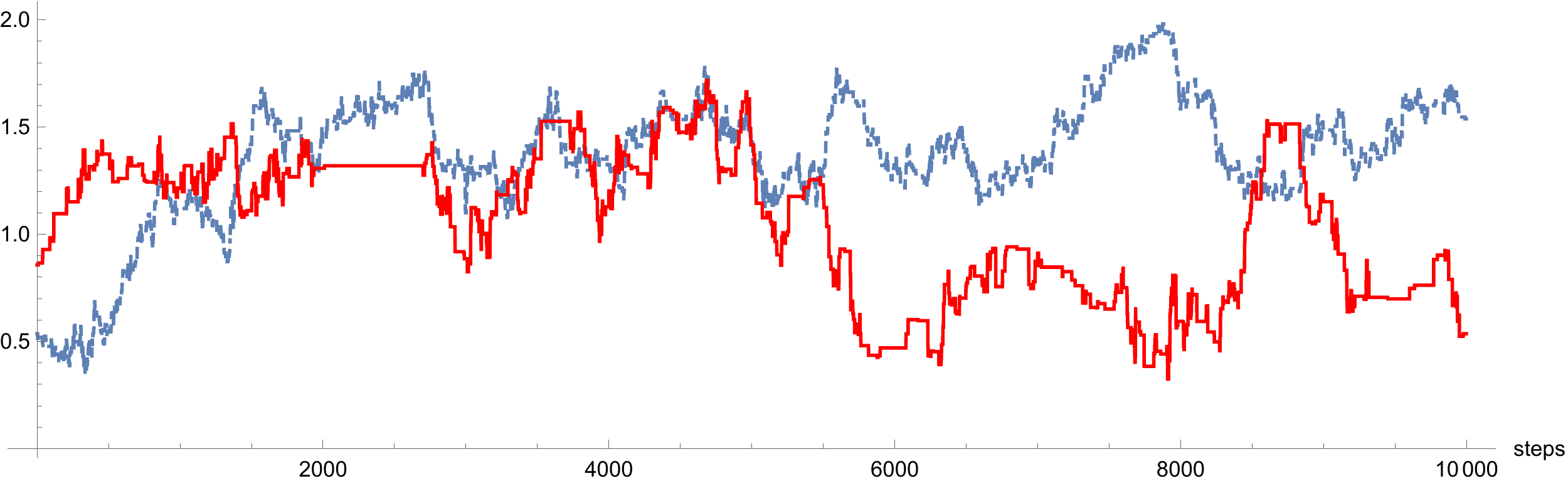}
\end{center}
\end{subfigure}
\caption{Marginal target density $\pi$ on the top left. Autocorrelation (top right) and trace (bottom) plots of first coordinate of RWM at $\ell=5.5$ tuned to accept $23\%$ of proposals (dashed) and at $\ell=13.0$ with near optimal ESJD (solid).
The near optimal ESJD scaling leads to behaviour of 
getting trapped in local modes}\label{fig2:figure2}
% \end{center}
\end{Figure}
% >>>>>>> master

\begin{table}[h!]
\begin{center}
\begin{tabular}{|c|cccccc|}
\hline
$\ell$ & 5.0 & 5.5 & 11.0 & 12.0 & 13.0 & 14.0  \\ \hline
$\alpha$ & 26.2\% &  23.7\%  & 9.9\% & 7.9\% & 7.1\% & 5.9\%    \\ 
ESJD  & $3.26\cdot 10^{-2}$ & $3.57\cdot 10^{-2}$  & $5.69\cdot 10^{-2}$ & $5.66\cdot 10^{-2}$ & $5.89\cdot 10^{-2}$ & $5.69\cdot 10^{-2}$  \\
\hline
\end{tabular}
\end{center}
\caption{Average acceptance rates $\alpha$ and ESJD for different values of $\ell$ for a RWM example with rough target.}
\label{tableAnomalousRWM}
\end{table}

The observations of the acceptance rates, autocorrelation and ESJD are somewhat noisy, but it is clear that the ESJD values are the highest closer to the average acceptance rates of $7\%$ rather than to $23\%$. The autocorrelation also decays faster at $7\%$ than at $23\%$ of accepted proposals. Even the optimally tuned RWM exhibits slow mixing for the rough target.

An interesting feature we wish to note is the behaviour of the RWM path with 
near-optimal ESJD. It tends to occasionally get trapped in local modes for long periods of time, not accepting any proposal out of hundreds (see bottom image of Figure~\ref{fig2:figure2} around step $2000$).

\item \label{item:Oscillatory}
Consideration of the theoretical counterexamples presented in \ref{sec:UsefullHeuristics}\ref{item:roughRWM}
suggests that MCMC methods can get stuck in regions of high roughness in 
a manner similar to the way in which they can get stuck in local modes. Furthermore according to the MALA counterexample, these rough patches may only manifest at the level of the target derivatives, and hence may
not be immediately detectable from the plot of the target, while still slowing down mixing.  
We would also expect problems in practice
with application of RWM and MALA methods to finite dimensional targets
falling in the regimes described in \cite{RobertsGelmanGilks-1997} and \cite{RobertsRosenthal-1998}, but 
possessing regions of high local oscillations (at second order for MALA).
In such cases one might expect to need to tune acceptance rate to a lower value
than conventionally indicated. 

Indeed, consider the following toy numerical example. 
Take an $n=100$ dimensional RWM chain 
with a product target defined by the requirement
that the potential of the one-dimensional marginal is
\[
\log(\pi(x))\quad=\quad-\frac{x^2}{2}~+~a\cos(bx)
\]
for constants $a=0.25$, $b=30$. 
Further take the proposal variance of marginal proposal to be equal to $\ell n^{-1}$.
At $\ell=0.65$ the average acceptance rate is $23.3\%$, 
with ESJD of $9.77\cdot 10^{-4}$, 
while at $\ell=2.55$ the average acceptance rate is $7.7\%$
with ESJD of $4.88\cdot 10^{-3}$, 
which appears to be close to optimal.
Some other average acceptance rates and ESJD are reported in Table~\ref{tableRWM}
(all numbers are based on a single RWM run of 
length $10^6$ started in stationarity).
Again the top left image of Figure~\ref{fig3:figure3} depicts the marginal target density. The top right compares the autocorrelation of the
first coordinate of RWM algorithms 
for proposal variance tuned 
on the one hand 
to accept around $23\%$ of the proposals ($\ell=0.65$, dashed line)
and one the other hand
to attain near optimal ESJD value ($\ell=2.55$, solid line). The bottom picture depicts $10^4$ steps of the first coordinates of the same RWM algorithms.

\begin{Figure}
% \begin{center}
\begin{subfigure}{0.5\textwidth}
\begin{center}
\includegraphics[width=0.7\linewidth]{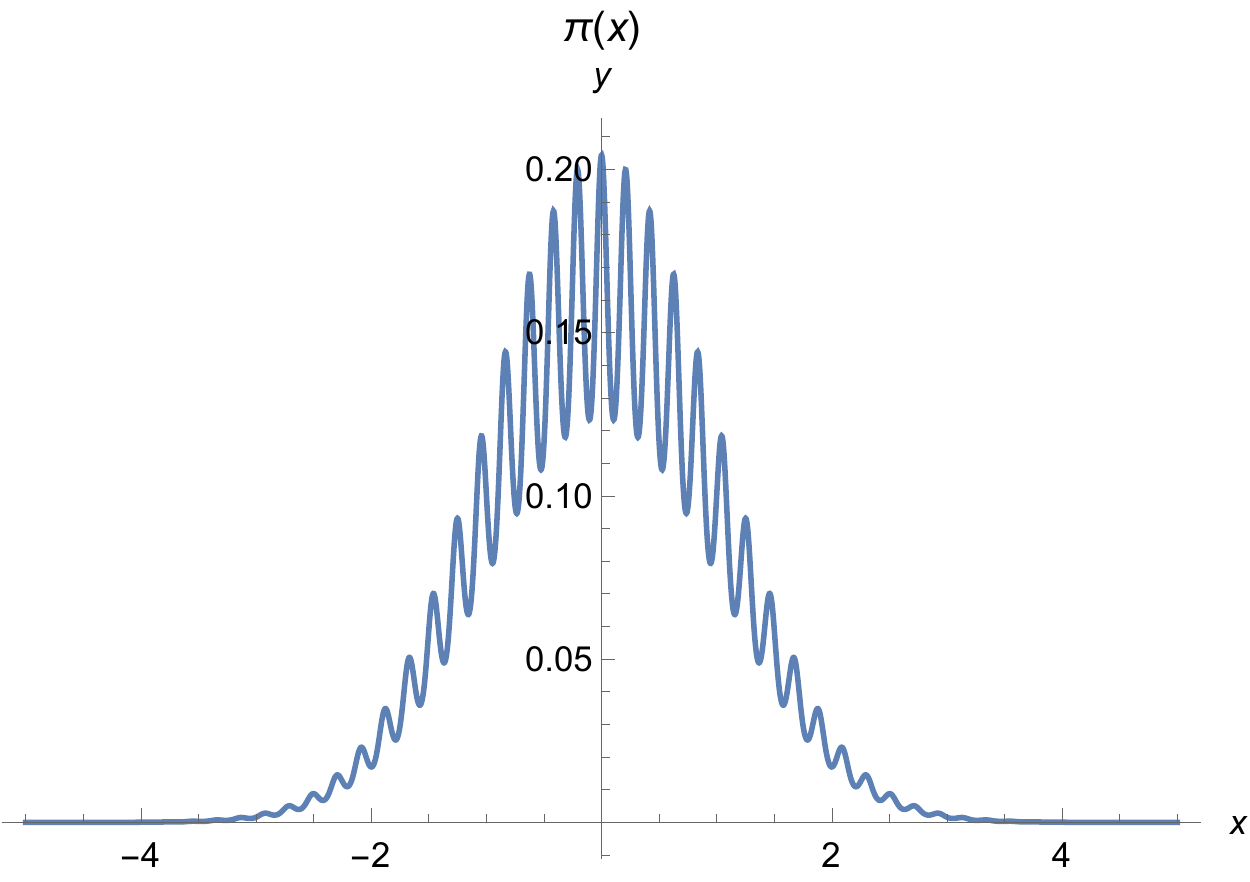}
\end{center}
\end{subfigure}%
\begin{subfigure}{0.5\textwidth}
\begin{center}
\includegraphics[width=0.7\linewidth]{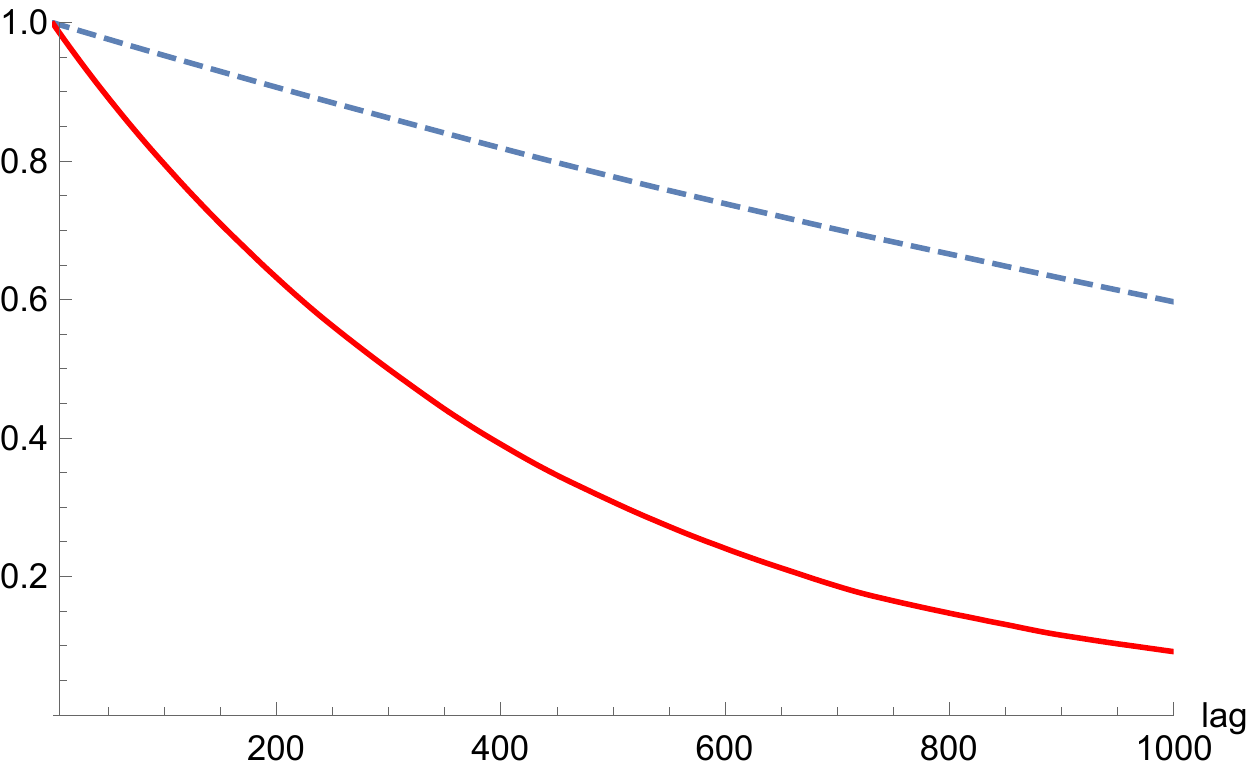}
\end{center}
\end{subfigure}\\
\vspace{10pt}

\begin{subfigure}{\textwidth}
\begin{center}
\includegraphics[width=0.7\linewidth]{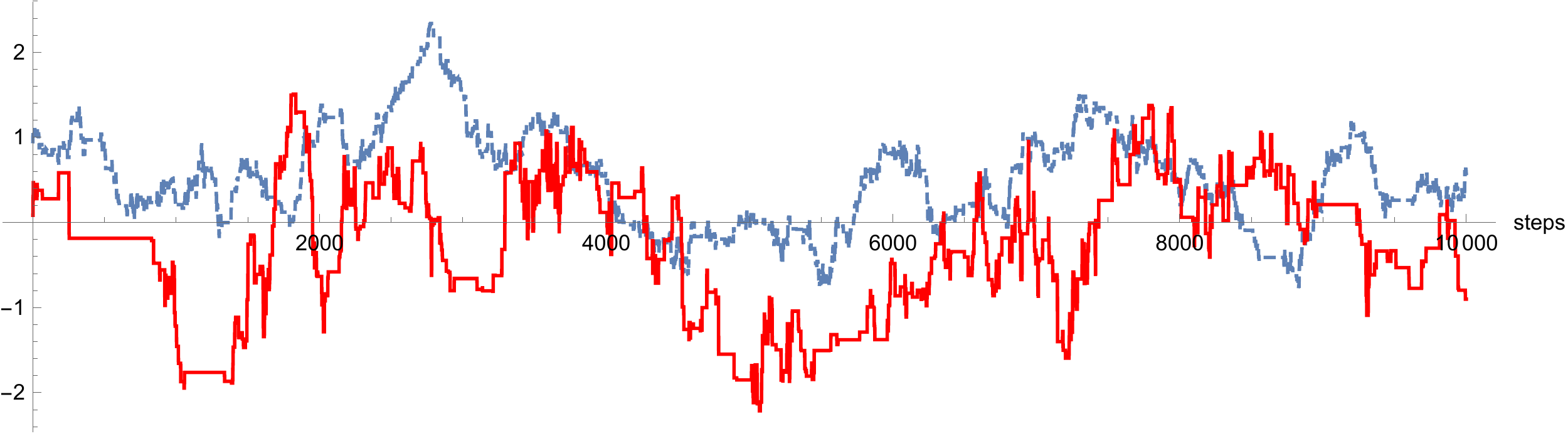}
\end{center}
\end{subfigure}
\caption{Marginal target density $\pi$ on the top left. Autocorrelation (top right) and trace (bottom) plots of first coordinate of RWM at $\ell=0.65$ tuned to accept $23\%$ of proposals (dashed) and at $\ell=2.55$ with near optimal ESJD (solid).
Again the near-optimal ESJD option leads to
behaviour of getting trapped in local modes
}\label{fig3:figure3}
% \end{center}
\end{Figure}
% >>>>>>> master

\begin{table}[h!]
\begin{center}
\begin{tabular}{|c|cccccc|}
\hline
$\ell$ & 0.5 & 0.65 & 1.5 & 2 & 2.55 & 3  \\ \hline
$\alpha$ & 29.3\% &  23.3\%  & 14.7\% & 11.1\% & 7.7\% & 5.2\%    \\ 
ESJD  & $7.25\cdot 10^{-4}$ & $9.77\cdot 10^{-4}$  & $3.28\cdot 10^{-3}$ & $4.37\cdot 10^{-3}$ & $4.88\cdot 10^{-3}$ & $4.58\cdot 10^{-3}$  \\
\hline
\end{tabular}
\end{center}
\caption{Average acceptance rates $\alpha$ and ESJD for different values of $\ell$ for a smooth RWM example.}
\label{tableRWM}
\end{table}
Again the ESJD values and the autocorrelation plot suggest that RWM tuned to accept $7\%$ of proposals outperforms the RWM tuned to accept $23\%$ of proposals. The mixing is considerably faster than in the rough example \ref{sec:UsefullHeuristics}\ref{item:roughRWM} but is still slow.

We observe the same phenomenon as in \ref{sec:UsefullHeuristics}\ref{item:roughRWM}.
The solid line graph, 
corresponding to the optimally tuned proposal in terms of ESJD, has low acceptance rate and spends very long periods of time in particular states with high target density value.

Is this behaviour simply due to apparent multi-modality of the target? 
We do agree it is not unrelated, after all roughness and local oscillations are both in some sense extreme cases of local multi-modality. 
Note however, that the work of 
\cite{RobertsGelmanGilks-1997}
assures us that for considerably larger \(n\) 
we will see standard optimal scaling, despite the distance between neighbouring nodes relative to the proposal size not growing and modes becoming more pronounced due to multiplication of the marginal densities.

A natural question arises: 
can fixed deterministic marginal target densities 
of this kind be associated with an ``appropriate''
H\"older exponent?
In this case we obtain the same acceptance rate as 
in Theorem~\ref{thm:CLT_RWMvanilla} for $H\approx 0.5$, 
but it would be preferable to establish a link without having to optimize ESJD beforehand. 
If such a link can be established, 
can it be used together with the insights of Theorem~\ref{thm:CLT_RWMvanilla} to develop heuristics
on how to tune the proposal variance for 
sampling from locally oscillatory or multi-modal targets?

Consider now a similar example for MALA instead of RWM. Take an $n=100$ dimensional MALA chain 
with the potential of the one-dimensional marginal equal to
\[
\log(\pi(x))\quad=\quad-\frac{x^2}{2}~-~\frac{a}{b^2}\cos(bx)
\]
for $a=0.9$ and $b=5$. This target is log concave and looks very much like the standard Normal density and the oscillations only happen at the level of the second derivative of $\log(\pi)$: we therefore do not present a figure.
 Take the proposal variance of marginal proposal to be equal to $\ell n^{-\frac{1}{3}}$.
 Again, we can detect that the algorithm does not behave according to the theory \cite{RobertsRosenthal-1998} and has the best ESJD for lower acceptance rates.
At $\ell=1.51$ the average acceptance rate is $57.4\%$, 
with ESJD of $0.315$, 
while at $\ell=1.68$ the average acceptance rate is $47.5\%$
with ESJD of $0.331$, 
which appears to be near optimal.
 Some other average acceptance rates and ESJD are reported in Table~\ref{tableMALA}.

\begin{table}[h!]
\begin{center}
\begin{tabular}{|c|ccccccccc|}
\hline
$\ell$ & 1.4 & 1.51 & 1.6 & 1.67 & 1.68  & 1.7  & 1.72 & 1.73 & 1.8	\\ \hline
$\alpha$ & 62.9\% &  57.4\%  & 52.4\% & 48.1\% & 47.5\% & 46.3\%  & 45.2\% & 44.5\% &40.0 \\ 
ESJD  & 0.292 & 0.315  & 0.327 & 0.330 & 0.331 & 0.331  & 0.331	& 0.330 &	0.325	\\
\hline
\end{tabular}
\end{center}
\caption{Average acceptance rates $\alpha$ and ESJD for different values of $\ell$ for the MALA example.}
\label{tableMALA}
\end{table}

Results of Table~\ref{tableMALA} (each entry is again based on a single MALA run of length $10^6$ started at stationarity) are less precise then those of of Table~\ref{tableRWM} 
as 
in the MALA case
the ESJD do not vary so much over the range
of average acceptance rates $45.2\%-57.4\%$ permitted by 
the conditions of Theorem~\ref{thm:CLT_MALAvanilla}.
Moreover the numerical results suggest that, while
there is detectable deviation from classical results, 
nevertheless this does not have a
% it does not 
significant impact on the performance of the method.
Put differently, MALA tuned to accept anywhere between $45.2\%$ and $57.4\%$ of proposals works fine.
Worrying about the roughness of the second derivative does not seem fruitful in practice.

The above considerations and these numerical examples suggest it would be valuable to conduct a thorough numerical study (using a variety of locally oscillating targets in different dimensions) to investigate this further. A referee suggests that it would be very interesting to compare (theoretically and numerically) the performance of an MCMC algorithm on a rough target with the performance on an associated truncated Karhunen-Loève expansion of the random part of the target. This might shed light on the way in which different levels of oscillation  and roughness affect performance of MCMC algorithms.

\item
Sampling from targets with local oscillations is a matter of current scientific discussion, for instance applications in disordered media and soft matter \citep{Owhadi-2003,BenArous-2003,Pollak-2008,Duncan-2016,Hu-2018}. In light of our theoretical and numerical results it is sensible to argue that classic MCMC algorithms do not really work well for rough or oscillatory targets, and that one should seek appropriate modifications. Suggestions for such modifications also already exist \citep{Plechavc-2019}. Such modifications could usefully be assessed in terms 
of optimally tuned RWM and MALA algorithms 
providing benchmark MCMC methods.

It is also not impossible to imagine situations where one would want to sample rough targets, such as those in \ref{sec:UsefullHeuristics}\ref{item:roughRWM}. This could happen naturally if rough targets are  interpreted as noisy observations of a smooth target. This may be relevant for understanding pseudo-marginal Metropolis-Hastings algorithms \citep{Andrieu-2009,Beaumont-2003}. It is interesting to compare our results to the optimal scaling results for pseudo-marginal RWM obtained in \citet{Sherlock-2015}, who in case of stationary Gaussian noise (Section~3.2) obtain the standard scaling of proposal variance $n^{-1}$ but the exact same optimal acceptance rate $7.001\%$ as in our case for $H=\half$. Again it seems possible that one could develop common framework for studying noisy targets which would simultaneously explain both results.

Another more speculative usage of rough targets is when attempting to sample objects of fractal-like nature. 
Take Bayesian inference of ancestral trees as an example. 
There are a myriad ways in which an MCMC move on the space of trees can change the tree topology.
Combined with complex likelihood structure arising when modelling mutations this seems capable of resulting in a setting that in the limit (of say growing number of tree leaves) approaches a rough target.
In fact MCMC algorithms on trees do indeed suffer from very low acceptance rates 
when the proposal alters the tree topology, 
as reported for instance in \citet{Lakner-2008} and \citet{HohnaDrummond-2011}.
Further investigation is needed to determine if 
this can be accounted for by some kind of effective roughness or local oscillations of the target.

\end{enumerate}

\bigskip
\noindent \textbf{Acknowledgements.} 
This work was funded by the UK EPSRC under grant EP/R022100.
The second author acknowledges the support of 
the Alan Turing Institute under EPSRC grant EP/N510129.

We thank Jere Koskela, Anastasia Papavasiliou and Giacomo Zanella for useful debate about material presented in Discussion.

This is a theoretical research paper and, as such, no new data were created during this study. 
% =========================================   
%   \label{sec:bibliography}
%
   \bibliographystyle{wchicago}
   \bibliography{VogrincKendall1}

% ===================================================
% 
\appendix 
% =========================================   
% 
\section{Auxiliary calculus results}\label{sec:Taylor}
This appendix establishes two simple lemmas 
concerning exact second-order Taylor expansions and a lemma establishing properties of a certain kind of set. All are used in the paper.
\begin{appendixlem}\label{lem:taylor1}
Let $f\in\mathcal{C}^2(\mathbb{I})$ for an interval $\mathbb{I}\subset \Reals$. The following identity holds, provided that $x$, $x+\delta_1$, $x+\delta_2$, and $x+\delta_1+\delta_2$ all belong to $\mathbb{I}$,
\[
f(x+\delta_1+\delta_2)-f(x+\delta_1)-f(x+\delta_2)+f(x)\quad=\quad\delta_1\delta_2\int_0^1\int_0^1f''(x+u\delta_1+v\delta_2){\d}u{\d}v\,.
\]
\end{appendixlem}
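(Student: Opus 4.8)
The plan is to reduce the mixed second difference on the left-hand side to the claimed double integral by applying the fundamental theorem of calculus twice, once in each increment direction, peeling off one increment at a time. First I would introduce the auxiliary function
\[
g(u)\quad=\quad f(x+u\delta_1+\delta_2)-f(x+u\delta_1)\,,\qquad u\in[0,1]\,,
\]
chosen precisely so that $g(1)-g(0)$ reproduces exactly the left-hand side $f(x+\delta_1+\delta_2)-f(x+\delta_1)-f(x+\delta_2)+f(x)$.

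Second, I would compute $g'(u)=\delta_1\bigl(f'(x+u\delta_1+\delta_2)-f'(x+u\delta_1)\bigr)$ and apply the fundamental theorem of calculus to $g$, giving $g(1)-g(0)=\int_0^1 g'(u)\,{\d}u$. I would then expand the inner difference of first derivatives by a further application of the fundamental theorem of calculus in the $\delta_2$ direction, namely $f'(x+u\delta_1+\delta_2)-f'(x+u\delta_1)=\delta_2\int_0^1 f''(x+u\delta_1+v\delta_2)\,{\d}v$. Substituting this into the expression for $g'(u)$ and using the Fubini-Tonelli theorem (the integrand being continuous, hence bounded, on the compact rectangle $[0,1]^2$) to combine the two integrals yields exactly $\delta_1\delta_2\int_0^1\int_0^1 f''(x+u\delta_1+v\delta_2)\,{\d}u\,{\d}v$, as required.

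Third, the only point requiring care — and the main, albeit modest, obstacle — is to confirm that every argument $x+u\delta_1+v\delta_2$ appearing above lies in $\mathbb{I}$, so that $f'$ and $f''$ are defined and continuous there. This follows from the convexity of the interval $\mathbb{I}$: for $u,v\in[0,1]$ the point $x+u\delta_1$ is a convex combination of $x$ and $x+\delta_1$, the point $x+u\delta_1+\delta_2$ is a convex combination of $x+\delta_2$ and $x+\delta_1+\delta_2$, and $x+u\delta_1+v\delta_2$ lies on the segment joining these two; since all four corner points $x$, $x+\delta_1$, $x+\delta_2$ and $x+\delta_1+\delta_2$ belong to $\mathbb{I}$ by hypothesis, so do all of these convex combinations. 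With the arguments thus confined to $\mathbb{I}$, the $\mathcal{C}^2$ assumption supplies the continuity needed to justify both applications of the fundamental theorem of calculus and the interchange of the order of integration, completing the proof.
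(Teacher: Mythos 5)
Your proof is correct and follows essentially the same route as the paper's: both arguments reduce the mixed second difference to the double integral by two successive applications of the fundamental theorem of calculus, one in each increment direction (you merely peel off the $\delta_1$ increment first via the auxiliary function $g$, whereas the paper peels off $\delta_2$ first via $F_1(x)=f(x+\delta_1)-f(x)$). Your explicit verification that all intermediate points $x+u\delta_1+v\delta_2$ lie in $\mathbb{I}$ by convexity is a welcome bit of care that the paper's proof leaves implicit.
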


\begin{proof}
The fundamental theorem of calculus implies that 
$F(y)-F(x)=(y-x)\int_0^1F'(x+u(y-x)){\d}u$
holds
for every $F\in\mathcal{C}^1(\Reals)$ and all real $x,y$. 
This can be employed
once for $F_1(x)=f(x+\delta_1)-f(x)$ and $y=x+\delta_2$,
and then again for $F_u(x)=f'(x+u\delta_2)$ and $y=x+\delta_1$,
to yield:
\begin{multline*}
f(x+\delta_1+\delta_2)-f(x+\delta_1)-f(x+\delta_2)+f(x)
\quad=\quad
F_1(x+\delta_2)-F_1(x)
\quad=\quad
\delta_2\int_0^1F_1'(x+u\delta_2){\d}u
\\
\quad=\quad 
\delta_2\int_0^1f'(x+\delta_1+u\delta_2)-f'(x+u\delta_2){\d}u
\quad=\quad 
\delta_2\int_0^1F_u(x+\delta_1)-F_u(x){\d}u
\\
\quad=\quad 
\delta_1\delta_2\int_0^1\int_0^1F'_u(x+v\delta_1){\d}v{\d}u
\\
\quad=\quad 
\delta_1\delta_2\int_0^1\int_0^1f''(x+u\delta_1+v\delta_2){\d}u{\d}v
\,.
\end{multline*}
\end{proof}

\begin{appendixlem}\label{lem:taylor2}
Let $f\in\mathcal{C}^2(\Reals)$. The
following holds for all real $x,\delta$:
\[
f(x+\delta)-f(x)-\frac{\delta}{2}\left(f'(x)+f'(x+\delta)\right)
\quad=\quad
\frac{\delta^2}{2}\int_0^1(1-2t)f''(x+t\delta){\d}t\,.
\]
\end{appendixlem}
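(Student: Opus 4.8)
The plan is to verify the identity by working from the right-hand side and integrating by parts once; no appeal to Lemma~\ref{lem:taylor1} is needed, since the statement is simply the exact trapezoidal-rule remainder on a single interval. The degenerate case $\delta=0$ is immediate, as both sides then vanish, so I would assume $\delta\neq0$ throughout.

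The key observation is that $\frac{{\d}}{{\d}t}f'(x+t\delta)=\delta\,f''(x+t\delta)$, so the integrand $f''(x+t\delta)$ is a perfect $t$-derivative up to the constant factor $\delta$. Writing $R=\frac{\delta^2}{2}\int_0^1(1-2t)f''(x+t\delta){\d}t=\frac{\delta}{2}\int_0^1(1-2t)\frac{{\d}}{{\d}t}f'(x+t\delta){\d}t$, I would integrate by parts taking $u=1-2t$ (so ${\d}u=-2\,{\d}t$) and ${\d}v=\frac{{\d}}{{\d}t}f'(x+t\delta){\d}t$ (so $v=f'(x+t\delta)$). The boundary term $\big[(1-2t)f'(x+t\delta)\big]_0^1$ evaluates to $-f'(x+\delta)-f'(x)$, while the $-\int u'v$ piece contributes $+2\int_0^1 f'(x+t\delta){\d}t$.

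Finally I would apply the fundamental theorem of calculus to the surviving integral, $\int_0^1 f'(x+t\delta){\d}t=\tfrac1\delta\big(f(x+\delta)-f(x)\big)$, and collect terms: the $\tfrac{\delta}{2}$ prefactor turns the boundary contribution into $-\tfrac{\delta}{2}\big(f'(x)+f'(x+\delta)\big)$ and the integral contribution into $f(x+\delta)-f(x)$, which is exactly the left-hand side. Since $f\in\mathcal{C}^2(\Reals)$ guarantees $f'\in\mathcal{C}^1$, and hence $f'$ is absolutely continuous, every step — the integration by parts and the two applications of the fundamental theorem of calculus — is justified. There is no genuine obstacle here beyond careful bookkeeping of the signs produced by the weight $1-2t$, whose vanishing mean over $[0,1]$ is precisely what makes the first-order (symmetric) term come out correctly.
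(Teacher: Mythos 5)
Your proof is correct, but it takes a genuinely different route from the paper's. You start from the right-hand side and perform a single integration by parts with $u=1-2t$, $\d v=\tfrac{\d}{\d t}f'(x+t\delta)\,\d t$, after which the boundary term produces $-\tfrac{\delta}{2}\left(f'(x)+f'(x+\delta)\right)$ and the remaining integral gives $f(x+\delta)-f(x)$ by the fundamental theorem of calculus; all steps check out, and your explicit treatment of $\delta=0$ is a small point of care the paper omits (its substitutions also implicitly assume $\delta\neq0$). The paper instead writes down two exact second-order Taylor expansions with integral remainder --- of $f(x+\delta)$ around $x$ and of $f(x)$ around $x+\delta$ --- averages the two resulting expressions for $f(x+\delta)-f(x)$, and then changes variables $t=(u-x)/\delta$ and $t=(v-x)/\delta$ to combine the two remainder integrals into $\tfrac{\delta^2}{2}\int_0^1(1-2t)f''(x+t\delta)\,\d t$. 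Your argument is shorter and works forward from the claimed identity with one mechanical computation; the paper's makes the trapezoidal (symmetric-averaging) structure visible and fits the style of Lemma~\ref{lem:taylor1}, which it proves by similar repeated use of the fundamental theorem of calculus. Either proof is fully rigorous under the stated $\mathcal{C}^2$ hypothesis.
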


\begin{proof}
Consider exact second-order Taylor expansions of $f(x+\delta)$ around $x$ and 
of $f(x)$ around $x+\delta$:
\begin{align*}
f(x+\delta)&\quad=\quad
f(x)+\delta f'(x)+\int_x^{x+\delta} f''(u)(x+\delta-u){\d}u\,,
\\
f(x)&\quad=\quad
f(x+\delta)-\delta f'(x+\delta)+\int_{x+\delta}^x f''(v)(x-v){\d}v\,.
\end{align*}
% Subtracting the second expansion from the first and halving yields
These yield two different expansions for \(f(x+\delta)-f(x)\).
Averaging, we obtain
\begin{align*}
&f(x+\delta)-f(x)-\frac{\delta}{2}\left(f'(x)+f'(x+\delta)\right)
\quad=\quad
\frac{1}{2}\int_x^{x+\delta} f''(u)(x+\delta-u){\d}u
 -  \frac{1}{2}\int_{x+\delta}^x f''(v)(x-v){\d}v
 \\
&\quad=\quad\frac{\delta^2}{2}\left(\int_0^1 (1-t)f''(x+t\delta){\d}t
 -  \int_0^1 tf''(x+t\delta){\d}t\right)
\quad=\quad\frac{\delta^2}{2}\int_0^1(1-2t)f''(x+t\delta){\d}t\,,
\end{align*}
respectively using changes of variables $t=(u-x)/\delta$ and $t=(v-x)/\delta$.
\end{proof}

\begin{appendixlem}\label{lem:auxset_properties}
Let $\{a_n\}_{n\in\Numbers}$ be a strictly decreasing positive sequence and denote for each $n\in\Numbers$ the set
\[
\auxset_n
\quad:=\quad
\left\{(x_1,z_1,x_2,z_2)\in\Reals^4 \;:\; 
|x_1-x_2|>2a_n(|z_1|+|z_2|)\right\}\,.
\]
Then the following two statements hold:
\begin{enumerate}[(i)]
\item\label{prop:nu-aux-interaction}
$\frac{1}{4\pi^2}\int_{\auxset^c_n}e^{-\frac{1}{2}(x_1^2+z_1^2+x_2^2+z_2^2)}	{\d}x_1{\d}x_2{\d}z_1{\d}z_2
\quad\leq \quad
% JV1:\frac{64}{3\pi}\sqrt{\ell\cdot\sigma_n}.
% WSK:\frac{16}{3\pi}\sqrt{2\ell\sigma_n}
\frac{4}{\pi^{3/2}}\cdot a_n
$.
\item\label{prop:aux-geometry}
For all $(x_1,z_1,x_2,z_2)\in\auxset_n$ and $u,v \in[0,1]$,
\begin{equation*} 
 |u z_1-v z_2| a_n
 \quad<\quad  
                \tfrac12 |x_1-x_2| 
 \quad<\quad 
                 |x_1-x_2 - (u z_1-v z_2)a_n|\,.
\end{equation*}
\end{enumerate}
\end{appendixlem}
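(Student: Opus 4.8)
The plan is to prove the two parts separately, since part~\ref{prop:aux-geometry} is a purely elementary consequence of the defining inequality of $\auxset_n$, while part~\ref{prop:nu-aux-interaction} is the substantive Gaussian estimate. The one genuine idea, needed only for part~\ref{prop:nu-aux-interaction}, is to recognise that the constraint cutting out $\auxset_n^c$ controls only the \emph{difference} $x_1-x_2$, so one should pass to that difference variable and integrate the orthogonal direction out for free, reducing a four-dimensional integral to a one-dimensional strip probability.

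For part~\ref{prop:aux-geometry} I would argue by chaining two triangle inequalities. Since $u,v\in[0,1]$, the crude bound $|uz_1-vz_2|\leq u|z_1|+v|z_2|\leq |z_1|+|z_2|$ combines with the membership condition $2a_n(|z_1|+|z_2|)<|x_1-x_2|$ to give $a_n|uz_1-vz_2|\leq a_n(|z_1|+|z_2|)<\tfrac12|x_1-x_2|$, which is exactly the first inequality. The second then follows from the reverse triangle inequality, $|x_1-x_2-(uz_1-vz_2)a_n|\geq |x_1-x_2|-a_n|uz_1-vz_2|>\tfrac12|x_1-x_2|$, the final strict step reusing the first inequality. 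There is no obstacle here.

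For part~\ref{prop:nu-aux-interaction} I would use that the four-dimensional standard Gaussian factorises into independent $(x_1,x_2)$ and $(z_1,z_2)$ blocks. First apply the Fubini--Tonelli theorem to condition on $(z_1,z_2)$; for fixed $(z_1,z_2)$ the inner integral is $\mathbb{P}[\,|x_1-x_2|\leq R\,]$ with $R:=2a_n(|z_1|+|z_2|)$ and $x_1,x_2$ independent standard normals. Since $x_1-x_2\sim N(0,2)$ has density bounded above by its peak value $\tfrac{1}{2\sqrt{\pi}}$, this strip probability is at most $\tfrac{1}{2\sqrt{\pi}}\cdot 2R=\tfrac{2a_n}{\sqrt{\pi}}(|z_1|+|z_2|)$ for every $n$ (no smallness of $a_n$ is needed). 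Integrating this linear bound against the $(z_1,z_2)$-Gaussian and using $\mathbb{E}|Z|=\sqrt{2/\pi}$ for a standard normal $Z$ then leaves a bound that is an explicit constant multiple of $a_n$, namely $\tfrac{2}{\sqrt{\pi}}\,\mathbb{E}[|z_1|+|z_2|]\cdot a_n$, which is the order-$a_n$ decay the lemma asserts and all that is invoked (via $\precsim a_n$) in Sections~\ref{sec:RWM} and~\ref{sec:MALA}.

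I do not anticipate a real obstacle: every ingredient---the factorisation, the bound of a Gaussian density by its maximum, and the first absolute moment of a standard normal---is elementary, and because the peak-density bound is sharp as $a_n\to0$ the method pins down the leading-order coefficient exactly. The only point requiring care is the bookkeeping of the numerical constant, which is a matter of assembling these two exact computations correctly rather than of any analytic difficulty.
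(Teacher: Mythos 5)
Your part (ii) is exactly the paper's argument: the triangle inequality combined with the defining inequality of $\auxset_n$ gives the first bound, and the reverse triangle inequality gives the second; there is nothing to add there. Your part (i) is also, in essence, the paper's own proof: the paper's orthonormal rotation $y_1=\tfrac{1}{\sqrt{2}}(x_1-x_2)$, $y_2=\tfrac{1}{\sqrt{2}}(x_1+x_2)$ together with the ``simple-minded bound'' $\int_{-a}^{a}e^{-y^2/2}\,\mathrm{d}y\leq 2a$ is precisely your step of passing to $x_1-x_2\sim N(0,2)$ and bounding its density by its peak value $\tfrac{1}{2\sqrt{\pi}}$, and both arguments then integrate $|z_1|+|z_2|$ against the Gaussian via $\mathbb{E}|Z|=\sqrt{2/\pi}$.

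The only discrepancy is the constant, and here your bookkeeping is right while the paper's is not. Carried out correctly, this computation yields the bound $\tfrac{2}{\sqrt{\pi}}\,\mathbb{E}\bigl[|z_1|+|z_2|\bigr]\,a_n=\tfrac{4\sqrt{2}}{\pi}\,a_n\approx 1.80\,a_n$, exactly as you obtain. The paper's displayed chain drops the factor $\sqrt{2\pi}$ arising from the full-line $y_2$ integration: its intermediate bound $\tfrac{\sqrt{2}\,a_n}{\pi}(|z_1|+|z_2|)$ for the inner bracket should read $\tfrac{2a_n}{\sqrt{\pi}}(|z_1|+|z_2|)$, and this is how it arrives at the smaller constant $\tfrac{4}{\pi^{3/2}}=\tfrac{4\sqrt{2}}{\pi}\cdot\tfrac{1}{\sqrt{2\pi}}\approx 0.72$. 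Moreover, as you remark, the peak-density bound is asymptotically sharp: the left-hand side of (i) equals $\mathbb{E}\bigl[\operatorname{erf}\bigl(a_n(|Z_1|+|Z_2|)\bigr)\bigr]$, which is asymptotic to $\tfrac{4\sqrt{2}}{\pi}a_n$ as $a_n\to 0$. Consequently the inequality with the stated constant $\tfrac{4}{\pi^{3/2}}$ is actually \emph{false} once $a_n$ is small, so your proof is not deficient for failing to reach it; rather, the lemma's constant should be corrected to $\tfrac{4\sqrt{2}}{\pi}$ (or anything larger). None of this affects the rest of the paper: Lemmas~\ref{lem:RWM:AsymptoticWeakDependence} and~\ref{lem:MALA:AsymptoticWeakDependence} invoke the result only through the relation $\precsim a_n$ with $a_n=\sigma_n^{1/2}$, which both your constant and the corrected one deliver.
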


\begin{proof} 
Property \ref{prop:nu-aux-interaction}:
Consider the orthonormal change of variables given by
$y_1=\frac{1}{\sqrt{2}}(x_1-x_2)$, $y_2=\frac{1}{\sqrt{2}}(x_1+x_2)$.
This yields the following bound
using the simple-minded bound 
\(\int_{-a}^a e^{-y_1^2/2}{\d}y_1\leq 2a\):
\begin{multline*}
\frac{1}{4\pi^2}\int_{\auxset^c_n}e^{-\frac{1}{2}(x_1^2+z_1^2+x_2^2+z_2^2)}{\d}x_1{\d}x_2{\d}z_1{\d}z_2\\
\quad=\quad
\frac{1}{2\pi}\int_{\Reals^2}
e^{-\tfrac12(z^2_1+z^2_2)}
\left(\frac{1}{2\pi}\int_{|y_1|\leq \sqrt{2}a_n(|z_1|+|z_2|)}
e^{-\tfrac12(y^2_1+y^2_2)}{\d}y_1{\d}y_2\right)
{\d}z_1{\d}z_2
\\
\quad\leq\quad
\frac{\sqrt{2}a_n}{\pi}\cdot\frac{1}{2\pi}
 \iint_{\Reals^2}   (|z_1|+|z_2|)e^{-\tfrac12(z^2_1+z^2_2)}
{\d}z_1{\d}z_2
\\
\quad=\quad
\frac{2\sqrt{2}a_n}{\pi}\cdot\frac{1}{2\pi}
 \iint_{\Reals^2}   |z_1|e^{-\tfrac12(z^2_1+z^2_2)}
{\d}z_1{\d}z_2
\quad=\quad\frac{4}{\pi^{3/2}}\cdot a_n
\,.
\end{multline*}

Property \ref{prop:aux-geometry}:
Working with the definition of \(\auxset_n\),
we deduce
\[
 |u z_1-v z_2| a_n \quad\leq\quad
 (|z_1|+|z_2|) a_n
 \quad<\quad \tfrac12|x_1-x_2|\,.
\]
On the other hand,
\[
 \tfrac12|x_1-x_2| \;=\; |x_1-x_2|-\tfrac12|x_1-x_2|
 \;<\; |x_1-x_2|- |u z_1-v z_2|a_n
 \;\leq\; |x_1-x_2 - (u z_1-v z_2)a_n|\,.
\]
\end{proof}

\twocolumn
\pagenumbering{alph} 
\setcounter{page}{1}

%\TrackingPostamble

\end{document}